\documentclass[english,reqno]{article}
\usepackage[T1]{fontenc}
\usepackage[utf8]{inputenc}
\usepackage{lmodern}
\usepackage[left=3.5cm,right=3.5cm,top=2.5cm,bottom=2.5cm]{geometry}

\usepackage[leqno]{amsmath}
\usepackage{amssymb}
\usepackage{amsmath}
\usepackage{amsthm}
\usepackage{pgfplots}
\usepackage{enumitem}

\usepackage{float}
\usepackage{hyperref}

\usepackage{subfig}
\usepackage{graphicx}
\usepackage{todonotes}

\newcounter{lem}
\newtheorem{lemma}[lem]{Lemma}
\newtheorem{remark}[lem]{Remark}
\newtheorem{proposition}[lem]{Proposition}
\newtheorem{theorem}[lem]{Theorem}
\newtheorem{definition}[lem]{Definition}
\newtheorem{assumption}[lem]{Assumptions}
\newtheorem{corollary}[lem]{Corollary}
\newtheorem{notation}[lem]{Notation}

\usepackage{mathtools}
\usepackage{bbm}
\usepackage{bm}
\usepackage[]{algorithm2e}
\usepackage[affil-it]{authblk}

\DeclareMathOperator{\E}{\mathbb{E}}%
\DeclareMathOperator{\Pro}{\mathbb{P}}
\newcommand*\indic[1]{\mathbbm{1}_{\{ #1 \}}}
\newcommand*\indica[1]{\mathbbm{1}_{ #1 }}

\DeclareMathOperator*{\esssup}{ess\,sup}

\newcommand{\mydots}{\cdots}
\def \limhb#1#2#3{\mathrel{\mathop{\kern 0pt#1}\limits_{#2}^{#3}}}

\newcommand{\PM}{\bm{\mathrm{N}}}

\usepackage{xargs}
\newcommandx{\Flow}[2][1=,2=]{  \varphi^{#1}_{#2}}

\newcommandx{\K}[2][1=,2=]{K^{#1}_{#2}}
\newcommandx{\BK}[2][1=,2=]{\bar{K}^{#1}_{#2}}

\renewcommandx{\H}[2][1=,2=]{H^{#1}_{#2}}
\newcommandx{\BH}[2][1=,2=]{\bar{H}^{#1}_{#2}}

\renewcommandx{\r}[2][1=,2=]{r^{#1}_{#2}}

\usepackage{ifmtarg}

\makeatletter

\newcommandx{\Y}[4][1=,2=,3=(a.),4=]{
	Y^{#1\@ifmtarg{#1}{}{\@ifmtarg{#2}{}{,}}#2,#3}_{#4}}
	
\newcommand{\EXP}[1]{\exp{\left( #1 \right) }}
\newcommand*{\RePart}{\mathfrak{R}}
\newcommand*{\ImPart}{\mathfrak{I}}

\begin{document}
	\date{\today}
	\title{Long time behavior of  a mean-field model of interacting neurons}
	\author[1]{Quentin Cormier}
	\author[1]{Etienne Tanr\'e}
	\author[2]{Romain Veltz}
	\affil[1]{Université Côte d’Azur, Inria, France (Tosca team)}
	\affil[2]{Université Côte d’Azur, Inria, CNRS, LJAD, France (MathNeuro team)}

	\maketitle		
\begin{abstract}
We study the long time behavior of the solution to some McKean-Vlasov stochastic differential equation (SDE) driven by a Poisson process. In neuroscience, this SDE models the asymptotic dynamic of the membrane potential of a spiking neuron in a large network.
We prove that for a small enough interaction parameter, 
any solution converges to the unique (in this case) invariant probability measure. To this 
aim, we first obtain global bounds on the jump rate and derive a Volterra type 
integral equation satisfied by this rate. We then replace temporary the 
interaction part of the equation by a deterministic external quantity (we call 
it the external current).
For constant current, we obtain the convergence to the invariant 
probability measure.  Using a 
perturbation method, we extend this result to more general external currents. Finally, we prove the result for the non-linear McKean-Vlasov equation. \\

\noindent \textbf{Keywords} McKean-Vlasov SDE · Long time behavior · Mean-field interaction · Volterra integral equation~· Piecewise deterministic Markov process\\
\textbf{Mathematics Subject Classification} Primary: 60B10.  Secondary  60G55 · 60K35 · 45D05 · 35Q92.
\end{abstract}

	\section{Introduction}
		We study a model of network of neurons. For each \(N\in\mathbb{N}\),
		we consider a Piecewise-Deterministic Markov Process (PDMP) 
               \(\mathbf{X}^N_t = (X^{1, N}_t, \mydots, X^{N, N}_t) \in\mathbb{R}_+^N\). 
               For  \(i\in\{1,\mydots,N\}\), \(X^{i,N}_t\) models the membrane potential of a neuron (say neuron \(i\)) in the network.
		It emits spikes at random times. The spiking rate of neuron \(i\) at time \(t\) is \(f(X^{i, N}_t)\): it only depends on the potential of neuron \(i\).
		When the neuron \(i\) emits a spike, say at time \(\tau\), its 
		potential is reset (\(X^{i,N}_{\tau+} = 0\))
		and the potential of the other neurons increases by an amount 
		\(\tfrac{J}{N}\), where the connection strength \(J \geq 0\) is fixed:
		\[
		\forall j\neq i, \quad X^{j,N}_{\tau_+} = X^{j,N}_{\tau_-} + \frac{J}{N}.
		\]
		Between two spikes, the potentials evolve according to the one 
		dimensional equation
		\[
			\dfrac{d X^{i, N}_t}{ dt } = b(X^{i, N}_t).
		\]
	The functions \(b\) and \(f\) are assumed to be smooth.
	This process is indeed a PDMP, in particular Markov (see \cite{DavisPDMP}). 
	Equivalently, the model can be described using a system of SDEs driven by Poisson measures.
	Let $(\PM^i(du, dz))_{i=1,\mydots,N}$ be a family of $N$ independent  Poisson measures on $\mathbb{R}_+ \times \mathbb{R}_+$ with intensity measure $du dz$.
	Let $(X_0^{i, N})_{i=1,\mydots,N}$ be a family of $N$ random variables on $\mathbb{R}_+$, \textit{i.i.d.} of law $\nu$ and independent of the Poisson measures.
	Then $(X^{i, N})$ is a \textit{càdlàg} process solution of coupled  SDEs:
	\begin{equation}
	\begin{aligned}
	\forall i,\quad X_t^{i, N} =& X_0^{i, N} + \int_0^t{b(X_u^{i, N}) du} + \frac{J}{N}\sum_{j \neq i}{\int_0^t{\int_{\mathbb{R}_+}{ \indic{z \leq f(X_{u-}^{j, N}) } \PM^j(du, dz)} }}  \\
	               &  - \int_0^t{\int_{\mathbb{R}_+}{X_{u-}^{i, N} \indic{z \leq f(X_{u-}^{i, N})} \PM^i(du, dz)}}.  
	\end{aligned}
	.
	\label{finite_system}
	\end{equation}
	When the number of neurons $N$ goes to infinity, it has been proved 
	(see \cite{de_masi_hydrodynamic_2015,fournier_toy_2016}) for specific linear functions \(b\) and under few assumptions on $f$
	that $X_t^{1, N}$ -  i.e. the first coordinate of the solution to 
	\eqref{finite_system} -  converges  in law to the solution of the  
	McKean-Vlasov SDE:
	\begin{equation}
	X_t = X_0 + \int_0^t{b(X_u) du} + J \int_0^t{\E f(X_u) du}  -  \int_0^t{\int_{\mathbb{R}_+}{X_{u-} \indic{z \leq f(X_{u-})} \PM(du, dz)}},
	\label{NL-equation0}
	\end{equation}
	where, $\mathcal{L}(X_0) := \mathcal{L}(X^{1, N}_0) = \nu$ and $\PM$ is a 
	Poisson measure on $\mathbb{R}_+ \times \mathbb{R}_+$ with intensity 
	measure $du dz$. 
	The measure $\PM$ and $X_0$ are independent.
	
	Equation \eqref{NL-equation0} is a mean-field equation and is the current 
	object of interest. Note that the drift part of \eqref{NL-equation0} 
	involves the law of the solution in the term $\E f(X_u)$: the equation is 
	non-linear in the sense of McKean-Vlasov. Here, we study existence and 
	uniqueness of the solution of \eqref{NL-equation0} and its long time 
	behavior.
	
	Let $\nu(t, \cdot)$ be the law of $X_t$ at time $t \geq 0$. It is a weak solution of the following Fokker-Planck PDE:
	\begin{equation}
	\left\{
	\begin{aligned}
	 \frac{\partial }{\partial t} \nu (t, x) &= -\frac{\partial}{\partial x} [ (b(x) + J r_t) \nu(t, x)] - f(x) \nu(t, x),\quad x>0 \\
\nu(0,\cdot) &= \nu,\quad \nu(t, 0) = \frac{r_t}{b(0) + J r_t}, ~~~r_t = \int_0^\infty{f(x) \nu(t, x) dx}.
	\end{aligned}
	\right.
	\label{eq:fokker_planck_edp}
	\end{equation}
This model with a noisy threshold is known in the physic literature as  the ``Escape Noise'' model (see \cite[Chap. 9]{GerstnetNeuronalDynamic} for references and biological 
considerations). From a mathematical point of view, it has been first studied in
\cite{de_masi_hydrodynamic_2015} and has been the object of further 
developments in \cite{fournier_toy_2016}.  The function \(f:\mathbb{R}_+\to\mathbb{R}_+\) %
can be considered of the type $f(x) = 
(\frac{x}{\vartheta})^\xi$
for large $\xi > 0$ and some soft 
threshold $\vartheta > 0$. In this situation, if the potential of the neuron is 
equal to $x$ then the neuron has a small probability to spike between $t$ and 
$t + dt$ if $x < \vartheta$ and a large probability if $x > \vartheta$. Such a 
choice of $f$ mimics the standard Integrate-And-Fire model with a fixed 
(deterministic) threshold around $\vartheta$.  
	
Results on the existence of solution to~\eqref{NL-equation0}, in a slightly different context (in particular, with \(b(x)\sim_\infty -\kappa x\) for \(\kappa\geq 0\)), have been obtained in~\cite{de_masi_hydrodynamic_2015}: 
the authors explored the case where the initial condition \(\nu\) is 
compactly supported. This property is preserved at any time $t > 0$. So, the 
behavior of the solution with a rate function \(f\) locally Lipschitz 
continuous is similar to the case with a function \(f\) globally Lipschitz continuous.
When the initial condition is not compactly supported, the situation is more 
delicate. In~\cite{fournier_toy_2016}, the authors proved existence and 
path-wise uniqueness of the solution to \eqref{NL-equation0} (in a slightly 
different setting than ours) using an ad-hoc distance.

Note that the global existence results obtained for this model differ from 
those obtained for the ``standard'' Integrate-and-Fire model with a fixed 
deterministic threshold. This situation, studied for instance in 
\cite{PerthameEDP,CarrilloEDP,DIRT1,DIRT2}, corresponds (informally) 
to the choice 
$f(x) = +\infty \indic{x \geq \vartheta}$, $\vartheta > 0$ being the fixed 
threshold. In these papers, a diffusion part is included in the modeling.
In \cite{PerthameEDP}, the 
authors proved that a blow-up phenomenon appears when the 
law of the initial condition is concentrated close to the threshold 
$\vartheta$: the jump rate of the 
solution diverges to infinity in finite time. Here, the situation is completely 
different: the jump rate is uniformly bounded in time (see 
Theorem~\ref{th:existence et unicité de l'équation limite}). In
\cite{PerthameEDP}, the authors have obtained results on the stability of the 
solution for the diffusive model with a deterministic threshold (see also \cite{PierreRoux} for a variant).

Very little is known about the long time behavior of the solutions to \eqref{NL-equation0}. 
One can %
study it by considering the long time behavior of 
the finite particles system \eqref{finite_system} and then apply the 
propagation of chaos to extend the 
results to the McKean-Vlasov equation~\eqref{NL-equation0}. 
This strategy has been developed  
in~\cite{Veretennikov_ergodic_2006,bolley_trend_2010} for diffusive problems. 
The long time behavior of the particles system \eqref{finite_system} has been 
studied in \cite{duarte_model_2014,hodara_non-parametric_2018} (again in a 
slightly different setting but the methods could be adapted to our case): %
the authors proved that the particles system  is 
Harris-ergodic and consequently 
converges weakly to its unique invariant probability measure. 
However, transferring the long time behavior of the particles system to the 
McKean-Vlasov equation is possible if the propagation of chaos holds 
\textit{uniformly in time}. 
In 
\cite{de_masi_hydrodynamic_2015,fournier_toy_2016}, the propagation of chaos is
only proved on compact time interval $[0, T]$ and their 
estimates diverge as $T$ goes to infinity. Because Equation 
\eqref{NL-equation0} may have multiple invariant probability measures, there is no hope in 
general to prove such uniform propagation of chaos.

Coupling methods are also used to study the long time behavior of SDEs. 
In \cite{TCPCoupling}, the authors have studied the TCP (a {linear} PDMP) which is close to \eqref{NL-equation0}.
{The size of the jumps is \(-x / 2\) in the TCP and \(-x\) in our setting, 
	$x$ being the 
	position of the process just before the jump.} The main difference is the non-linearity: we failed to adapt their methods 
when the interactions are non-zero ($J > 0$).

Butkovsky studied in \cite{Butkovsky}  the long time behavior of some McKean-Vlasov diffusion SDE of the form: 
\begin{equation}\label{eq:butkovsky} 
\forall t \geq 0,~X_t = X_0 + \int_0^t{\left[b_1(X_u)  
	+ \epsilon b_2(X_u,  \mu_u)  \right] du} + W_t,  ~\mu_u = \mathcal{L}(X_u), 
\end{equation}
where $(W_t)_{t \geq 0}$ is a Brownian motion. Here the drift terms $b_1$ and $b_2$ are assumed to be globally Lipschitz and $b_2$ is assumed uniformly bounded with respect to its two parameters. The author proved that if the 
parameter $\epsilon$ is small enough, \eqref{eq:butkovsky} has 
a unique invariant probability measure 
which is globally stable. 
The case $\epsilon > 0$ (and small) is treated 
as a perturbation of the case $\epsilon = 0$ using a Girsanov transform. It 
could be interesting to see how this method could be adapted to SDE driven by 
Poisson measures, but we did not pursue this path. 

Another approach consists in studying the non-linear Fokker-Planck equation~\eqref{eq:fokker_planck_edp}. Such non-linear transport equations with boundary 
conditions have been studied in the context of population dynamics (see for 
instance \cite{AgeDepPopDyn,AgeDepPopPruss,AgeDepPopWebb, transportequationbiology}). 
In \cite{AgeDepPopDyn}, the authors 
have characterized the stationary solutions of the PDE and 
found a criterion of local stability for stationary solutions. 
They derived a Volterra integral 
equation and used it to obtain the stability criteria. More recently, 
\cite{Pakdaman2010,Pakdaman2013,MischlerEDP} have re-explored 
these models for neuroscience applications 
(see \cite{Chevallier2015,Chevallier2017} for a rigorous derivation 
of some of these PDEs using Hawkes processes).

PDE \eqref{eq:fokker_planck_edp} differs from theirs in the sense that we 
have a non-linear transport term (theirs is constant and equal to one) and our 
boundary condition is more complex.
The long time behavior of the PDE 
\eqref{eq:fokker_planck_edp} has been successfully studied in 
\cite{fournier_toy_2016} and in \cite{veltz:hal-01290264v2} in the case where 
$b \equiv 0$. In this situation, one can simplify the PDE 
\eqref{eq:fokker_planck_edp} with a simpler boundary condition
\[ 
\nu(t, 0) = \frac{1}{J}.
\]
The authors proved that if the density of the initial condition satisfies this 
boundary condition and regularity assumptions, then $\nu(t, .)$ converges to the 
density of the invariant probability measure as $t$ goes to infinity. The convergence holds 
in $L^1$ or in stronger norms (see \cite{veltz:hal-01290264v2}).
For $b \neq 0$, the boundary condition is more delicate and their methods cannot 
be easily applied.

Actually the long time behavior of the solution to \eqref{NL-equation0} may 
be remarkably intricate. Depending on the choice of $f$, $b$ and $J$, equation \eqref{NL-equation0} may have multiple invariant probability measures. Even if the invariant probability measure is unique, it is not necessarily a stable one and oscillations may appear (see Examples page~\pageref{pageexample}). In 
\cite{drogoul_hopf_2017}, the authors have numerically illustrated this 
phenomenon in a setting close to ours. 

Our main result describes the long time behavior of the solution to
\eqref{NL-equation0} in the weakly connected regime 
(Theorem~\ref{th:Jpetit}). 
If the connection strength $J$ is small enough, we prove that  
\eqref{NL-equation0} has a unique invariant probability measure which is globally stable. 
We give the explicit expression of this non-trivial invariant distribution and 
starting from any initial condition $X_0$, we prove 
the convergence in law of \(X_t\) to it, exponentially fast, as $t$ goes to infinity. 
We argue that this result is very general: it does not depend on the explicit 
shape of the functions $f$ or $b$. 
For stronger connection strengths $J$, such a result cannot hold true in 
general as equation \eqref{NL-equation0} may have multiple invariant probability measures. 

Note that we prove convergence in law, which is weaker than convergence in 
$L^1$. On the other hand, we require very few on the initial condition, in 
particular, we do not assume the existence of a density for the initial 
	condition in Theorem 
	\ref{th:Jpetit}.
We also provide a new proof for the existence and uniqueness of the solution to 
\eqref{NL-equation0}, based on a  Picard iteration scheme (see Theorem 
\ref{th:existence et unicité de l'équation limite}). As in 
\cite{fournier_toy_2016}, we do not require the initial condition to be 
compactly supported.
One of the main difficulty to study \eqref{NL-equation0} (or its PDE version 
\eqref{eq:fokker_planck_edp}) is that there is no simple autonomous equation
for the jump rate $t \mapsto \E f(X_t)$.
To overcome this difficulty, we introduce a ``linearized'' version of 
\eqref{NL-equation0} for which we can derive a closed equation of the jump rate.

Fix a $s \geq 0$ and let $(a_u)_{u \geq s}$ be a continuous deterministic non-negative function, called the external current. It replaces the interaction \(J\mathbb{E}f(X_u)\) in \eqref{NL-equation0}. We consider the  linear non-homogeneous SDE:
	\begin{equation}
	\forall t \geq s,\quad	\Y[s][\nu][(a.)][t] = \Y[s][\nu][(a.)][s] +  \int_s^t{b(\Y[s][\nu][(a.)][u]) du} +  \int_s^t{a_u du} -  \int_s^t{\int_{\mathbb{R}_+}{\Y[s][\nu][(a.)][u-] \indic{z \leq f(\Y[s][\nu][(a.)][u-])} \PM(du, dz)}} ,
	\label{non-homogeneous-SDE}
	\end{equation}
	where $\mathcal{L}(\Y[s][\nu][(a.)][s]) = \nu$. 
	Under quite general assumptions on $b$ and $f$, this SDE has a path-wise unique solution (see Lemma~\ref{lem:existence and uniqueness of the linear non-homogeneous SDE}). We denote the jump rate of this SDE by:
		\begin{equation} \forall t \geq s,\quad \r[\nu][(a.)](t, s) := \E f(\Y[s][\nu][(a.)][t]).
	\label{eq:definition du taux de saut}
	\end{equation}
	Moreover, taking $s = 0$ and $\Y[0][\nu][(a.)][0] = X_0$, it holds that $(\Y[0][\nu][(a.)][t])_{t \geq 0}$  is a solution to \eqref{NL-equation0} if it satisfies the closure condition
	\begin{equation} \forall t \geq 0,\quad a_t = J \r[\nu][(a.)](t, 0).
	\label{eq:closure equation NL-SDE}
	\end{equation}
	Conversely, any solution to \eqref{NL-equation0} is a solution to \eqref{non-homogeneous-SDE} with $a_t = J \E f(X_t)$. 
	We prove that the function $\r[\nu][(a.)]$ satisfies a Volterra integral equation
\begin{equation}
	\forall t \geq s,\quad \r[\nu][(a.)](t, s) = \K[\nu][(a.)](t, s) + \int_s^t{\K[][(a.)](t, u) \r[\nu][(a.)](u, s) du},
	 \label{eq:equation de Volterra ecrite sous forme integrale}
\end{equation}
 where the kernels $\K[\nu][(a.)]$ and $\K[][(a.)]$ are explicit in terms of $\nu$, $a$, $b$ and 
 $f$ (see \eqref{definition de K^x} and \eqref{definition de K^nu et de K}).
	 
Our main tool is this Volterra equation: we use it with a Picard iteration scheme to ``recover'' the non-linear equation \eqref{NL-equation0}.
	The McKean-Vlasov equation \eqref{NL-equation0}, its ``linearized'' 
	non-homogeneous version \eqref{non-homogeneous-SDE}, the Fokker-Planck PDE 
	\eqref{eq:fokker_planck_edp} and the Volterra equation \eqref{eq:equation de Volterra ecrite sous forme integrale} are  different ways to investigate 
	this mean-field problem, each of these interpretations having their own 
	strength and weakness.
Here, we use mainly the Volterra equation \eqref{eq:equation de Volterra ecrite sous forme integrale} and the non-homogeneous SDE \eqref{non-homogeneous-SDE}. To prove that equation \eqref{NL-equation0} has a path-wise unique solution, we rely on the Volterra equation \eqref{eq:equation de Volterra ecrite sous forme integrale} and show that the following mapping:
\begin{equation}
(a_t)_{t \geq 0} \mapsto J \r[\nu][(a.)](., 0) := [t \mapsto J \E f(\Y[0][\nu][(a.)][t]) ], 
\label{eq:fixed point application}
\end{equation}
is contracting on $\mathcal{C}([0, T], \mathbb{R}_+)$ for all $T > 0$.
It then follows that the fixed point of this mapping satisfies the closure 
condition \eqref{eq:closure equation NL-SDE} and can be used to define a 
solution to \eqref{NL-equation0}. Conversely any solution to
\eqref{NL-equation0} defines a fixed point of this mapping and one proves 
strong uniqueness for \eqref{NL-equation0}. 

Finally, we prove our main result concerning the long time behavior of the 
solution to \eqref{NL-equation0}. Let us detail the structure of the proof. 
First, we give in 
Proposition~\ref{prop: convergence du taux de saut dans le cas J=0} 
the long time behavior of the solution to the linear equation 
\eqref{non-homogeneous-SDE} with a constant current (\(a_t \equiv a\)). 
Any solution converges in law to a unique invariant probability measure \(\nu_a^{\infty}\) (Proposition~\ref{prop:mesinvlineaire}). In that case, the Volterra 
equation~\eqref{eq:equation de Volterra ecrite sous forme integrale} is of 
convolution type and it is possible to study finely its solution using Laplace 
transform techniques. 
Second, we prove, for small \(J\), the uniqueness of a constant current 
\(a^*\) such that
\[
\forall t \geq 0,\quad a^* = J \mathbb{E}f( \Y[0][ \nu_{a*}^{\infty} ][a^*][t]).
\]
Third, we extend the previous {convergence} result to non-constant 
currents \((a_t)\) 
satisfying
\begin{equation}  |a_t - a^*| \leq C e^{-\lambda t}.
\label{a_t converge exponentiellement vite vers a}
\end{equation}
Using a perturbation method, we prove that
\[
\Y[0][\nu][(a.)][t]\limhb{\longrightarrow}{t\to \infty}{\mathcal{L}}\nu^\infty_{a^*}.
\]
Fourth, in Theorem~\ref{th:Jpetit}, we give the long time behavior of
the solution to the non-linear equation \eqref{NL-equation0}
for small \(J\). Here, we use a fixed point argument.

The layout of the paper is as follows.
Our main results are given in Section~\ref{sec:notations}. In Section~\ref{sec:technical notations}, we gather technical results. In Section~\ref{sec:existence and uniqueness}, we study the non-homogeneous 
linear equation \eqref{non-homogeneous-SDE} and derive the Volterra equation 
satisfied by the jump rate.
In Section~\ref{sec:the invariant measures}, we characterize the invariant 
probability measures of \eqref{NL-equation0}. 
In Section~\ref{sec: long time behavior for isolated neuron} we study the long time behavior of the solution to~\eqref{non-homogeneous-SDE} with a constant current \(a\).
In Section~\ref{sec: long time behavior for a general drift}, we introduce the perturbation method. 
Finally Section~\ref{sec:proof of the main theorem} is devoted to the proof of our main result (Theorem~\ref{th:Jpetit}).

\section{Notations and results}
\label{sec:notations}
Let us introduce some notations and definitions.  For $s \geq 0$ and a probability measure $\nu$ on $\mathbb{R}_+$, let $\Y[s][\nu][(a.)][s]$ be a $\nu$-distributed random variable, independent of a Poisson measure $\PM(du, dz)$ on $\mathbb{R}_+ \times \mathbb{R}_+$ of intensity measure $du dz$. We consider the canonical filtration $(\mathcal{F}^s_t)_{t \geq s}$ associated to the Poisson measure $\PM$ and to the initial condition $\Y[s][\nu][(a.)][s]$, that is the completion of
\[  \sigma \{\Y[s][\nu][(a.)][s], \PM([s, r] \times A):~ s \leq r \leq t,~ A \in \mathcal{B}(\mathbb{R}_+) \}. 
\]
 
 \begin{definition} Let $s \geq 0$ and consider $(a_t): [s, \infty) \rightarrow \mathbb{R}_+$ a measurable locally integrable function ($\forall t \geq s,~ \int_s^t{a_u du} < \infty$). 
\begin{itemize}
\item A process $(\Y[s][\nu][(a.)][t])_{t \geq s}$ is said to be a solution of the non-homogeneous linear equation \eqref{non-homogeneous-SDE} with a current $(a_t)_{ t \geq s}$ if the law of $\Y[s][\nu][(a.)][s]$ is $\nu$, $(\Y[s][\nu][(a.)][t])_{t \geq s}$ is $(\mathcal{F}^s_t)_{t \geq s}$-adapted,  càdlàg, $a.s. ~\forall t \geq s,~\int_s^t{f(\Y[s][\nu][(a.)][u]) du} < \infty$ and \eqref{non-homogeneous-SDE} holds a.s.
\item An $(\mathcal{F}^0_t)_{t \geq 0}$-adapted càdlàg process $(X_t)_{t \geq 0}$ is said to solve the non-linear SDE \eqref{NL-equation0} if $t \mapsto \E f(X_t)$ is measurable locally integrable and if $(X_t)_{t \geq 0}$ is a solution of \eqref{non-homogeneous-SDE} with $s = 0$, $\Y[0][\nu][(a.)][0] = X_0$ and $\forall t \geq 0,~a_t = J \E f(X_t)$.
\label{def:solution des EDS}
\end{itemize} 
 \end{definition}
 Let $t \geq s \geq 0$. We denote by $\Y[s][\nu][(a.)][t]$ a solution to the linear non-homogeneous SDE \eqref{non-homogeneous-SDE} driven by $(a_t)_{t \geq s} \in \mathcal{C}([s, \infty), \mathbb{R}_+)$ starting with law $\nu$ at time $s$. We denote its associated jump rate by: $\r[\nu][(a.)](t, s) := \E f(\Y[s][\nu][(a.)][t])$. For any measurable function $g$, we write $\nu(g) := \int_0^\infty{g(x) \nu(dx)} = \E g(\Y[s][\nu][(a.)][s])$ whenever this integral makes sense.

Between its random jumps, the SDE \eqref{non-homogeneous-SDE} is reduced to a non-homogeneous ODE. Let us introduce its flow $\Flow[(a.)][t, s](x)$, which by definition is the solution of:
	\begin{align}
		 \label{def:definition of the flow}
	 \forall t \geq s,~\frac{d}{dt} \Flow[(a.)][t, s](x) &= b(\Flow[(a.)][t, s](x)) + a_t \\
	 \Flow[(a.)][s, s](x) &=x. \nonumber
\end{align}	
If $a_t \equiv a$, we denote $\Flow[a][t](x) = \Flow[(a.)][t,0](x)$.
	
\begin{assumption}
\label{assumptions:b0}
We assume that $b: \mathbb{R}_+ \rightarrow \mathbb{R}$ is a  locally Lipschitz function with $b(0) > 0$ and that $b$ is bounded from above:
\begin{equation}
\exists C_b \geq 0:~ \forall x \geq 0, ~b(x) \leq C_b.
\label{assumptions:b0:drift bounded from above}
 \end{equation}
We assume moreover that there is a positive constant $C_\varphi$ such that for all $(a_t)_{t \geq 0}$,  $(d_t)_{t \geq 0} \in \mathcal{C}(\mathbb{R}_+, \mathbb{R}_+)$ 
we have
\begin{equation} \forall x \geq 0,~ \forall s \leq t,~ |\Flow[(a.)][t,s](x) - \Flow[(d.)][t,s](x)| \leq C_\varphi \int_s^t{|a_u - d_u| du}.  
\end{equation}
\label{assumptions:b0:Cvarphi}
\end{assumption}
The assumption \(b(0) > 0\) implies that for all \(x,t,s\in\mathbb{R}_+\), we have \(\Flow[(a.)][t, s](x)\in\mathbb{R}_+\).
 \begin{assumption}
  We assume that $f: \mathbb{R}_+ \rightarrow \mathbb{R}_+$  is a $\mathcal{C}^1$ convex increasing  function with $f(0) = 0$ and satisfies:
 \begin{enumerate}[label=\ref{assumptions:f0}.\arabic*.]
 \item there exists a constant $C_f$ such that
\[ \forall x,y \geq 0,~ f(x+y) \leq C_f (1 + f(x) + f(y)) \text{ and } f'(x+y) \leq C_f (1 + f'(x) + f'(y)).\] \label{assumptions:f0:f(x+y)}
 \item for all $\theta \geq 0,~\sup_{x \geq 0} \{  \theta f'(x)  - f(x) \} < \infty$. 
 
 Define $\psi(\theta) := \sup_{x \geq 0} \{ \theta f'(x) - \frac{1}{2} f^2(x) \} < \infty.$ We also assume that
 \[ \lim_{\theta \rightarrow +\infty}{ \frac{\psi(\theta)}{\theta^2} } = 0.\] \label{assumptions:f0:psi}
 \item Finally we assume that there is a constant $C_{b,f} > 0$ such that
 \[ 
 \forall x \geq 0,~|b(x)| \leq C_{b,f}(1 + f(x)).
 \] 
 \label{assumptions:f0:b}
 \end{enumerate}
 \label{assumptions:f0} 
 \end{assumption}
 Note that these assumptions ensure that $f(x) > 0$ for all $x > 0$.
 \begin{assumption} 
 We assume that the law of the initial condition is a probability measure $\nu$ satisfying  $\nu(f^2) < \infty$.
\label{assumptions:nu0}
\end{assumption}
 Let us give our main results.
  \begin{theorem}
 \label{th:existence et unicité de l'équation limite}
 Under Assumptions~\ref{assumptions:b0},  \ref{assumptions:f0} and \ref{assumptions:nu0}, the non-linear SDE \eqref{NL-equation0} has a  path-wise unique solution $(X_t)_{t \geq 0}$ in the sense of 
 Definition~\ref{def:solution des EDS}.
 Furthermore, there is a finite constant $\bar{r} > 0$ (only depending on $b$, $f$ and $J$)  such that:
   \[ \sup_{t \geq 0} \E f(X_t) \leq \max(\bar{r}, \E f(X_0)) ,~~ \limsup_{t \rightarrow \infty}{\E f(X_t)} \leq \bar{r}. \]
  The upper-bound $\bar{r}$ can be chosen to be an increasing function of $J$.
 \end{theorem}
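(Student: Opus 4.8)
The plan is to remove the McKean--Vlasov non-linearity by freezing the interaction as an external current, to solve the resulting linear problem, and then to close the loop by a fixed point argument, exactly as announced in the introduction. Fix $T>0$ and view $\mathcal{C}([0,T],\mathbb{R}_+)$ as a complete metric space for the uniform norm. By Lemma~\ref{lem:existence and uniqueness of the linear non-homogeneous SDE}, for every current $(a_t)\in\mathcal{C}([0,T],\mathbb{R}_+)$ the linear SDE \eqref{non-homogeneous-SDE} started from $\nu$ at time $0$ has a path-wise unique solution $\Y[0][\nu][(a.)][t]$; moreover, by \eqref{eq:equation de Volterra ecrite sous forme integrale} and the continuity of its kernels, the rate $\r[\nu][(a.)](\cdot,0)$ is continuous as soon as it is finite, so the map $\Phi:(a_t)\mapsto\big(J\,\r[\nu][(a.)](t,0)\big)_{t\in[0,T]}$ will be a well-defined self-map of a suitable subset of $\mathcal{C}([0,T],\mathbb{R}_+)$, and a solution of \eqref{NL-equation0} is precisely a fixed point of $\Phi$ read through the closure condition \eqref{eq:closure equation NL-SDE}. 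The first thing I would establish is a uniform-in-time a priori bound on the rate. Writing $Y_t=\Y[0][\nu][(a.)][t]$ and $g(t)=\E f(Y_t)$, a localization argument --- legitimate thanks to Assumption~\ref{assumptions:nu0}, the growth controls of Assumption~\ref{assumptions:f0}, and the fact that between jumps $Y$ only moves by the bounded flow --- applied to the Dynkin formula for $f(Y_t)$ gives, since a jump resets $Y$ to $0$ where $f$ vanishes,
\[
g'(t)=\E\!\big[f'(Y_t)\,(b(Y_t)+a_t)\big]-\E\!\big[f(Y_t)^2\big].
\]
Because $f'\ge0$ and $b\le C_b$, the drift term is at most $\E[(C_b+a_t)f'(Y_t)]$; splitting off $\tfrac12\E[f(Y_t)^2]$, using Assumption~\ref{assumptions:f0:psi} and Jensen's inequality $\E[f(Y_t)^2]\ge g(t)^2$ then yields the closed differential inequality
\[
g'(t)\ \le\ \psi(C_b+a_t)-\tfrac12\,g(t)^2 .
\]

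I would use this inequality twice. First, since $\psi(\theta)/\theta^2\to0$ and $\theta\mapsto\psi(\theta)$ is nondecreasing ($f'\ge0$), define $\bar r=\bar r(b,f,J)>0$ as (a positive upper bound for) the smallest $\rho_0\ge0$ with $\psi(C_b+J\rho)\le\tfrac14\rho^2$ for all $\rho\ge\rho_0$; this quantity is finite and nondecreasing in $J$. For \eqref{NL-equation0} itself $a_t=Jg(t)$, so $g'(t)\le\psi(C_b+Jg(t))-\tfrac12 g(t)^2\le-\tfrac14 g(t)^2<0$ whenever $g(t)>\bar r$; a comparison argument against the constant level $\bar r$ then gives both $\sup_{t\ge0}\E f(X_t)\le\max(\bar r,\E f(X_0))$ and $\limsup_{t\to\infty}\E f(X_t)\le\bar r$, which is the quantitative half of the theorem, with the claimed monotonicity in $J$. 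Second, set $R:=J\max(\bar r,\nu(f))$, finite since $\nu(f)\le\sqrt{\nu(f^2)}<\infty$ by Assumption~\ref{assumptions:nu0}, and let $B_R:=\{a\in\mathcal{C}([0,T],\mathbb{R}_+):\|a\|_\infty\le R\}$; for $a\in B_R$ the inequality reads $g'(t)\le\psi(C_b+R)-\tfrac12 g(t)^2$, so the choice of $\bar r$ forces $g(t)\le\max(\nu(f),\bar r)=R/J$ for all $t\le T$. Hence $\Phi$ is well defined on $B_R$ with $\Phi(B_R)\subseteq B_R$, and $B_R$, being closed in a complete space, is complete.

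It then remains to show that $\Phi$ has a unique fixed point on $B_R$. The key estimate is the Lipschitz bound
\[
\big|\r[\nu][(a.)](t,0)-\r[\nu][(d.)](t,0)\big|\ \le\ C_{T,R}\int_0^t|a_u-d_u|\,du,\qquad a,d\in B_R,
\]
which I would obtain from the Volterra equation \eqref{eq:equation de Volterra ecrite sous forme integrale} together with the continuous dependence of the flow on the current (Assumption~\ref{assumptions:b0:Cvarphi}), the $\mathcal{C}^1$ regularity of $f$, and Assumption~\ref{assumptions:f0:f(x+y)}, all with constants uniform over the bounded set $B_R$. Iterating, $\big|\Phi^n(a)_t-\Phi^n(d)_t\big|\le(JC_{T,R})^n\,\tfrac{t^{\,n-1}}{(n-1)!}\int_0^t|a_u-d_u|\,du$, so $\Phi^n$ is a contraction for $n$ large, and the iterated Banach fixed point theorem gives a unique fixed point $a^*\in B_R$. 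Then $X_t:=\Y[0][\nu][(a^*.)][t]$ satisfies \eqref{eq:closure equation NL-SDE}, hence solves \eqref{NL-equation0}. Conversely, any solution $(X_t)$ of \eqref{NL-equation0} has current $a_t=J\E f(X_t)$, which by the Volterra equation is continuous and by the a priori bound lies in $B_R$; being a fixed point of $\Phi$ on $B_R$ it equals $a^*$, and path-wise uniqueness for \eqref{non-homogeneous-SDE} driven by $a^*$ forces $(X_t)=\Y[0][\nu][(a^*.)][t]$ a.s. Since the construction is consistent as $T$ grows (restricting a fixed point to a shorter interval is the fixed point there) and $R$ does not depend on $T$, letting $T\to\infty$ gives the global solution.

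The step I expect to be the main obstacle is the a priori bound on the jump rate. One must extract from Assumption~\ref{assumptions:f0} --- chiefly the sublinearity $\psi(\theta)/\theta^2\to0$ and the control $|b|\le C_{b,f}(1+f)$ of Assumption~\ref{assumptions:f0:b} --- a genuine Lyapunov-type inequality for $g$ closing \emph{uniformly in time}; this requires care in the localization, since the martingale part of $f(Y_t)$ involves moments of $f(Y)$ of order higher than is directly furnished by $\nu(f^2)<\infty$, and in the limiting procedure that legitimizes the differentiation of $g$. One must then run the contraction on the bounded set $B_R$ rather than on all of $\mathcal{C}([0,T],\mathbb{R}_+)$, since $C_{T,R}$ is only finite on bounded sets; without the a priori bound, neither the contraction nor the absence of finite-time blow-up (in contrast with the hard-threshold model) could be obtained.
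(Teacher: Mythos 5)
Your proof is correct and follows essentially the same route as the paper's: freeze the interaction as an external current, solve the linear SDE (Lemma~\ref{lem:existence and uniqueness of the linear non-homogeneous SDE}), derive a uniform-in-time a priori bound on the rate from the differential inequality $g'(t)\le\psi(C_b+a_t)-\tfrac12 g(t)^2$ obtained via Itô's formula, and close the loop by a Picard iteration of the map $(a_t)\mapsto J\,\r[\nu][(a.)](\cdot,0)$ on a bounded, closed, self-mapped set of continuous currents, using the Volterra equation and Assumption~\ref{assumptions:b0:Cvarphi} to get the Gronwall-type Lipschitz estimate uniformly on that set.

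The one place you diverge, and it is a genuine (if small) simplification, is in the construction of the a priori bound $\bar r$. The paper works separately with $\E f'(X_t)b(X_t)$ and $J\E f'(X_t)\E f(X_t)$, introducing $\beta:=\sup_x\{Jf'(x)-\tfrac18 f(x)\}$ and setting $\bar r:=\sqrt{\psi(2C_b)+4\beta^2}$; you instead insert $a_t=Jg(t)$ directly to get the closed scalar inequality $g'(t)\le\psi(C_b+Jg(t))-\tfrac12 g(t)^2$ and then read off $\bar r$ from the superlinearity $\psi(\theta)/\theta^2\to0$, which yields the same kind of Lyapunov barrier with one fewer auxiliary constant. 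Both yield $\bar r$ nondecreasing in $J$. One phrase worth tightening: between jumps $Y$ is not moved by a \emph{bounded} flow globally in time but only by a flow that is bounded on each compact $[0,T]$ (Lemma~\ref{lemma:b0}(\ref{lemma:b0:drift bounded from above})); this is enough for the localization, and your flagged concerns about differentiating $g$ correctly match what the paper must also handle (Lemma~\ref{lem: locally bounded and continuous jump rate} and Lemma~\ref{lem: continuity of Ef' E f2 etc}), but the wording should be adjusted.
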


	\begin{notation}  
		Denote for all $a \geq 0$ the probability measure
		\begin{equation}
		\nu^\infty_{a}(dx) := \frac{\gamma(a)}{b(x) + a} \EXP{-\int_0^x{\frac{f(y)}{b(y) + a} dy}} \indic{x \in [0, \sigma_{a}]} dx,
		\label{eq:invariante measure}
		\end{equation}
		where $\gamma(a)$ is the normalization
		\begin{equation} \gamma(a) := \left[\int_0^{\sigma_a}{  \frac{1}{b(x) + a} \EXP{-\int_0^x{\frac{f(y)}{b(y) + a} dy}} dx}\right]^{-1}. 
		\label{eq:gamma(a) la constante de renormalisation}
		\end{equation}
		The upper bound $\sigma_a$ of the support of \(\nu^\infty_{a}\) is given by
		$\sigma_a := \lim_{t \rightarrow \infty}{\Flow[a][t](0)} \in \mathbb{R}^*_+ \cup \{ +\infty \}$.
		\end{notation}		
		\begin{remark}
			\begin{enumerate}
				\item For all $a \geq 0, ~\gamma(a) = \nu^\infty_a(f)$.
				\item We prove in Proposition~\ref{prop:mesinvlineaire} that for any  $a\geq 0$, $\nu_a^\infty$ is the unique invariant probability measure of \eqref{non-homogeneous-SDE} with $a_t\equiv a$.
			\end{enumerate}
		\end{remark}
		\begin{proposition}\label{prop:mesures invariantes}
		The probability measure \(\nu^\infty_{a}\) is an invariant measure of
		\eqref{NL-equation0} iff
		\begin{equation}
		\frac{a}{\gamma(a)} = J.
		\label{eq:scalar equation invariant measures}
		\end{equation}
		Moreover, define $J_m := \sup\{J_0 \geq 0:~ \forall J \in [0, J_0] \text{ equation \eqref{eq:scalar equation invariant measures} has a unique solution}\}$, then $J_m > 0$. Consequently, for all $ 0 \leq J < J_m$ the non-linear process \eqref{NL-equation0} has a unique invariant probability measure.
\end{proposition}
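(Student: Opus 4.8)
The plan is to transfer the question to the linear non-homogeneous equation \eqref{non-homogeneous-SDE} driven by a \emph{constant} current, whose unique invariant probability measure is $\nu^\infty_a$ (Proposition~\ref{prop:mesinvlineaire} and the preceding Remark, which also records $\nu^\infty_a(f)=\gamma(a)$), and then to analyse the scalar fixed-point equation $a=J\gamma(a)$, which is \eqref{eq:scalar equation invariant measures} rewritten since $\gamma(a)=\nu^\infty_a(f)\in(0,\infty)$.

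\noindent\textbf{Classification.} I would first prove that $a\mapsto\nu^\infty_a$ is a bijection from the set of solutions of \eqref{eq:scalar equation invariant measures} onto the set of invariant probability measures of \eqref{NL-equation0}; the equivalence in the statement then follows at once. \emph{Surjectivity / forward implication:} given an invariant probability measure $\mu$ (necessarily with $\mu(f^2)<\infty$, so Theorem~\ref{th:existence et unicité de l'équation limite} applies), let $(X_t)$ be the unique solution of \eqref{NL-equation0} with $X_0\sim\mu$; then $X_t\sim\mu$ for all $t$, hence $\E f(X_t)\equiv\mu(f)=:r_\infty$ and, by Definition~\ref{def:solution des EDS}, $(X_t)$ solves \eqref{non-homogeneous-SDE} with the constant current $a:=Jr_\infty$; since $(X_t)$ keeps the law $\mu$ and that equation is well-posed, $\mu$ is an invariant measure of it, so $\mu=\nu^\infty_a$ by Proposition~\ref{prop:mesinvlineaire}, and $r_\infty=\mu(f)=\nu^\infty_a(f)=\gamma(a)$, i.e. $a=J\gamma(a)$. \emph{Backward implication:} if $a$ solves \eqref{eq:scalar equation invariant measures}, the solution $(Y_t)$ of \eqref{non-homogeneous-SDE} with constant current $a$ and $Y_0\sim\nu^\infty_a$ satisfies $Y_t\sim\nu^\infty_a$ and $J\E f(Y_t)=J\gamma(a)=a$ for all $t$, so the closure relation \eqref{eq:closure equation NL-SDE} holds, $(Y_t)$ solves \eqref{NL-equation0}, and $\nu^\infty_a$ is invariant. \emph{Injectivity:} if $\nu^\infty_{a_1}=\nu^\infty_{a_2}$ their densities coincide, so by \eqref{eq:invariante measure} taken at $x=0$ one gets $\gamma(a_1)/(b(0)+a_1)=\gamma(a_2)/(b(0)+a_2)$, while integrating $f$ gives $\gamma(a_1)=\gamma(a_2)$, whence $a_1=a_2$. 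Combining surjectivity with injectivity, ``$\nu^\infty_a$ invariant'' yields $\nu^\infty_a=\nu^\infty_{J\gamma(a)}$ with $J\gamma(a)$ a solution of \eqref{eq:scalar equation invariant measures}, hence $a=J\gamma(a)$, which is the claimed equivalence.

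\noindent\textbf{Positivity of $J_m$.} By the bijection it suffices to show that $a=J\gamma(a)$ has exactly one nonnegative solution for all small enough $J$; then for $0\le J<J_m$ equation \eqref{eq:scalar equation invariant measures} has a unique solution $a^*$ and \eqref{NL-equation0} has the unique invariant probability measure $\nu^\infty_{a^*}$. The key point is an \emph{a priori} bound: if $a=J\gamma(a)$ then, by the backward implication, $\nu^\infty_a$ is invariant, so the solution $(X_t)$ of \eqref{NL-equation0} issued from $\nu^\infty_a$ has $\E f(X_t)\equiv\gamma(a)$, and Theorem~\ref{th:existence et unicité de l'équation limite} gives $\gamma(a)=\limsup_{t\to\infty}\E f(X_t)\le\bar r(J)\le\bar r(1)$ for $J\le1$, $\bar r$ being nondecreasing in $J$; thus every solution lies in $[0,J\bar r(1)]$. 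Separately, \eqref{eq:gamma(a) la constante de renormalisation} shows that $\gamma$ is finite, positive and locally Lipschitz with $\gamma(0)\in(0,\infty)$; choose $\delta>0$, $\gamma_{\max}<\infty$ and $L<\infty$ with $\gamma\le\gamma_{\max}$ and $\gamma$ $L$-Lipschitz on $[0,\delta]$. For $J\le J_0:=\min\bigl(1,\ \delta/\bar r(1),\ \delta/\gamma_{\max},\ 1/(2L)\bigr)$ the map $F_J(a):=J\gamma(a)$ sends the complete set $[0,\delta]$ into itself and is a contraction there, hence has a unique fixed point in $[0,\delta]$; since every solution of $a=J\gamma(a)$ lies in $[0,J\bar r(1)]\subseteq[0,\delta]$, it is the unique one. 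Therefore $J_m\ge J_0>0$.

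I expect the main obstacle to be precisely this a priori confinement of the solutions of the scalar equation near $0$ for small $J$: without it, spurious ``large'' invariant measures could not be excluded and the local contraction argument would be inconclusive. It is resolved by feeding the candidate $\nu^\infty_a$ back into the nonlinear SDE and invoking the uniform bound of Theorem~\ref{th:existence et unicité de l'équation limite} together with its monotonicity in $J$. The auxiliary facts used — positivity and local Lipschitz regularity of $a\mapsto\gamma(a)$, and $\nu^\infty_a(f^2)<\infty$ — are routine estimates on the explicit formula \eqref{eq:invariante measure}.
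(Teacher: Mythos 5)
Your classification argument (matching invariant measures of the nonlinear SDE to solutions of $a=J\gamma(a)$ via Proposition~\ref{prop:mesinvlineaire}) is the same as the paper's, modulo the extra injectivity check which the paper does not need but which costs nothing. The difference is in the proof that $J_m>0$. The paper works directly with the function $U(a):=a/\gamma(a)$: it establishes $U(0)=0$, $U'(0)=1/\gamma(0)>0$, hence $U$ strictly increasing on some $[0,a_0]$; shows $\lim_{a\to\infty}U(a)=+\infty$ and picks $a_1$ with $U\geq 1$ on $[a_1,\infty)$; then defines $J_0$ as the minimum of $U$ on $[a_0,a_1]$, which forces any solution of $U(a)=J$ with $J<J_0$ into $[0,a_0]$ where $U$ is injective. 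Your route instead brings in external information: you feed $\nu^\infty_a$ back into the nonlinear SDE and use the uniform bound of Theorem~\ref{th:existence et unicité de l'équation limite} to confine all solutions a priori into $[0,J\bar r(1)]$, then run a Banach contraction for $F_J(a)=J\gamma(a)$ on a small interval $[0,\delta]$. Both are correct. The paper's argument is closer to elementary real analysis (but implicitly needs $U$, hence $\gamma$, differentiable near $0$, a point it states without proof), whereas your argument shifts the burden onto local Lipschitz regularity of $\gamma$ (also left as a routine check) and onto the global rate bound, which cleanly excludes spurious large solutions. Your observation that this confinement is the crux of the matter is exactly right; the paper achieves the same exclusion by the barrier $J_0=\min_{[a_0,a_1]}U$ combined with $U\geq 1$ beyond $a_1$.
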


We now state our main result: the convergence to the unique invariant probability measure for weak enough interactions.
\begin{theorem}
Under Assumptions~\ref{assumptions:b0}, \ref{assumptions:f0}, \ref{assumptions:nu0}, there exists strictly positive constants $J^*$ and $\lambda$ (both only depending on $b$ and $f$) satisfying
\[ 0 < J^* < J_m,~  0 < \lambda < f(\sigma_0),  \]
($J_m$ and $\sigma_0$ are defined in Proposition~\ref{prop:mesures invariantes}) and such that for any $0 \leq J \leq J^*$,  there is a constant $D > 0$:
\[ \forall t \geq 0,~| \E f(X_t) - \gamma(a^*) | \leq D e^{-\lambda t}. \]
Here, $(X_t)_{t \geq 0}$ is the solution of the non-linear SDE \eqref{NL-equation0} starting with law $\nu$ and $a^*$ is the unique solution of \eqref{eq:scalar equation invariant measures}. The constant $D$ only depends on $b$, $f$, $\E f(X_0)$, $J$ and $\lambda$.

Moreover, it holds that $X_t$ converges in law to $\nu^\infty_{a^*}$ at an exponential speed. If $\phi: \mathbb{R}_+ \rightarrow \mathbb{R}$ is a bounded Lipschitz-continuous function, it holds that
\[ 
\exists D' >0, \forall t\geq 0,\quad | \E \phi(X_t) -  \nu^\infty_{a^*}(\phi)| \leq D' e^{-\lambda t} ,
\]
where the constant $D'$ only depends on $b, f, J, \nu, \lambda$ and $\phi$ through its infinite norm and its Lipschitz constant.
\label{th:Jpetit}
\end{theorem}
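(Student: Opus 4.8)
The plan is to run a Banach fixed point argument on a space of admissible ``currents'' that decay exponentially to the stationary value. Let $a^*$ be the unique solution of \eqref{eq:scalar equation invariant measures} (it exists since $J < J_m$, Proposition~\ref{prop:mesures invariantes}); recall $a^* = J\gamma(a^*)$. Fix $\lambda \in (0, f(\sigma_0))$ strictly below the spectral gap of the constant-current Volterra equation of Section~\ref{sec: long time behavior for isolated neuron} at current $0$; by continuity this gap still exceeds $\lambda$ at the currents $a^*(J)$ for $J$ in a neighborhood of $0$, and $\lambda$ and $J^*$ may then be taken to depend only on $b, f$. For $D > 0$ to be fixed later, set
\[
\mathcal{A}_D := \Bigl\{ (a_t)_{t\ge 0} \in \mathcal{C}(\mathbb{R}_+, \mathbb{R}_+) :~ \forall t \ge 0,~ |a_t - a^*| \le D e^{-\lambda t} \Bigr\},
\]
a complete metric space for $\rho(a,d) := \sup_{t \ge 0} e^{\lambda t} |a_t - d_t|$, and let $\Phi(a)_t := J\,\r[\nu][(a.)](t,0)$ be the map \eqref{eq:fixed point application}. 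By Theorem~\ref{th:existence et unicité de l'équation limite} and its proof, $\Phi$ has a unique fixed point $\bar a \in \mathcal{C}(\mathbb{R}_+, \mathbb{R}_+)$, and $X_t = \Y[0][\nu][\bar{a}][t]$ is the unique solution of \eqref{NL-equation0}, with $\E f(X_t) = \bar a_t / J$. It therefore suffices to prove $\bar a \in \mathcal{A}_D$ for a suitable $D$; this follows from Banach's theorem once $\Phi(\mathcal{A}_D) \subseteq \mathcal{A}_D$ and $\Phi$ is a $\rho$-contraction on $\mathcal{A}_D$ (the fixed point in $\mathcal{A}_D$ must equal $\bar a$ by uniqueness), and it yields $|\E f(X_t) - \gamma(a^*)| = |\bar a_t - a^*|/J \le (D/J) e^{-\lambda t}$.

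For the stability, write for $a \in \mathcal{A}_D$
\[
|\r[\nu][(a.)](t,0) - \gamma(a^*)| \le |\r[\nu][(a.)](t,0) - \r[\nu][(a^*.)](t,0)| + |\r[\nu][(a^*.)](t,0) - \gamma(a^*)|,
\]
$(a^*.)$ denoting the constant current $a^*$. The second term is $\le C_\nu e^{-\lambda t}$ by the constant-current analysis of Section~\ref{sec: long time behavior for isolated neuron} (Propositions~\ref{prop: convergence du taux de saut dans le cas J=0} and~\ref{prop:mesinvlineaire}, via Laplace transforms), $C_\nu$ depending on $\nu$ only through $\nu(f)$; note $C_\nu \ge |\nu(f) - \gamma(a^*)|$ by taking $t = 0$. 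For the first term, $g := \r[\nu][(a.)](\cdot,0) - \r[\nu][(a^*.)](\cdot,0)$ solves a Volterra equation with the \emph{good} kernel $\K[][(a^*.)]$ and source $(\K[\nu][(a.)] - \K[\nu][(a^*.)]) + (\K[][(a.)] - \K[][(a^*.)]) \ast \r[\nu][(a.)]$; since the kernels depend on the current only through the flow, which is globally Lipschitz in the current by Assumption~\ref{assumptions:b0:Cvarphi}, and $\r[\nu][(a.)]$ is uniformly bounded (Theorem~\ref{th:existence et unicité de l'équation limite}), the source is $O(\rho(a,a^*) e^{-\lambda t})$; convolving against the exponentially decaying resolvent of $\K[][(a^*.)]$ gives $|g(t)| \le C_1 \rho(a,a^*) e^{-\lambda t} \le C_1 D e^{-\lambda t}$, $C_1$ independent of $J$. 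Hence $|\Phi(a)_t - a^*| = J|\r[\nu][(a.)](t,0) - \gamma(a^*)| \le J(C_1 D + C_\nu) e^{-\lambda t}$ using $a^* = J\gamma(a^*)$; shrinking $J^*$ so that $J^* C_1 \le 1/2$ and setting $D := 2 J C_\nu$ we get $J(C_1 D + C_\nu) \le D$, and since $\Phi$ takes values in $\mathcal{C}(\mathbb{R}_+, \mathbb{R}_+)$ we obtain $\Phi(\mathcal{A}_D) \subseteq \mathcal{A}_D$. The same perturbation estimate applied to two currents $a, d \in \mathcal{A}_D$ gives $\rho(\Phi(a), \Phi(d)) \le J C_1 \rho(a, d) \le \tfrac12 \rho(a,d)$, the contraction. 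This proves the bound on $\E f(X_t)$ with constant $D/J = 2 C_\nu$, depending on $b, f, \E f(X_0), J, \lambda$.

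For the convergence in law, I use the decomposition over the last jump before time $t$ (the identity behind \eqref{eq:equation de Volterra ecrite sous forme integrale}, with a general bounded Lipschitz $\phi$ in the observable slot):
\begin{equation*}
\E\phi(X_t) = \nu\!\Bigl( x \mapsto \phi(\Flow[\bar{a}][t,0](x))\,\EXP{-\int_0^t f(\Flow[\bar{a}][u,0](x))\,du} \Bigr) + \int_0^t \r[\nu][\bar{a}](s,0)\,\phi(\Flow[\bar{a}][t,s](0))\,\EXP{-\int_s^t f(\Flow[\bar{a}][u,s](0))\,du}\,ds,
\end{equation*}
together with the same identity for the stationary process started from $\nu^\infty_{a^*}$ with constant current $a^*$, which is constant in $t$ and equal to $\nu^\infty_{a^*}(\phi)$ (using $\r[\nu^\infty_{a^*}][a^*](\cdot,0) \equiv \gamma(a^*) = \nu^\infty_{a^*}(f)$). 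Subtracting and estimating term by term: the two ``first terms'' are each $\le C \|\phi\|_\infty e^{-\lambda t}$ because $\Flow[\bar{a}][u,0](x) \ge \Flow[0][u,0](0) \nearrow \sigma_0$ and $f$ is increasing with $f(\sigma_0) > \lambda$, so the no-jump factors decay like $e^{-\lambda t}$ uniformly in $x$ --- this is exactly where $\lambda < f(\sigma_0)$ is used; in the integral terms I split $[0,t] = [0, t/2] \cup [t/2, t]$, using on $[t/2,t]$ the bounds $|\r[\nu][\bar{a}](s,0) - \gamma(a^*)| \le (D/J) e^{-\lambda s}$, $|\phi(\Flow[\bar{a}][t,s](0)) - \phi(\Flow[a^*][t,s](0))| \le \mathrm{Lip}(\phi)\, C_\varphi \int_s^\infty |\bar a_u - a^*|\,du$ (Assumption~\ref{assumptions:b0:Cvarphi}), and the near-$e^{-\lambda(t-s)}$ size of the no-jump factors up to a boundary layer of bounded length near $s = t$, while on $[0,t/2]$ every no-jump factor is $\le C e^{-\lambda(t-s)} \le C e^{-\lambda t/2}$. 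Collecting the terms gives $|\E\phi(X_t) - \nu^\infty_{a^*}(\phi)| \le D' e^{-\lambda t}$, $D'$ depending on $b, f, J, \nu, \lambda$ and on $\phi$ through $\|\phi\|_\infty$ and its Lipschitz constant.

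The main obstacle is the perturbation estimate in the second paragraph: controlling, uniformly in time and with the exponential weight, the dependence of the Volterra solution \eqref{eq:equation de Volterra ecrite sous forme integrale} on the current. When the current is non-constant, $\K[][(a.)]$ is not a convolution kernel, so the Laplace-transform study of Section~\ref{sec: long time behavior for isolated neuron} does not apply directly; one must treat it as a perturbation of $\K[][(a^*.)]$ and show the resolvent still decays like $e^{-\lambda t}$. This is the technical heart of Section~\ref{sec: long time behavior for a general drift}; granting it, together with the global jump-rate bound of Theorem~\ref{th:existence et unicité de l'équation limite} and the constant-current convergence, the proof of Theorem~\ref{th:Jpetit} is the fixed point bookkeeping and the renewal-identity estimate above.
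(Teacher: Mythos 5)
Your strategy matches the paper's: a fixed-point iteration on exponentially decaying currents, a perturbation analysis of the constant-current Volterra equation, and the renewal (last-jump) decomposition for the convergence in law. Two points, however, need repair. The more serious one is that the $J^*$ you construct is not independent of $\nu$. You set $D := 2 J C_\nu$, but for the perturbation argument to be valid on all of $\mathcal{A}_D$ one needs $D$ below a threshold coming from the smallness condition \eqref{eq:contrainte satisfaite par C pour que la methode de perturbation s'applique}, and also $a^* + D \leq \bar{a}(J)$ so that Proposition~\ref{prop: la constant bar(a)} keeps the jump rate in the admissible box for every $a \in \mathcal{A}_D$. These thresholds depend only on $b, f, J$, whereas $C_\nu \geq |\nu(f) - \gamma(a^*)|$ can be arbitrarily large; so your constraint would force $J^*$ to shrink with $\nu$, contradicting the statement. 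The paper circumvents this (Step~6 of its proof) by using $\limsup_t \E f(X_t) \leq \bar{r}(J)$ from Theorem~\ref{th:existence et unicité de l'équation limite}, which is uniform in $\nu$, to pick a time $s$ (depending on $\nu$) with $\E f(X_s)$ below a $\nu$-free bound, and then running the fixed point from $\mathcal{L}(X_s)$; this keeps $J^*$ a function of $b,f$ only and pushes the $\nu$-dependence into $D$.

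The second point is your gloss on the perturbation estimate. The Volterra resolvent $\r[][a^*]$ of $\K[][a^*]$ is \emph{not} exponentially decaying: it converges to $\gamma(a^*) > 0$ (Proposition~\ref{prop: comportement en temps long equation de volterra cas convolutionnel}). Hence ``convolving the $O(\rho\,e^{-\lambda t})$ source $W$ against the exponentially decaying resolvent'' is misleading: writing $g = W + \r[][a^*] * W = W + \gamma(a^*)\int_0^t W + \xi_{a^*} * W$, the middle term is bounded but not $O(e^{-\lambda t})$ unless one exploits cancellation in $1 * W$. This cancellation comes from $1 * \BK[][(a.)] = -\BH[][(a.)]$ (a consequence of \eqref{relation entre K et H}), and making it quantitative in $\mathcal{V}_\lambda$ requires the two-sided controls of $\BK[][(a.)]$, $\BH[][(a.)]$ \emph{and} $\BK[][(a.)]*1$, $\BH[][(a.)]*1$ in Lemmas~\ref{lem: la fonction eta qui controle K bar(a) et H bar(a)} and \ref{lem:K_bar * 1 et H_bar * 1} --- exactly why the paper works with $\Delta_K := \BK[][(a.)] + \xi_{a^*} * \BK[][(a.)] - \gamma(a^*)\BH[][(a.)]$ rather than with $\BK[][(a.)]$ alone. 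You do flag this as the technical heart of Section~\ref{sec: long time behavior for a general drift}, so the omission is acknowledged, but as phrased your step would not deliver the claimed bound.
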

Note that in Theorem~\ref{th:Jpetit},  the unique invariant probability measure is globally stable: for weak enough interactions, starting from any initial condition, the system converges to its steady state.
\subsection*{Examples}\label{pageexample}
Given the following constants $p \geq 1$, $\mu > 0$ and $\kappa \geq 0$, define, for all $x \geq 0$:
\[ f(x) := x^p, ~b(x) = \mu - \kappa x.\]
Then $(b, f)$ satisfies the Assumptions~\ref{assumptions:b0} and \ref{assumptions:f0}.
In that case, the flow is given by
\[ 
\Flow[(a.)][t,s](x) = xe^{-\kappa(t-s)} + \frac{\mu}{\kappa}[1-e^{-\kappa(t-s)}] + \int_s^t{e^{-\kappa(t-u)}a_u du }. 
\]
We have $\forall x,y \in \mathbb{R}_+,~ f(x+y) \leq 2^{p-1} (f(x) + f(y))$. A similar estimate holds for $f'$. Moreover $\psi(\theta)  = \frac{1}{2} \theta^{\frac{2p}{p+1}}  (p-1)^{\frac{p-1}{p+1}}(1+p)$, so Assumption~\ref{assumptions:f0:psi} holds.
 
 Consequently, Theorem~\ref{th:Jpetit} applies.
When $\kappa > 0$, the invariant probability measures are compactly supported and not necessarily unique. Consider for instance $b(x) = \mu - x$, $f(x) = x^2$. If $\mu$ is small enough, a numerical study shows that there exists $0 < a_1 < a_2 < \infty$ such that the function $a \mapsto \tfrac{a}{\gamma(a)}$ is increasing on $[0, a_1]$, decreasing on $[a_1, a_2]$ and finally increasing on $[a_2, \infty)$.
Thus, if $J \in (a_1, a_2)$, the non-linear equation \eqref{NL-equation0} admits exactly 3 non-trivial invariant probability measures. A numerical study shows that only two of the three are locally stable (bi-stability).

Another interesting example is the following. Assume $b(x) = 2 - 2x$ and $f(x) = x^{10}$.
Then, a numerical study shows that the function $a \mapsto \tfrac{a}{\gamma(a)}$ is increasing on $\mathbb{R}_+$ and consequently for all $J \geq 0$, \eqref{NL-equation0} admits a unique invariant probability measure. But if $J \in [0.7, 1.05]$ a further numerical analysis shows that the law of the solution of \eqref{NL-equation0} asymptotically  oscillates, betraying that the invariant probability measure is not locally stable.
Those examples emphasis on the fact that the condition $J$ small enough is required for Theorem~\ref{th:Jpetit} to hold.
\begin{remark}
Assumption~\ref{assumptions:b0:Cvarphi} is crucial to obtain our result on the long time behavior 
(Theorem~\ref{th:Jpetit}). It restricts us to $\kappa \geq 0$. If $b(x) = \mu - \kappa x$ with 
$\kappa < 0$ then Assumption~\ref{assumptions:b0:Cvarphi} does not hold.
\end{remark}

\section{Technical notations and technical lemmas}
\label{sec:technical notations}
The following standard results on the ODE \eqref{def:definition of the flow} will be useful all along:
\begin{lemma}
\label{lemma:b0}
Assume b satisfies Assumption~\ref{assumptions:b0}. Then:
\begin{enumerate}
\item For all $x \geq 0$ and $s \geq 0$, the ODE \eqref{def:definition of the flow} has a unique solution $t \mapsto \Flow[(a.)][t,s](x)$ defined on $[s, \infty)$. This is the flow associated to the drift $b$ and to the external current $(a_t)_{t \geq 0}$.
\label{lemma:b0:ODE}
\item Given $(a_t)$ and $(d_t)$ in $\mathcal{C}(\mathbb{R}_+, \mathbb{R}_+)$, the flow  satisfies the following comparison principle:
\[  [ \forall t  \geq 0,~ a_t \geq d_t  ]  \implies [\forall x \geq y \geq 0,~\forall t \geq s \geq 0,~   \Flow[(a.)][t,s](x) \geq  \Flow[(d.)][t,s](y)]. \]
\label{lemma:b0:comparison principle}
\item The flow grows at most linearly with respect to the initial condition:
\[ 
\forall a \geq 0,~  ~\forall x \geq 0,~\forall t \geq 0, \quad ~ \Flow[a][t](x) \leq x + C^a_b t,~ \quad \text{ where } \quad C^a_b := C_b + a.  
\] \label{lemma:b0:drift bounded from above}
\item The function $(t, s) \mapsto \Flow[(a.)][t, s](0)$ is continuous on $\{(t,s): 0 \leq s \leq t < \infty\}$.
 \label{lemma:b0:sensibility du flot}
 \item For any constant current $a \geq 0$, the flow converges to a limit as $t$ goes to infinity (possibly equal to $+\infty$):
\begin{equation}
\label{eq:definition de sigma_a} \forall a \geq 0,~ \forall x \geq 0,~ \lim_{t \rightarrow +\infty}{\Flow[a][t](x)} := \sigma^x_a \in \mathbb{R}^*_+ \cup \{ +\infty \}. 
\end{equation}
It holds that $\inf_{a,x \geq 0}{\sigma^x_a} > 0$. Moreover if we define:
\[
\sigma_a := \inf\{x \geq 0:~ b(x) + a = 0 \} \in \mathbb{R}^*_+ \cup \{ +\infty \},  
\]
we have: $\sigma^0_a = \sigma_a$.
\end{enumerate}
\end{lemma}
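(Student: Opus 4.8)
The plan is to derive the five assertions successively, each from classical one-dimensional ODE theory together with the two structural facts in Assumption~\ref{assumptions:b0}, namely $b(0)>0$ and $b\le C_b$. For the first assertion I would extend $b$ to a locally Lipschitz function $\tilde b$ on $\mathbb{R}$ by setting $\tilde b(x)=b(0)$ for $x<0$; since $(t,y)\mapsto \tilde b(y)+a_t$ is continuous in $t$ and locally Lipschitz in $y$, Cauchy--Lipschitz yields a unique maximal solution of \eqref{def:definition of the flow} on some $[s,T^*)$. I would then rule out blow-up: on any $[s,T]\subset[s,T^*)$ one has $\tfrac{d}{dt}\Flow[(a.)][t,s](x)=\tilde b(\Flow[(a.)][t,s](x))+a_t\le C_b+\sup_{[s,T]}a<\infty$, so the solution is bounded above by an affine function of $t$; and it stays $\ge 0$ by a ``first crossing'' argument — at the last time before a hypothetical negative excursion where the solution equals $0$, its derivative equals $b(0)+a_t\ge b(0)>0$, contradicting the excursion. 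Hence the solution remains in a fixed compact on every $[s,T]$, so $T^*=+\infty$, on $\mathbb{R}_+$ the extension agrees with $b$, and $\Flow[(a.)][t,s](x)\in\mathbb{R}_+$.

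The comparison principle is the textbook monotonicity argument. With $a_t\ge d_t$ and $x\ge y\ge 0$, set $g(t):=\Flow[(a.)][t,s](x)-\Flow[(d.)][t,s](y)$, so $g(s)\ge 0$; if $g(t_1)<0$ for some $t_1$, let $t_0:=\sup\{t\in[s,t_1]:g(t)\ge 0\}$, so $g(t_0)=0$ and $g<0$ on $(t_0,t_1]$. On $[s,t_1]$ both flows stay in a compact $K$ (by the first assertion); with $L$ the Lipschitz constant of $b$ on $K$, $g'(t)=[b(\Flow[(a.)][t,s](x))-b(\Flow[(d.)][t,s](y))]+(a_t-d_t)\ge -L|g(t)|=Lg(t)$ on $(t_0,t_1]$, and Gronwall gives $g\ge 0$ there, a contradiction. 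The at-most-linear growth is then immediate: for a constant current, $\tfrac{d}{dt}\Flow[a][t](x)=b(\Flow[a][t](x))+a\le C_b+a$, and integrating from $0$ to $t$ gives $\Flow[a][t](x)\le x+(C_b+a)t$.

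For the continuity of $(t,s)\mapsto\Flow[(a.)][t,s](0)$ I would combine the flow identity $\Flow[(a.)][t,s'](\Flow[(a.)][s',s](0))=\Flow[(a.)][t,s](0)$ (uniqueness) with the Gronwall bound $|\Flow[(a.)][t,s'](x)-\Flow[(a.)][t,s'](x')|\le e^{L(t-s')}|x-x'|$ valid on compacts. The $s$-dependence is then controlled by $|\Flow[(a.)][t,s](0)-\Flow[(a.)][t,s'](0)|\le e^{L(t-s')}|\Flow[(a.)][s',s](0)|\le e^{L(t-s')}(C_b+\sup a)|s'-s|$, while the $t$-dependence is handled by $|\Flow[(a.)][t,s](0)-\Flow[(a.)][t',s](0)|\le\int_{t\wedge t'}^{t\vee t'}|b(\Flow[(a.)][u,s](0))+a_u|\,du$; both tend to $0$, which yields joint continuity on $\{0\le s\le t\}$.

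For the last point, fix a constant current $a\ge 0$: the ODE $\dot h=b(h)+a$ is autonomous, so by uniqueness any trajectory $h(t)=\Flow[a][t](x)$ is monotone (if $\dot h$ vanishes once it vanishes identically), and being bounded below by $0$ it converges to some $\sigma^x_a\in(0,+\infty]$; if $\sigma^x_a<\infty$, passing to the limit in the ODE forces $b(\sigma^x_a)+a=0$, and $\sigma^x_a>0$ since $0$ is not a zero of $b(\cdot)+a$. Taking $x=0$: as $\sigma_a$ is the first zero of $b(\cdot)+a$ and $b(0)+a>0$, one has $b(\cdot)+a>0$ on $[0,\sigma_a)$, so $h$ increases and, by uniqueness, cannot reach the equilibrium $\sigma_a$; hence $h(t)\uparrow\ell\le\sigma_a$ with $b(\ell)+a=0$ when $\ell<\infty$, forcing $\ell=\sigma_a$ (and $\ell=\sigma_a=+\infty$ when $b(\cdot)+a$ has no zero). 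Finally, continuity and $b(0)>0$ give $\delta>0$ with $b\ge b(0)/2$ on $[0,\delta]$, hence $\sigma_a\ge\delta$ for every $a\ge 0$; together with $\sigma^x_a\ge\sigma^0_a=\sigma_a$ from the comparison principle, this gives $\inf_{a,x\ge 0}\sigma^x_a\ge\delta>0$. The only genuinely delicate points — everything else being routine Cauchy--Lipschitz and Gronwall — are confining the flow to $\mathbb{R}_+$ using merely $b(0)>0$ (and not a sign condition on all of $\mathbb{R}_+$) and the identification $\sigma^0_a=\sigma_a$, which rests on uniqueness forbidding a monotone trajectory to cross the first zero of $b(\cdot)+a$.
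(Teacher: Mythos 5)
The paper does not prove Lemma~\ref{lemma:b0}: it introduces it as ``standard results on the ODE \eqref{def:definition of the flow}'' and leaves the justification to the reader, so there is no proof in the text to compare yours against. Your argument is correct, and it is the Cauchy--Lipschitz/Gronwall treatment one expects. The two points that are not entirely generic are both handled soundly: (i)~invariance of $\mathbb R_+$ under the flow, and hence global existence, rests only on $b(0)>0$ via the locally Lipschitz extension $\tilde b(x)=b(0)$ for $x<0$ (which agrees with $b$ on $\mathbb R_+$) together with the first-crossing argument at $0$; and (ii)~the identification $\sigma^0_a=\sigma_a$ and the bound $\inf_{a,x\ge 0}\sigma^x_a>0$ follow from monotonicity of one-dimensional autonomous trajectories, the fact that uniqueness forbids reaching the first equilibrium of $b(\cdot)+a$ in finite time, the continuity of $b$ at $0$, and the comparison principle of point~\ref{lemma:b0:comparison principle}, which you correctly establish before using it. The only step I would tighten in a final write-up is the joint-continuity argument: you implicitly assume $s\le s'$; for completeness you should also give the symmetric case $s'<s$ via $\Flow[(a.)][t,s](\Flow[(a.)][s,s'](0))=\Flow[(a.)][t,s'](0)$, and treat the diagonal $s_0=t_0$ directly using $0\le\Flow[(a.)][t,s](0)\le(C_b+\sup_{[s,t]}a)(t-s)$. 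These are routine fill-ins; the proof as written is sound.
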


\begin{remark}
\label{remark:b0f}
\begin{enumerate}
\item  Assumption~\ref{assumptions:f0:psi} ensures that $f$ does not grow too fast in the sense that for all $\epsilon > 0$, there is a constant $C_\epsilon > 0$, such that: $\forall x \geq 0~ f(x) \leq C_\epsilon e^{\epsilon x}$.  \label{remark:b0f:f does not grow too much}
\item Using that $f$ is increasing and continuous, we have, for all $a \geq 0$:
\[ \lim_{t \rightarrow \infty}{ f(\Flow[a][t]) }=f(\sigma_{a}) \geq f(\sigma_{0})>0. \]
\label{remark:b0f:lim f flow a}
\end{enumerate}
\end{remark}

We show that the jump rate $\r[\nu][(a.)]$ of the non-homogeneous SDE \eqref{non-homogeneous-SDE}, satisfies the Volterra equation \eqref{eq:equation de Volterra ecrite sous forme integrale} where the kernels $\K[\nu][(a.)]$ and $\K[][(a.)]$ are defined by
	\begin{align}\label{definition de K^x}
	\forall t \geq s \geq 0,~ \K[\nu][(a.)](t, s) &:= \int_0^\infty {f(\Flow[(a.)][t, s](x)) \EXP{-\int_s^t{f(\Flow[(a.)][u, s](x)) du}} \nu(dx) }, \\
	\label{definition de K^nu et de K}
	 \K[][(a.)](t, s) &:= \K[\delta_0][(a.)](t, s).
	\end{align}
Given two ``kernels'' $\alpha$ and $\beta$, it is convenient to follow the notation of \cite{gripenberg_volterra_1990} and define:
\begin{equation}\label{eq:defconvolution}
\forall t \geq s,~ (\alpha * \beta)(t, s) := \int_s^t{ \alpha(t, u) \beta(u, s) du}. 
\end{equation}
The Volterra equation \eqref{eq:equation de Volterra ecrite sous forme integrale} becomes
\begin{equation}
\r[\nu][(a.)] = \K[\nu][(a.)] + \K[][(a.)] *  \r[\nu][(a.)].
\label{eq:equation de Volterra}
\end{equation}
Similarly to \eqref{definition de K^x} and \eqref{definition de K^nu et de K}, we define the kernels
\begin{equation}
\forall t \geq s,~\H[\nu][(a.)](t, s) := \int_0^\infty{\EXP{-\int_s^t{f(\Flow[(a.)][u, s](x)) du}} \nu (dx) },\quad \H[][(a.)] := \H[\delta_0][(a.)], \quad \forall x \geq 0,~ \H[x][(a.)] := \H[\delta_x][(a.)].
	\label{definition de H^x et de H^nu}
\end{equation}
From the definition, one can check directly the following relation
\begin{equation}
1 * \K[\nu][(a.)] = 1 - \H[\nu][(a.)].
\label{relation entre K et H}
\end{equation}
To shorten notations, we shall also write $\r[][(a.)](t, s) := \r[\delta_0][(a.)](t, s)$.

\noindent When the input current $(a_t)_{t \geq 0}$ is constant and equal to $a$, equation \eqref{non-homogeneous-SDE} is homogeneous and we write
 \[ \forall t \geq 0, ~ \Y[][\nu][a][t] := \Y[0][\nu][a][t],~\r[\nu][a](t) := \r[\nu][(a.)](t, 0), ~\K[\nu][a](t) := \K[\nu][(a.)](t, 0),~\H[\nu][a](t) := \H[\nu][(a.)](t, 0),~ \Flow[a][t](x) := \Flow[(a.)][t, 0](x). \] 
Note that in this homogeneous situation, the operation $*$ corresponds to the classical convolution operation. In particular this operation is commutative  in the homogeneous setting and equation \eqref{eq:equation de Volterra} is a \textit{convolution Volterra equation}.
\begin{remark}
\label{remark:inequality Ha}
For any $(a.) \in \mathcal{C}(\mathbb{R}_+, \mathbb{R}_+)$ and any probability measure $\nu$, we have
\[ \forall t \geq s \geq 0:~\H[\nu][(a.)](t, s) \leq \H[][0](t - s). \]
\end{remark}
\section{Study of the non-linear SDE \eqref{NL-equation0} and of its linearized version \eqref{non-homogeneous-SDE}}
\label{sec:existence and uniqueness}
\subsection{On the non-homogeneous linear SDE \eqref{non-homogeneous-SDE}}

Fix $s \geq 0$ and let $(a_t): [s, \infty) \rightarrow \mathbb{R}_+$ be a continuous function.
We consider the non-homogeneous linear SDE \eqref{non-homogeneous-SDE}.
We always assume that $\nu$, the law of the initial condition $\Y[s][\nu][(a.)][s]$, satisfies Assumptions~\ref{assumptions:nu0}.

\begin{lemma}
Grant Assumptions~\ref{assumptions:b0}, \ref{assumptions:f0} and \ref{assumptions:nu0}.
Then the  SDE \eqref{non-homogeneous-SDE} has a path-wise unique solution on $[s, \infty)$ in the sense of Definition~\ref{def:solution des EDS}.
\label{lem:existence and uniqueness of the linear non-homogeneous SDE}
\end{lemma}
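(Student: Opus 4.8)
The plan is to prove path-wise existence and uniqueness for \eqref{non-homogeneous-SDE} by an explicit pathwise construction between jumps, combined with an a priori moment bound that prevents explosion. First I would reduce to the case $s = 0$ by translation of time. Then I would construct the solution jump by jump: on any interval $[0,\tau_1)$ before the first jump, the process coincides with the deterministic flow $t \mapsto \Flow[(a.)][t,0](\Y[0][\nu][(a.)][0])$, and the first jump time $\tau_1$ is the first atom $(u,z)$ of $\PM$ with $z \le f(\Flow[(a.)][u,0](\Y[0][\nu][(a.)][0]))$. Since $f$ is continuous and the flow grows at most linearly (Lemma~\ref{lemma:b0}.\ref{lemma:b0:drift bounded from above}), the rate $u \mapsto f(\Flow[(a.)][u,0](x))$ is locally bounded, so $\tau_1 > 0$ a.s.\ and the construction is unambiguous; at $\tau_1$ we set $\Y[0][\nu][(a.)][\tau_1] = 0$ and restart from $0$. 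Iterating gives a solution on $[0, \tau_\infty)$ where $\tau_\infty = \sup_k \tau_k$, and the jumps $(\tau_k)$ are well-defined and strictly increasing.

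The key step is to show $\tau_\infty = +\infty$ a.s., i.e.\ non-explosion of the number of jumps in finite time. The plan is to control the expected number of jumps on $[0,T]$ via $\E\left[\int_0^T f(\Y[0][\nu][(a.)][u])\,du\right]$, which requires an a priori bound on $\E f(\Y[0][\nu][(a.)][t])$. To get such a bound I would apply Itô's formula for jump processes to $t \mapsto \E f(\Y[0][\nu][(a.)][t])$ (or, to stay rigorous before knowing the process is well-defined, to a stopped/truncated version, then pass to the limit via Fatou/monotone convergence). The drift contribution is $\E[f'(\Y)(b(\Y) + a_t)]$ and the jump contribution is $-\E[f(\Y)\cdot f(\Y)] = -\E f^2(\Y)$ since a jump sends $\Y$ to $0$ and $f(0)=0$. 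Using Assumption~\ref{assumptions:f0:b} to bound $|b(x)| \le C_{b,f}(1+f(x))$ and Assumption~\ref{assumptions:f0:psi} (the function $\psi$ and the sublinearity of $\psi(\theta)/\theta^2$), one bounds $f'(x)(b(x)+a_t) \le C(1+f(x)) + a_t f'(x) \le C' + \tfrac12 f^2(x) + \psi(a_t)$ pointwise, so that $\tfrac{d}{dt}\E f(\Y) \le C' + \psi(a_t) - \tfrac12 \E f^2(\Y) \le C' + \psi(a_t) - \tfrac12 (\E f(\Y))^2$ by Jensen. A Gronwall-type / Riccati comparison argument then yields $\sup_{t \le T} \E f(\Y[0][\nu][(a.)][t]) < \infty$, using $\nu(f^2) < \infty$ (Assumption~\ref{assumptions:nu0}) for the initial value, and hence $\E[\#\{k : \tau_k \le T\}] < \infty$, so $\tau_\infty = +\infty$ a.s.

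For uniqueness, I would argue that any càdlàg adapted solution in the sense of Definition~\ref{def:solution des EDS} must, between consecutive atoms of $\PM$, satisfy the deterministic ODE \eqref{def:definition of the flow} (whose solution is unique by Lemma~\ref{lemma:b0}.\ref{lemma:b0:ODE}), and at an atom $(u,z)$ it jumps to $0$ precisely when $z \le f(\Y_{u-})$; since $f(\Y_{u-})$ is determined by the past, the jump times and post-jump values are forced. An induction on the successive atoms of $\PM$ (ordered by their time coordinate, which is licit because on any compact time window and any bounded $z$-window $\PM$ has finitely many atoms) shows the solution is pathwise unique on $[0,\tau_k]$ for every $k$, hence on $[0,\infty)$ since $\tau_k \to \infty$. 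The main obstacle is the a priori moment estimate: making the Itô/Dynkin computation rigorous before the global solution is known to exist (working with truncations $f_n := f \wedge n$ or stopping at $\tau_k$ and then removing the truncation) and correctly exploiting Assumption~\ref{assumptions:f0:psi} so that the negative $-\tfrac12\E f^2$ term dominates the growth from the drift and the possibly large-but-continuous current $a_t$; everything else is routine PDMP bookkeeping.
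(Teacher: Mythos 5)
Your construction jump-by-jump and your uniqueness argument by induction on the successive atoms of $\PM$ match the paper's Step~1 in spirit. The genuine divergence is in how you rule out explosion of the jump times, and there your route is correct but considerably harder than it needs to be. You propose to control $\E\bigl[\#\{k:\tau_k\le T\}\bigr]$ by an a priori bound on $\E f(\Y[0][\nu][(a.)][t])$, derived from It\^o's formula and a Riccati comparison using Assumption~\ref{assumptions:f0:psi}. This works, but it is exactly the machinery the paper develops \emph{later} (in Lemma~\ref{lem: locally bounded and continuous jump rate} and Proposition~\ref{prop: la constant bar(a)}) to control the jump rate; it is not needed for bare existence/uniqueness, and making It\^o rigorous before the solution is known forces the stopping/truncation detour you rightly flag as the main obstacle.

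The paper instead exploits a purely pathwise, deterministic observation that sidesteps all moment analysis: since the only jumps are resets to $0$ and between jumps the process follows the flow, any solution satisfies $\Y[s][\nu][(a.)][t]\le\Flow[(a.)][t,s]\bigl(\Y[s][\nu][(a.)][s]\bigr)$ almost surely, a finite deterministic trajectory. Consequently $f(\Y[s][\nu][(a.)][t])$ is a.s.\ locally bounded along every trajectory, so the driving Poisson measure a.s.\ produces only finitely many accepted atoms on any compact time interval---no expectation, no It\^o, no Riccati, and no use of $\nu(f^2)<\infty$. Concretely, the paper truncates spatially ($f^K(x)=f(\min(x,K))$), gets a global solution for each bounded $f^K$ by Step~1, glues them on the stopping times $\zeta_K=\inf\{t:\Y[s][\nu][(a.),K][t]\ge K\}$, and uses the pathwise domination above to conclude $\sup_K\zeta_K=+\infty$ a.s. Your argument is not wrong, but you are importing a later quantitative estimate to solve a problem that has an elementary trajectory-wise answer; if you write your version rigorously you will in effect re-prove the paper's later moment lemma inside the existence proof.
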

\begin{proof}
We give a direct proof by considering the jumps of $\Y[s][\nu][(a.)][t]$ and by solving the equation between the jumps. 
\begin{itemize}
\item \textbf{Step 1}: we grant Assumptions~\ref{assumptions:b0}, \ref{assumptions:nu0} and assume that $f: \mathbb{R}_+ \rightarrow \mathbb{R}_+$ is measurable and bounded. There exists a constant $0<K < \infty$ such that:
\[ \sup_{x \geq 0} f(x) \leq K. \]
In this case, the solution of \eqref{non-homogeneous-SDE} can be constructed in the following way. Define by induction:
\begin{align*}
\tau_0 &:=  \inf\{t \geq s: ~ \int_s^t{ \int_{\mathbb{R}_+}{ \indic{z \leq f(\Flow[(a.)][u, s](\Y[s][\nu][(a.)][s]))}\PM(du, dz)} } > 0 \}, \\
 \forall n \geq 0,~ \tau_{n+1} &:= \inf\{t \geq \tau_n: ~ \int_{\tau_n}^t{ \int_{\mathbb{R}_+}{ \indic{z \leq f(\Flow[(a.)][u,\tau_n](0))}\PM(du, dz)} } > 0 \}.
\end{align*}
Using that $f \leq K$, it follows that $a.s. \lim_{n \rightarrow \infty}{\tau_n} = +\infty$.
We define:
\[ \Y[s][\nu][(a.)][t] = \Flow[(a.)][t,s](\Y[s][\nu][(a.)][s]) \indica{t \in [s, \tau_0)} + \sum_{n \geq 1}{ \Flow[(a.)][t,\tau_n](0) \indica{t \in [\tau_n, \tau_{n+1})}  } , \]
and we can directly verify that $t \mapsto \Y[s][\nu][(a.)][t]$ is almost surely a solution of \eqref{non-homogeneous-SDE}.

Uniqueness of equation \eqref{non-homogeneous-SDE} follows immediately from Lemma~\ref{lemma:b0} (point  \ref{lemma:b0:ODE}): two solutions have to be equal almost surely before the first jump, from which we deduce that the two solutions have to jump at the same time. By induction on the number of jumps, the two trajectories are almost surely equal.

\item \textbf{Step 2}: 
We now come back to the general case where $f$ is not assumed to be bounded and we adapt the strategy of \cite{fournier_toy_2016}, proof of Proposition 2. We grant Assumptions~\ref{assumptions:b0}, \ref{assumptions:f0} and \ref{assumptions:nu0}.

We use Step~1 with $f^K(x) := f(\min(x, K))$ for some $K > 0$. Let us denote $\Y[s][\nu][(a.),K][t]$ the solution of \eqref{non-homogeneous-SDE} where $f$ has been replaced by $f^K$.
The boundedness of $f^K$ implies the path-wise uniqueness  of $\Y[s][\nu][(a.),K][t]$. We introduce $\zeta_K := \inf\{t \geq 0:~ |\Y[s][\nu][(a.),K][t]| \geq K \}$, it holds that $\Y[s][\nu][(a.),K][t] = \Y[s][\nu][(a.),K+1][t]$ for all $t \in [0, \zeta_K]$ and all $K \in \mathbb{N}$. Moreover, $\zeta_K < \zeta_{K+1}$.
We define $\zeta := \sup_{K}{\zeta_K}$ and deduce the existence and uniqueness of a solution $t \mapsto \Y[s][\nu][(a.)][t]$ of \eqref{non-homogeneous-SDE} on $[0, \zeta[$ such that $\limsup_{t \rightarrow \zeta}{\Y[s][\nu][(a.)][t]} = \infty$ on the event $\{\zeta < \infty \}$.
But any solution of \eqref{non-homogeneous-SDE} satisfies for all $t \geq s,~\Y[s][\nu][(a.)][t] \leq \Flow[(a.)][t,s](\Y[s][\nu][(a.)][s]) < \infty$ a.s. and so it holds that $\zeta = +\infty$ a.s.
\end{itemize}
\end{proof}

\begin{lemma}
Grant Assumptions~\ref{assumptions:b0}, \ref{assumptions:f0} and \ref{assumptions:nu0}. Let $(\Y[s][\nu][(a.)][t])_{t \geq s}$ be the solution of \eqref{non-homogeneous-SDE}. The functions $t \mapsto \E f(\Y[s][\nu][(a.)][t])$, $t \mapsto \E f'(\Y[s][\nu][(a.)][t])$, $t \mapsto \E f'(\Y[s][\nu][(a.)][t])|b(\Y[s][\nu][(a.)][t])|$ and $t \mapsto \E f^2(\Y[s][\nu][(a.)][t])$ are locally bounded on $[s, \infty)$.
Moreover, $t \mapsto \E f(\Y[s][\nu][(a.)][t]) =: \r[\nu][(a.)](t, s)$ is continuous on $[s, \infty)$.
\label{lem: locally bounded and continuous jump rate}
\end{lemma}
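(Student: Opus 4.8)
The plan is to establish local boundedness of the four functionals through the pathwise domination
\[
\Y[s][\nu][(a.)][t] \leq \Flow[(a.)][t,s](\Y[s][\nu][(a.)][s])
\]
valid almost surely (proved at the end of Lemma~\ref{lem:existence and uniqueness of the linear non-homogeneous SDE}: between jumps the process follows the flow, and jumps only decrease the position), combined with the linear growth of the flow (Lemma~\ref{lemma:b0}, point~\ref{lemma:b0:drift bounded from above}): on any compact interval $[s,T]$ one has $\Flow[(a.)][t,s](x) \leq x + C t$ for a constant $C$ depending on $T$, $C_b$ and $\sup_{[s,T]} a_u$. Since $f$ is increasing and convex with $f(0)=0$, the splitting property~\ref{assumptions:f0:f(x+y)} gives $f(\Y[s][\nu][(a.)][t]) \leq C_f(1 + f(\Y[s][\nu][(a.)][s]) + f(Ct))$, hence $\E f(\Y[s][\nu][(a.)][t]) \leq C_f(1 + \nu(f) + \sup_{[s,T]}f(Ct))$, which is finite and locally bounded because $\nu(f) \leq \nu(f^2)^{1/2}(1+\nu(f^2)^{1/2}) < \infty$ under Assumption~\ref{assumptions:nu0}. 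The same argument applied to $f'$ (using the second inequality in~\ref{assumptions:f0:f(x+y)} and $\nu(f') < \infty$, which follows from $f' \leq$ something dominated by $1+f^2$ via~\ref{assumptions:f0:psi} with $\theta=1$) bounds $\E f'(\Y[s][\nu][(a.)][t])$; for $\E f^2(\Y[s][\nu][(a.)][t])$ one uses $f(x+y)^2 \leq 3C_f^2(1 + f(x)^2 + f(y)^2)$ together with $\nu(f^2)<\infty$; and for $\E f'(\Y[s][\nu][(a.)][t])|b(\Y[s][\nu][(a.)][t])|$ one combines the bound on $f'$ with Assumption~\ref{assumptions:f0:b}, $|b(x)| \leq C_{b,f}(1+f(x))$, reducing it to the already-controlled quantity $\E[f'(\Y[s][\nu][(a.)][t])(1+f(\Y[s][\nu][(a.)][t]))]$; here one needs the slightly stronger bookkeeping that $f'(x)f(x)$ is controlled, which again follows from~\ref{assumptions:f0:psi} (the function $\theta \mapsto \psi(\theta)$ handles $\theta f'(x) - \tfrac12 f^2(x)$, so $f'(x) \leq \psi(\theta)/\theta + f^2(x)/(2\theta)$ for every $\theta>0$).

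For the continuity of $t \mapsto \r[\nu][(a.)](t,s)$, the plan is to pass the limit through the expectation. Fix $t_0 \geq s$ and a sequence $t_n \to t_0$. The solution is càdlàg, so $\Y[s][\nu][(a.)][t_n] \to \Y[s][\nu][(a.)][t_0]$ almost surely provided $t_0$ is almost surely not a jump time; since the jump times of the PDMP are almost surely distinct from any fixed deterministic $t_0$ (the jump rate is finite, so the jump times have no deterministic atom), we get $f(\Y[s][\nu][(a.)][t_n]) \to f(\Y[s][\nu][(a.)][t_0])$ a.s. by continuity of $f$. To upgrade to convergence in $L^1$ I would invoke uniform integrability: on a compact interval $[s, t_0+1]$ containing all $t_n$, the family $\{f(\Y[s][\nu][(a.)][t_n])\}$ is bounded in $L^2$ by the local boundedness of $t \mapsto \E f^2(\Y[s][\nu][(a.)][t])$ just established, hence uniformly integrable, and dominated convergence (in its uniformly-integrable form) gives $\E f(\Y[s][\nu][(a.)][t_n]) \to \E f(\Y[s][\nu][(a.)][t_0])$. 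Left-continuity at $t_0$ follows identically using that $\Y[s][\nu][(a.)][t_n-] \to \Y[s][\nu][(a.)][t_0]$ for $t_n \uparrow t_0$ when $t_0$ is not a.s. a jump time; combining both gives continuity.

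The main obstacle I anticipate is not the continuity — which is a routine uniform-integrability argument once the $L^2$-type bound is in hand — but rather assembling the local bounds for the three derivative-type functionals cleanly, in particular $\E[f'(\Y)|b(\Y)|]$, since this mixes the growth of $f'$, the growth of $b$, and requires that the product $f'(x)(1+f(x))$ not grow faster than something integrable against $\nu$ and against the linearly-growing flow. The key technical input that makes this work is Assumption~\ref{assumptions:f0:psi}, specifically $\psi(\theta)<\infty$ for all $\theta$ and $\psi(\theta)/\theta^2 \to 0$: this is precisely what controls $f'$ by a multiple of $1+f^2$ (and more quantitatively lets one absorb $f'$ into $\epsilon f^2 + C_\epsilon$), and it is also what underlies Remark~\ref{remark:b0f} that $f(x) \leq C_\epsilon e^{\epsilon x}$, which one may alternatively use to dominate everything by exponential moments of the flow. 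One should also take care that the constants in all these bounds depend only on $T$ (through $\sup_{[s,T]}a_u$, finite by continuity of $(a_t)$), $b$, $f$ and $\nu$, so that the word "locally" is justified — but no uniformity in $s$ or in $(a_t)$ is claimed here, so this is straightforward.
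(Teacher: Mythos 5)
Your local-boundedness argument is essentially the paper's: the paper writes the pathwise bound directly as $\Y[s][\nu][(a.)][t] \leq \Y[s][\nu][(a.)][s] + \int_s^t (b(\Y[s][\nu][(a.)][u]) + a_u)\,du \leq \Y[s][\nu][(a.)][s] + C_T$ (dropping the nonpositive jump integral and using $b\leq C_b$), which is the same deterministic domination you obtain via the flow; it then bounds $\E f^2$ first by Assumption~\ref{assumptions:f0:f(x+y)} and Assumption~\ref{assumptions:nu0}, gets $\E f$ by Cauchy--Schwarz, and controls $\E f'|b|$ via $f'\leq C + f$ (Assumption~\ref{assumptions:f0:psi} with $\theta=1$) and Assumption~\ref{assumptions:f0:b}, exactly the reductions you describe. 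Where you genuinely diverge is the continuity argument. You propose a.s.\ convergence at a fixed non-jump time plus uniform integrability from the $L^2$ bound. That is correct, though it silently uses that for deterministic $t_0$ one has $\mathbb{P}(\Y[s][\nu][(a.)][t_0]\neq\Y[s][\nu][(a.)][t_0-])=0$ — true here because the compensator of the jump counting process is absolutely continuous in time, but worth a line. The paper instead applies the Itô formula to $f(\Y[s][\nu][(a.)][t])$, takes expectations, and writes
\[ \E f(\Y[s][\nu][(a.)][t+\epsilon]) - \E f(\Y[s][\nu][(a.)][t]) = \int_t^{t+\epsilon}\E f'(\Y[s][\nu][(a.)][u])\big(b(\Y[s][\nu][(a.)][u])+a_u\big)\,du - \int_t^{t+\epsilon}\E f^2(\Y[s][\nu][(a.)][u])\,du, \]
which, together with the local bounds just established, shows $t\mapsto \E f(\Y[s][\nu][(a.)][t])$ is locally Lipschitz, not merely continuous. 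This extra regularity is not idle: it is precisely what feeds the later $\mathcal{C}^1$ argument in Proposition~\ref{prop: la constant bar(a)}, where the derivative of the jump rate is taken. Your uniform-integrability route proves the lemma as stated but is strictly weaker, so if you reuse it downstream you would need a separate argument for differentiability. Both approaches are sound for the lemma itself.
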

\begin{proof}
Consider the interval $[s, T]$ for some $T > 0$. Let $A := \sup_{t \in [s, T]}{a_t}$. It is clear that
\[ \forall t \in [s, T],  a.s. ~\Y[s][\nu][(a.)][t]  \leq \Y[s][\nu][(a.)][s] + \int_s^t{[b(\Y[s][\nu][(a.)][u]) + a_u] du } \leq \Y[s][\nu][(a.)][s] + C_T,  \]
with $C_T := (C_b + A) (T-s)$. We used here that $b$ is bounded from above (Assumption~\ref{assumptions:b0}).
Using that $f^2$ is non-decreasing and Assumption~\ref{assumptions:f0:f(x+y)}, we have:
\[ a.s.\ f^2(\Y[s][\nu][(a.)][t]) \leq f^2(\Y[s][\nu][(a.)][s] + C_T) \leq C_f^2(1 + f(C_T)  + f(\Y[s][\nu][(a.)][s]))^2. \]
Using Assumption~\ref{assumptions:nu0}, we deduce that $t \mapsto \E f^2(\Y[s][\nu][(a.)][t])$ is bounded on $[s, T]$.
By the Cauchy–Schwarz inequality, this implies that  $t \mapsto \E f(\Y[s][\nu][(a.)][t])$ is also bounded on $[s, T]$.
Finally, using the Assumption~\ref{assumptions:f0:psi} (with $\theta$ = 1), there is a constant $C$ such that for all  $x \geq 0~ f'(x) \leq C + f(x)$. Assumption~\ref{assumptions:f0:b} thus yields
\[ \forall x \geq 0 ~f'(x) |b(x)| \leq C_{b,f}(1+f(x))(C+f(x)), \]
and so this proves that $t \mapsto \E f'(\Y[s][\nu][(a.)][t])|b(\Y[s][\nu][(a.)][t])|$ is also bounded on $[s, T]$.
We now apply the Itô formula (see for instance Theorem 32 of \cite[Chap. II]{protter_stochastic_2005}) to $\Y[s][\nu][(a.)][t]$. It gives for any $\epsilon > 0$
\[ f(\Y[s][\nu][(a.)][t+\epsilon]) = f(\Y[s][\nu][(a.)][t]) + \int_t^{t+\epsilon}{f'(\Y[s][\nu][(a.)][u]) [b(\Y[s][\nu][(a.)][u]) + a_u] du } - \int_t^{t+\epsilon}{ \int_0^\infty {f(\Y[s][\nu][(a.)][u-]) \indic{z \leq f(\Y[s][\nu][(a.)][u-])}  \PM(du, dz)}}. \]
Taking the expectation, it follows that
\[ \E f(\Y[s][\nu][(a.)][t+\epsilon]) - \E f(\Y[s][\nu][(a.)][t]) =  \int_t^{t+\epsilon}{\E f'(\Y[s][\nu][(a.)][u])[b(\Y[s][\nu][(a.)][u]) +  a_u] du } - \int_t^{t+\epsilon}{ \E f^2(\Y[s][\nu][(a.)][u]) du }, \]
from which we deduce that $t \mapsto \E f(\Y[s][\nu][(a.)][t])$ is locally Lipschitz and consequently continuous.
\end{proof}
		 \subsection{The Volterra equation}
		 
		 Along this section, we grant Assumptions~\ref{assumptions:b0}, \ref{assumptions:f0} and \ref{assumptions:nu0}. Let $s \geq 0$ and $(a_t)_{t \geq s} \in \mathcal{C}([s, \infty), \mathbb{R}_+)$ be fixed. We consider $(\Y[s][\nu][(a.)][t])_{t \geq s}$ the path-wise unique solution of equation \eqref{non-homogeneous-SDE} driven by the current $(a_t)_{t \geq s}$.
		 Following \cite{fournier_toy_2016}, we define:
		 \[ \tau_{s, t} := \sup\{u \in [s, t]: \Y[s][\nu][(a.)][u]\neq \Y[s][\nu][(a.)][u-]  \}, \]
		 the time of the last jump before $t$, with the convention that $\tau_{s, t} = s$ if there is no jump during $[s, t]$.
		 It follows directly from \eqref{non-homogeneous-SDE} that:
		 \[  \forall t \geq s,~ a.s. ~ \Y[s][\nu][(a.)][t] = \Flow[(a.)][t,s](\Y[s][\nu][(a.)][s]) \indic{\tau_{s, t} = s} + \Flow[(a.)][t,\tau_{s,t}] \indic{\tau_{s, t}  > s}. \]
		 We also define:
		 \[ \forall t \geq s,~ J_t := \int_s^t{\int_0^\infty{ \indic{z \leq f(\Y[s][\nu][(a.)][u-])} \PM(du, dz)} },\]
		 the number of jumps between $s$ and $t$.
		 
		 \begin{lemma} For all $t \geq u \geq s \geq 0$, we have
		 \[  \mathbb{P}(J_t = J_u | \mathcal{F}_u) = \H[ {\Y[s][\nu][(a.)][u] }][(a.)](t, u) ~a.s. \]
		 where $\H[x][(a.)]$ is given by \eqref{definition de H^x et de H^nu}.
		 \label{lem: proba conditionnelle de ne pas avoir de sauts}
		 \end{lemma}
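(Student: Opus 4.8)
The plan is to identify $\{J_t = J_u\}$, the event that $\Y[s][\nu][(a.)][\cdot]$ makes no jump on $(u,t]$, with a ``no point'' event for $\PM$ over a region of $(u,t]\times\mathbb{R}_+$ that depends only on $\Y[s][\nu][(a.)][u]$ and on the deterministic flow, and then to evaluate its conditional probability using that the restriction of $\PM$ to $(u,\infty)\times\mathbb{R}_+$ is independent of $\mathcal{F}_u$.

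First I would introduce the candidate time of the first jump after $u$,
\[
\sigma := \inf\Bigl\{ r > u :~ \int_u^r \int_0^\infty \indic{z \leq f(\Flow[(a.)][v,u](\Y[s][\nu][(a.)][u]))}\, \PM(dv,dz) > 0 \Bigr\},
\]
which is a measurable function of the $\mathcal{F}_u$-measurable variable $\Y[s][\nu][(a.)][u]$ and of $\PM|_{(u,\infty)\times\mathbb{R}_+}$ only. The first step is the pathwise identity $\{J_t = J_u\} = \{\sigma > t\}$ a.s. On $\{J_t = J_u\}$ there is no jump on $(u,t]$, so on $[u,t]$ the process $\Y[s][\nu][(a.)][\cdot]$ solves the ODE \eqref{def:definition of the flow} with value $\Y[s][\nu][(a.)][u]$ at time $u$; by uniqueness of that ODE (Lemma~\ref{lemma:b0}, point~\ref{lemma:b0:ODE}) we get $\Y[s][\nu][(a.)][v] = \Flow[(a.)][v,u](\Y[s][\nu][(a.)][u])$ for all $v\in[u,t]$, hence every point $(v,z)$ of $\PM$ with $u < v \leq t$ satisfies $z > f(\Y[s][\nu][(a.)][v-]) = f(\Flow[(a.)][v,u](\Y[s][\nu][(a.)][u]))$, i.e. $\sigma > t$. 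Conversely, on $\{\sigma > t\}$, if some jump occurred on $(u,t]$, let $\rho \in (u,t]$ be the first one; up to $\rho$ the path still solves the ODE, so $\Y[s][\nu][(a.)][v] = \Flow[(a.)][v,u](\Y[s][\nu][(a.)][u])$ on $[u,\rho)$, and the jump at $\rho$ requires a point $(\rho,z)$ of $\PM$ with $z \leq f(\Y[s][\nu][(a.)][\rho-]) = f(\Flow[(a.)][\rho,u](\Y[s][\nu][(a.)][u]))$ and $u < \rho \leq t$, contradicting $\sigma > t$; hence there is no jump on $(u,t]$ and $J_t = J_u$.

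It then remains to compute $\mathbb{P}(\sigma > t \mid \mathcal{F}_u)$. Splitting $\PM = \PM|_{[0,u]\times\mathbb{R}_+} + \PM|_{(u,\infty)\times\mathbb{R}_+}$, the second summand is a Poisson measure with intensity $dv\,dz$ independent of $\mathcal{F}_u$, while $\Y[s][\nu][(a.)][u]$ is $\mathcal{F}_u$-measurable and a.s. finite. Freezing the value of $\Y[s][\nu][(a.)][u]$: for each fixed $x \geq 0$ the map $v \mapsto \Flow[(a.)][v,u](x)$ is deterministic and, since $b$ is bounded from above and $(a_\cdot)$ is continuous, bounded on $[u,t]$; hence the region $\{(v,z):~u < v \leq t,~z \leq f(\Flow[(a.)][v,u](x))\}$ has finite Lebesgue measure $\int_u^t f(\Flow[(a.)][v,u](x))\, dv$, so the number of points of $\PM|_{(u,\infty)\times\mathbb{R}_+}$ it contains is Poisson with that parameter. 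By the independence of $\PM|_{(u,\infty)\times\mathbb{R}_+}$ and $\mathcal{F}_u$ this yields
\[
\mathbb{P}(\sigma > t \mid \mathcal{F}_u) = \EXP{-\int_u^t f(\Flow[(a.)][v,u](\Y[s][\nu][(a.)][u]))\, dv} = \H[ {\Y[s][\nu][(a.)][u] }][(a.)](t, u) \quad \text{a.s.},
\]
by the definition \eqref{definition de H^x et de H^nu} of $\H[x][(a.)]$. Combined with $\{J_t = J_u\} = \{\sigma > t\}$, this is the claim.

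The only genuinely delicate point is the freezing step, i.e. the rigorous statement that conditioning on $\mathcal{F}_u$ amounts to plugging the realised value of $\Y[s][\nu][(a.)][u]$ into the deterministic function $x \mapsto \EXP{-\int_u^t f(\Flow[(a.)][v,u](x))\, dv}$; this is a standard ``independence lemma'' relying on the joint measurability of $(x,\omega) \mapsto \indica{\sigma(x,\omega) > t}$ and on $\PM|_{(u,\infty)\times\mathbb{R}_+} \perp \mathcal{F}_u$, but it is where the care goes. Everything else --- the reduction $\{J_t = J_u\} = \{\sigma > t\}$ and the Poisson tail computation --- is routine bookkeeping.
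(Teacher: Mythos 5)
Your proof is correct and follows essentially the same route as the paper: reduce $\{J_t = J_u\}$ to a no-point event for $\PM$ on $(u,t]$ in a region determined by the flow from $\Y[s][\nu][(a.)][u]$, then use independence of $\PM|_{(u,\infty)\times\mathbb{R}_+}$ from $\mathcal{F}_u$ and the Poisson tail formula. The paper compresses the pathwise identification and the freezing step into a single invocation of the Markov property, whereas you unpack exactly these steps — same substance, more explicit bookkeeping.
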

		 \begin{proof}
		 We have $\{J_t = J_u\} = \{ \int_u^t{\int_0^\infty{ \indic{z \leq f(\Y[s][\nu][(a.)][\theta-])} \PM(d\theta, dz)} } = 0 \}$.
		  Moreover, $\mathcal{F}_u$ and  $\sigma\{ \PM([u, \theta] \times A): \theta \in [u, t], A \in \mathcal{B}(\mathbb{R}_+) \}$ are independent.  It follows from the Markov property satisfied by $(\Y[s][\nu][(a.)][t])_{t \geq s}$ that:
		 \[ a.s. ~\mathbb{P}(J_t = J_u | \mathcal{F}_u) = \Phi(\Y[s][\nu][(a.)][u]) \]
		 where: $\Phi(x) := \mathbb{P}( \int_u^t{\int_0^\infty{ \indic{z \leq f(\Flow[(a.)][\theta, u](x))} \PM(d\theta, dz)} } = 0 ) = \H[x][(a.)](t, u)$.		 
		 \end{proof}
		\begin{lemma}[See also \cite{fournier_toy_2016}, Proposition 25]
		For all $t > s$, the law of $\tau_{s, t}$ is given by:
		\[ \mathcal{L}(\tau_{s, t})(du) = \H[\nu][(a.)](t, s) \delta_s(du) + \r[\nu][(a.)](u, s) \H[][(a.)](t, u) \indic{ s < u < t} du. \]
		\label{lem:loi du dernier instant de saut}
		\end{lemma}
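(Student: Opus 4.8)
The plan is to decompose the event according to the number of jumps and use the previous two lemmas. First I would treat the atom at $u=s$: the event $\{\tau_{s,t}=s\}$ is exactly the event $\{J_t=J_s\}=\{$no jump on $[s,t]\}$, and by Lemma~\ref{lem: proba conditionnelle de ne pas avoir de sauts} applied with $u=s$ (and the tower property, since $\mathcal{L}(\Y[s][\nu][(a.)][s])=\nu$) this has probability $\E \H[{\Y[s][\nu][(a.)][s]}][(a.)](t,s)=\H[\nu][(a.)](t,s)$, which is the mass of the $\delta_s$ component.

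Next I would compute, for $s<u<t$, the ``density'' of $\tau_{s,t}$ near $u$, i.e. analyze $\Pro(\tau_{s,t}\in[u,u+h))$ for small $h>0$. Conditioning on $\mathcal{F}_u$, having the last jump fall in $[u,u+h)$ means: exactly one jump occurs in $[u,u+h)$ and no jump occurs in $[u+h,t]$. To leading order in $h$, the probability of exactly one jump in $[u,u+h)$ given $\mathcal{F}_u$ is $f(\Y[s][\nu][(a.)][u])\,h+o(h)$ (the Poisson measure has intensity $du\,dz$ and the rate is $f$ of the current position, which is left-continuous), and conditionally on that jump the process restarts from $0$ at (approximately) time $u$, so the probability of no further jump on $[u,t]$ is $\H[0][(a.)](t,u)+o(1)=\H[][(a.)](t,u)+o(1)$ by the Markov property and Lemma~\ref{lem: proba conditionnelle de ne pas avoir de sauts}. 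Taking expectations, $\Pro(\tau_{s,t}\in[u,u+h))=\E\big[f(\Y[s][\nu][(a.)][u])\big]\,\H[][(a.)](t,u)\,h+o(h)=\r[\nu][(a.)](u,s)\,\H[][(a.)](t,u)\,h+o(h)$, which identifies the claimed density $\r[\nu][(a.)](u,s)\H[][(a.)](t,u)$ on $(s,t)$.

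To make this rigorous rather than heuristic I would instead write things in integrated form: for a test function $g$ on $[s,t]$, express $\E[g(\tau_{s,t})]$ by summing over the number of jumps, or equivalently use the fact (a standard PDMP/Poisson-measure computation, as in \cite{fournier_toy_2016}, Proposition~25) that the jump times form a point process whose compensator has intensity $f(\Y[s][\nu][(a.)][u-])$; then $\tau_{s,t}$ is $s$ on the no-jump event and otherwise equals the last atom of this point process before $t$. Writing $N$ for the jump counting process, $\E[g(\tau_{s,t})] = \E[g(s)\indic{N_t=0}] + \E\big[\int_s^t g(u)\indic{\text{no jump on }(u,t]}\,dN_u\big]$; the first term gives $g(s)\H[\nu][(a.)](t,s)$ by the atom computation above, and for the second term I replace $dN_u$ by its compensator $f(\Y[s][\nu][(a.)][u])\,du$ (the integrand $\indic{\text{no jump on }(u,t]}$ is not predictable, so one must be slightly careful — split off the contribution of the jump at $u$ itself, which is negligible, and use the strong Markov property at $u$ to replace $\indic{\text{no jump on }(u,t]}$ by $\E[\indic{\text{no jump on }(u,t]}\mid\mathcal{F}_u]=\H[][(a.)](t,u)$ once the jump at $u$ is accounted for), yielding $\int_s^t g(u)\,\r[\nu][(a.)](u,s)\,\H[][(a.)](t,u)\,du$.

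The main obstacle is exactly this last measure-theoretic point: the event ``no jump on $(u,t]$'' is adapted but not predictable, so one cannot naively apply the compensation formula. The clean fix is to condition on $\mathcal{F}_u$ and use Lemma~\ref{lem: proba conditionnelle de ne pas avoir de sauts} together with the Markov property (as the authors do — note they already set up $\tau_{s,t}$, $J_t$ and the conditional no-jump probability precisely for this), turning the computation into $\E[g(\tau_{s,t})] = g(s)\H[\nu][(a.)](t,s) + \int_s^t g(u)\,\E[f(\Y[s][\nu][(a.)][u])\,\H[{\Y[s][\nu][(a.)][u]\mapsto 0}][(a.)](t,u)]\,du$ where the reset to $0$ comes from the jump map; since after a jump the position is deterministically $0$, the inner expectation factors as $\r[\nu][(a.)](u,s)\,\H[][(a.)](t,u)$, giving the result.
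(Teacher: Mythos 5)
Your plan is correct and follows essentially the paper's route: the atom at $s$ comes from Lemma~\ref{lem: proba conditionnelle de ne pas avoir de sauts}, and the density on $(s,t)$ comes from conditioning on $\mathcal{F}_u$ and using that a jump at $u$ resets the position to $0$ (so the no-jump probability $\H[][(a.)](t,u)$ is deterministic) together with the identification of the jump rate $\r[\nu][(a.)](u,s)$ — which is precisely Lemma~\ref{lem: technical estimate of the jumps}; the paper makes this rigorous on the increment $(u-h,u]$ and controls the error via continuity of $x\mapsto\H[x][(a.)](t,u)$ at $x=0$. Your compensator detour is, as you note, obstructed because the integrand is not predictable, but you correctly reduce it back to the same $\mathcal{F}_u$-conditioning argument, so the substance matches.
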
		 
		\begin{proof}
		First, from Lemma~\ref{lem: proba conditionnelle de ne pas avoir de sauts}, it follows that:
		\[ \mathbb{P}(\tau_{s, t} = s ) = \mathbb{P}(J_t = J_s) = \E (\H[{\Y[s][\nu][(a.)][s]}][(a.)](t, s)) = \H[\nu][(a.)](t, s). \]
		Let now $u \in (s, t]$ and $h>0$ such that: $s < u -h < u \leq t$.
		We have:
		\[ \mathbb{P}(\tau_{s, t} \in ( u -h, u]) = \mathbb{P}( J_u > J_{u - h}, J_t = J_u) =    \mathbb{E}( \indic{J_u > J_{u - h}} \mathbb{P}(J_t = J_u | \mathcal{F}_u)) = \E( \indic{J_u > J_{u-h}} \H[ {\Y[s][\nu][(a.)][u]} ][(a.)](t, u)). \]
		Let $A := \sup_{u \in [s, t]}{a_u}$.
		On the event $\{  J_u > J_{u-h}\}$, the process jumps at least once during $(u-h, u]$ and so, by Lemma~\ref{lemma:b0} (point \ref{lemma:b0:comparison principle}), we have $\Y[s][\nu][(a.)][u] \in [0, \Flow[(a.)][u, u-h] ] \subset [0, \Flow[A][h] ]$. It follows that
		\[ |\mathbb{P}(\tau_{s, t} \in ( u -h, u]) - \E( \indic{J_u > J_{u-h}} \H[][(a.)](t, u)) | \leq \sup_{x \in [0, \Flow[A][h] ]}|\H[x][(a.)](t, u)  - \H[][(a.)](t, u)| \mathbb{P}( J_u > J_{u-h} ). \]
	From the following Lemma~\ref{lem: technical estimate of the jumps}, we have:
	\[ 
	\lim_{h \downarrow 0} \frac{1}{h} \mathbb{P}(J_u > J_{u-h}) = \r[\nu][(a.)](u, s).
	\]		
		Using Lemma~\ref{lemma:b0} (point \ref{lemma:b0:sensibility du flot}),  $x \mapsto \H[x][(a.)](t, u)$ is continuous at $x = 0$. From the continuity of $h \mapsto \Flow[A][h]$ at $h = 0$,  it yields:
		\[ \lim_{h \downarrow 0} \frac{1}{h}|\mathbb{P}(\tau_{s, t} \in ( u -h, u]) - \E( \indic{J_u > J_{u-h}} \H[][(a.)](t, u))| = 0. \]
		Combining the two results, we obtain the stated formula:
		\[ 
		\lim_{h \downarrow 0} \frac{1}{h}\mathbb{P}(\tau_{s, t}  \in ( u -h, u]) = \r[\nu][(a.)](u, s) \H[][(a.)](t, u).
		\]
		This proves the result.
		\end{proof}
		
		\begin{lemma}[See also \cite{fournier_toy_2016}, Lemma 23]
		\label{lem: technical estimate of the jumps}
		For all $u \in (s, t]$ we have:
		\[ \lim_{h \downarrow 0} \frac{1}{h} \mathbb{P}(J_u > J_{u-h}) = \r[\nu][(a.)](u, s). \]
		\end{lemma}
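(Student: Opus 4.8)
The plan is to rewrite $\tfrac1h\,\mathbb{P}(J_u > J_{u-h})$ as the expectation of an explicit functional of the state, using the conditional no-jump probability of Lemma~\ref{lem: proba conditionnelle de ne pas avoir de sauts}, and then to pass to the limit $h \downarrow 0$ by dominated convergence. Fix $u \in (s,t]$ and let $0 < h < u-s$. The event $\{J_u = J_{u-h}\}$ is exactly ``no jump occurs on $(u-h,u]$'', so applying Lemma~\ref{lem: proba conditionnelle de ne pas avoir de sauts} with the pair $(u-h,u)$ in place of $(u,t)$ gives $\mathbb{P}(J_u = J_{u-h}\mid\mathcal{F}_{u-h}) = \H[{\Y[s][\nu][(a.)][u-h]}][(a.)](u,u-h)$ a.s. Introducing
\[ g_h(x) := \frac1h\Bigl(1 - \H[x][(a.)](u,u-h)\Bigr) = \frac1h\Bigl(1 - \EXP{-\int_{u-h}^u{f(\Flow[(a.)][\theta,u-h](x))\,d\theta}}\Bigr), \]
we obtain $\tfrac1h\,\mathbb{P}(J_u > J_{u-h}) = \E\bigl[g_h(\Y[s][\nu][(a.)][u-h])\bigr]$, and it suffices to show this tends to $\r[\nu][(a.)](u,s) = \E f(\Y[s][\nu][(a.)][u])$.

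First I would establish that $g_h \to f$ locally uniformly on $\mathbb{R}_+$. Writing $I_h(x) := \int_{u-h}^u f(\Flow[(a.)][\theta,u-h](x))\,d\theta$, the elementary bounds $y - \tfrac12 y^2 \le 1 - e^{-y} \le y$ (valid for $y \ge 0$) give $\tfrac1h(I_h(x) - \tfrac12 I_h(x)^2) \le g_h(x) \le \tfrac1h I_h(x)$. On a fixed compact $[0,M]$ and for $0 < h \le 1$, the flow $\Flow[(a.)][\theta,u-h](x)$ stays in the compact set $[0, M + C^A_b]$ (it is nonnegative since $b(0)>0$, and bounded above by $x + C^A_b h$ by Lemma~\ref{lemma:b0}, points~\ref{lemma:b0:comparison principle} and~\ref{lemma:b0:drift bounded from above}, with $A := \sup_{\theta\in[s,t]} a_\theta$ and $C^A_b := C_b + A$), and it satisfies $|\Flow[(a.)][\theta,u-h](x) - x| \le (B_M + A)h$ with $B_M := \sup_{[0,M+C^A_b]}|b| < \infty$. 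By uniform continuity of $f$ on $[0, M+C^A_b]$ this yields $\sup_{x\in[0,M]}|I_h(x)/h - f(x)| \to 0$, and then $\sup_{[0,M]}|g_h - f| \to 0$, the quadratic correction being $O(h)$ uniformly on $[0,M]$.

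Next I would push the convergence through the expectation. Almost surely $\Y[s][\nu][(a.)][u-h] \to \Y[s][\nu][(a.)][u-]$ as $h\downarrow 0$ by the càdlàg property, and $\Y[s][\nu][(a.)][u-] = \Y[s][\nu][(a.)][u]$ a.s., because $\E[J_u - J_{u-h}] = \int_{u-h}^u \r[\nu][(a.)](\theta,s)\,d\theta \to 0$ by the continuity of the jump rate (Lemma~\ref{lem: locally bounded and continuous jump rate}), so the process a.s.\ does not jump at the deterministic time $u$; together with the locally uniform convergence of $g_h$ this gives $g_h(\Y[s][\nu][(a.)][u-h]) \to f(\Y[s][\nu][(a.)][u])$ a.s. For domination, $g_h(x) \le I_h(x)/h \le f(x + C^A_b h) \le f(x + C^A_b)$ for $h\le 1$ ($f$ increasing), while a.s.\ $\Y[s][\nu][(a.)][u-h] \le \Y[s][\nu][(a.)][s] + C^A_b(u-s)$ (exactly as in the proof of Lemma~\ref{lem: locally bounded and continuous jump rate}), so $g_h(\Y[s][\nu][(a.)][u-h]) \le f\bigl(\Y[s][\nu][(a.)][s] + C^A_b(u-s) + C^A_b\bigr)$, an $h$-independent random variable whose expectation is finite by Assumptions~\ref{assumptions:f0:f(x+y)} and~\ref{assumptions:nu0}. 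Dominated convergence then gives $\tfrac1h\,\mathbb{P}(J_u > J_{u-h}) \to \E f(\Y[s][\nu][(a.)][u]) = \r[\nu][(a.)](u,s)$.

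I expect the main obstacle to be this last step: producing an $h$-uniform integrable dominating random variable even though the law of $\Y[s][\nu][(a.)][u-h]$ itself moves with $h$, and correctly identifying the almost sure limit as $f(\Y[s][\nu][(a.)][u])$ rather than merely $f(\Y[s][\nu][(a.)][u-])$ — which is precisely where one invokes that a deterministic time is a.s.\ not a jump time. The remaining ingredients (continuity and monotonicity of $f$, the flow estimates of Lemma~\ref{lemma:b0}, and $\nu(f)<\infty$) are routine.
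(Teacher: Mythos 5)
Your proof is correct, and it takes a genuinely different route from the paper once the common starting point is reached. Both begin from the identity
$\mathbb{P}(J_u > J_{u-h}) = \E\bigl[\,1 - \H[{\Y[s][\nu][(a.)][u-h]}][(a.)](u,u-h)\,\bigr]$
supplied by Lemma~\ref{lem: proba conditionnelle de ne pas avoir de sauts}. The paper then telescopes the difference $|h\,\r[\nu][(a.)](u,s) - \mathbb{P}(J_u > J_{u-h})|$ into three explicit terms $\Delta^1_h,\Delta^2_h,\Delta^3_h$, and controls each one quantitatively via the local boundedness of $t\mapsto\E f'(\Y[s][\nu][(a.)][t])$ and $t\mapsto\E f^2(\Y[s][\nu][(a.)][t])$ from Lemma~\ref{lem: locally bounded and continuous jump rate}. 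You instead take a soft, pathwise route: you show $g_h \to f$ locally uniformly, observe that $\E[J_u - J_{u-}] = \lim_{h\downarrow 0}\int_{u-h}^u \r[\nu][(a.)](\theta,s)\,d\theta = 0$ so the process a.s.\ does not jump at the deterministic time $u$, hence $g_h(\Y[s][\nu][(a.)][u-h]) \to f(\Y[s][\nu][(a.)][u])$ a.s., and then dominate by the $h$-independent integrable random variable $f(\Y[s][\nu][(a.)][s] + C^A_b(u-s) + C^A_b)$. Your approach only needs $\nu(f)<\infty$ (rather than the second moments $\E f^2$ that enter the paper's $\Delta^3_h$ bound), at the cost of being non-quantitative in $h$; the paper's approach yields an explicit rate of convergence in $h$, which is not needed for the statement but sits naturally alongside the neighboring quantitative lemmas. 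The minor observation that a fixed time is a.s.\ not a jump time is implicit in the paper's argument (hidden in the continuity of $u\mapsto\r[\nu][(a.)](u,s)$ used to control $\Delta^1_h$); you make it explicit, which is a small but clarifying difference.
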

		\begin{proof}
		Again let $A := \sup_{u \in [s, t]}{a_u} < \infty$. We have:
		\begin{align*}
		&\hspace*{-40pt}|  h \r[\nu][(a.)](u, s) - \mathbb{P}(J_u > J_{u-h}) | \\
		 &\leq  \left|h \r[\nu][(a.)](u, s) - h \r[\nu][(a.)](u - h, s) \right|   +  \left| h \r[\nu][(a.)](u - h , s) - \E \int_{u-h}^{u}{f( \Flow[(a.)][\theta, u-h](\Y[s][\nu][(a.)][u-h])) d\theta} \right|  \\
	 &\quad+  \left| \E \int_{u-h}^{u}{f( \Flow[(a.)][\theta, u-h](\Y[s][\nu][(a.)][u-h])) d\theta} - \left[1 - \E \EXP{-\int_{u-h}^{u}{f( \Flow[(a.)][\theta, u-h](\Y[s][\nu][(a.)][u-h])) d\theta}} \right]  \right| \\
		  & =:  \Delta^1_h + \Delta^2_h + \Delta^3_h.
		\end{align*}
		From the continuity of $u \mapsto \r[\nu][(a.)](u , s)$ 
		(Lemma~\ref{lem: locally bounded and continuous jump rate}) it follows that $\lim_{h \downarrow 0}{ \frac{\Delta^1_h } {h}} = 0$. Moreover, 
		\[ 
		\Delta^2_h =  \left|\int_{u-h}^u{ \E f(\Y[s][\nu][(a.)][u-h]) d\theta } - \E \int_{u-h}^{u}{f( \Flow[(a.)][\theta, u-h](\Y[s][\nu][(a.)][u-h])) d\theta} \right|. 
		\]
 	Assumption~\ref{assumptions:b0}  gives
 	\[ \forall y \geq 0, ~\forall \theta \in [u-h, u],~0 \leq \Flow[(a.)][\theta, u-h](y) - y \leq \Flow[A][h](y) - y \leq C^{A}_b h.  \]
 	We deduce that
		  \[  \Delta^2_h  \leq h \int_{u-h}^u{  \E g_h(\Y[s][\nu][(a.)][u-h]) C^{A}_b d\theta },  \]
		with $g_h(x) := \sup_{y \in [0, C^{A}_b h] } f'(x  + y) = f'(x + C^{A}_b h)$. Using Assumption~\ref{assumptions:f0:f(x+y)}, we have $f'(x + C^{A}_b h) \leq C_f (1 + f'(C^{A}_b h) + f'(x))$. \textit{It} follows that $\E g_h(\Y[s][\nu][(a.)][u-h]) \leq  C_f  (1 + f(C^{A}_b h) + \E f'(\Y[s][\nu][(a.)][u-h])) $.
		The function $t \mapsto \E f'(\Y[s][\nu][(a.)][t])$ being locally bounded, we deduce that $\limsup_{h \downarrow 0}{ \frac{\Delta^2_h } {h}} = 0$.
		Finally, using that $\forall x \geq 0,~ |x - (1-e^{-x})| \leq x^2$ we have
		\[
		 \Delta^3_h \leq \E \left( \int_{u-h}^{u}{f( \Flow[(a.)][\theta, u-h](\Y[s][\nu][(a.)][u-h])) d\theta}
		\right) ^2. 
		 \]
		Using the Cauchy-Schwarz inequality, we obtain
		\[ \Delta^3_h  \leq h \E \int_{u-h}^{u}{f^2( \Flow[(a.)][\theta, u-h](\Y[s][\nu][(a.)][u-h])) d\theta} \leq h^2 \E f^2(\Y[s][\nu][(a.)][u-h] + C^{A}_b h).  \]
		Using $\forall x \geq 0, \forall y \in [0, C^A_b t],~ f^2(x + y) \leq C^2_f (1 + f(C^A_b t)+f(x))^2$  (Assumption~\ref{assumptions:f0:f(x+y)}) and the fact that $t \mapsto \E f^2(\Y[s][\nu][(a.)][t])$ and $t \mapsto \E f(\Y[s][\nu][(a.)][t])$ are locally bounded (as seen in the Lemma~\ref{lem: locally bounded and continuous jump rate}), one can find a constant $C_t$ such that
		\[  \Delta^3_h \leq C_t h^2. \]
		 This shows that $\lim_{h \downarrow 0}{ \frac{\Delta^3_h } {h}} = 0$. Combining the three results ends the proof.
		 		\end{proof}

		 \begin{proposition}[See also \cite{fournier_toy_2016}, Theorem 12]
		 \label{prop:formule donnant la loi de X_t}
		 Grant Assumptions~\ref{assumptions:b0}, \ref{assumptions:f0} and \ref{assumptions:nu0} . 
		 Let $s \geq 0$ and $(a_t)_{t \geq s} \in \mathcal{C}([s, \infty), \mathbb{R}_+)$. Let $\Y[s][\nu][(a.)][t]$ be the solution of equation \eqref{non-homogeneous-SDE}, starting from $\mathcal{L}(\Y[s][\nu][(a.)][s]) = \nu$.
		Let $\phi: \mathbb{R}_+ \rightarrow \mathbb{R}_+$ 
		 be a  continuous non-negative function.  It holds that
		 \[
		  \E \phi(\Y[s][\nu][(a.)][t]) =  \int_s^t{\phi(\Flow[(a.)][t,u](0)) \H[][(a.)](t, u)  \r[\nu][(a.)](u, s) du}  + \int_0^{\infty}{ \phi(\Flow[(a.)][t, s](x)) \H[x][(a.)](t, s) \nu(dx)}.
	\]	  
	In particular, $\r[\nu][(a.)](t, s) = \E f(\Y[s][\nu][(a.)][t])$ solves the Volterra equation \eqref{eq:equation de Volterra}
		 	\[ 
		 	\r[\nu][(a.)] = \K[\nu][(a.)] + \K[][(a.)] * \r[\nu][(a.)]. 
		 	\]
		 \end{proposition}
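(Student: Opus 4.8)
The plan is to exploit the fact that Proposition~\ref{prop:formule donnant la loi de X_t} is a \emph{renewal}-type decomposition: a trajectory of \eqref{non-homogeneous-SDE} reaches time $t$ in one of only two ways --- either it has not jumped since $s$, in which case $\Y[s][\nu][(a.)][t]=\Flow[(a.)][t,s](\Y[s][\nu][(a.)][s])$, or its last jump occurred at some $u\in(s,t)$, at which instant the potential was reset to $0$, so that $\Y[s][\nu][(a.)][t]=\Flow[(a.)][t,u](0)$. This is precisely the identity
\[ \Y[s][\nu][(a.)][t] = \Flow[(a.)][t,s](\Y[s][\nu][(a.)][s])\,\indic{\tau_{s,t}=s} + \Flow[(a.)][t,\tau_{s,t}](0)\,\indic{\tau_{s,t}>s} \]
recorded above (the last jump resets the potential to $0$). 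I would take expectations of $\phi(\Y[s][\nu][(a.)][t])$ through this decomposition, reading the law of $\tau_{s,t}$ off Lemma~\ref{lem:loi du dernier instant de saut}. On the event $\{\tau_{s,t}>s\}$ the quantity $\phi(\Y[s][\nu][(a.)][t])=\phi(\Flow[(a.)][t,\tau_{s,t}](0))$ is a deterministic (measurable) function of $\tau_{s,t}$ only, so its expectation equals the integral of $u\mapsto\phi(\Flow[(a.)][t,u](0))$ against the absolutely continuous part of $\mathcal L(\tau_{s,t})$, which by Lemma~\ref{lem:loi du dernier instant de saut} has density $u\mapsto\r[\nu][(a.)](u,s)\,\H[][(a.)](t,u)$ on $(s,t)$; this produces the term $\int_s^t\phi(\Flow[(a.)][t,u](0))\,\H[][(a.)](t,u)\,\r[\nu][(a.)](u,s)\,du$.

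For the contribution of $\{\tau_{s,t}=s\}$ --- the event that no jump occurred on $(s,t]$ --- I would condition on $\mathcal F_s$. Since $\{\tau_{s,t}=s\}=\{J_t=J_s\}$ and $\phi(\Flow[(a.)][t,s](\Y[s][\nu][(a.)][s]))$ is $\mathcal F_s$-measurable, the tower property combined with Lemma~\ref{lem: proba conditionnelle de ne pas avoir de sauts} (applied at $u=s$, which gives $\mathbb P(J_t=J_s\mid\mathcal F_s)=\H[{\Y[s][\nu][(a.)][s]}][(a.)](t,s)$) yields
\[ \E\big[\phi(\Y[s][\nu][(a.)][t])\,\indic{\tau_{s,t}=s}\big] = \E\big[\phi(\Flow[(a.)][t,s](\Y[s][\nu][(a.)][s]))\,\H[{\Y[s][\nu][(a.)][s]}][(a.)](t,s)\big] = \int_0^\infty\phi(\Flow[(a.)][t,s](x))\,\H[x][(a.)](t,s)\,\nu(dx), \]
where the last equality just uses $\mathcal L(\Y[s][\nu][(a.)][s])=\nu$. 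Summing the two contributions gives the stated formula for $\E\phi(\Y[s][\nu][(a.)][t])$. Because $\phi\geq 0$, Tonelli's theorem justifies every interchange and the identity is valid in $[0,+\infty]$ without any integrability hypothesis on $\phi$.

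It then remains to specialize $\phi=f$ and to recognize the kernels. By \eqref{definition de H^x et de H^nu}, $\H[x][(a.)](t,s)=\EXP{-\int_s^t f(\Flow[(a.)][\theta,s](x))\,d\theta}$, hence $\int_0^\infty f(\Flow[(a.)][t,s](x))\,\H[x][(a.)](t,s)\,\nu(dx)=\K[\nu][(a.)](t,s)$ by \eqref{definition de K^x}; taking $\nu=\delta_0$ in the same identity gives $f(\Flow[(a.)][t,u](0))\,\H[][(a.)](t,u)=\K[][(a.)](t,u)$ by \eqref{definition de K^nu et de K}. Substituting these two identities into the formula at $\phi=f$ gives $\r[\nu][(a.)](t,s)=\K[\nu][(a.)](t,s)+\int_s^t\K[][(a.)](t,u)\,\r[\nu][(a.)](u,s)\,du$, that is $\r[\nu][(a.)]=\K[\nu][(a.)]+\K[][(a.)]*\r[\nu][(a.)]$ with the convolution notation \eqref{eq:defconvolution}--\eqref{eq:equation de Volterra}. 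The only genuinely delicate step is the bookkeeping of the two conditionings --- that conditionally on ``no jump on $(s,t]$'' the initial position is re-weighted by $x\mapsto\H[x][(a.)](t,s)$, and that on the last-jump event $\phi(\Y[s][\nu][(a.)][t])$ depends on $\tau_{s,t}$ alone --- but Lemmas~\ref{lem: proba conditionnelle de ne pas avoir de sauts} and~\ref{lem:loi du dernier instant de saut} have already done exactly this work, so everything else reduces to the routine kernel identification above.
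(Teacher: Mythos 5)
Your proposal is correct and follows exactly the same route as the paper: split on $\{\tau_{s,t}=s\}$ vs.\ $\{\tau_{s,t}>s\}$, handle the former by conditioning on $\mathcal F_s$ via Lemma~\ref{lem: proba conditionnelle de ne pas avoir de sauts}, handle the latter by integrating against the density of $\tau_{s,t}$ from Lemma~\ref{lem:loi du dernier instant de saut}, and then set $\phi=f$ to read off the Volterra equation. The kernel identifications and the Tonelli remark are sound; there is nothing to add.
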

		 \begin{proof}
		 We have, for all $t \geq s$
		 \begin{align*}
		  \E \phi(\Y[s][\nu][(a.)][t]) &=  \E \phi(\Y[s][\nu][(a.)][t]) \indic{\tau_{s, t} = s} +  \E \phi(\Y[s][\nu][(a.)][t] ) \indic{\tau_{s, t} > s} \\
		  & =  \E \phi( \Flow[(a.)][t, s](\Y[s][\nu][(a.)][s])) \indic{\tau_{s, t} = s} +  \E \phi(\Flow[(a.)][t, \tau_{s, t}](0) ) \indic{\tau_{s, t} > s} \\
		  & := \alpha_t + \beta_t. \\
		 \end{align*}
		 Using Lemma~\ref{lem: proba conditionnelle de ne pas avoir de sauts}, it follows that
		 \begin{align*}
		 \alpha_t = \E [ \phi( \Flow[(a.)][t, s](\Y[s][\nu][(a.)][s])) \Pro(J_t = J_s | \mathcal{F}_s) ] = \E [ \phi( \Flow[(a.)][t, s](\Y[s][\nu][(a.)][s])) \H[{\Y[s][\nu][(a.)][s]}][(a.)](t, s) ] = \int_0^\infty{ \phi(\Flow[(a.)][t,s](x)) \H[x][(a.)](t,s) \nu(dx). }
		 \end{align*}
		 Moreover, using Lemma~\ref{lem:loi du dernier instant de saut}, we have $\beta_t = \int_s^t{\phi(\Flow[(a.)][t,u](0)) \r[\nu][(a.)](u, s) \H[][(a.)](t, u) du }.$
		 Taking $\phi = f$ we obtain the Volterra equation \eqref{eq:equation de Volterra}.
		 		 \end{proof}		 		 
Note that using Lemma~\ref{lem:loi du dernier instant de saut}, $\int_s^t{\mathcal{L}(\tau_{s, t})(du)} = 1$ gives:
\[ \H[\nu][(a.)] + \H[][(a.)] * \r[\nu][(a.)] = 1.\]
This last formula is interesting by itself but does not characterize the jump rate $\r[\nu][(a.)]$. We prefer to work with \eqref{eq:equation de Volterra} because, as shown in the next lemma, this Volterra equation admits a unique solution.
\begin{lemma}
\label{lem:existence et unicté de l'équation de Volterra}
Let $s \geq 0$ be fixed, $(a_t)_{t \geq s} \in \mathcal{C}([s, \infty), \mathbb{R}_+)$.
Then equation \eqref{eq:equation de Volterra} has a unique continuous solution $t \mapsto \r[\nu][(a.)](t, s)$ on $[s, \infty)$.
\end{lemma}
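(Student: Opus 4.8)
\emph{Approach.} Existence of a continuous solution is already in hand: by Proposition~\ref{prop:formule donnant la loi de X_t}, the function $(t,s)\mapsto\r[\nu][(a.)](t,s)=\E f(\Y[s][\nu][(a.)][t])$ solves \eqref{eq:equation de Volterra}, and by Lemma~\ref{lem: locally bounded and continuous jump rate} it is continuous on $[s,\infty)$. So the only thing left to prove is uniqueness, which I plan to obtain by a Picard/Grönwall-type iteration, once the kernel $\K[][(a.)]$ is controlled on bounded time windows.

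\emph{Bounding the kernel.} I would fix $T>s$ and set $A:=\sup_{u\in[s,T]}a_u<\infty$, finite since $a$ is continuous. For $s\le u\le\theta\le t\le T$, the comparison principle and the linear growth of the flow (Lemma~\ref{lemma:b0}, points~\ref{lemma:b0:comparison principle} and~\ref{lemma:b0:drift bounded from above}) give $\Flow[(a.)][\theta,u](0)\le C^A_b(\theta-u)\le C^A_b(T-s)=:R_T$ with $C^A_b=C_b+A$; since $f$ is increasing and the exponential factor is at most $1$,
\[ \K[][(a.)](t,u)=f\big(\Flow[(a.)][t,u](0)\big)\,\EXP{-\int_u^t f\big(\Flow[(a.)][\theta,u](0)\big)\,d\theta}\le f(R_T)=:M_T<\infty, \]
so $\K[][(a.)]$ is bounded by $M_T$ on the triangle $\{s\le u\le t\le T\}$.

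\emph{Iteration and conclusion.} Given two continuous solutions $\rho_1,\rho_2$ of \eqref{eq:equation de Volterra} on $[s,\infty)$, their difference $\delta:=\rho_1-\rho_2$ is continuous and satisfies $\delta=\K[][(a.)]*\delta$, that is $\delta(t,s)=\int_s^t\K[][(a.)](t,u)\,\delta(u,s)\,du$. Writing $m_T:=\sup_{u\in[s,T]}|\delta(u,s)|<\infty$, I would iterate this identity $n$ times and use the bound above to get, for every $t\in[s,T]$,
\[ |\delta(t,s)|\le m_T\,M_T^{\,n}\,\frac{(t-s)^n}{n!}\le m_T\,\frac{(M_T T)^n}{n!}, \]
which tends to $0$ as $n\to\infty$. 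Hence $\delta\equiv0$ on $[s,T]$, and since $T>s$ is arbitrary, $\rho_1=\rho_2$ on $[s,\infty)$. The only delicate point is the local boundedness of $\K[][(a.)]$, just obtained from the a priori linear growth of the flow together with the monotonicity of $f$; the rest is routine. Alternatively, one could invoke the standard resolvent theory for linear Volterra equations of the second kind with locally bounded kernels (as in \cite{gripenberg_volterra_1990}), but the elementary iteration above keeps the argument self-contained.
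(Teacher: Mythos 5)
Your proof is correct and follows essentially the same approach as the paper: both hinge on the bound $\K[][(a.)](t,u)\le f\bigl(C^A_b(T-s)\bigr)$ on $\{s\le u\le t\le T\}$ and on the resulting factorial decay of the $n$-fold iterated kernel. The only cosmetic difference is that the paper packages the argument as a Banach fixed-point theorem for the $n$-th iterate of the affine map $\Gamma:r\mapsto\K[\nu][(a.)]+\K[][(a.)]*r$ (thereby re-deriving existence self-containedly), whereas you borrow existence from Proposition~\ref{prop:formule donnant la loi de X_t} and run the iteration directly on the difference of two solutions; both are valid and computationally equivalent.
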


\begin{proof}
Fix $T > s$. It is sufficient to prove the existence and uniqueness result on $[s, T]$.
We consider the Banach space $(\mathcal{C}([s, T], \mathbb{R}), ||\cdot||_{\infty, T})$ and define on this space the following operator: $\Gamma: r \mapsto \K[\nu][(a.)] + \K[][(a.)] * r$.
Let $A := \sup_{t \in [s, T]}{a_t}$, we have: $M_s^T= \sup_{s \leq u \leq t \leq T} \K[][(a.)](t, u) < \infty$. This follows from:
\[\forall s \leq u \leq t \leq T,~ \K[][(a.)](t, u) \leq f(\Flow[(a.)][t, u](0)) \leq f(C^{A}_b (T-s)) < \infty. \]
It is clear (using Assumptions~\ref{assumptions:f0:f(x+y)} and \ref{assumptions:nu0}) that the operator $\Gamma: \mathcal{C}([s, T], \mathbb{R}) \rightarrow \mathcal{C}([s, T], \mathbb{R})$ is well defined.
Given $n \in \mathbb{N}$, the iteration $\Gamma^n$ is an affine operator with linear part $\Gamma_0^n: r \mapsto (\K[][(a.)])^{*(n)} * r$. We prove that $\Gamma^n$ is contracting for $n$ large enough, which is equivalent to proving that $\Gamma^n_0$ is contracting for $n$ large enough.
By induction, it is easily shown that
\[ \forall r \in \mathcal{C}([s, T], \mathbb{R}), \forall n \in \mathbb{N}~~ ||\Gamma^n_0(r)||_{\infty,t} := \sup_{u \in [s, t]} |(\Gamma^n_0(r))(u, s)|   \leq \frac{||r||_{\infty, T} (M_s^T (t-s))^n}{n!}.   \]
Consequently $ \forall r \in \mathcal{C}([s, T], \mathbb{R}), \forall n \in \mathbb{N}, || \Gamma^n_0(r)||_{\infty,T} \leq \frac{(M_s^T (T-s))^n}{n!} ||r||_{\infty, T}$ and $\Gamma^n_0$ is contracting for $n$ large enough. We deduce that the operator $\Gamma^n$ is also contracting and has a unique fixed point in $\mathcal{C}([s, T], \mathbb{R})$. It is also a fixed point of $\Gamma$. This proves that \eqref{eq:equation de Volterra} has a unique solution in $\mathcal{C}([s, T], \mathbb{R})$.
\end{proof}
We shall need the following well-known result on Volterra equation:
\begin{lemma}\label{lem:solution a laide de la resolvante.}
Consider $k, w: \mathbb{R}^2_+ \rightarrow \mathbb{R}$ two continuous kernels. The Volterra equation $x = w + k * x$ has a unique solution 
given by $x =w  + r * w$, where $r: \mathbb{R}^2_+ \rightarrow \mathbb{R}$ is the ``resolvent'' of $k$, i.e. the unique solution 
of
\[  
r = k + k * r.
\]
\end{lemma}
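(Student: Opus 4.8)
The plan is to first record existence and uniqueness of both the resolvent $r$ and of the solution $x$, and then to derive the closed formula $x = w + r*w$ by a direct substitution that only uses the bilinearity and associativity of the $*$-product.

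\textbf{Existence and uniqueness.} I would note that $r = k + k*r$ and $x = w + k*x$ are both fixed-point equations for an affine operator of the form $\rho \mapsto g + k*\rho$, and that these are handled exactly as in the proof of Lemma~\ref{lem:existence et unicté de l'équation de Volterra}: fixing $T > 0$, the continuity of $k$ gives a bound $M_T := \sup\{|k(t,u)|: 0 \le u \le t \le T\} < \infty$, and an induction yields $\|k^{*(n)}*\rho\|_{\infty,t} \le \|\rho\|_{\infty,T}(M_T(t-s))^n/n!$, so a high enough iterate of the linear part $\rho \mapsto k*\rho$ is a contraction on the relevant space of continuous kernels over $\{0 \le s \le t \le T\}$. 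Hence each equation has a unique continuous solution on $\{0 \le s \le t \le T\}$ for every $T$, and since the restrictions are consistent, a unique continuous solution on $\{0 \le s \le t\}$; in particular the resolvent $r$ of $k$ is well defined.

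\textbf{Associativity and substitution.} For continuous kernels $\alpha,\beta,\gamma$ and $t \ge s$, Fubini's theorem on the simplex $\{s \le u \le v \le t\}$ gives
\[
\bigl((\alpha*\beta)*\gamma\bigr)(t,s) = \int_s^t \!\int_s^v \alpha(t,v)\,\beta(v,u)\,\gamma(u,s)\,du\,dv = \bigl(\alpha*(\beta*\gamma)\bigr)(t,s),
\]
so $*$ is associative, and it is clearly bilinear. Then, setting $x := w + r*w$ (again a continuous kernel), I would compute, using $r = k + k*r$,
\[
w + k*x = w + k*w + (k*r)*w = w + (k + k*r)*w = w + r*w = x.
\]
Thus $x := w + r*w$ solves $x = w + k*x$, and by the uniqueness established above it is the unique solution.

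The whole argument is routine; the only steps worth a moment's care are the Fubini computation behind the associativity of $*$ and the observation that the existence and uniqueness of $r$ itself is just the contraction argument of Lemma~\ref{lem:existence et unicté de l'équation de Volterra}. I do not anticipate a genuine obstacle.
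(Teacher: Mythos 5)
Your proposal is correct and follows essentially the same route as the paper: quote the contraction argument of the preceding lemma for existence/uniqueness of both $r$ and $x$, then verify by direct substitution using $k*r = r - k$ (equivalently $k + k*r = r$) and associativity of $*$. The only difference is that you spell out the Fubini step behind associativity, which the paper leaves implicit.
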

\begin{proof}
It is clear from the proof of the preceding lemma that both Volterra equations have a unique solution. Moreover, we have:
$ w+ k * (w + r * w) = w + k * w + (r- k) * w = w + r * w$.
By uniqueness, we deduce that $x = w + r * w$.
\end{proof}
\subsection{The jump rate is uniformly bounded}
\label{subsec:the jump rate is uniformly bounded}
\begin{lemma}
 Grant Assumptions~\ref{assumptions:b0}, \ref{assumptions:f0} and \ref{assumptions:nu0} . 
		 Let $s \geq 0$ and $(a_t)_{t \geq s} \in \mathcal{C}([s, \infty), \mathbb{R}_+)$.
		 Let $\Y[s][\nu][(a.)][t]$ be the solution of equation \eqref{non-homogeneous-SDE}, starting from $\mathcal{L}(\Y[s][\nu][(a.)][s]) = \nu$.
		 Then the functions $t \mapsto \E f'(\Y[s][\nu][(a.)][t])$, $t \mapsto \E f'(\Y[s][\nu][(a.)][t]) b(\Y[s][\nu][(a.)][t])$ and $t \mapsto \E f^2(\Y[s][\nu][(a.)][t])$ are continuous on $[s, \infty)$.
		 \label{lem: continuity of Ef' E f2 etc}
\end{lemma}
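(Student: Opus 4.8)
The plan is to represent each of the three functions via the formula of Proposition~\ref{prop:formule donnant la loi de X_t} and to conclude by dominated convergence. They are all of the form $t \mapsto \E\phi(\Y[s][\nu][(a.)][t])$, but for $\phi = f'b$ the integrand is not non-negative, which is the only genuine subtlety. To bypass it I would write $b = b^+ - b^-$ with $b^\pm := \max(\pm b, 0)$; both are locally Lipschitz, hence continuous, so $f'b^+$, $f'b^-$, $f'$ and $f^2$ are continuous non-negative functions on $\mathbb{R}_+$ and $f'b = f'b^+ - f'b^-$. By linearity it then suffices to prove that $t \mapsto \E\phi(\Y[s][\nu][(a.)][t])$ is continuous on $[s,\infty)$ for each $\phi \in \{f', f^2, f'b^+, f'b^-\}$. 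The only property of these $\phi$ I would use, besides continuity and non-negativity, is the pointwise bound $\phi(y) \leq C_\phi(1+f(y))^2$ valid for all $y \geq 0$; this follows from Assumptions~\ref{assumptions:f0:f(x+y)}, \ref{assumptions:f0:psi} and \ref{assumptions:f0:b} exactly as in the proof of Lemma~\ref{lem: locally bounded and continuous jump rate}.

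Fixing such a $\phi$, fixing $T > s$ and setting $A := \sup_{u \in [s,T]} a_u < \infty$, Proposition~\ref{prop:formule donnant la loi de X_t} gives, for $t \in [s,T]$,
\[ \E\phi(\Y[s][\nu][(a.)][t]) = \int_s^t \phi(\Flow[(a.)][t,u](0))\, \H[][(a.)](t,u)\, \r[\nu][(a.)](u,s)\, du + \int_0^\infty \phi(\Flow[(a.)][t,s](x))\, \H[x][(a.)](t,s)\, \nu(dx) =: I_1(t) + I_2(t). \]
For $I_2$ I would invoke dominated convergence: the a priori flow bound $\Flow[(a.)][t,s](x) \leq x + C^{A}_b(T-s)$ (Lemma~\ref{lemma:b0}, points~\ref{lemma:b0:comparison principle} and \ref{lemma:b0:drift bounded from above}), combined with $\phi(y) \leq C_\phi(1+f(y))^2$ and Assumption~\ref{assumptions:f0:f(x+y)}, dominates $\phi(\Flow[(a.)][t,s](x))$ by a fixed multiple of $(1+f(x))^2$, which is $\nu$-integrable by Assumption~\ref{assumptions:nu0}; since moreover $0 \leq \H[x][(a.)](t,s) \leq 1$ and, for each fixed $x$, both $t \mapsto \Flow[(a.)][t,s](x)$ and $t \mapsto \H[x][(a.)](t,s) = \EXP{-\int_s^t f(\Flow[(a.)][u,s](x))\, du}$ are continuous, dominated convergence yields that $I_2$ is continuous on $[s,T]$.

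For $I_1$ I would rewrite $I_1(t) = \int_s^T \indic{u \leq t}\, \phi(\Flow[(a.)][t,u](0))\, \H[][(a.)](t,u)\, \r[\nu][(a.)](u,s)\, du$. The bound $\Flow[(a.)][t,u](0) \leq C^{A}_b(T-s)$, the estimate $0 \leq \H[][(a.)] \leq 1$ and the continuity, hence boundedness on $[s,T]$, of $u \mapsto \r[\nu][(a.)](u,s)$ (Lemma~\ref{lem: locally bounded and continuous jump rate}) dominate the integrand by a finite constant on $\{s \leq u \leq t \leq T\}$. For $t \to t_0 \in [s,T]$, using that $(t,u) \mapsto \Flow[(a.)][t,u](0)$ is continuous (Lemma~\ref{lemma:b0}, point~\ref{lemma:b0:sensibility du flot}), that $\H[][(a.)](t,u) = \EXP{-\int_u^t f(\Flow[(a.)][\theta,u](0))\, d\theta}$ is then jointly continuous (again by dominated convergence, the flow being bounded on the compact), and that $\indic{u \leq t} \to \indic{u \leq t_0}$ for every $u \neq t_0$, the integrand converges pointwise for a.e.\ $u$; dominated convergence gives $I_1(t) \to I_1(t_0)$. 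Hence $I_1$ and $I_2$ are continuous on $[s,T]$, and since $T$ is arbitrary the three functions are continuous on $[s,\infty)$. The main obstacle is purely a matter of bookkeeping: checking joint continuity of the kernels $\H[][(a.)]$ and $\H[x][(a.)]$ and exhibiting $t$-uniform dominating functions, which is handled by the continuity and growth bound of the flow together with $\nu(f^2) < \infty$; the single non-routine point, the sign of $f'b$, is removed at the outset by the decomposition $b = b^+ - b^-$.
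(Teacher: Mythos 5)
Your proof is correct and follows essentially the same route as the paper's: write $\E\phi(\Y[s][\nu][(a.)][t])$ via Proposition~\ref{prop:formule donnant la loi de X_t}, treat the $u$-integral by (joint) continuity of the kernels on the compact $\{s\le u\le t\le T\}$ and the $x$-integral by dominated convergence using the flow bound, Assumption~\ref{assumptions:f0:f(x+y)} and $\nu(f^2)<\infty$. The one genuine addition is your splitting $b=b^+-b^-$ to meet the non-negativity hypothesis of Proposition~\ref{prop:formule donnant la loi de X_t} when $\phi=f'b$, a point the paper applies the proposition to without commenting on the sign; this is a small but worthwhile clarification rather than a different method.
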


\begin{proof}
The proof relies on Proposition~\ref{prop:formule donnant la loi de X_t}. 
Consider the interval $[s, T]$ for some fixed $T > s \geq 0$ and let $A := \sup_{t \in [s, T]}{a_t}$.
Let $\phi \in \{f', f'b, f^2 \}$.
By Lemma~\ref{lemma:b0} (point~\ref{lemma:b0:sensibility du flot}), the function $(t, u) \mapsto \phi(\Flow[(a.)][t,u](0)) \H[][(a.)](t, u) \r[\nu][(a.)](u, s)$ is uniformly continuous on $\{(t, u): s \leq u \leq t \leq T\}$. Consequently
\[ t \mapsto \int_s^t{\phi(\Flow[(a.)][t, u](0)) \H[][(a.)](t, u) \r[\nu][(a.)](u, s)  du} \text{ is continuous on $[s, T]$.} \]
The continuity of $t \mapsto \int_0^\infty{ \phi(\Flow[(a.)][t, s](x)) \H[x][(a.)](t, s) \nu(dx) }$ follows from the Dominated Convergence Theorem. For instance, for $\phi \equiv f'$, one has
\[ 
\forall t \in [s, T], \forall x \geq 0,~ f'(\Flow[(a.)][t,s](x)) \leq f'(\Flow[A][t-s](x)) \leq f'(x + C^{A}_b (t-s)) \leq C_f({f'(x) + 1 + f'(C^{A}_b (T-s))}), 
\] 
from which the result follows easily using Assumption~\ref{assumptions:nu0} and Assumption~\ref{assumptions:f0:psi}.
The same method can be applied for $\phi(x) := f'(x) b(x)$ (using Assumption~\ref{assumptions:f0:b}) and for $\phi(x) := f^2(x)$.
\end{proof}
\begin{proposition}
\label{prop: la constant bar(a)}
Grant Assumptions~\ref{assumptions:b0}, \ref{assumptions:f0} and \ref{assumptions:nu0}. Let $s, J \geq 0$ be fixed.
Given any $\kappa \geq 0$, there is a constant $\bar{a} \geq \kappa$ only depending on $b$, $f$, $J$ and $\kappa$ such that:
\[ \forall (a_t)_{t \geq s} \in \mathcal{C}([s, \infty), \mathbb{R}_+),~ \left\{\sup_{t \geq s}{a_t }\leq \bar{a} \text{ and } J \nu(f) \leq \bar{a} \right\} \implies \sup_{t \geq s}{J \r[\nu][(a.)](t, s)} \leq \bar{a} .\]
Moreover, $\bar{a}$ can be chosen to be an increasing function of $J$ and $\kappa$.
\end{proposition}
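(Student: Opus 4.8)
The plan is to obtain an a priori upper bound on $t\mapsto \r[\nu][(a.)](t,s)=\E f(\Y[s][\nu][(a.)][t])$ via a differential inequality, and then to choose $\bar a$ large enough that $J$ times this bound stays below $\bar a$. Write $r_t:=\r[\nu][(a.)](t,s)$. The Itô formula computation already carried out in the proof of Lemma~\ref{lem: locally bounded and continuous jump rate} gives, for all $t\geq s$,
\[
 r_t = \nu(f) + \int_s^t \Big( \E\big[\, f'(\Y[s][\nu][(a.)][u])\,( b(\Y[s][\nu][(a.)][u]) + a_u )\,\big] - \E f^2(\Y[s][\nu][(a.)][u]) \Big)\, du ,
\]
and $r$ is continuous on $[s,\infty)$; moreover $\nu(f)\leq\sqrt{\nu(f^2)}<\infty$ by Cauchy--Schwarz and Assumption~\ref{assumptions:nu0}, and all the expectations appearing are finite and locally bounded by Lemma~\ref{lem: locally bounded and continuous jump rate}.

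Next I would bound the integrand from above. Since $f$ is increasing, $f'\geq 0$, so $b\leq C_b$ (with $C_b\geq0$) yields $f'(x)b(x)\leq C_b f'(x)$ for every $x\geq0$; together with $a_u\leq\bar a$ the integrand is at most $\E\big[(C_b+\bar a)f'(\Y[s][\nu][(a.)][u]) - f^2(\Y[s][\nu][(a.)][u])\big]$. Now I invoke Assumption~\ref{assumptions:f0:psi}: by definition of $\psi$, $(C_b+\bar a)f'(x) - \tfrac12 f^2(x)\leq\psi(C_b+\bar a)$ for all $x$, hence $(C_b+\bar a)f'(x) - f^2(x)\leq\psi(C_b+\bar a) - \tfrac12 f^2(x)$. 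Taking expectations and using Jensen's inequality $\E f^2(\Y[s][\nu][(a.)][u])\geq r_u^2$ gives
\[
 \forall t\geq s,\qquad r_t \leq \nu(f) + \int_s^t \Big( \psi(C_b+\bar a) - \tfrac12 r_u^2 \Big)\, du .
\]
A standard comparison argument then yields $\sup_{t\geq s} r_t\leq R$ with $R:=\max\!\big(\nu(f),\,\sqrt{2\psi(C_b+\bar a)}\big)$: if $r_{t_1}>R$ for some $t_1$, set $t_0:=\sup\{t\in[s,t_1]:r_t\leq R\}$; by continuity $r_{t_0}=R$ and $r_u>R\geq\sqrt{2\psi(C_b+\bar a)}$ on $(t_0,t_1]$, so the integrand $\psi(C_b+\bar a)-\tfrac12 r_u^2$ is strictly negative there, forcing $r_{t_1}<r_{t_0}=R$, a contradiction. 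Consequently, using the hypothesis $J\nu(f)\leq\bar a$,
\[
 \sup_{t\geq s} J\,\r[\nu][(a.)](t,s) = J\sup_{t\geq s} r_t \leq \max\!\big(J\nu(f),\, J\sqrt{2\psi(C_b+\bar a)}\big) \leq \max\!\big(\bar a,\, J\sqrt{2\psi(C_b+\bar a)}\big) .
\]

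It remains to choose $\bar a$ with $\bar a\geq\kappa$ and $J\sqrt{2\psi(C_b+\bar a)}\leq\bar a$. Since $\psi$ is finite and non-decreasing (a supremum of affine non-decreasing functions of its argument, as $f'\geq0$) and $\psi(\theta)/\theta^2\to0$ as $\theta\to\infty$, the ratio $J\sqrt{2\psi(C_b+\theta)}/\theta$ tends to $0$; hence there is a threshold $\theta_0(J)\geq0$ — which one may take non-decreasing in $J$, e.g. the infimum of those $\theta_0$ beyond which $J\sqrt{2\psi(C_b+\theta)}\leq\theta$ holds for all $\theta\geq\theta_0$ — such that $J\sqrt{2\psi(C_b+\bar a)}\leq\bar a$ for every $\bar a\geq\theta_0(J)$. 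Setting $\bar a:=\max(\kappa,\theta_0(J))$ gives a constant depending only on $b$, $f$, $J$, $\kappa$, non-decreasing in $J$ and in $\kappa$, for which the stated implication holds. The only genuinely delicate point is recognising that Assumption~\ref{assumptions:f0:psi} is exactly what turns the sign-indefinite quantity $(C_b+\bar a)f'-f^2$ into $\psi(C_b+\bar a)-\tfrac12 f^2$, after which the inequality closes on $r_t$ itself through Jensen; everything else is routine.
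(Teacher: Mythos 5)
Your proof is correct and follows essentially the same route as the paper: an Itô-formula differential inequality for $t\mapsto\E f(\Y[s][\nu][(a.)][t])$, closed using Assumption~\ref{assumptions:f0:psi} and Jensen/Cauchy--Schwarz to obtain $\dot r\leq\psi(C_b+\bar a)-\tfrac12 r^2$, followed by a comparison argument and the observation that $\psi(\theta)/\theta^2\to0$ allows choosing $\bar a$ with $J\sqrt{2\psi(C_b+\bar a)}\leq\bar a$. The only cosmetic difference is that you spell out the comparison argument and perform the $\psi$-split pointwise before taking expectation, where the paper does it afterward; the substance is identical.
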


\begin{proof}
Assume $\sup_{t \geq s}{a_t} \leq \bar{a}$ for some $\bar{a} > 0$ that we specify later.
Applying the Itô formula and taking expectations yields
\[
\forall t \geq s,~ \E f(\Y[s][\nu][(a.)][t]) = \E f(\Y[s][\nu][(a.)][s]) + \int_s^t{\E f'(\Y[s][\nu][(a.)][u])[b(\Y[s][\nu][(a.)][u]) +  a_u] du } - \int_s^t{ \E f^2(\Y[s][\nu][(a.)][u]) du }.  
\]
Lemma~\ref{lem: continuity of Ef' E f2 etc} implies that  \(t \mapsto \E f(\Y[s][\nu][(a.)][t])\) is \(\mathcal{C}^1\) and
\[ 
\forall t \geq s,~ \frac{d } {dt } \E f(\Y[s][\nu][(a.)][t]) =  \E f'(\Y[s][\nu][(a.)][t])(b(\Y[s][\nu][(a.)][t]) +  a_t) - \E f^2(\Y[s][\nu][(a.)][t]). 
\] 
Using \eqref{assumptions:b0:drift bounded from above}, the Cauchy-Schwarz inequality gives
\begin{align*}
\frac{d } {dt } \E f(\Y[s][\nu][(a.)][t])  & \leq \left\{[ \bar{a} + C_b] \E f'(\Y[s][\nu][(a.)][t]) - \frac{1}{2} \E f^2(\Y[s][\nu][(a.)][t]) \right\} -  \frac{1}{2} \E^2 f(\Y[s][\nu][(a.)][t]) \\
& \leq \frac{1}{2} [2 \psi(\bar{a} +C_b) - \E^2 f(\Y[s][\nu][(a.)][t]) ],
\end{align*} 
where in the last line, we used Assumption~\ref{assumptions:f0:psi}. Setting $M(\bar{a}) := \sqrt{2 \psi(\bar{a} + C_b)}$ and using the sign of the right hand side, we conclude that
\[ \nu(f) \leq M(\bar{a}) \implies [\forall t \geq s ~\E f(\Y[s][\nu][(a.)][t]) \leq  M(\bar{a})].\]
To complete the proof, we need to check that for any $\kappa \geq 0$, any $J \geq 0$, there is a constant $\bar{a} \geq \kappa$ such that $J M(\bar{a}) \leq \bar{a}$. This follows easily from Assumption~\ref{assumptions:f0:psi}, which gives
\[ \lim_{\theta \rightarrow \infty} { \frac{J \sqrt{2 \psi( \theta)}} {\theta} } = 0. \]
It is clear that $\bar{a}(J)$ can be chosen to be a non-decreasing function of $J$ and $\kappa$. We deduce that:
\[
\left[\sup_{t \geq s}{a_t }\leq \bar{a} \text{ and } J \nu(f) \leq \bar{a}\right]  \implies
 \left\{ \begin{aligned} \frac{d } {dt } \E f(\Y[s][\nu][(a.)][t])  & \leq  \frac{1}{2} \left[\frac{\bar{a}^2}{J^2} - \E^2 f(\Y[s][\nu][(a.)][t]) \right] \\
     \E f(\Y[s][\nu][(a.)][s]) & \leq \frac{\bar{a}}{J}.
 \end{aligned} \right\} \implies \sup_{t \geq s}{J \r[\nu][(a.)](t, s)} \leq \bar{a}.
\]
\end{proof}
We have proved that $t \mapsto \E f(\Y[s][\nu][(a.)][t])$ is $\mathcal{C}^1$ and bounded on $\mathbb{R}_+$. The same methods can be applied to the non-linear equation \eqref{NL-equation0}. 
\begin{lemma} Grant Assumptions~\ref{assumptions:b0}, \ref{assumptions:f0} and \ref{assumptions:nu0}. Consider $(X_t)_{t \geq 0}$ a solution of the non-linear equation \eqref{NL-equation0} in the sense of Definition~\ref{def:solution des EDS}. Then $t \mapsto \E f(X_t) \in \mathcal{C}^1(\mathbb{R}_+, \mathbb{R})$ and there is a finite constant $\bar{r} > 0$ (only depending on $b$, $f$ and $J$)  such that:
   \[ \sup_{t \geq 0} \E f(X_t) \leq \max(\bar{r}, \E f(X_0)) ,~~ \limsup_{t \rightarrow \infty}{\E f(X_t)} \leq \bar{r}. \]
   Moreover, $\bar{r}$ can be chosen to be an increasing function of $J$.
\end{lemma}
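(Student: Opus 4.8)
The plan is to transfer the dissipativity argument of Proposition~\ref{prop: la constant bar(a)} to the non-linear setting, the only extra work being to upgrade the regularity of the current. Since $(X_t)_{t\ge0}$ solves \eqref{NL-equation0}, it solves the linear non-homogeneous SDE \eqref{non-homogeneous-SDE} on $[0,\infty)$ with $\nu=\mathcal L(X_0)$ and current $a_t=J\E f(X_t)$, which by Definition~\ref{def:solution des EDS} is only known to be measurable and locally integrable; set $R(t):=J\int_0^t\E f(X_u)\,du$, which is then finite and continuous. First I would establish the a priori local boundedness of $t\mapsto\E f(X_t)$ together with $t\mapsto\E f^2(X_t)$, $t\mapsto\E f'(X_t)$ and $t\mapsto\E f'(X_t)|b(X_t)|$: from \eqref{non-homogeneous-SDE} and $b\le C_b$ one has a.s. $X_t\le X_0+C_bt+R(t)$ for all $t$, hence $f(X_t)\le C_f(1+f(X_0)+f(C_bt+R(t)))$ by Assumption~\ref{assumptions:f0:f(x+y)}, and likewise for $f^2$, and for $f'$ and $f'|b|$ after using $f'(x)\le C+f(x)$ (Assumption~\ref{assumptions:f0:psi} with $\theta=1$) and Assumption~\ref{assumptions:f0:b}; taking expectations and using $\nu(f^2)<\infty$ (Assumption~\ref{assumptions:nu0}) and the continuity of $t\mapsto C_bt+R(t)$ gives local boundedness. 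In particular the current $a$ is locally bounded.

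Next I would bootstrap from local boundedness to continuity. Applying the Itô formula to $f(X_t)$ exactly as in Lemma~\ref{lem: locally bounded and continuous jump rate} and taking expectations yields
\[
\E f(X_{t+\epsilon})-\E f(X_t)=\int_t^{t+\epsilon}\big[\E f'(X_u)b(X_u)+J\E f(X_u)\,\E f'(X_u)-\E f^2(X_u)\big]\,du ,
\]
whose integrand is locally bounded by the previous step; hence $t\mapsto\E f(X_t)$ is locally Lipschitz, thus continuous, and so $a_t=J\E f(X_t)$ is a genuine continuous current. At that point Lemma~\ref{lem: continuity of Ef' E f2 etc} applies to $(X_t)=(\Y[0][\nu][(a.)][t])$ and gives the continuity of $t\mapsto\E f'(X_t)$, $t\mapsto\E f'(X_t)b(X_t)$ and $t\mapsto\E f^2(X_t)$; the integrand above is therefore continuous, which proves $t\mapsto\E f(X_t)\in\mathcal C^1(\mathbb R_+,\mathbb R)$ with
\[
\frac{d}{dt}\E f(X_t)=\E f'(X_t)\big(b(X_t)+J\E f(X_t)\big)-\E f^2(X_t).
\]

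Finally I would repeat the estimate of Proposition~\ref{prop: la constant bar(a)}: using $b\le C_b$, $f'\ge0$, Cauchy--Schwarz in the form $\E f^2(X_t)\ge(\E f(X_t))^2$ and the definition of $\psi$ in Assumption~\ref{assumptions:f0:psi},
\[
\frac{d}{dt}\E f(X_t)\le\big(C_b+J\E f(X_t)\big)\E f'(X_t)-\tfrac12\E f^2(X_t)-\tfrac12\big(\E f(X_t)\big)^2\le\psi\big(C_b+J\E f(X_t)\big)-\tfrac12\big(\E f(X_t)\big)^2=:g\big(\E f(X_t)\big).
\]
The function $g$ is continuous, satisfies $g(0)=\psi(C_b)\ge0$ (since $f(0)=0$ and $f'\ge0$) and $g(\theta)\to-\infty$ as $\theta\to\infty$ by Assumption~\ref{assumptions:f0:psi}; letting $\bar r:=\sup\{\theta\ge0:\ g(\theta)\ge0\}\in[0,\infty)$ one has $g<0$ on $(\bar r,\infty)$, and a standard scalar comparison argument gives $\E f(X_t)\le\max(\bar r,\E f(X_0))$ for all $t\ge0$ and $\limsup_{t\to\infty}\E f(X_t)\le\bar r$. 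Since $\psi$ is non-decreasing, $g$ is non-decreasing in $J$ at each fixed $\theta$, so $\bar r$ can be chosen non-decreasing in $J$. The main obstacle is precisely this two-step bootstrap — local boundedness, then continuity — required to pass from the bare local integrability of the current granted by Definition~\ref{def:solution des EDS} to the hypotheses of Lemmas~\ref{lem: locally bounded and continuous jump rate} and~\ref{lem: continuity of Ef' E f2 etc}; everything hinges on the pathwise a priori bound $X_t\le X_0+C_bt+R(t)$ with $R$ finite.
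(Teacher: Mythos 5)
Your proof is correct and follows essentially the same route as the paper's: a priori local boundedness of $\E f(X_t)$, $\E f^2(X_t)$, $\E f'(X_t)$ and $\E f'(X_t)|b(X_t)|$, Itô's formula plus Lemma~\ref{lem: continuity of Ef' E f2 etc} to bootstrap to $\mathcal{C}^1$ regularity, and then a dissipativity estimate for $\tfrac{d}{dt}\E f(X_t)$. The only (harmless) variation is in the final step: you compare with the scalar ODE $y'\le\psi(C_b+Jy)-\tfrac12 y^2$ and take $\bar r$ as the threshold where the right--hand side turns negative, whereas the paper splits $\E f^2(X_t)$ into two quarters, bounds the drift and interaction terms separately, and produces the explicit constant $\bar r=\sqrt{\psi(2C_b)+4\beta^2}$ with $\beta:=\sup_{x\ge 0}\{Jf'(x)-\tfrac18 f(x)\}$ --- both yield a finite $\bar r$ that is non-decreasing in $J$.
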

\begin{proof}
By applying the same argument as in the proof of Lemma~\ref{lem: locally bounded and continuous jump rate} it is clear that the functions
\[ t \mapsto \E f(X_t), t \mapsto \E f'(X_t),  t \mapsto \E f^2(X_t) \text{ and } t \mapsto \E |b(X_t)| f'(X_t) \]
are locally bounded.
Applying the Itô formula and taking expectations yields
\begin{equation}
 \E f(X_t) = \E f(X_0) + \int_0^t{\E f'(X_u) b(X_u) du} + J \int_0^t{\E f'(X_u) \E f(X_u) du} - \int_0^t{\E f^2(X_u) du }. 
 \label{eq:formule d'Ito pour le taux de saut du processus non lineaire}
\end{equation}
We deduce that $t \mapsto \E f(X_t)$ is continuous. Define for all $t \geq 0,~a_t := \E f(X_t)$. From Lemma~\ref{lem:existence and uniqueness of the linear non-homogeneous SDE}, it is clear that:
\[a.s. ~\forall t \geq 0,~  X_t  = \Y[0][\nu][(a.)][t], \]
where $(\Y[0][\nu][(a.)][t])_{t \geq 0}$
 is the solution of \eqref{non-homogeneous-SDE} driven by $(a_t)_{t \geq 0}$.
In particular, Lemma~\ref{lem: continuity of Ef' E f2 etc} applies and the functions $t \mapsto \E f'(X_t)$, $t \mapsto \E f^2(X_t)$ and $t \mapsto \E f'(X_t) b(X_t)$ are continuous.
From equation \eqref{eq:formule d'Ito pour le taux de saut du processus non lineaire}, we deduce that $t \mapsto \E f(X_t) \in \mathcal{C}^1(\mathbb{R}_+, \mathbb{R}_+)$ and
\[
 \frac{d}{dt} \E f(X_t) = \E f'(X_t) b(X_t) + J \E f'(X_t) \E f(X_t) - \E f^2(X_t). 
\]
We have:
\begin{enumerate}
\item \label{ieq:uniform bound NL 1}  $ \!\begin{aligned}[t] \E f'(X_t) b(X_t)  - \frac{1}{4} \E f^2(X_t) \leq \frac{1}{2}[2 C_b \E f'(X_t) - \frac{1}{2} \E f^2(X_t)] \leq \frac{1}{2} \psi(2 C_b)\end{aligned}, $ using Assumptions~\ref{assumptions:b0:drift bounded from above} and \ref{assumptions:f0:psi}.
\item \label{ieq:uniform bound NL 2} $ \!\begin{aligned}[t]
J \E f'(X_t) \E f(X_t) -  \frac{1}{4} \E f^2(X_t) & \leq J \E f'(X_t) \E f(X_t) - \frac{1}{4} \E^2 f(X_t) \\
    & \leq \E f(X_t) [J \E f'(X_t) - \frac{1}{8} \E f(X_t) - \frac{1}{8} \E f(X_t) ]  \\
    & \leq 2 \beta^2, \\
\end{aligned} $ \\
where $\beta := \sup_{x \geq 0}{J f'(x) - \frac{1}{8}f(x)} < \infty$ (by Assumption~\ref{assumptions:f0:psi}).
We used $\sup_{y \geq 0} y(\beta - \tfrac{1}{8} y) \leq 2 \beta^2$ to obtain the last inequality. Note that $\beta$ is a non-decreasing function of $J$.
\end{enumerate}
Combining the points~\ref{ieq:uniform bound NL 1} and \ref{ieq:uniform bound NL 2} gives
\begin{equation}\label{eq:20190424}
\frac{d}{dt} \E f(X_t) \leq \frac{1}{2}[(\psi(2 C_b) + 4 \beta^2) - \E f^2(X_t)].
\end{equation}
We define: $\bar{r} := \sqrt{\psi(2 C_b) + 4 \beta^2}$ and deduce that
\[ \sup_{t \geq 0}{\E f(X_t)} \leq \max(\bar{r}, \E f(X_0)) ,~~ \limsup_{t \rightarrow \infty}{\E f(X_t)} \leq \bar{r}.\]
\end{proof}
\subsection{Existence and uniqueness of the solution of the non-linear SDE: proof of Theorem~\ref{th:existence et unicité de l'équation limite}}
\label{section: existence de l'equation non lineaire}
We now prove that equation \eqref{NL-equation0} has a unique strong solution $(X_t)_{t \geq 0}$.
Let $J > 0$ (the case $J = 0$ has already been treated in 
Lemma~\ref{lem:existence and uniqueness of the linear non-homogeneous SDE} by choosing $(a_t)_{t \geq 0} \equiv 0$).  Let $\nu$, the initial condition, satisfying Assumption~\ref{assumptions:nu0}, be fixed. We grant Assumptions~\ref{assumptions:b0} and \ref{assumptions:f0}.
Let $T > 0$ be a fixed horizon time.
Thanks to Proposition~\ref{prop: la constant bar(a)} with $\kappa := \max(J\E f(X_0), J\bar{r})$, we build the following application:
\begin{equation}
					\begin{array}{ccccc}
			\Phi & : & \mathcal{C}^T_{\bar{a}} & \rightarrow  & \mathcal{C}^T_{\bar{a}}  \\
			& & (a_t)_{t} & \mapsto & J \r[\nu][(a.)](\cdot, 0), \\
			\end{array}
			\label{eq:fixed point application for a fixed horizon time}
\end{equation}			 
where $\mathcal{C}^T_{\bar{a}} := \{ (a_t)_t \in \mathcal{C}([0, T], \mathbb{R}_+):~  \sup_{t \in [0, T]}{a_t} \leq \bar{a} \}$. 
The function  $\r[\nu][(a.)](t, 0) := \E f( \Y[0][\nu][(a.)][t])$ is defined by equation \eqref{non-homogeneous-SDE} (using $s = 0$). 
The constant $\bar{a}$ is given by Proposition~\ref{prop: la constant bar(a)}: in particular $\bar{a}$ does not depend on $T$.
We equip $\mathcal{C}^T_{\bar{a}}$ with the sup norm $||(a_t)_t||_{\infty, T} := \sup_{t \in [0, T]} |a_t|$. The metric space $(\mathcal{C}^T_{\bar{a}}, ||\cdot||_{\infty, T} )$ is complete.
We now prove  that the application $\Phi$ defined by \eqref{eq:fixed point application for a fixed horizon time} is contracting.
Let $(a_t)_t, (d_t)_t \in \mathcal{C}^T_{\bar{a}}$; we denote by $\r[\nu][(a.)](t, s)$ and  $\r[\nu][(d.)](t, s)$ their corresponding jump rate,  where $t$ belongs to $[s, T]$.
Both $\r[\nu][(a.)]$ and $\r[\nu][(d.)]$ satisfy the Volterra equation \eqref{eq:equation de Volterra}.
It follows that the difference $\Delta :=  \r[\nu][(a.)] -\r[\nu][(d.)]$ satisfies:
\begin{align*}
\Delta & = \K[\nu][(a.)] - \K[\nu][(d.)] + \K[][(a.)] * (\r[\nu][(a.)] -\r[\nu][(d.)]) + (\K[][(a.)]-\K[][(d.)])*\r[\nu][(d.)] \\
                              & =  W + \K[][(a.)] * \Delta \text{ with } W := \K[\nu][(a.)] - \K[\nu][(d.)]  + (\K[][(a.)]-\K[][(d.)])*\r[\nu][(d.)] 
\end{align*}
Consequently, $\Delta$ solves the following non-homogeneous Volterra equation with kernel $\K[][(a.)]$
\begin{equation}
\Delta = W + \K[][(a.)] * \Delta.
\label{eq:eq de volterra satisfait par la difference}
\end{equation}
Using Lemma~\ref{lem:solution a laide de la resolvante.}, this equation can be solved explicitly in 
terms of $\r[][(a.)]$, the ``resolvent'' of $\K[][(a.)]$
\begin{equation}
\label{eq:solution eq de volterra Delta}
\Delta = W + \r[][(a.)]  * W.
\end{equation}
\begin{lemma}
There exists a constant $\Theta_T$ only depending on $T$, $f$, $b$ and $\bar{a}$ such that, for all $a, d\in \mathcal{C}^T_{\bar{a}} $:
\[ \forall\ 0 \leq s \leq t \leq T,~ \forall x \in \mathbb{R}_+,~ |\K[\delta_x][(a.)] - \K[\delta_x][(d.)]|(t, s) \leq \Theta_T (1 + f'(x) + f(x) + f'(x) f(x)) \int_s^t{|a_u - d_u| du}. \]
\end{lemma}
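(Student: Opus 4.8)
The plan is to unwind the definition of $\K[\delta_x][(a.)]$ directly and bound the difference by controlling two elementary pieces: the discrepancy between the flows $\Flow[(a.)][t,s](x)$ and $\Flow[(d.)][t,s](x)$, and the discrepancy between the exponential weights $\EXP{-\int_s^t f(\Flow[(a.)][u,s](x))\,du}$. First I would write, from \eqref{definition de K^x},
\[
\K[\delta_x][(a.)](t,s) - \K[\delta_x][(d.)](t,s)
= \left[ f(\Flow[(a.)][t,s](x)) - f(\Flow[(d.)][t,s](x)) \right] \EXP{-\int_s^t f(\Flow[(a.)][u,s](x))\,du}
+ f(\Flow[(d.)][t,s](x)) \left[ e^{-I_a} - e^{-I_d} \right],
\]
where $I_a := \int_s^t f(\Flow[(a.)][u,s](x))\,du$ and likewise $I_d$. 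Since the exponential weights lie in $[0,1]$, the first bracket is bounded using the mean value theorem for $f$ together with Assumption~\ref{assumptions:b0:Cvarphi}: for some intermediate point, $|f(\Flow[(a.)][t,s](x)) - f(\Flow[(d.)][t,s](x))| \le f'(\xi)\, C_\varphi \int_s^t |a_u - d_u|\,du$, and $\xi$ is bounded above by $x + C^{\bar a}_b T$ using Lemma~\ref{lemma:b0} (point~\ref{lemma:b0:drift bounded from above}) together with $\sup a_u \le \bar a$; hence by Assumption~\ref{assumptions:f0:f(x+y)}, $f'(\xi) \le C_f(1 + f'(x) + f'(C^{\bar a}_b T))$, which produces a term of the form $C(1 + f'(x))\int_s^t|a_u-d_u|\,du$.

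For the second bracket I would use $|e^{-I_a} - e^{-I_d}| \le |I_a - I_d|$ (both exponents nonnegative), and then
\[
|I_a - I_d| \le \int_s^t \left| f(\Flow[(a.)][u,s](x)) - f(\Flow[(d.)][u,s](x)) \right| du
\le \int_s^t f'(\xi_u)\, C_\varphi \int_s^u |a_r - d_r|\,dr\, du
\le C_\varphi T \cdot C_f(1 + f'(x) + f'(C^{\bar a}_b T)) \int_s^t |a_u - d_u|\,du,
\]
again bounding the intermediate points via the linear growth of the flow. Multiplying by $f(\Flow[(d.)][t,s](x)) \le C_f(1 + f(x) + f(C^{\bar a}_b T))$ (Assumption~\ref{assumptions:f0:f(x+y)} and Lemma~\ref{lemma:b0}, point~\ref{lemma:b0:drift bounded from above}) gives a term controlled by $C(1 + f(x))\int_s^t|a_u-d_u|\,du$. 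Collecting the two contributions and expanding the product $(1+f'(x))(1+f(x))$ yields exactly the claimed bound $\Theta_T(1 + f'(x) + f(x) + f'(x)f(x))\int_s^t |a_u - d_u|\,du$, with $\Theta_T$ depending only on $T$, $f$, $b$ and $\bar a$ through the constants $C_\varphi$, $C_f$, $C^{\bar a}_b$ and the values $f(C^{\bar a}_b T)$, $f'(C^{\bar a}_b T)$.

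The only mildly delicate point is keeping the intermediate evaluation points of $f$ and $f'$ under control: whenever the mean value theorem is invoked, the intermediate point lies between $\Flow[(a.)][u,s](x)$ and $\Flow[(d.)][u,s](x)$, both of which are at most $x + C^{\bar a}_b(t-s) \le x + C^{\bar a}_b T$ by Lemma~\ref{lemma:b0} (point~\ref{lemma:b0:drift bounded from above}) since $a,d \in \mathcal{C}^T_{\bar a}$; this is where the uniform bound $\bar a$ on the currents is used. Everything else is a routine application of Assumption~\ref{assumptions:b0:Cvarphi}, the sublinearity estimate~\ref{assumptions:f0:f(x+y)}, and the trivial Lipschitz bound $|e^{-u}-e^{-v}| \le |u-v|$ for $u,v\ge 0$. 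I do not anticipate any real obstacle.
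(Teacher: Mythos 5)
Your proposal is correct and follows essentially the same route as the paper's: the same two-term split of $\K[\delta_x][(a.)]-\K[\delta_x][(d.)]$, with the first term handled by the Lipschitz bound on the flow (Assumption~\ref{assumptions:b0:Cvarphi}) plus monotonicity of $f'$, and the second by the Lipschitz estimate $|e^{-A}-e^{-B}|\le|A-B|$ together with the same flow bounds, yielding the extra factor of $T$ from the iterated integral. The only cosmetic difference is that you invoke the mean value theorem where the paper bounds $|f(A)-f(B)|$ directly by $f'(\max(A,B))\,|A-B|$ via convexity, which amounts to the same thing.
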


\begin{proof}
Fix $(a_t)$ and $(d_t)$ in $\mathcal{C}^T_{\bar{a}} $. We have
\begin{align*}
|\K[\delta_x][(a.)] - \K[\delta_x][(d.)]|(t,s) &= \left|f(\Flow[(a.)][t, s](x))\EXP{-\int_s^t{f(\Flow[(a.)][u, s](x))du}} - f(\Flow[(d.)][t, s](x))\EXP{-\int_s^t{f(\Flow[(d.)][u, s](x))du}} \right| \\
& \leq \left|f(\Flow[(a.)][t, s](x))- f(\Flow[(d.)][t, s](x))\right|\EXP{-\int_s^t{f(\Flow[(a.)][u, s](x))du}}  \\
          & \quad +   f(\Flow[(d.)][t, s](x)) \left|\EXP{-\int_s^t{f(\Flow[(a.)][u, s](x))du}}  -\EXP{-\int_s^t{f(\Flow[(d.)][u, s](x))du}} \right| \\
 &   =: M + N.
\end{align*}
Assumptions~\ref{assumptions:b0} and \ref{assumptions:f0:f(x+y)} together with 
Lemma~\ref{lemma:b0} (\ref{lemma:b0:comparison principle}) give
\begin{align*}
M & \leq |f(\Flow[(a.)][t, s](x))- f(\Flow[(d.)][t, s](x))| \\
    & \leq f'(x + C^{\bar{a}}_b T) |\Flow[(a.)][t, s](x) - \Flow[(d.)][t, s](x)| \\
   & \leq C_f(1+ f'(x) + f'(C^{\bar{a}}_b T)) C_\varphi \int_s^t{|a_u - d_u| du}. 
\end{align*}
Furthermore, using that $\forall A , B \geq 0:~~|e^{-A} - e^{-B}| \leq |A-B|$, we have
\begin{align*}
N & \leq C_f [1 + f(x) + f(C^{\bar{a}}_b T)]  \int_s^t{|f(\Flow[(a.)][u, s](x)) -f(\Flow[(d.)][u, s](x)) |du} \\
   & \leq C_f [1 + f(x) + f(C^{\bar{a}}_b T)] f'(x + C^{\bar{a}}_b T) C_\varphi \int_s^t{ \int_s^r{|a_u - d_u| du} dr} \\
   & \leq T C_\varphi C^2_f  [ 1  + f(x) + f(C^{\bar{a}}_b T)]  [1 + f'(x)  + f'(C^{\bar{a}}_b T)]  \int_s^t{|a_u - d_u| du}. 
\end{align*}
Combining the two estimates, we get the result.
\end{proof}
\begin{proof}[Proof of Theorem~\ref{th:existence et unicité de l'équation limite}]
We now write $\Theta_T$ for any constant that depends only on $T$, on the initial condition $\nu$, on $b$, $f$, $J$ and on $\bar{a}$ and that can change from line to line.
By Assumptions~\ref{assumptions:f0} and \ref{assumptions:nu0}, it follows that:
\[ 
\forall (a_t), (d_t) \in \mathcal{C}^T_{\bar{a}},~\forall t \in [0, T]:~ |\K[\nu][(a.)] -\K[\nu][(d.)]|(t, 0) \leq \Theta_T  \int_0^t{|a_u - d_u| du}. 
\]
Moreover, since   $\sup_{t\in[0,T]}\r[][(d.)](t, 0) \leq \tfrac{\bar{a}}{J}$
by Proposition~\ref{prop: la constant bar(a)}, we have
\[ 
|(\K[][(a.)] - \K[][(d.)])*\r[][(d.)]|(t, 0) = |\int_0^t{(\K[][(a.)] - \K[][(d.)])(t, u) \r[][(d.)](u) du}| \leq \frac{\bar{a}}{J} \Theta_T (1+f'(0)) T \int_0^t{ |a_u - d_u| du}. 
\]
Consequently, there is a constant $\Theta_T$ such that
\[ \forall (a_t),(d_t) \in \mathcal{C}^T_{\bar{a}},~\forall t \in [0, T]: ~|W|(t, 0) \leq \Theta_T \int_0^t{|a_u - d_u| du}.  \]
Using the formula~\eqref{eq:solution eq de volterra Delta}, we deduce that
\begin{align*}
|\Delta(t, 0)| \leq & |W|(t, 0) + \int_0^t{\r[][(a.)](t, u) |W|(u, 0) du} \\
 \leq & \Theta_T (1+T \frac{\bar{a}}{J}) \int_0^t{|a_u - d_u| du}.
\end{align*}
We have proved that there is a constant $\Theta_T$ such that:
\[ 
\forall (a_t), (d_t) \in \mathcal{C}^T_{\bar{a}}, \forall t \in [0, T], \quad || J \r[\nu][(a.)](\cdot, 0) - J\r[\nu][(d.)](\cdot, 0)||_{\infty, t} \leq \Theta_T \int_0^t{||a-d||_{\infty, u} du}.  
\]
This estimate is sufficient to prove Theorem~\ref{th:existence et unicité de l'équation limite} by a classical Picard/Gronwall argument.  We deduce that $\Phi$ has a unique fixed point $(a^*_t)_t$. It is then easy to check that $(\Y[0][\nu][(a^*.)][t])_{t \in [0, T]}$, driven by the current $(a^*)$ and with initial condition $\Y[0][\nu][(a^*.)][0] = X_0$, defines a solution of \eqref{NL-equation0} up to time $T$. This proves existence of a strong solution to \eqref{NL-equation0}.
Now, if $(X_t)_{t \geq 0}$ is a strong solution of \eqref{NL-equation0} in the sense of Definition~\ref{def:solution des EDS}, let $\forall t \geq 0,~a_t := J \E f(X_t)$. We have $\sup_{t \geq 0}{a_t} \leq \max(J \bar{r}, J \E f(X_0)) \leq \bar{a}$ and consequently $(a_t)_{t \in [0, T]} \in \mathcal{C}^T_{\bar{a}}$.
Moreover, it is clear that $(X_t)_{t \geq 0}$ solves \eqref{non-homogeneous-SDE} with $a_t := J \E f(X_t)$ and $\Y[0][\nu][(a.)][0] := X_0$. We deduce that $(a_t)$ is the unique fixed point of $\Phi$: $\forall t \in [0, T]: a_t = a^*_t$. Consequently, by Lemma~\ref{lem:existence and uniqueness of the linear non-homogeneous SDE}, we have: $a.s. ~\forall t \in [0, T] ~X_t = \Y[0][\nu][(a.)][t]$. This proves path-wise uniqueness and ends the proof of Theorem~\ref{th:existence et unicité de l'équation limite}.
\end{proof}
\section{The invariant probability measures: proof of Proposition~\ref{prop:mesures invariantes}}
\label{sec:the invariant measures}
We now study the invariant probability measures of the non-linear process \eqref{NL-equation0}. We follow the strategy of \cite{fournier_toy_2016}: we first study the linear process driven by a constant current $a$ and show that it has a unique invariant  probability measure. We then use this result to study the invariant probability measures of the non-linear equation \eqref{NL-equation0}.
Let $a \geq 0$ and $(\Y[][\nu][a][t])_t$ the solution of the following SDE:
\begin{equation}
\Y[][\nu][a][t] = \Y[][\nu][a][0] + \int_0^t{b(\Y[][\nu][a][u]) du}  + a t -  \int_0^t{\int_{\mathbb{R}_+}{\Y[][\nu][a][u-] \indic{z \leq f(\Y[][\nu][a][u-])} \PM(du, dz)}} 
\label{eq:homogeneous EDS with cst a}
\end{equation}
Equation \eqref{eq:homogeneous EDS with cst a} is equation \eqref{non-homogeneous-SDE} with $\forall t \geq 0,~a_t = a$ and $s = 0$.
\begin{proposition}\label{prop:mesinvlineaire}
Grant Assumptions~\ref{assumptions:b0} and  \ref{assumptions:f0}. Then the SDE \eqref{eq:homogeneous EDS with cst a} has a unique invariant probability measure $\nu^\infty_{a}$ given by equation \eqref{eq:invariante measure}:
\[
\nu^\infty_{a}(dx) := \frac{\gamma(a)}{b(x) + a} \EXP{-\int_0^x{\frac{f(y)}{b(y) + a} dy}} \indic{x \in [0, \sigma_{a}]} dx,
\]
where $\gamma(a)$ is the normalizing factor given by \eqref{eq:gamma(a) la constante de renormalisation}.
Moreover we have $\nu^\infty_a(f) = \gamma(a)$.
\end{proposition}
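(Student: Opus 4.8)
The plan is to treat $(Y^{\nu,a}_t)_{t\ge0}$ as the piecewise-deterministic Markov process that flows along $\dot y=b(y)+a$ and, at rate $f(y)$, jumps back to $0$; since $b(0)+a>0$ it lives on $[0,\sigma_a)$ and, by Lemma~\ref{lem:existence and uniqueness of the linear non-homogeneous SDE}, it is non-explosive. First I would record two one-line identities behind formula~\eqref{eq:invariante measure}. Writing $G_a(x):=\int_0^x f(y)/(b(y)+a)\,dy$ and $p_a(x):=\gamma(a)e^{-G_a(x)}/(b(x)+a)$, so that $\nu^\infty_a(dx)=p_a(x)\mathbbm 1_{[0,\sigma_a]}(x)\,dx$ and $(b(x)+a)p_a(x)=\gamma(a)e^{-G_a(x)}$, one has the stationary relation $\frac{d}{dx}[(b(x)+a)p_a(x)]=-f(x)p_a(x)$. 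The key fact is $G_a(\sigma_a)=+\infty$: if $\sigma_a=\infty$ the integrand is bounded below for large $y$ ($f$ increasing, $b$ bounded above), and if $\sigma_a<\infty$ it follows from $b(\sigma_a)+a=0$ with $b$ locally Lipschitz (so $b+a$ vanishes at most linearly at $\sigma_a$) together with $f(\sigma_a)\ge f(\sigma_0)>0$ (Remark~\ref{remark:b0f}). Integrating $\frac{f}{b+a}e^{-G_a}=-\frac{d}{dx}e^{-G_a}$ then gives $\int_0^{\sigma_a}f\,p_a\,dx=\gamma(a)(1-e^{-G_a(\sigma_a)})=\gamma(a)$, which is the asserted identity $\nu^\infty_a(f)=\gamma(a)$; and the change of variables $x=\varphi^a_t(0)$, $dx=(b(x)+a)\,dt$ (mapping $e^{-G_a(x)}$ onto $H^a(t)=\exp(-\int_0^t f(\varphi^a_u(0))\,du)$) gives the dual identity $\gamma(a)^{-1}=\int_0^\infty H^a(t)\,dt<\infty$, the finiteness because $H^a$ decays exponentially.

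For invariance I would check $\int\mathcal{A}g\,d\nu^\infty_a=0$ for $g\in\mathcal{C}^1_b([0,\sigma_a))$, where $\mathcal{A}g(x)=(b(x)+a)g'(x)+f(x)(g(0)-g(x))$ is the extended generator of the PDMP. One integration by parts against $p_a$, using the stationary relation above, yields $\int(b(x)+a)g'(x)p_a(x)\,dx=-\gamma(a)g(0)+\int g\,f\,p_a\,dx$ (the boundary contribution at $\sigma_a$ vanishing since $e^{-G_a(\sigma_a)}=0$), and adding the jump part $\int f(x)(g(0)-g(x))p_a(x)\,dx$ and invoking $\int f p_a=\gamma(a)$ makes everything cancel; non-explosiveness upgrades this to invariance. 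Alternatively, both the formula and invariance can be read off the regenerative structure directly: each jump regenerates the process at $0$, the inter-jump law has a density and mean $\gamma(a)^{-1}$, and the renewal--reward identity $\nu^\infty_a(g)=\gamma(a)\,\E\!\int_0^\tau g(\varphi^a_t(0))\,dt$ matches $\int g\,d\nu^\infty_a$ via the same change of variables.

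For uniqueness I would use Proposition~\ref{prop:formule donnant la loi de X_t}. If $\pi$ is an invariant probability, stationarity makes $t\mapsto\E f(Y^{\pi,a}_t)$ constant, equal to $r_\pi:=\pi(f)$; substituting into the representation of $\E\phi(Y^{\pi,a}_t)$ in the homogeneous case ($\varphi^{(a.)}_{t,u}=\varphi^a_{t-u}$, $H^{(a.)}(t,u)=H^a(t-u)$) gives, for every bounded continuous $\phi$,
\[
\pi(\phi)=r_\pi\int_0^t\phi(\varphi^a_s(0))H^a(s)\,ds+\int_{[0,\sigma_a]}\phi(\varphi^a_t(x))H^x_a(t)\,\pi(dx).
\]
Letting $t\to\infty$, the second term is bounded by $\|\phi\|_\infty\int H^x_a(t)\,\pi(dx)\le\|\phi\|_\infty H^0(t)\to0$ (Remark~\ref{remark:inequality Ha}), while the first converges by dominated convergence to $r_\pi\int_0^\infty\phi(\varphi^a_s(0))H^a(s)\,ds$. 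Taking $\phi\equiv1$ and using $\int_0^\infty H^a=\gamma(a)^{-1}$ forces $r_\pi=\gamma(a)$, and the change of variables identifies the limit with $\nu^\infty_a(\phi)$; hence $\pi=\nu^\infty_a$.

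The step I expect to be the main obstacle is the control at the right endpoint $\sigma_a$ of the state space: the identity $G_a(\sigma_a)=+\infty$ is what makes $\nu^\infty_a$ a genuine probability measure, makes $H^a(t)$ and $H^x_a(t)$ vanish at infinity, guarantees the process keeps jumping (so the regenerative picture is non-degenerate), and kills the boundary term in the integration by parts — and it is precisely here that the hypotheses $\sigma_a\ge\sigma_0>0$ (Lemma~\ref{lemma:b0}) and the at-most-linear decay of $b+a$ near $\sigma_a$ are used. Everything else is bookkeeping with the change of variables $x=\varphi^a_t(0)$ and the tail estimates already provided by Remarks~\ref{remark:b0f} and~\ref{remark:inequality Ha}.
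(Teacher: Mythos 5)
Your proof is correct in its key computations and takes a genuinely different route from the paper.  For \emph{invariance}, you establish $\int\mathcal{A}g\,d\nu^\infty_a=0$ by a direct integration by parts against the density $p_a$, using the stationary Fokker--Planck identity $\tfrac{d}{dx}[(b+a)p_a]=-fp_a$, with the boundary term at $\sigma_a$ killed by $G_a(\sigma_a)=+\infty$.  The paper instead verifies Claim~2, namely $K^{\nu^\infty_a}_a=\gamma(a)H_a$, feeds it into the Volterra equation to get $r^{\nu^\infty_a}_a\equiv\gamma(a)$, and then shows $\E\phi(Y^{\nu^\infty_a,a}_t)=\nu^\infty_a(\phi)$ for all finite $t$ by the change of variable $x=\varphi^a_s(0)$, which gives semigroup invariance directly without ever invoking the generator.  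The price of your route is the passage from ``$\int\mathcal{A}g\,d\pi=0$ for test functions $g$'' to ``$\pi$ is invariant for the semigroup'': this is an Echeverria-type step, true for non-explosive PDMPs but not free; the paper's computation is self-contained and avoids it (your alternative regenerative/renewal-reward argument is closer to the paper's spirit, as the semigroup formula there is exactly the renewal representation).  For \emph{uniqueness}, you plug the constant jump rate into Proposition~\ref{prop:formule donnant la loi de X_t} and let $t\to\infty$ using $H^x_a(t)\le H_0(t)\to0$, which is a clean and transparent argument; the paper instead uses the Costa dual function $\lambda_g$ with $\mathcal{H}\lambda_g=f\lambda_g-g$, arriving at the same identity $\nu(g)=\lambda_g(0)/\lambda_1(0)$.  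One small advantage of the paper's version is that it applies to any invariant $\pi$ with $\pi(f)<\infty$, whereas your argument, as stated, goes through Proposition~\ref{prop:formule donnant la loi de X_t}, whose proof in this paper is run under Assumption~\ref{assumptions:nu0} (i.e.\ $\pi(f^2)<\infty$); this is a genuinely stronger integrability requirement on the candidate invariant measure and should be flagged, or the proposition's hypotheses relaxed. Everything else --- the key identity $G_a(\sigma_a)=+\infty$, the $\nu^\infty_a(f)=\gamma(a)$ computation, and the change of variables onto $H^a$ --- matches the paper.
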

A proof of this result can be found in \cite[Prop. 21]{fournier_toy_2016} 
with $b(x) := -\kappa x$ and with slightly different assumptions on $f$. We 
give here a proof based on different arguments. Note that the general method 
introduced by  \cite{costa_stationary_1990} to find the stationary measures of 
a PDMP can be applied here; we use a method introduced in this paper to prove 
the uniqueness part.

\begin{proof}
Let us first check that the probability measure $\nu^\infty_a$ is indeed an invariant measure of \eqref{eq:homogeneous EDS with cst a}. \\
\noindent \textbf{Claim 1} The probability measure $\nu^\infty_a$ satisfies Assumption~\ref{assumptions:nu0}.\\
First $b(0) > 0$ yields to $\forall a \geq 0,~ \sigma_a \geq \sigma_0 > 0$.
The function $t \mapsto \Flow[a][t](0)$ is a bijection from $\mathbb{R}_+$ to $[0, \sigma_a)$. Consequently, the changes of variable $x = \Flow[a][t](0)$ and $y = \Flow[a][u](0)$ give
\begin{align*}
\int_0^{\sigma_a}{\frac{f^2(x)}{b(x) + a} \EXP{-\int_0^x{\frac{f(y)}{b(y) + a} dy}} dx} 
 = \int_0^\infty{f^2(\Flow[a][t](0)) \EXP{-\int_0^t{ f(\Flow[a][u](0))du }} dt}.
\end{align*}
This last integral is finite by Remark~\ref{remark:b0f}.

\noindent \textbf{Claim 2} We have: $\K[\nu^\infty_a][a](t) = \gamma(a) \H[][a](t)$. \\
We recall that $\H[][a](t) = \H[ \delta_0 ][a](t, 0)$. We have, for all $t \geq 0$:
\begin{equation}
\K[\nu^\infty_a][a](t) 
= \int_0^{\sigma_a}{ f(\Flow[a][t](x)) \EXP{-\int_0^t{f(\Flow[a][u](x)) du}} \frac{\gamma(a)}{b(x) + a} \EXP{-\int_0^x{\frac{f(y)}{b(y) + a} dy}} dx}.
\end{equation}
The change of variable $y = \Flow[a][u](0)$ yields:
\[ \K[\nu^\infty_a][a](t) = \int_0^{\sigma_a}{ f(\Flow[a][t](x)) \EXP{-\int_0^t{f(\Flow[a][u](x)) du}} \frac{\gamma(a)}{b(x) + a} \EXP{-\int_0^{t(x)}{ f(\Flow[a][u](0))du}} dx}, \]
where $t(x)$ is the unique $t \geq 0$ such that $\Flow[a][t](0) = x$. We now make the change of variable $x = \Flow[a][s](0)$ we obtain (using  the semi-group property satisfied by $\Flow[a][t](0)$):
\begin{align*}
 \K[\nu^\infty_a][a](t) &=  \gamma(a) \int_0^{\infty}{ f(\Flow[a][t](\Flow[a][s](0))) \EXP{-\int_0^t{f(\Flow[a][u](\Flow[a][s](0))) du}} \EXP{-\int_0^{s}{ f(\Flow[a][u](0))du}} ds} \\
 & =  \gamma(a) \int_t^{\infty}{ f(\Flow[a][\theta](0)) \EXP{-\int_0^{\theta}{f(\Flow[a][u](0)) du}} d\theta}  \\
 & = \gamma(a) \left[\H[][a](t) - \lim_{\theta \rightarrow \infty} \EXP{-\int_0^{\theta}{f(\Flow[a][u](0)) du}}\right].
\end{align*}   
Using Remark~\ref{remark:b0f:lim f flow a}, we have: $\lim_{\theta \rightarrow \infty} \EXP{-\int_0^{\theta}{f(\Flow[a][u](0)) du}} = 0$ and the claim is proved.

We now consider $(\Y[][\nu^\infty_a][a][t])_{t \geq 0}$ 
the solution of equation \eqref{eq:homogeneous EDS with cst a} starting from $\mathcal{L}(\Y[][\nu^\infty_a][a][0]) =  \nu^\infty_a$. 
Proposition~\ref{prop:formule donnant la loi de X_t} applies, so $\r[\nu^\infty_a][a](t) = \E f(\Y[][\nu^\infty_a][a][t])$ is the unique solution of the Volterra equation
\[ \r[\nu^\infty_a][a] = \K[\nu^\infty_a][a] + \K[][a] * \r[\nu^\infty_a][a].  \]
Using Claim~2 and the relation \eqref{relation entre K et H}, we verify that the constant function $\gamma(a)$ is a solution of
\[ 
\K[\nu^\infty_a][a] + \K[][a] * \gamma(a)= \gamma(a)\H[][a] + \gamma(a)(1 - \H[][a])  = \gamma(a).
\]
 By uniqueness (Lemma~\ref{lem:existence et unicté de l'équation de Volterra}), 
 we deduce that $\forall t \geq 0,~\r[\nu^\infty_a][a](t) = \gamma(a)$.

Finally, let $\phi: \mathbb{R}_+ \rightarrow \mathbb{R}_+$ be a  measurable function. Using  Proposition~\ref{prop:formule donnant la loi de X_t}, we have:
\begin{align*}
\E \phi(\Y[][\nu^\infty_a][a][t]) & = \gamma(a) \int_0^t{ \phi(\Flow[a][t-u](0)) \H[][a](t-u) du } + \int_0^\infty{\phi(\Flow[a][t](x)) \H[x][a](t) \nu^\infty_a(dx) } \\
& =  \gamma(a) \int_0^t{ \phi(\Flow[a][u](0)) \H[][a](u) du }  \\&\quad + \int_0^{\sigma_a}{ \phi(\Flow[a][t](x)) \EXP{-\int_0^t{f(\Flow[a][u](x)) du}} \frac{\gamma(a)}{b(x) + a} \EXP{ -\int_0^x{\frac{f(y)}{b(y) + a} dy}} dx }.
\end{align*}
The change of variables $y = \Flow[a][u](0)$ and $x = \Flow[a][\theta](0)$ yields
\begin{align*}
\E \phi(\Y[][\nu^\infty_a][a][t]) & = \gamma(a) \int_0^t{ \phi(\Flow[a][u](0)) \H[][a](u) du } \\&\quad +  \gamma(a) \int_0^{\infty}{ \phi(\Flow[a][t](\Flow[a][\theta](0))) \EXP{-\int_0^t{f(\Flow[a][u](\Flow[a][\theta](0))) du}}  \EXP{ -\int_0^\theta{f(\Flow[a][u](0)) du}} d\theta } \\
& =  \gamma(a) \int_0^t{ \phi(\Flow[a][u](0)) \H[][a](u) du } +  \gamma(a) \int_t^{\infty}{ \phi(\Flow[a][u](0)) \EXP{-\int_0^u{f(\Flow[a][\theta](0)) d\theta}}  du } \\
& =  \gamma(a) \int_0^\infty{ \phi(\Flow[a][u](0)) \H[][a](u) du } \\
& = \nu^\infty_{a} (\phi).
\end{align*}
This proves that $\forall t \geq 0,~ \mathcal{L}(\Y[][\nu^\infty_a][a][t]) = \nu^\infty_a$ and consequently $\nu^\infty_a$ is an invariant probability measure of \eqref{eq:homogeneous EDS with cst a}.  Moreover, we have
\[ 
\nu^\infty_a(f) = \gamma(a) \int_0^{\sigma_a}{\frac{f(x)}{b(x) + a} \EXP{-\int_0^x{\frac{f(y)}{b(y) + a} dy}}  dx } = \gamma(a). 
\]
It remains to prove that the  invariant probability measure is unique. 
Following \cite{DavisPDMP} and \cite{costa_stationary_1990}, we define 
$\mathcal{B}^{ac}(\mathbb{R}_+)$ the set of bounded function 
$h: \mathbb{R}_+ \rightarrow \mathbb{R}$ such that for all $x \in \mathbb{R}_+$, 
the function $t \mapsto h(\Flow[a][t](x))$ is absolutely continuous on $\mathbb{R}_+$. 
For $h \in \mathcal{B}^{ac}(\mathbb{R}_+)$, we define $\mathcal{H}h (x) := \left. \tfrac{d}{dt} h(\Flow[a][t](x)) \right|_{t = 0}.$

\noindent \textbf{Claim 3} Let $h \in \mathcal{B}^{ac}(\mathbb{R}_+)$, then for all $x \geq 0$ we have
\[ 
\left. \tfrac{d}{dt} \E h(\Y[\delta_x][][a][t] ) \right|_{t = 0} = \mathcal{L} h (x) \quad \text{ with } \quad \mathcal{L} h (x) := \mathcal{H} h(x) + (h(0) - h(x)) f(x) . 
\]
Let $\tau_1^x = \inf\{t \geq 0:~ \Y[\delta_x][][a][t] \neq \Y[\delta_x][][a][t-] \}$ and $\tau_2^x = \inf\{t \geq \tau_1^x:~ \Y[\delta_x][][a][t] \neq \Y[\delta_x][][a][t-] \}$ be  the times of the first and second jumps of $(\Y[\delta_x][][a][t])$.
We have
\[ \E h( \Y[\delta_x][][a][t] )  = \E h(\Y[\delta_x][][a][t] ) \indic{t < \tau_1^x}  + \E h(\Y[\delta_x][][a][t] ) \indic{\tau_1^x \leq t < \tau_2^x} + \E h(\Y[\delta_x][][a][t] ) \indic{t \geq \tau_2^x} =: \alpha_t + \beta_t  + \theta_t.  \]
By Lemma~\ref{lem: proba conditionnelle de ne pas avoir de sauts}, we have $\alpha_t = h(\Flow[a][t](x)) \mathbb{P}(t < \tau_1^x) = h(\Flow[a][t](x)) \H[x][a](t)$. It follows that $\left. \tfrac{d}{dt} \alpha_t \right|_{t = 0} = \mathcal{H} h(x) - h(x) f(x)$.
Moreover using that the density of $\tau_1^x$ is $s \mapsto \K[x][a](s)$ it holds that $\beta_t = \int_0^t{ h(\Flow[a][t-s](0)) \K[x][a](s) \H[0][a](t-s) ds}$. We deduce that $\left. \tfrac{d}{dt} \beta_t \right|_{t = 0} = h(0) f(x)$. 
Then, using that $h$ is bounded, we have $\theta_t \leq ||h||_\infty \int_0^t{ \int_0^t{ \K[x][a](u) \K[0][a](s-u) du ds} } \in \mathcal{O}(t^2)$. This proves Claim 3.

Let $g$
be a bounded measurable  function. We follow the method of \cite{costa_stationary_1990} (proof of Theorem~3(a)) and define
\[  
\forall x \geq 0, \quad \lambda_g(x) := \int_0^\infty{ g(\Flow[a][t](x)) \EXP{-\int_0^t{f(\Flow[a][r](x)) dr} } dt}. 
\]
\noindent  \textbf{Claim 4}  The function \(\lambda_g\) belongs to $\mathcal{B}^{ac}(\mathbb{R}_+)$ and satisfies $\mathcal{H} \lambda_g (x) = f(x) \lambda_g(x) - g(x)$. \\
Using the semi-group property of $\Flow[a][t](x)$ we have
\[ 
\lambda_g( \Flow[a][t](x)) = \EXP{ \int_0^t{f( \Flow[a][u](x)) du}} \left[ \lambda_g(x) - \int_0^t{g(\Flow[a][u](x)) \EXP{-\int_0^u{f(\Flow[a][\theta](x)) d\theta}} du } \right] . 
\]
This proves that \(\lambda_g\) is in $\mathcal{B}^{ac}(\mathbb{R}_+)$ with $\tfrac{d}{dt} \lambda_g(\Flow[a][t](x)) = f(\Flow[a][t](x)) \lambda_g(\Flow[a][t](x)) - g(\Flow[a][t](x))$ and gives the stated formula.

Consider now $\nu$ an invariant  probability measure with $\nu(f) < \infty$. 
The Markov property at time $t = 0$ together with Claim~3 shows that $\left. \tfrac{d}{dt} \E \lambda_g(\Y[][\nu][a][t]) \right|_{t = 0} = \left. \tfrac{d}{dt} \int_0^\infty{  \E \lambda_g(\Y[][\delta_x][a][t])  \nu(dx) }  \right|_{t = 0} = \nu(\mathcal{L} \lambda_g).$ The exchange of the derivative at time $t = 0$ and the integral on $\mathbb{R}_+$ is legitimate thanks to the Dominated Convergence Theorem. 
Claim~4 and the fact that $\nu$ is an invariant measure then show that
\[ 
0 = \left. \frac{d}{dt} \E \lambda_g(\Y[][\nu][a][t]) \right|_{t = 0}=  \lambda_g(0) \nu(f) - \nu(g). 
\]
The same computations can be done with $g \equiv 1$, giving $\lambda_1(0) \nu(f) = 1$. It follows that
\[ \nu(g) = \frac{\lambda_g(0)}{\lambda_1(0)} = \int_0^\infty{g(x) \nu^\infty_a(dx) }.  \]
We deduce that necessarily  $\nu = \nu^\infty_a$.
\end{proof}
The next lemma characterizes the invariant probability measures of \eqref{NL-equation0}.

\begin{lemma}
The invariant probability measures of the non-linear equation \eqref{NL-equation0} are 
\(\{ \nu^{\infty}_{a} ~|~ a = J \gamma(a), a \in \mathbb{R}_+\}\).
\end{lemma}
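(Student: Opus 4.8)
The plan is to prove the two inclusions separately. The crucial observation is that a \emph{stationary} solution of the non-linear equation \eqref{NL-equation0} must have a \emph{constant} current $J\,\E f(X_t)$, so that it in fact solves a \emph{linear} homogeneous equation of the form \eqref{eq:homogeneous EDS with cst a}, to which Proposition~\ref{prop:mesinvlineaire} applies. Everything then reduces to identifying the right constant $a$ via the scalar relation $a=J\gamma(a)$.

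For the inclusion ``$\supseteq$'': fix $a\geq 0$ with $a=J\gamma(a)$ and consider the solution $(\Y[][\nu^\infty_a][a][t])_{t\geq0}$ of \eqref{eq:homogeneous EDS with cst a} started from $\mathcal{L}(\Y[][\nu^\infty_a][a][0])=\nu^\infty_a$. By Proposition~\ref{prop:mesinvlineaire}, $\mathcal{L}(\Y[][\nu^\infty_a][a][t])=\nu^\infty_a$ for all $t\geq0$; moreover the proof of that proposition shows $\r[\nu^\infty_a][a](t)=\gamma(a)$ for all $t\geq0$. Hence $J\,\E f(\Y[][\nu^\infty_a][a][t])=J\gamma(a)=a$ for all $t$, i.e.\ the closure condition \eqref{eq:closure equation NL-SDE} holds; since $t\mapsto\E f(\Y[][\nu^\infty_a][a][t])$ is constantly equal to $\gamma(a)<\infty$, hence locally integrable, $(\Y[][\nu^\infty_a][a][t])_{t\geq0}$ is a solution of \eqref{NL-equation0} in the sense of Definition~\ref{def:solution des EDS}. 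As its law is constantly $\nu^\infty_a$, this measure is invariant for \eqref{NL-equation0}.

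For the inclusion ``$\subseteq$'': let $\mu$ be an invariant probability measure of \eqref{NL-equation0} and let $(X_t)_{t\geq0}$ be a stationary solution with $\mathcal{L}(X_t)=\mu$ for all $t$. By Definition~\ref{def:solution des EDS}, $t\mapsto\E f(X_t)$ is measurable and locally integrable; being constant and equal to $\mu(f)$, this forces $\mu(f)<\infty$. Set $a:=J\mu(f)\geq0$. Then $(X_t)$ solves \eqref{non-homogeneous-SDE} with the \emph{constant} current $a_t\equiv J\,\E f(X_t)=a$, that is, $(X_t)$ is a solution of \eqref{eq:homogeneous EDS with cst a}; being stationary with marginal $\mu$, the measure $\mu$ is invariant for \eqref{eq:homogeneous EDS with cst a}. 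The uniqueness part of Proposition~\ref{prop:mesinvlineaire} then forces $\mu=\nu^\infty_a$, whence $\mu(f)=\nu^\infty_a(f)=\gamma(a)$. Plugging this back into the definition of $a$ gives $a=J\gamma(a)$, so $\mu=\nu^\infty_a$ with $a=J\gamma(a)$, as claimed.

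The only genuinely delicate point is the reduction in the last paragraph: one must read ``invariant probability measure of \eqref{NL-equation0}'' through Definition~\ref{def:solution des EDS}, so that local integrability of $t\mapsto\E f(X_t)$ is available and yields $\mu(f)<\infty$, and one must check that the uniqueness statement of Proposition~\ref{prop:mesinvlineaire} can be invoked for $\mu$ without assuming $\mu(f^2)<\infty$. This is indeed legitimate, since the uniqueness argument in that proof only uses an invariant probability measure with $\nu(f)<\infty$.
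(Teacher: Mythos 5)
Your proof is correct and follows essentially the same approach as the paper: reduce to the linear equation \eqref{eq:homogeneous EDS with cst a} by observing that a stationary solution has constant current $J\mu(f)$, then invoke Proposition~\ref{prop:mesinvlineaire} in both directions. Your remark that the uniqueness part of Proposition~\ref{prop:mesinvlineaire} only requires $\mu(f)<\infty$ rather than $\mu(f^2)<\infty$ is an accurate and worthwhile observation, consistent with how that proof is actually carried out.
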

\begin{proof}
Let $\nu$ be an invariant probability measure of \eqref{NL-equation0}
and \(\mathcal{L}(X_0) = \nu\). We have
\[ \forall t \geq 0,~\E f(X_t) = \nu(f) =: p. \]
Let $a :=J p$. 
The process $(X_t)_{t \geq 0}$ solves \eqref{eq:homogeneous EDS with cst a} and $\nu$ is an invariant probability measure of equation \eqref{eq:homogeneous EDS with cst a}. It implies that $\nu = \nu^\infty_a$. Moreover $p = \gamma(a)$ and so necessarily $\frac{a}{\gamma(a)} = J$.

Conversely, let $a \geq 0$ such that $\frac{a}{\gamma(a)} = J$.
Let $(\Y[][\nu^\infty_a][a][t])$ be the solution of \eqref{eq:homogeneous EDS with cst a} with \(\mathcal{L}(\Y[][\nu^\infty_a][a][0]) = \nu^\infty_a\).
We have seen that $\E f(\Y[][\nu^\infty_a][a][t]) = \gamma(a)$, it follows that $a =  J \E f(\Y[][\nu^\infty_a][a][t])$. Consequently $(\Y[][\nu^\infty_a][a][t])_{t \geq 0}$ solves \eqref{NL-equation0} and $\nu^\infty_a$ is one of its invariant probability measure.
\end{proof}
The problem of finding the invariant probability measures of the mean-field equation \eqref{NL-equation0} has been reduced to finding the solutions of the scalar equation \eqref{eq:scalar equation invariant measures}. When $J$ is small enough, we can prove that it has a unique solution, which concludes the proof of Proposition~\ref{prop:mesures invariantes}.
\begin{lemma}
Equation \eqref{eq:scalar equation invariant measures} has at least one solution $a^* > 0$. Moreover, there is a constant $J_0 > 0$ such that for all $J \in [0,J_0]$ \eqref{eq:scalar equation invariant measures} has a unique solution.
\end{lemma}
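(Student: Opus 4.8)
The plan is to reduce everything to the scalar map $g\colon[0,\infty)\to[0,\infty)$, $g(a):=a/\gamma(a)$, and to establish: (i) $g$ is continuous with $g(0)=0$ and $g(a)\to+\infty$; (ii) $g$ is locally Lipschitz near $0$, while $\inf_{a\ge\delta}g(a)>0$ for every $\delta>0$. Granting (i), the intermediate value theorem gives, for every $J\ge 0$, a solution $a^{*}\ge 0$ of $g(a^{*})=J$, and $a^{*}>0$ as soon as $J>0$ since $g(0)=0$; this is the existence part. For $g(0)=0$ it suffices that $0<\gamma(0)<\infty$. For $g(a)\to+\infty$: applying the differential inequality established in the proof of Proposition~\ref{prop: la constant bar(a)} with the constant current $a$ (i.e.\ $\bar a=a$) to the stationary solution $\Y[][\nu^\infty_a][a][t]$, for which $\E f(\Y[][\nu^\infty_a][a][t])\equiv\gamma(a)$ by Proposition~\ref{prop:mesinvlineaire}, the bound $0=\frac{d}{dt}\gamma(a)\le\frac12\big(2\psi(a+C_b)-\gamma(a)^2\big)$ yields $\gamma(a)\le\sqrt{2\psi(a+C_b)}$; Assumption~\ref{assumptions:f0:psi} ($\psi(\theta)/\theta^2\to0$) then gives $\gamma(a)/a\to0$, hence $g(a)\to+\infty$. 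Continuity of $g$ reduces to continuity of $\gamma$, which follows from the Lipschitz estimate below.

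The technical core is the local Lipschitz regularity of $\gamma$ near $0$. The change of variable $x=\Flow[a][t](0)$ in \eqref{eq:gamma(a) la constante de renormalisation} (exactly as in Claim~1 of the proof of Proposition~\ref{prop:mesinvlineaire}) yields $\gamma(a)^{-1}=\int_0^{\infty}\EXP{-\int_0^{t}{f(\Flow[a][u](0))\,du}}\,dt$; from this and the comparison principle (Lemma~\ref{lemma:b0}, point~\ref{lemma:b0:comparison principle}) one reads off that $\gamma$ is non-decreasing, and using $f(\Flow[0][u](0))\to f(\sigma_0)>0$ (Remark~\ref{remark:b0f}, point~\ref{remark:b0f:lim f flow a}) that $0<\gamma(a)<\infty$ for all $a$. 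Now fix $\delta>0$ and $a,d\in[0,\delta]$. Using $|e^{-A}-e^{-B}|\le e^{-\min(A,B)}|A-B|$, the bound $\int_0^{t}f(\Flow[a][u](0))\,du\ge\int_0^{t}f(\Flow[0][u](0))\,du$ (and likewise for $d$), the bound $\Flow[a][u](0)\le(C_b+\delta)u$ (Lemma~\ref{lemma:b0}, point~\ref{lemma:b0:drift bounded from above}), the convexity of $f$, and — crucially — Assumption~\ref{assumptions:b0:Cvarphi} specialised to constant currents ($|\Flow[a][u](0)-\Flow[d][u](0)|\le C_\varphi|a-d|\,u$), one arrives at
\[ \big|\gamma(a)^{-1}-\gamma(d)^{-1}\big|\le C_\varphi|a-d|\int_0^{\infty}\EXP{-\int_0^{t}{f(\Flow[0][u](0))\,du}}\Big(\int_0^{t}u\,f'\big((C_b+\delta)u\big)\,du\Big)\,dt. \]
The step I expect to be the main obstacle is showing this last double integral is finite: it pits the (at least exponential) decay of $\EXP{-\int_0^{t}{f(\Flow[0][u](0))\,du}}$, coming from $f(\Flow[0][u](0))\to f(\sigma_0)>0$, against the at most sub-exponential growth of $\int_0^{t}u\,f'((C_b+\delta)u)\,du$, coming from $f'(x)\le C+f(x)\le C+C_\epsilon e^{\epsilon x}$ valid for every $\epsilon>0$ (Assumption~\ref{assumptions:f0:psi} together with Remark~\ref{remark:b0f}, point~\ref{remark:b0f:f does not grow too much}); taking $\epsilon<f(\sigma_0)/(2(C_b+\delta))$ closes the estimate, the case $\sigma_0=+\infty$ only making the decay faster. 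Since $\gamma(a)^{-1}$ is non-increasing and strictly positive on $[0,\delta]$, dividing by $\gamma(a)^{-1}\gamma(d)^{-1}$ turns this into $|\gamma(a)-\gamma(d)|\le L|a-d|$ on $[0,\delta]$.

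Finally, uniqueness for small $J$. With $\delta$ and $L$ as above, note that $g$ is continuous, strictly positive on $[\delta,\infty)$, with $g(a)\to+\infty$, so $m:=\inf_{a\ge\delta}g(a)>0$. Set $J_0:=\min\big(\tfrac{1}{2L},\tfrac{m}{2}\big)>0$ and let $0\le J\le J_0$. The case $J=0$ forces $a=0$. For $0<J\le J_0$, any solution $a^{*}$ of $g(a^{*})=J$ lies in $(0,\delta)$, since $a^{*}=0$ is excluded and $a^{*}\ge\delta$ would give $J=g(a^{*})\ge m>J_0\ge J$; and if $a_1<a_2$ were two such solutions, then from $a_i=J\gamma(a_i)$ we would get $a_2-a_1=J(\gamma(a_2)-\gamma(a_1))\le JL(a_2-a_1)\le\tfrac12(a_2-a_1)$, absurd. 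Hence \eqref{eq:scalar equation invariant measures} has a unique solution for every $J\in[0,J_0]$, which together with the existence part proves the lemma and shows $J_m\ge J_0>0$.
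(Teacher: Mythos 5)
Your proof is correct, and it diverges from the paper's on both substantive points. For the divergence $a/\gamma(a)\to\infty$, the paper lower-bounds $\gamma(a)^{-1}$ directly: a change of variable in \eqref{eq:gamma(a) la constante de renormalisation} gives $\gamma(a)^{-1}\ge\int_0^\infty\exp\bigl(-\tfrac{1}{a+C_b}\int_0^\theta f(z)\,dz\bigr)\,d\theta/(a+C_b)$, and monotone convergence shows the integral blows up; you instead exploit the \emph{a priori} bound $\gamma(a)^2\le 2\psi(a+C_b)$, obtained by feeding the stationary law into the differential inequality from Proposition~\ref{prop: la constant bar(a)}, and invoke $\psi(\theta)/\theta^2\to0$. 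Both work; yours reuses an estimate already proved in the paper, the paper's is self-contained and shorter. For uniqueness near $J=0$, the paper asserts (without proof — ``one can show'') that $U(a):=a/\gamma(a)$ is differentiable at $0$ with $U'(0)=1/\gamma(0)>0$, infers strict monotonicity on a neighbourhood $[0,a_0]$, then picks $J_0:=\min_{[a_0,a_1]}U$. You instead prove a genuine quantitative bound: a local Lipschitz estimate $|\gamma(a)-\gamma(d)|\le L|a-d|$ on $[0,\delta]$ (via the same type of $|e^{-A}-e^{-B}|$ and convexity estimates the paper uses elsewhere, e.g.\ in the proof of Theorem~\ref{th:existence et unicité de l'équation limite}), then run a Banach-type contraction on the fixed-point form $a=J\gamma(a)$. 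Your route is more elementary and fully detailed where the paper waves its hands, at the cost of a slightly longer argument; it also naturally produces a lower bound $J_0=\min(1/(2L),m/2)$ in which the dependence on $f,b$ is transparent. One minor remark: you should note explicitly, as you implicitly do, that $\gamma$ non-decreasing is used in the contraction step to replace $\gamma(a_2)-\gamma(a_1)$ by $|\gamma(a_2)-\gamma(a_1)|$; this is established via the same change-of-variable formula $\gamma(a)^{-1}=\int_0^\infty\EXP{-\int_0^t f(\varphi^a_u(0))\,du}\,dt$ from \eqref{eq:equation gamma a avec le flot} that you already invoke.
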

\begin{proof}
Recall \eqref{eq:gamma(a) la constante de renormalisation}.
By the changes of variable $y  = \Flow[a][u](0)$ and $x = \Flow[a][t](0)$, it holds that
\begin{equation}
  \gamma(a)^{-1} = \int_0^\infty{ \EXP{-\int_0^t{f(\Flow[a][u](0)) du}} dt}. 
  \label{eq:equation gamma a avec le flot}
  \end{equation}
In particular, the function $a \mapsto \gamma(a)$ is non-decreasing. 
Furthermore, using that \(b(x) \leq C_b\), we have
\begin{align*}
\frac{a}{\gamma(a)} &\geq a \int_0^\infty{ 
	\EXP{-\int_0^t{f( (a+C_b)u) du}} dt}\\
& \geq  \frac{a}{a+C_b}\int_0^\infty\EXP{-\frac{1}{a+C_b}\int_0^\theta f(z)dz}d\theta.
\end{align*}
We deduce that $\lim_{a \rightarrow +\infty}{a \gamma(a)^{-1}} = +\infty$.
Let $U(a) := a \gamma(a)^{-1}$. One has $U(0) = 0$, $\lim_{a \rightarrow +\infty}{U(a)} = +\infty$ and $U$ is continuous  on $\mathbb{R}_+$. It follows that the equation $U(a) = J$ has at least one solution $a^*$. Moreover, one can show that the function $U$ has a derivative at $a = 0$ and $U'(0) = 1/\gamma(0) > 0$. 
Consequently, there is $a_0 > 0$ such that $U$ is strictly increasing on $[0, a_0]$.
Using $\lim_{a \rightarrow +\infty}{U(a)} = +\infty$, we can find $a_1$ such that: $\forall a \geq a_1, U(a) \geq 1$.
Finally let $J_0 := \min_{a \in [a_0, a_1]} U(a) > 0$. Let $J < J_0$, it is clear that the equation $U(a) = J$ has exactly one solution $a^* \in [0, a_0]$.
\end{proof}

\section{The convergence of the jump rate implies the convergence in law of the time marginals}
\label{sec: convergence of the jumoing rate implies convergence of the law}

The goal of this section is to prove that controlling the behavior of the jump rate $t \mapsto \E f(X_t)$ can be sufficient to deduce the asymptotic law of $(X_t)$, solution of \eqref{NL-equation0}.
  \begin{proposition} \label{prop:convergence en loi vers la mesure invariante}
  Grant Assumptions~\ref{assumptions:b0}, \ref{assumptions:f0}, \ref{assumptions:nu0}.
Let $(X_t)_{t \geq 0}$ be the solution of the non-linear equation \eqref{NL-equation0}. Assume that there exist constants $\lambda, C > 0$ and $a^* \geq 0$ (that may depend on $b$, $f$, $\nu$, and $J$) such that:
\[ \forall t \geq 0,~ |\E f(X_t) - \gamma(a^*)| \leq C e^{-\lambda t}, \]
and that $a^*$ satisfies equation \eqref{eq:scalar equation invariant measures}: $\frac{a^*}{\gamma(a^*)} = J$. Then
\[ X_t \limhb{\longrightarrow}{t\to \infty}{\mathcal{L}}\nu^\infty_{a^*}. \]
Moreover, if $\phi: \mathbb{R}_+ \rightarrow \mathbb{R}$ is any bounded Lipschitz-continuous function, it holds that
\[ \forall 0 < \lambda' < \min(\lambda, f(\sigma_0)), ~\exists D >0 
\quad\text{s.t.}\quad \forall t\geq 0,\quad  | \E \phi(X_t) -  \nu^\infty_{a^*}(\phi)| \leq D e^{-\lambda' t} ,\]
where the constant $D$ only depends on $b, f, J, C, \nu, \lambda'$ and $\phi$ through its infinite norm and its Lipschitz constant.
\end{proposition}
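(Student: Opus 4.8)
The plan is to freeze the interaction and then read off the long-time behaviour from the explicit representation of Proposition~\ref{prop:formule donnant la loi de X_t}. Set $a_t := J\E f(X_t)$; by Theorem~\ref{th:existence et unicité de l'équation limite} this is a bounded continuous non-negative function with $(X_t)_{t\geq 0} = (\Y[0][\nu][(a.)][t])_{t\geq 0}$, and, since $a^* = J\gamma(a^*)$, the hypothesis gives $|a_t - a^*| \leq JC e^{-\lambda t}$ for all $t\geq 0$. Put $\bar a := \max(a^*, \sup_{t\geq 0} a_t) < \infty$. Fix a bounded Lipschitz $\phi$ and $0 < \lambda' < \min(\lambda, f(\sigma_0))$, choose $\delta \in (\lambda', f(\sigma_0))$, and note that since $f(\Flow[0][u](0)) \to f(\sigma_0)$ (Remark~\ref{remark:b0f}) there is $C_\delta \geq 1$ with $\H[][0](v) \leq C_\delta e^{-\delta v}$ for all $v\geq 0$. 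Applying Proposition~\ref{prop:formule donnant la loi de X_t} with $s=0$ (extended to bounded continuous $\phi$ by linearity) and $\r[\nu][(a.)](u,0) = \E f(X_u)$,
\[ \E\phi(X_t) = \underbrace{\int_0^t \phi(\Flow[(a.)][t,u](0))\,\H[][(a.)](t,u)\,\E f(X_u)\, du}_{A(t)} \;+\; \underbrace{\int_0^\infty \phi(\Flow[(a.)][t,0](x))\,\H[x][(a.)](t,0)\,\nu(dx)}_{B(t)}. \]
By the comparison principle of Lemma~\ref{lemma:b0}, $\H[x][(a.)](t,0) \leq \H[][0](t)$, so $|B(t)| \leq \|\phi\|_\infty C_\delta e^{-\delta t}$; and from the proof of Proposition~\ref{prop:mesinvlineaire}, $\nu^\infty_{a^*}(\phi) = \gamma(a^*)\int_0^\infty \phi(\Flow[a^*][v](0))\,\H[][a^*](v)\, dv$.

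It remains to estimate $A(t) - \nu^\infty_{a^*}(\phi)$. Changing variables $u = t-v$ in $A(t)$ and using that the constant-current flow is time-homogeneous, so $\Flow[a^*][v](0) = \Flow[a^*][t,t-v](0)$ and $\H[][a^*](v) = \EXP{-\int_{t-v}^t f(\Flow[a^*][r,t-v](0))\, dr}$, I decompose $A(t) - \nu^\infty_{a^*}(\phi)$, by telescoping the product $\phi\cdot\H\cdot(\text{rate})$ and peeling off the tail $v>t$, as the sum of
\begin{align*}
\mathrm{(i)}\ &:= \int_0^t \phi(\Flow[(a.)][t,t-v](0))\,\H[][(a.)](t,t-v)\,\bigl(\E f(X_{t-v}) - \gamma(a^*)\bigr)\, dv, \\
\mathrm{(ii)}\ &:= \gamma(a^*)\int_0^t \bigl(\phi(\Flow[(a.)][t,t-v](0)) - \phi(\Flow[a^*][t,t-v](0))\bigr)\,\H[][(a.)](t,t-v)\, dv, \\
\mathrm{(iii)}\ &:= \gamma(a^*)\int_0^t \phi(\Flow[a^*][t,t-v](0))\,\bigl(\H[][(a.)](t,t-v) - \H[][a^*](v)\bigr)\, dv, \\
\mathrm{(iv)}\ &:= -\gamma(a^*)\int_t^\infty \phi(\Flow[a^*][v](0))\,\H[][a^*](v)\, dv.
\end{align*}
Term (iv) is bounded by $\gamma(a^*)\|\phi\|_\infty\int_t^\infty \H[][0](v)\, dv \leq \tfrac{\gamma(a^*)\|\phi\|_\infty C_\delta}{\delta}e^{-\delta t}$. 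For (i) and (ii) I use $\H[][(a.)](t,t-v) \leq \H[][0](v)$ (Remark~\ref{remark:inequality Ha}), the hypothesis $|\E f(X_{t-v}) - \gamma(a^*)| \leq Ce^{-\lambda(t-v)}$ for (i), and for (ii) the Lipschitz property together with $|\Flow[(a.)][t,t-v](0) - \Flow[a^*][t,t-v](0)| \leq C_\varphi\int_{t-v}^t|a_r - a^*|\, dr \leq \tfrac{C_\varphi JC}{\lambda}e^{-\lambda(t-v)}$ (Assumption~\ref{assumptions:b0:Cvarphi}); both are then bounded by $\mathrm{const}\cdot C_\delta\int_0^t e^{-\delta v}e^{-\lambda(t-v)}\, dv \leq \mathrm{const}\cdot e^{-\lambda' t}$, the last step because $\min(\delta,\lambda) > \lambda'$.

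The heart of the argument is term (iii). Writing $\H[][(a.)](t,t-v) = e^{-I_1}$, $\H[][a^*](v) = e^{-I_2}$ with $I_1 := \int_{t-v}^t f(\Flow[(a.)][r,t-v](0))\, dr$ and $I_2 := \int_{t-v}^t f(\Flow[a^*][r,t-v](0))\, dr$, the comparison principle gives $I_1\wedge I_2 \geq \int_0^v f(\Flow[0][s](0))\, ds = -\log\H[][0](v)$, hence
\[ \bigl|\H[][(a.)](t,t-v) - \H[][a^*](v)\bigr| \;\leq\; e^{-(I_1\wedge I_2)}\,|I_1 - I_2| \;\leq\; \H[][0](v)\,|I_1 - I_2|, \]
and keeping the factor $e^{-(I_1\wedge I_2)} = \H[][0](v)$ here rather than bounding it by $1$ is essential. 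For $r\in[t-v,t]$ both flows stay in $[0, C^{\bar a}_b v]$ (Lemma~\ref{lemma:b0}), so convexity of $f$ and Assumption~\ref{assumptions:b0:Cvarphi} give $|f(\Flow[(a.)][r,t-v](0)) - f(\Flow[a^*][r,t-v](0))| \leq f'(C^{\bar a}_b v)\,C_\varphi\int_{t-v}^r|a_\theta - a^*|\, d\theta \leq f'(C^{\bar a}_b v)\,\tfrac{C_\varphi JC}{\lambda}e^{-\lambda(t-v)}$, whence $|I_1 - I_2| \leq v\,f'(C^{\bar a}_b v)\,\tfrac{C_\varphi JC}{\lambda}e^{-\lambda(t-v)}$. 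By Remark~\ref{remark:b0f} and $f'(x) \leq C + f(x)$, the quantity $v\,f'(C^{\bar a}_b v)$ grows sub-exponentially: for every $\eta > 0$ there is $C_\eta$ with $v\,f'(C^{\bar a}_b v) \leq C_\eta e^{\eta v}$. Choosing $\eta \in (0, \delta - \lambda')$, the integrand of (iii) is $\leq \mathrm{const}\cdot e^{-(\delta-\eta)v}e^{-\lambda(t-v)}$, whose integral over $[0,t]$ is $\leq \mathrm{const}\cdot e^{-\lambda' t}$ since $\min(\delta-\eta,\lambda) > \lambda'$.

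Adding (i)--(iv) and $B(t)$ yields $|\E\phi(X_t) - \nu^\infty_{a^*}(\phi)| \leq D e^{-\lambda' t}$ with $D$ of the claimed dependence, and $X_t \to \nu^\infty_{a^*}$ in law follows because bounded Lipschitz functions are convergence-determining. I expect the only genuine difficulty to be the estimate of term (iii): the crude bound $|e^{-I_1} - e^{-I_2}| \leq |I_1 - I_2|$ would discard exactly the decay $e^{-(I_1\wedge I_2)} = \H[][0](v)$ needed to absorb the sub-exponential but unbounded growth of $f'$ along the flow, so that one would only reach a rate strictly below $\min(\lambda, f(\sigma_0))$; retaining that factor is what makes the whole range of $\lambda'$ available.
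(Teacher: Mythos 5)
Your proof is correct and follows essentially the same route as the paper: the representation from Proposition~\ref{prop:formule donnant la loi de X_t}, the explicit formula for $\nu^\infty_{a^*}(\phi)$, a triangle-inequality telescoping into flow-difference, kernel-difference, rate-difference, and tail terms, and the crucial step of bounding $|\H[][(a.)]-\H[][a^*]|$ via $|e^{-A}-e^{-B}|\leq e^{-\min(A,B)}|A-B|$ so that the $\H[][0]$ decay is retained to absorb the growth of $f'$ along the flow. The only divergence is cosmetic: you pivot the telescoping on the constant $\gamma(a^*)$ (your terms (ii)--(iii)) while the paper pivots on the time-varying rate $\r[\nu][(a.)]$ (its $\alpha_t,\beta_t$ carry $\r$ and its $\theta_t$ carries $\H[][a^*]\phi(\Flow[a^*])$); both orderings yield the same estimates.
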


\begin{proof}
Let $(X_t)_{t \geq 0}$ be the solution of \eqref{NL-equation0} and 
$\phi: \mathbb{R}_+ \rightarrow \mathbb{R}$  a bounded Lipschitz-continuous function, with Lipschitz constant $l_\phi$. 
Consider $\lambda' \in (0, \min(\lambda, f(\sigma_0)))$. 
We denote by $D$ any constant only depending on  $b, f, J, C, \nu, \lambda'$, $||\phi||_\infty$ and $l_\phi$ which shall change from line to line. 
Define for all $t \geq 0,~a_t := J \E f(X_t)$. 
It holds that $(X_t)_{t \geq 0}$ is a solution of \eqref{non-homogeneous-SDE}
with driving current \((a_.)\).
Denote $\r[\nu][(a.)](t, 0) = \E f(X_t)$. By Proposition~\ref{prop:formule donnant la loi de X_t}, we have
\[
\E \phi(X_t) = 
	\int_0^t{\phi(\Flow[(a.)][t,u](0)) \H[][(a.)](t, u)  \r[\nu][(a.)](u, 0) du}
+ 
	\int_0^{\infty}{ \phi(\Flow[(a.)][t, 0](x)) \H[x][(a.)](t, 0) \nu(dx)}
\]
Using Remarks~\ref{remark:inequality Ha} and \ref{remark:b0f} (\ref{remark:b0f:lim f flow a}), together with the fact that $\lambda' < f(\sigma_0)$, we deduce that 
\[ \forall t \geq 0,\quad 	\int_0^{\infty}{ \phi(\Flow[(a.)][t, 0](x)) \H[x][(a.)](t, 0) \nu(dx)} \leq D e^{-\lambda' t}  \]
for some constant $D$.
Moreover, one has, using  the change of variable $x = \Flow[a^*][v](0)$
\begin{align*}
\nu^\infty_{a^*}(\phi) & =
\int_0^{\sigma_{a^*}}{\phi(x) \nu^\infty_{a^*} (dx) } = \int_0^\infty{\phi(\Flow[a^*][v](0)) \gamma(a^*) \H[][a^*](v) dv } \\
&= \int_0^t{ \phi(\Flow[a^*][t,u](0)) \H[][a^*](t, u) \gamma(a^*) du } + \int_t^\infty{ \phi(\Flow[a^*][v](0)) \gamma(a^*) \H[][a^*](v) dv  }.
\end{align*}
The last equality is obtained with the change of variable $v = t - u$.
The second term is controlled by 
\begin{align*}
\int_t^\infty{ \phi(\Flow[a^*][u](0)) \gamma(a^*) \H[][a^*](u) du  } &\leq ||\phi||_\infty \gamma(a^*) \int_t^\infty{\frac{f(\Flow[a^*][u](0))}{\inf_{v \geq t }f(\Flow[a^*][v](0))} \EXP{-\int_0^u{f(\Flow[a^*][\theta](0)) d\theta}} du} \\
&= \frac{||\phi||_\infty \gamma(a^*)}{\inf_{v \geq t }f(\Flow[a^*][v](0))} \EXP{-\int_0^t{f(\Flow[a^*][\theta](0)) d\theta}} \\
& \leq D e^{-\lambda' t},
\end{align*}
for some constant $D$. We used again Remark~\ref{remark:b0f}. It remains to show that
\[  \Delta := \left| \int_0^t{\phi(\Flow[(a.)][t,u](0)) \H[][(a.)](t, u)  \r[\nu][(a.)](u, 0) du} -  \int_0^t{ \phi(\Flow[a^*][t,u](0)) \H[][a^*](t, u) \gamma(a^*) du } \right| \]
goes to zero exponentially fast.
One has
\begin{align*}
\Delta & \leq   \int_0^t{ \left|\phi(\Flow[(a.)][t, u](0)) -  \phi(\Flow[a^*][t, u](0))\right| \H[][(a.)](t, 
u) \r[\nu][(a.)](u, 0)  du } + \int_0^t{ \left|\H[][(a.)](t, u) - \H[][a^*](t, u)\right|   \phi(\Flow[a^*][t, 
u](0))   \r[\nu][(a.)](u, 0) du} \\
 &\quad  + \int_0^t{\H[][a^*](t, u) \phi(\Flow[a^*][t, u](0)) \left|\r[\nu][(a.)](u, 0) - \gamma(a^*)\right| du } \\
  &=:  \alpha_t + \beta_t + \theta_t.
\end{align*}
Using that for all $t \geq 0$, $|\r[\nu][(a.)](t, 0) - \gamma(a^*)| \leq C e^{-\lambda' t}$ ($\lambda' < \lambda$) and Remark~\ref{remark:inequality Ha}, we obtain:
\begin{align*}
\theta_t  &\leq C||\phi||_\infty  \int_0^t{ H_0(t, u) e^{-\lambda ' u} du} \\
 &= C ||\phi||_\infty   e^{-\lambda' t} \int_0^t{ H_0(t - u) e^{\lambda ' (t- u) } du} \\
 & \leq  \left[ C||\phi||_\infty  \int_0^\infty{H_0(u) e^{\lambda ' u } du} \right] e^{-\lambda' t} =: De^{-\lambda' t}.
\end{align*} 
The fact that $u \mapsto H_0(u) e^{ \lambda ' u}$ belongs to $L^1(\mathbb{R}_+)$ follows from $\lambda' < f(\sigma_0)$.
By Theorem~\ref{th:existence et unicité de l'équation limite}, one can find a constant $\bar{p}$ (with $\gamma(a^*) \leq \bar{p}$) such that:
\[ \forall t \geq 0,~ \E f(X_t) = \r[\nu][(a.)](t, 0) \leq \bar{p}. \]
Moreover, Assumption~\ref{assumptions:b0:Cvarphi} and Remark~\ref{remark:inequality Ha} give
\begin{align*}
\alpha_t &\leq \bar{p} l_\phi \int_0^t{|\Flow[(a.)][t,u](0) - \Flow[a^*][t,u](0)| H_0(t, u) du} \\
& \leq \bar{p} l_\phi C_{\varphi} \int_0^t{ \int_u^t{|a_\theta - a^*| d \theta} H_0(t, u) du}.
\end{align*}
Using that $\int_u^t{|a_\theta - a^*| d \theta}  
\leq J C \int_u^t{e^{-\lambda' \theta} d\theta} 
\leq \frac{J C e^{-\lambda' u}}{\lambda'}$, 
one has
\begin{align*}
\alpha_t &\leq \frac{\bar{p} l_\phi C_{\varphi} J C }{\lambda'} e^{-\lambda' t} \int_0^t{  e^{\lambda' (t-u)} H_0(t -u) du} \\
& \leq \left[\frac{\bar{p} l_\phi C_{\varphi} J C }{\lambda'} \int_0^\infty{H_0(u) e^{\lambda ' u } du}\right] e^{-\lambda' t} =: D e^{-\lambda' t}.
\end{align*}
Finally, using the inequality $|e^{-A} - e^{-B}| \leq e^{-\min(A, B)}|A- B|$ together with Remark~\ref{remark:inequality Ha}, we obtain
\[ \beta_t \leq ||\phi||_\infty \bar{p} \int_0^t{ \H[][0](t-u) \int_u^t{ \left|f(\Flow[(a.)][\theta, u](0)) - f(\Flow[a^*][\theta, u](0))\right| d \theta} du}. \]
Setting $\bar{a} := J \bar{p}$, we have moreover, using that $f'$ is non-decreasing and Lemma~\ref{lemma:b0:comparison principle}
\[   \int_u^t{ \left|f(\Flow[(a.)][\theta, u](0)) - f(\Flow[a^*][\theta, u](0)) \right| d \theta}  \leq f'(\Flow[\bar{a}][t, u]) \int_u^t{ \left| \Flow[(a.)][\theta, u](0) - \Flow[a^*][\theta, u](0) \right|  d \theta}. \]
Assumption~\ref{assumptions:b0} yields
\begin{align*}
 \int_u^t{ \left|f(\Flow[(a.)][\theta, u](0)) - f(\Flow[a^*][\theta, u](0)) \right| d \theta} & \leq C_\varphi f'(\Flow[\bar{a}][t, u](0)) \int_u^t{ \int_u^\theta{|a_s- a^*| ds} d \theta}  \\
& \leq C_\varphi J C f'(\Flow[\bar{a}][t, u](0)) \int_u^t{ \int_u^\theta{e^{-\lambda' s} ds} d \theta} \\
& \leq C_\varphi \frac{J C}{\lambda'} f'(\Flow[\bar{a}][t, u](0))  (t-u) e^{\lambda' (t-u)} e^{-\lambda' t}.
\end{align*}
We used the fact that
\[   \int_u^t{ \int_u^\theta{e^{-\lambda' s} ds} d \theta} =  \int_u^t{ \frac{e^{-\lambda' u} - e^{-\lambda' \theta}}{\lambda'} d \theta} \leq \frac{(t-u) e^{-\lambda' u}}{\lambda' }. \]
Note that Lemma~\ref{lemma:b0:drift bounded from above} implies that $f'(\Flow[\bar{a}][t, u](0)) \leq f'(C^{\bar{a}}_b (t-u))$ and using Remark~\ref{remark:b0f}(\ref{remark:b0f:f does not grow too much}) we have
\[ \forall \epsilon > 0, \exists A_\epsilon:~\forall x \geq 0, f'(x) \leq A_\epsilon e^{\epsilon x}. \]
Choosing $\epsilon := (f(\sigma_0) - \lambda') /2$, we obtain
\[  \int_u^t{ \left|f(\Flow[(a.)][\theta, u](0)) - f(\Flow[a^*][\theta, u](0))\right| d \theta}  \leq A_\epsilon C_\varphi \frac{J C}{\lambda'}  (t-u) e^{(\lambda' + \epsilon) (t-u)} e^{-\lambda' t},  \]
and we deduce that
\[ \beta_t \leq \left[ \frac{A_\epsilon J C_\varphi C  ||\phi||_\infty \bar{a}}{\lambda'} \int_0^{+\infty}{H_0(u) u e^{(\lambda' + \epsilon) u} du} \right] e^{-\lambda' t} =: D e^{-\lambda' t}. \]
Combining the three estimates, we have proved the result.
\end{proof}
\section{Long time behavior with constant drift}
\label{sec: long time behavior for isolated neuron}
\label{sec:comportement en temps long avec drift constant}

The goal of this section is to study the rate of convergence to the invariant probability measure when $J = 0$ (no interaction). We use Laplace transform techniques to characterize the convergence. We state here the main result of the section.
\begin{proposition}
	\label{prop: convergence du taux de saut dans le cas J=0}
	Grant Assumptions~\ref{assumptions:b0}, \ref{assumptions:f0} and \ref{assumptions:nu0}.
	Let $(\Y[][\nu][a][t])_{t \geq 0}$ be the solution of \eqref{non-homogeneous-SDE}, driven by a constant current $(a_t) \equiv a$, $a \geq 0$; starting at time $s = 0$ with law $\nu$. 
 One can find a constant $\lambda^*_a \in (0, f(\sigma_a)]$ (only depending on  $b$, $f$ and $a$) such that for any $0 < \lambda < \lambda^*_a$ it holds
\begin{equation} \label{eq:convergence_taux_de_saut_lineaire_vers_gamma}
\forall t \geq 0,\quad |\E f(\Y[][\nu][a][t]) - \gamma(a)| \leq D e^{-\lambda t} 
\int_0^\infty{ [1 + f(x)] |\nu - \nu^\infty_a| (dx)},
\end{equation}
where D is a constant only depending on $f, b,a$ and $\lambda$. Moreover, one has
\[ 
\Y[][\nu][a][t]\limhb{\longrightarrow}{t\to \infty}{\mathcal{L}}\nu^\infty_{a}. \]
\end{proposition}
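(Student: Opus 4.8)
The plan is to exploit that, for a constant current, the Volterra equation of Proposition~\ref{prop:formule donnant la loi de X_t} is a genuine convolution equation, and to study it by Laplace transform. Starting from $\r[\nu][a] = \K[\nu][a] + \K[][a] * \r[\nu][a]$ and using the identities $\K[\nu^\infty_a][a] = \gamma(a)\,\H[][a]$ (Claim~2 in the proof of Proposition~\ref{prop:mesinvlineaire}) and $1 * \K[\nu][a] = 1 - \H[\nu][a]$, I would set $g := \r[\nu][a] - \gamma(a)$ and $w := \K[\nu][a] - \gamma(a)\,\H[][a] = \K[\nu][a] - \K[\nu^\infty_a][a]$; subtracting the trivial Volterra equation satisfied by the constant $\gamma(a)$, the function $g$ solves the renewal-type equation $g = w + \K[][a] * g$. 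Two observations are crucial. First, $\K[][a]$ is a \emph{probability density} on $\mathbb{R}_+$ with full support on $(0,\infty)$: since $f(\Flow[a][t](0)) \to f(\sigma_a) > 0$ (Remark~\ref{remark:b0f}) one has $\H[][a](t) \to 0$, whence $\int_0^\infty \K[][a](t)\,dt = \lim_{t \to \infty}(1 - \H[][a](t)) = 1$, and $\K[][a](t) = f(\Flow[a][t](0))\,\H[][a](t) > 0$ for $t > 0$. Second, the forcing term is \emph{centered}: $\int_0^\infty w(t)\,dt = \lim_{t\to\infty}\bigl(\H[\nu^\infty_a][a](t) - \H[\nu][a](t)\bigr) = 0$, because $\H[\nu][a]$ and $\H[\nu^\infty_a][a]$ both vanish at infinity by dominated convergence.

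Next I would make the exponential decay of the ingredients quantitative. As $t \mapsto f(\Flow[a][t](0))$ increases to $f(\sigma_a)$, for each $\mu < f(\sigma_a)$ there is a constant with $\H[][a](t) \le C_\mu e^{-\mu t}$; together with the sub-exponential growth $f(x) \le C_\epsilon e^{\epsilon x}$ (Remark~\ref{remark:b0f}) this gives $\int_0^\infty e^{\mu t} \K[][a](t)\,dt < \infty$ for every $\mu < f(\sigma_a)$, so in particular $\K[][a]$ has a finite, strictly positive first moment $\mu_1 := \int_0^\infty t\,\K[][a](t)\,dt = \int_0^\infty \H[][a](t)\,dt = \gamma(a)^{-1}$ (integration by parts, using $\K[][a] = -\partial_t \H[][a]$ and \eqref{eq:equation gamma a avec le flot}). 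For the forcing term, writing $w(t) = \int_0^\infty f(\Flow[a][t](x))\,\H[x][a](t)\,(\nu - \nu^\infty_a)(dx)$ and using the comparison principle $\Flow[a][t](x) \ge \Flow[a][t](0)$, the linear growth $\Flow[a][t](x) \le x + C^a_b t$ (Lemma~\ref{lemma:b0}) and Assumption~\ref{assumptions:f0:f(x+y)}, one gets $|w(t)| \le D\,e^{-\lambda^*_a t}\int_0^\infty [1 + f(x)]\,|\nu - \nu^\infty_a|(dx)$ for some $\lambda^*_a \in (0, f(\sigma_a)]$, so that $\hat w(\zeta) := \int_0^\infty e^{-\zeta t} w(t)\,dt$ is analytic on $\{\RePart\zeta > -\lambda^*_a\}$.

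The heart of the matter is the Laplace transform step: on $\{\RePart\zeta > 0\}$ the renewal equation gives $\hat g(\zeta) = \hat w(\zeta) / (1 - \hat\K[][a](\zeta))$. Because $\K[][a]$ is a positive probability density with full support, the triangle inequality is strict and $|\hat\K[][a](\zeta)| < 1$ for all $\zeta$ with $\RePart\zeta \ge 0$, $\zeta \ne 0$, while $\hat\K[][a](0) = 1$ and $1 - \hat\K[][a](\zeta) = \mu_1 \zeta + o(\zeta)$ near the origin with $\mu_1 > 0$ (a \emph{simple} zero); moreover $\hat\K[][a]$ is analytic on $\{\RePart\zeta > -f(\sigma_a)\}$ and tends to $0$ as $|\zeta| \to \infty$ in vertical strips (Riemann--Lebesgue). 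By compactness, after possibly shrinking $\lambda^*_a > 0$, the function $1 - \hat\K[][a]$ has no zero in $\{\RePart\zeta \ge -\lambda^*_a\} \setminus \{0\}$. The decisive point is that at the remaining point $\zeta = 0$ the numerator \emph{also} vanishes, $\hat w(0) = \int_0^\infty w = 0$, so the quotient $\hat g$ extends to a bounded analytic function on $\{\RePart\zeta \ge -\lambda\}$, decaying at infinity, for every $\lambda < \lambda^*_a$. A standard Paley--Wiener / Laplace-inversion argument (in the spirit of \cite{gripenberg_volterra_1990}) then yields $|\r[\nu][a](t) - \gamma(a)| = |g(t)| \le D\,e^{-\lambda t}$, and propagating the $\nu$-dependence of $\hat w$ through the bound on $w$ above produces the factor $\int_0^\infty [1 + f(x)]\,|\nu - \nu^\infty_a|(dx)$ of \eqref{eq:convergence_taux_de_saut_lineaire_vers_gamma}. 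This is where the main obstacle lies: the renewal equation is \emph{critical}, $\hat\K[][a](0) = 1$, so the naive Paley--Wiener criterion (``resolvent in $L^1$ iff $\hat k \ne 1$ on the closed right half-plane'') fails, and the argument hinges entirely on the cancellation $\int_0^\infty w = 0$ together with a careful location of the zeros of $1 - \hat\K[][a]$ in a left strip.

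Finally, I would deduce the convergence in law from the explicit formula of Proposition~\ref{prop:formule donnant la loi de X_t}: for bounded continuous $\phi$,
\[ \E\phi(\Y[][\nu][a][t]) = \int_0^t \phi(\Flow[a][t-u](0))\,\H[][a](t-u)\,\r[\nu][a](u)\,du + \int_0^\infty \phi(\Flow[a][t](x))\,\H[x][a](t)\,\nu(dx). \]
The second term is bounded by $\|\phi\|_\infty\,\H[\nu][a](t) \to 0$; replacing $\r[\nu][a](u)$ by $\gamma(a)$ in the first term costs at most $\|\phi\|_\infty \int_0^t \H[][a](t-u)\,D e^{-\lambda u}\,du = O(e^{-\lambda t})$, using that $\H[][a]$ has an exponential moment; and the remainder $\gamma(a)\int_0^t \phi(\Flow[a][v](0))\,\H[][a](v)\,dv$ converges to $\gamma(a)\int_0^\infty \phi(\Flow[a][v](0))\,\H[][a](v)\,dv = \nu^\infty_a(\phi)$, the last identity being exactly the one established at the end of the proof of Proposition~\ref{prop:mesinvlineaire}. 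Hence $\E\phi(\Y[][\nu][a][t]) \to \nu^\infty_a(\phi)$ for every bounded continuous $\phi$, i.e.\ $\Y[][\nu][a][t] \to \nu^\infty_a$ in law; this last part is just the computation of Proposition~\ref{prop:convergence en loi vers la mesure invariante}, considerably simpler here since the driving current is already constant.
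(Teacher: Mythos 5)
Your plan correctly identifies the structure of the argument: pass to the renewal equation for $g := \r[\nu][a] - \gamma(a)$, note that $\K[][a]$ is a probability density (the critical case), and exploit the cancellation $\hat w(0) = \int_0^\infty w\,dt = 0$ together with the fact that the only zero of $1-\widehat{\K[][a]}(\zeta) = \zeta\,\widehat{\H[][a]}(\zeta)$ in a suitable left strip is a simple zero at $\zeta = 0$. In essence this is the same Laplace-transform route as the paper, with the difference that you try to invert the Laplace transform for $g$ \emph{directly}, whereas the paper first proves $\r[][a] = \gamma(a) + \xi_a$ with $\xi_a \in L_\lambda$ for the Dirac initial condition (Proposition~\ref{prop: comportement en temps long equation de volterra cas convolutionnel}, via a contour shift that explicitly picks up the residue $\gamma(a)$ at $\zeta =0$), and only then writes, via Lemma~\ref{lem:solution a laide de la resolvante.} and \eqref{relation entre K et H},
\[
\r[\nu][a] - \gamma(a) = \bigl(\K[\nu][a] - \K[\nu^\infty_a][a]\bigr) + \gamma(a)\bigl(\H[\nu^\infty_a][a] - \H[\nu][a]\bigr) + \xi_a * \bigl(\K[\nu][a] - \K[\nu^\infty_a][a]\bigr),
\]
which is then bounded term by term using \emph{pointwise} estimates on the kernels — no further Laplace analysis for the $\nu$-dependent part.

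The place where your version has a genuine gap is the final sentence: ``A standard Paley--Wiener / Laplace-inversion argument ... then yields $|g(t)| \le D e^{-\lambda t}$.'' This step is not standard for two reasons. First, the ordinary Paley--Wiener criterion concerns membership of a resolvent in $L^1$, and as you note it fails in the critical case $\widehat{\K[][a]}(0)=1$. Second, and more to the point, to pass from ``$\hat g$ is bounded, analytic and decaying on a vertical strip containing $\RePart\zeta = -\lambda$'' to a pointwise bound $|g(t)| \le D e^{-\lambda t}$ you need to justify that the inverse Laplace integral $\frac{1}{2\pi i}\int_{-\lambda - i\infty}^{-\lambda + i\infty} e^{\zeta t}\hat g(\zeta)\,d\zeta$ actually converges and reproduces $g$; this requires $\hat g$ to be $L^1$ (or at least $L^2$) on that vertical line. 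Near the boundary you have $\hat g(\zeta) \sim \hat w(\zeta)$ as $|\ImPart\zeta|\to\infty$, and since $w$ is merely a bounded integrable function on $\mathbb{R}_+$ its transform decays only like $1/|\zeta|$, which is not $L^1$. So the inversion as stated is not licensed without additional regularity on $w$, and you do not indicate how to obtain it. The paper sidesteps this entirely: the contour shift / whole-line Paley--Wiener argument is only applied to the \emph{autonomous} kernel $\K[][a]$ (for which $K_{a,\sigma_+} = e^{-\sigma_+ \cdot}\K[][a] \in L^1(\mathbb{R})$ with $\sigma_+ > 0$), yielding $\xi_a \in L_\lambda$; the $\nu$-dependence enters only through the explicit resolvent formula above and elementary convolution estimates of the form $|(\xi_a * w)(t)| \le \|\xi_a\|_{\lambda,1}\,\sup_{s}\bigl(|w(s)|e^{\lambda s}\bigr)\,e^{-\lambda t}$. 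I would recommend restructuring your argument along these lines rather than attempting the direct inversion, since otherwise you must supply the missing integrability. The last paragraph on convergence in law is correct and matches the paper (appeal to Proposition~\ref{prop:convergence en loi vers la mesure invariante} with $J=0$, $\tilde b = b + a$).
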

\begin{remark}
	In the above result, 
	$\lambda^*_a$ is explicitly known in terms of $f, b$ 
	and $a$ (see its expression \eqref{eq:optimal rate of convergence lambda^*}) and is optimal (see Remark~\ref{rq:vitesse de convergence optimale}). Note also that \eqref{eq:convergence_taux_de_saut_lineaire_vers_gamma} states explicitly the dependence on the initial distribution $\nu$ through its distance to the invariant measure $\nu^\infty_{a}$.
\end{remark}
\subsection{Study of the Volterra equation}

In the case where $(a_t)$ is constant and equal to $a$, the Volterra equation \eqref{eq:equation de Volterra} is a linear homogeneous convolution Volterra equation. 
If moreover the initial condition $\nu$ is $\delta_0$, the kernel $\r[][a](t) := \E f(\Y[][\delta_0][a][t])$ satisfies \begin{equation}
\r[][a] = \K[][a] + \K[][a] * \r[][a],
\label{volterra_convo} 
\end{equation}
For such equations, it is very natural to use Laplace transform techniques as convolutions become scalar products with this transformation.
Furthermore, the ``kernel'' $\K[][a]$ and the ``forcing term'' $\K[\nu][a]$ are non-negative. Volterra equation with positive kernels have been studied in the context of Renewal theory.
The main reference on this question is a paper of Feller \cite{feller1941}. 
We refer to \cite[Th. 4]{feller1941} for this method.
However, in our case the rate of convergence is exponential. In order to achieve the optimal rate of convergence, we use general methods from the Volterra integral equation theory, and especially the so called ``Whole-line Palay-Wiener'' Theorem.

Along this section, we grant Assumptions~\ref{assumptions:b0}, \ref{assumptions:f0} and \ref{assumptions:nu0}. 

\begin{definition}[Laplace transform]
Let $g: \mathbb{R}_+ \mapsto \mathbb{R}$ be a measurable function.
The Laplace transform of $g$ is the following function
\[ 
\widehat{g}(z) := \int_0^\infty{ e^{-zt} g(t) dt, }
\]
defined for all $z \in \mathbb{C}$ for which the integral exists.
\label{def:transformée de laplace}
\end{definition}
Note that the Laplace transforms of $\H[][a]$ and $\K[][a]$ are well defined for all $z \in \mathbb{C}$ with $\RePart(z) > -f(\sigma_a)$. This follows from the fact that $\forall \lambda < f(\sigma_a),~  \sup_{t \geq 0}\H[][a](t)e^{\lambda t} < \infty$. 
The same holds for $\K[][a]$.
Integrating by parts the Laplace transform of $\K[][a]$ shows that
\begin{equation}
\forall z \in \mathbb{C}, ~\RePart(z) > -f(\sigma_a) \implies \widehat{\K[][a]}(z) = 1  - z \widehat{\H[][a]} (z).
\label{laplace_transform_k0}
\end{equation}
It is also useful to introduce the following Banach space
\begin{definition}
For any $\lambda \in \mathbb{R}$, let $L_{\lambda} = \{ f \in \mathcal{B}(\mathbb{R}_+, \mathbb{R}) : ||f||_{\lambda,1} < \infty \}$ the space of Borel-measurable functions from $\mathbb{R}_+$ to $\mathbb{R}$, equipped with the norm
 \[  ||f||_{\lambda,1} = \int_{\mathbb{R}_+} { |f(s)| e^{\lambda s} ds}.  \]
 \label{def:espace de banach a poids cas convo}
 \end{definition}
The long time behavior of $\r[][a]$ is related to the location of the poles of \(\widehat{r}_{a}\). Equation \eqref{volterra_convo} gives
\[ 
\forall \RePart(z) >0 \quad  \widehat{r}_a(z)  = \frac{ \widehat{K}_a(z) }{ 1 - \widehat{K}_a(z) }. 
\]
This suggests to study the location of the zeros of $1 - \widehat{K}_a(z)  = z \widehat{H}_a(z)$.

\subsection{On the zeros of \protect{$\widehat{H}_a$}}
\begin{lemma} $\forall z \in \mathbb{C},~\RePart(z) \geq 0 \implies  \widehat{H}_a(z) \neq 0.$
\label{lem: pas de zeros pour Re(z) >= 0}
\end{lemma}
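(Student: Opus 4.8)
The plan is to read everything off the identity \eqref{laplace_transform_k0}, i.e. $z\,\widehat{H}_a(z) = 1 - \widehat{K}_a(z)$, which holds for $\RePart(z) > -f(\sigma_a)$ and hence, since $f(\sigma_a) \geq f(\sigma_0) > 0$, on the whole closed half-plane $\RePart(z) \geq 0$. It therefore suffices to prove two things: that $\widehat{H}_a(0) \neq 0$, and that $\widehat{K}_a(z) \neq 1$ for every $z$ with $\RePart(z) \geq 0$ and $z \neq 0$. The first point is immediate: by the change of variables leading to \eqref{eq:equation gamma a avec le flot} one has $\widehat{H}_a(0) = \int_0^\infty \H[][a](t)\,dt = \gamma(a)^{-1}$, which is finite (the integrand decays exponentially by Remark~\ref{remark:b0f}) and strictly positive, hence nonzero.

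For the second point I would first record the elementary properties of the forcing kernel $\K[][a](t) = f(\Flow[a][t](0))\,\EXP{-\int_0^t f(\Flow[a][u](0))\,du}$: it is nonnegative; since $\Flow[a][t](0) > 0$ for $t > 0$ (because $b(0) > 0$) and $f > 0$ on $(0,\infty)$, it is in fact strictly positive on $(0,\infty)$; and letting $t \to \infty$ in the relation $1 * \K[][a] = 1 - \H[][a]$ (which is \eqref{relation entre K et H} with $\nu = \delta_0$), together with $\H[][a](t) \to 0$ (again Remark~\ref{remark:b0f}), gives $\int_0^\infty \K[][a](t)\,dt = 1$. So $\K[][a]$ is a probability density on $(0,\infty)$, positive almost everywhere; in particular $\widehat{K}_a(z)$ is absolutely convergent for $\RePart(z) \geq 0$.

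Now fix $z$ with $\RePart(z) = \rho \geq 0$ and $\ImPart(z) = \omega$. Taking real parts,
\[ \RePart\bigl(1 - \widehat{K}_a(z)\bigr) = 1 - \int_0^\infty e^{-\rho t}\cos(\omega t)\,\K[][a](t)\,dt \;\geq\; 1 - \int_0^\infty e^{-\rho t}\,\K[][a](t)\,dt \;\geq\; 1 - \int_0^\infty \K[][a](t)\,dt = 0. \]
If this quantity vanished, both inequalities would be equalities; the second equality means $\int_0^\infty(1-e^{-\rho t})\K[][a](t)\,dt = 0$, which (as $1-e^{-\rho t}\geq 0$ and $\K[][a] > 0$ on $(0,\infty)$) forces $\rho = 0$, and then the first equality means $\int_0^\infty(1-\cos(\omega t))\K[][a](t)\,dt = 0$, which forces $\cos(\omega t) = 1$ for a.e.\ $t > 0$, i.e.\ $\omega = 0$. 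Thus $\RePart(1 - \widehat{K}_a(z)) > 0$ whenever $z \neq 0$, so $\widehat{K}_a(z) \neq 1$ there. Combining with \eqref{laplace_transform_k0}: for $\RePart(z) \geq 0$ with $z \neq 0$ we get $z\,\widehat{H}_a(z) = 1 - \widehat{K}_a(z) \neq 0$, hence $\widehat{H}_a(z) \neq 0$; and $\widehat{H}_a(0) = \gamma(a)^{-1} \neq 0$. The only step that needs any care is the equality analysis on the imaginary axis, and as shown it reduces to the trivial fact that a strictly positive density cannot make $1 - \cos(\omega t)$ integrate to zero against it unless $\omega = 0$; so there is no genuine obstacle here.
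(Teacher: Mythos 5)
Your proof is correct and follows essentially the same route as the paper's: both work from the identity $z\widehat{H}_a(z) = 1-\widehat{K}_a(z)$ together with the fact that $\K[][a]$ is a strictly positive probability density on $(0,\infty)$, so that $\widehat{K}_a(z)=1$ forces $z=0$, with $\widehat{H}_a(0)=\gamma(a)^{-1}\neq 0$ handled separately. The only cosmetic difference is that you merge the paper's two cases ($\RePart(z)>0$ via a modulus bound $|\widehat{K}_a(z)|<1$, and $z=iy$ via Feller's real-part argument) into a single real-part estimate whose equality analysis covers both at once.
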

\begin{proof}
	Remark first that \(\H[][a]\) being a real-valued function, $\widehat{H}_a(z) = 0$
	iff \(\widehat{H}_a(\bar{z}) = 0\), so it is sufficient to locate the zeros of $\widehat{H}_a$ in the region $\ImPart(z) \geq 0$.
	Next, it follows from for the non-negativity of $\K[][a]$ that
\[  |\widehat{K}_a(z)|  \leq \int_0^\infty{ |e^{-t z}| \K[][a](t) dt } <  \int_0^\infty{ \K[][a](t) dt } = 1  \text{ if } \RePart(z) >  0. \]
It yields $\RePart(z) >  0 \implies \widehat{H}_a(z) \neq 0$.
Moreover, following \cite{feller1941} proof of Theorem 4, (b), if $z = i y$, $y > 0$ then
\[
 i y \widehat{H}_a(i y) = 1 - \widehat{K}_a(i y) = \int_0^\infty{(1 - \cos( yt)) \K[][a](t) dt } + i  \int_0^\infty{\sin(y t) \K[][a](t) dt. }
 \]
Consequently, $\widehat{K}_a(i y)  = 1$ for some $y > 0$ would imply that for 
Lebesgue almost every $t \geq 0$, $(1 - \cos( yt)) \K[][a](t) = 0$, that is, a.e. $\K[][a](t) = 0$. It obviously contradicts the 
assumption \(f(x)>0\) for \(x>0\). It follows that 
$\forall y > 0,~ \widehat{H}_a(i y) \neq 0$. Finally for $z = 0$, we have $\widehat{H}_a(0) = \int_0^\infty{\H[][a](t) dt} \neq 0$.
\end{proof}

\begin{lemma}
The zeros of $\widehat{H}_a$ are isolated.
\label{lem: les zeros sont isoles} 
\end{lemma}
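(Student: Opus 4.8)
The plan is to recognize $\widehat{H}_a$ as a genuinely holomorphic function on an open connected set and then to invoke the identity theorem. Concretely, set $\Omega := \{ z \in \mathbb{C} : \RePart(z) > -f(\sigma_a) \}$, which is the interior of the region where $\widehat{H}_a$ was observed to be well defined.

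First I would check that $\widehat{H}_a$ is holomorphic on $\Omega$. Recall that $\H[][a] \geq 0$ and that for every $\lambda < f(\sigma_a)$ one has $\sup_{t \geq 0} \H[][a](t) e^{\lambda t} < \infty$ (this is exactly what makes the Laplace integral converge on $\Omega$). Fix a closed disk $\bar{D} \subset \Omega$ and pick $\lambda_0$ with $-f(\sigma_a) < \lambda_0 < \inf_{z \in \bar{D}} \RePart(z)$. Then $|e^{-zt} \H[][a](t)| \leq e^{-\lambda_0 t} \H[][a](t)$ and $|t\, e^{-zt} \H[][a](t)| \leq t\, e^{-\lambda_0 t} \H[][a](t)$ for all $z \in \bar{D}$, and both dominating functions are integrable on $\mathbb{R}_+$ by the exponential decay of $\H[][a]$. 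Morera's theorem (or equivalently differentiation under the integral sign) then shows that $\widehat{H}_a$ is holomorphic on $\Omega$, with $\widehat{H}_a'(z) = -\int_0^\infty t\, e^{-zt} \H[][a](t)\, dt$.

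Next, $\widehat{H}_a$ is not identically zero on $\Omega$, since $\widehat{H}_a(0) = \int_0^\infty \H[][a](t)\, dt > 0$ because $\H[][a](t) > 0$ for all $t \geq 0$ — a fact already used in the proof of Lemma~\ref{lem: pas de zeros pour Re(z) >= 0}. Since $\Omega$ is open and connected, the identity theorem for holomorphic functions implies that the zero set of $\widehat{H}_a$ has no accumulation point in $\Omega$; that is, the zeros of $\widehat{H}_a$ are isolated.

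I do not expect any real obstacle here: the only slightly technical point is justifying analyticity of a Laplace transform strictly inside its abscissa of convergence, which is the standard dominated-convergence argument sketched above.
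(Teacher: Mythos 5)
Your argument is correct and is essentially the approach the paper takes: the paper also deduces the result from the holomorphicity of $\widehat{H}_a$ on $\RePart(z) > -f(\sigma_a)$; you simply make explicit the two supporting points (dominated convergence for analyticity, and $\widehat{H}_a \not\equiv 0$ via $\widehat{H}_a(0) > 0$ so that the identity theorem applies) which the paper leaves implicit.
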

\begin{proof}
This directly follows from the fact that $\widehat{H}_a$ is an holomorphic function on $\RePart(z) > -f(\sigma_a)$ and thus its zeros are isolated.
\end{proof}

\begin{lemma}
For all $z \in \mathbb{C}$, it holds that
\[  |\widehat{K}_a(z)| \leq \frac{ \phi_a(\RePart(z) )}{|\ImPart(z)| },  \]
where for all $x \in \mathbb{R}$, $\phi_a(x) := ||K'_{a,x}||_1$ and $K_{a,x}(t) := e^{-x t} \K[][a](t)$,~ $K'_{a,x}(t) := \frac{d}{d t} K_{a,x}(t)$.

Consequently, the zeros of $\widehat{H}_a$ are within a ``cone'':
\[ \forall z \in \mathbb{C},~\RePart(z) > -f(\sigma_a),~z  = x + i y,~ \widehat{H}_a(z) = 0 \implies |y| \leq \phi_a(x). \]
\label{lem:les zeros sont dans un cone}
\end{lemma}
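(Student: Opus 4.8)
The plan is to bound $\widehat{K}_a$ by a single integration by parts in the time variable $t$, which trades one power of the oscillation frequency $\ImPart(z)$ for a derivative falling on the kernel $\K[][a]$.

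First I would fix $z = x + iy$ with $x,y \in \mathbb{R}$. When $x \leq -f(\sigma_a)$ the quantity $\phi_a(x) = ||K'_{a,x}||_{1}$ is $+\infty$ and the inequality is vacuous, so I restrict to $x > -f(\sigma_a)$. I would then recall from \eqref{definition de K^nu et de K} the explicit expression $\K[][a](t) = f(\Flow[a][t](0)) \EXP{-\int_0^t{f(\Flow[a][u](0)) du}}$. From this formula, $t \mapsto \K[][a](t)$ is $\mathcal{C}^1$ on $\mathbb{R}_+$ with $\K[][a](0) = f(0) = 0$ (Assumption~\ref{assumptions:f0}); and since $\Flow[a][t](0) \uparrow \sigma_a$ and $f$ is increasing, both $\K[][a](t)$ and its derivative decay like $e^{-f(\sigma_a) t}$ as $t \to \infty$. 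Consequently, for $x > -f(\sigma_a)$, the weighted kernel $K_{a,x}(t) = e^{-xt}\K[][a](t)$ is $\mathcal{C}^1$, satisfies $K_{a,x}(0) = 0 = \lim_{t\to\infty}K_{a,x}(t)$, and has $K'_{a,x} \in L^1(\mathbb{R}_+)$, i.e. $\phi_a(x) < \infty$.

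Next, observing that $\widehat{K}_a(z) = \int_0^\infty{e^{-iyt} K_{a,x}(t)\, dt}$ by definition of $K_{a,x}$, I would integrate by parts; the boundary terms vanish by the previous step, yielding, for $y \neq 0$,
\[ \widehat{K}_a(z) = \frac{1}{iy} \int_0^\infty{e^{-iyt} K'_{a,x}(t)\, dt}. \]
Taking absolute values gives $|\widehat{K}_a(z)| \leq |y|^{-1} \int_0^\infty{|K'_{a,x}(t)|\, dt} = \phi_a(\RePart(z))/|\ImPart(z)|$, which is the claimed bound (it is trivially true when $y = 0$, since the right-hand side is then $+\infty$).

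Finally, for the ``cone'' statement, suppose $\widehat{H}_a(z) = 0$ with $\RePart(z) > -f(\sigma_a)$, and write $z = x + iy$. If $y = 0$ there is nothing to prove. If $y \neq 0$, relation \eqref{laplace_transform_k0} gives $\widehat{K}_a(z) = 1 - z\widehat{H}_a(z) = 1$, so the bound above forces $1 \leq \phi_a(x)/|y|$, hence $|y| \leq \phi_a(x)$. I do not anticipate a genuine difficulty here; the only step needing a bit of care is the verification that $K_{a,x}$ is absolutely continuous with integrable derivative and vanishing endpoints, which is precisely where the explicit shape of $\K[][a]$ and the asymptotics $\Flow[a][t](0) \to \sigma_a$, $f(\Flow[a][t](0)) \to f(\sigma_a)$ enter.
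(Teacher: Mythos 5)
Your proof is correct and follows exactly the same route as the paper: write $\widehat{K}_a(z)$ as $\int_0^\infty e^{-iyt}K_{a,x}(t)\,dt$, integrate by parts to pick up a factor $1/(iy)$, and bound by $\|K'_{a,x}\|_1/|y|$. The paper's proof is terser — it states the integration by parts in one line without spelling out why the boundary terms vanish; you supply the justification ($K_{a,x}(0)=f(0)=0$, decay at infinity from $f(\Flow[a][t](0))\to f(\sigma_a)>0$), which is the right thing to check.
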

\begin{proof}
Let $z = x+iy$, $y > 0, x > -f(\sigma_a)$. We have
\begin{align*}
\widehat{K}_a(z) &= \int_0^\infty{ e^{-z t} \K[][a](t) dt}  = \int_0^\infty{ e^{-i y t} K_{a,x}(t) dt} = \int_0^\infty{ \frac{e^{-i y t}}{i y } K'_{a,x}(t) dt. }
\end{align*}
The last equality follows by an integration by part. It yields
\[ |\widehat{K}_a(z)| \leq \frac{ ||K'_{a,x}||_1}{|y| }. \]
We deduce that for $ |y| >  ||K'_{a,x}||_1$,  we have $\widehat{K}_a(z) \neq  1$ and also $\widehat{H}_a(z) \neq  0.$
\end{proof}
Consequently, from Lemmas~\ref{lem: pas de zeros pour Re(z) >= 0}, \ref{lem: les zeros sont isoles} and \ref{lem:les zeros sont dans un cone}, we can define the abscissa of the ``first'' zero of $\widehat{H}_a$: 
\begin{equation}
\lambda^*_a := - \sup\{ \RePart(z)|~ \RePart(z) > -f(\sigma_a),~\widehat{H}_a(z) =  0 \} ,
\label{eq:optimal rate of convergence lambda^*}
\end{equation}
with the convention that $\lambda^*_a= f(\sigma_a)$ if the set of zeros is empty. We have proved that
\[ 
0 < \lambda^*_a \leq f(\sigma_a) \leq \infty. 
\]
The parameter $\lambda^*_a$ is key here as it gives the speed of convergence to the invariant probability measure. It only
 depends on $a$, $b$ and $f$.
\subsection{Convergence with optimal rate}

Our goal in this section is to prove the following proposition
\begin{proposition}
Eq. \eqref{volterra_convo} has a unique solution $\r[][a]$ of the form: 
\[ \r[][a]= \gamma(a) + \xi_a \quad \text{ with } \quad \forall \lambda \in [0,\lambda^*_a), \quad ~~ \xi_a \in  L_{\lambda}. \]
The constant $\lambda^*_a > 0$ is defined by \eqref{eq:optimal rate of convergence lambda^*}.
\label{prop: comportement en temps long equation de volterra cas convolutionnel}
\end{proposition}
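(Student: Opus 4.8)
Since Lemma~\ref{lem:existence et unicté de l'équation de Volterra} already provides a unique continuous solution $\r[][a]$ of \eqref{volterra_convo}, the only thing to prove is the decomposition $\r[][a]=\gamma(a)+\xi_a$ with $\xi_a\in L_\lambda$ for every $\lambda\in[0,\lambda^*_a)$; uniqueness of the decomposition is then automatic, because a nonzero constant does not belong to $L_\lambda$ for $\lambda\ge 0$, and uniqueness of $\r[][a]$ itself is Lemma~\ref{lem:existence et unicté de l'équation de Volterra}. The strategy is to subtract the limiting value $\gamma(a)$ and read off the exponential decay of the remainder from the location of the zeros of $\widehat{H}_a$, via a Paley--Wiener argument. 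First I would record the elementary facts used repeatedly: $\K[][a]=-\tfrac{d}{dt}\H[][a]\ge 0$ (differentiate \eqref{relation entre K et H}), $\int_0^\infty\K[][a](t)\,dt=1$ (from \eqref{relation entre K et H} and $\H[][a](t)\to 0$), $\H[][a](0)=1$, and---using Remark~\ref{remark:b0f}(\ref{remark:b0f:lim f flow a})---that $\H[][a]$ and $\K[][a]$ decay faster than $e^{-\lambda t}$ for every $\lambda<f(\sigma_a)$, so that $\widehat{H}_a,\widehat{K}_a$ are analytic on $\{\RePart z>-f(\sigma_a)\}$, \eqref{laplace_transform_k0} holds, and $\widehat{H}_a(0)=\int_0^\infty\H[][a]=\gamma(a)^{-1}$ by \eqref{eq:equation gamma a avec le flot}.

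Setting $\xi_a:=\r[][a]-\gamma(a)$ and using $1*\K[][a]=1-\H[][a]$, equation \eqref{volterra_convo} becomes the renewal-type equation
\[ \xi_a=g_a+\K[][a]*\xi_a,\qquad\text{with}\qquad g_a:=\K[][a]-\gamma(a)\H[][a]. \]
The forcing term inherits the exponential decay, so $g_a\in L_\lambda$ for every $\lambda<f(\sigma_a)$, and---this is the crucial cancellation---$\widehat{g}_a(z)=\widehat{K}_a(z)-\gamma(a)\widehat{H}_a(z)=1-(z+\gamma(a))\widehat{H}_a(z)$ by \eqref{laplace_transform_k0}, hence $\widehat{g}_a(0)=1-\gamma(a)\widehat{H}_a(0)=0$. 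Fix now $\lambda\in(0,\lambda^*_a)$ (the value $\lambda=0$ will follow from $L_\lambda\subseteq L_0$, since $e^{\lambda s}\ge 1$). Multiplying the equation by $e_\lambda(t):=e^{\lambda t}$ gives the convolution Volterra equation
\[ e_\lambda\xi_a=e_\lambda g_a+(e_\lambda\K[][a])*(e_\lambda\xi_a), \]
whose kernel $e_\lambda\K[][a]$ and forcing $e_\lambda g_a$ both lie in $L^1(\mathbb{R}_+)$. Its symbol is $1-\widehat{K}_a(z-\lambda)=(z-\lambda)\widehat{H}_a(z-\lambda)$ on $\{\RePart z\ge 0\}$; its only zero there is the simple zero $z=\lambda$ (simple because $\widehat{H}_a(0)\neq 0$; no other zero, for $\widehat{H}_a(z-\lambda)=0$ would force $\RePart(z-\lambda)\le-\lambda^*_a$, i.e. $\RePart z\le\lambda-\lambda^*_a<0$, by the definition \eqref{eq:optimal rate of convergence lambda^*} of $\lambda^*_a$ together with Lemma~\ref{lem: pas de zeros pour Re(z) >= 0}), and at that point the forcing transform vanishes: $\widehat{e_\lambda g_a}(\lambda)=\widehat{g}_a(0)=0$. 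I would then conclude with the Paley--Wiener theorem for convolution Volterra equations (the half-line/whole-line form, as in \cite{gripenberg_volterra_1990}), in the variant tolerating finitely many simple zeros of the symbol in the closed right half-plane provided the forcing transform vanishes at each of them: it yields $e_\lambda\xi_a\in L^1(\mathbb{R}_+)$, and by uniqueness of the locally integrable solution of the weighted equation this $L^1$ function is $e_\lambda\xi_a$, i.e. $\xi_a\in L_\lambda$. As $\lambda\in(0,\lambda^*_a)$ was arbitrary, this gives the claim.

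I expect the last step to be the main obstacle: the symbol $1-\widehat{e_\lambda\K[][a]}$ genuinely vanishes in the closed right half-plane (at $z=\lambda$, reflecting the unit mass $\int\K[][a]=1$), so the ``plain'' Paley--Wiener theorem does not apply and one must exploit the compatibility condition $\widehat{g}_a(0)=0$ coming from $\nu^\infty_a(f)=\gamma(a)$. Should a more self-contained route be preferred, one can instead cancel the common factor $z-\lambda$: writing $\widehat{e_\lambda g_a}(z)=(z-\lambda)\,\widehat{e_\lambda G_a}(z)$ with $G_a(t):=\int_0^t g_a(u)\,du=-\int_t^\infty g_a(u)\,du\in L_\lambda$ (using $\int_0^\infty g_a=\widehat{g}_a(0)=0$ once more), one gets $(e_\lambda\H[][a])*(e_\lambda\xi_a)=e_\lambda G_a$; differentiating this first-kind equation---legitimate since $\H[][a](0)=1$ and $\H[][a]'=-\K[][a]$---produces a second-kind equation whose symbol $z\,\widehat{H}_a(z-\lambda)$ has its sole right-half-plane zero at $z=0$, again compensated by the vanishing of the forcing transform there (equivalently $\int_0^\infty e^{\lambda t}(g_a+\lambda G_a)(t)\,dt=0$), after which a final application of Paley--Wiener gives $e_\lambda\xi_a\in L^1(\mathbb{R}_+)$. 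Either way one obtains $\xi_a\in L_\lambda$ for all $\lambda\in[0,\lambda^*_a)$, hence $\r[][a]=\gamma(a)+\xi_a$ of the asserted form.
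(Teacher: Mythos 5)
Your decomposition $\r[][a]=\gamma(a)+\xi_a$, the renewal equation $\xi_a = g_a + \K[][a]*\xi_a$ with $g_a = \K[][a]-\gamma(a)\H[][a]$, and the observation $\widehat{g}_a(0)=0$ (coming from $\gamma(a)\widehat{H}_a(0)=1$) are all correct and do capture the essential cancellation. But your route is genuinely different from the paper's and has a gap at exactly the step you flag as the ``main obstacle.'' After weighting by $e^{\lambda t}$, the symbol $1-\widehat{K}_a(z-\lambda)=(z-\lambda)\widehat{H}_a(z-\lambda)$ vanishes at $z=\lambda$, which lies in the \emph{closed} right half-plane, so neither the half-line Paley--Wiener theorem nor the plain whole-line version (Theorem~\ref{th:whole line palay wiener theorem}) applies to the weighted equation at level $\lambda$. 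You then invoke ``the variant tolerating finitely many simple zeros of the symbol in the closed right half-plane provided the forcing transform vanishes at each of them'' without a reference or a proof; this is not one of the standard statements in \cite{gripenberg_volterra_1990}, and in general the compensation $\widehat{g}_a(0)=0$ alone is not enough --- one needs to know that the primitive $G_a$ has the right integrability, and then one is essentially back to a contour-integration argument to conclude. Your alternative route does not escape the problem: differentiating $(e_\lambda \H[][a])*(e_\lambda\xi_a)=e_\lambda G_a$ produces a new kernel whose symbol is $z\,\widehat{H}_a(z-\lambda)$, which still has a simple zero on the boundary, now at $z=0$; you have only relocated the critical zero from $\lambda$ to $0$.

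The paper sidesteps this entirely. It does \emph{not} subtract $\gamma(a)$ first. It extends $\r[][a],\K[][a],\H[][a]$ by zero to the whole line, weights at a level $\sigma_-\in(-\lambda^*_a,0)$ (strictly \emph{below} the critical value), where $K_{a,\sigma_-}\in L^1(\mathbb{R})$ and $\widehat{K}_a(\sigma_-+iy)\neq 1$ for all $y\in\mathbb{R}$, and applies the genuine whole-line Paley--Wiener Theorem~\ref{th:whole line palay wiener theorem} to obtain an $L^1(\mathbb{R})$ solution $\xi_{a,\sigma_-}$ of the whole-line convolution equation. This $\xi_{a,\sigma_-}$ is supported on all of $\mathbb{R}$, not just $\mathbb{R}_+$, so it is not yet $\r[][a]$ shifted by a constant; the identification $\r[][a](t)=\gamma(a)+e^{\sigma_- t}\xi_{a,\sigma_-}(t)$ is established by contour integration of $e^{zt}\widehat{K}_a(z)/(z\widehat{H}_a(z))$ along a rectangle joining the lines $\RePart z=\sigma_-$ and $\RePart z=\sigma_+>0$, with the residue at $z=0$ producing exactly the constant $\gamma(a)$, and with Lemma~\ref{lem:les zeros sont dans un cone} controlling the horizontal segments as $|\ImPart z|\to\infty$. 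That residue computation is precisely what rigorously accounts for the pole you are implicitly trying to cancel. So the two proofs share the same analytic backbone (location of the zeros of $\widehat{H}_a$, Paley--Wiener), but the paper's version relies only on the unmodified textbook theorem and a residue argument, whereas yours outsources the hard part to an unproven extension of Paley--Wiener; to make your version fully rigorous you would in effect have to redo the contour integration that the paper carries out explicitly.
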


This result can be deduced from general theorems of the Volterra equations 
theory. For instance, one can apply 
\cite[Th. 2.4, Chap. 7]{gripenberg_volterra_1990}. 
However, this last result is written for general measure kernels in weighted 
spaces and its proof is somehow difficult to follow. In our setting, the proof 
given by \cite{gripenberg_volterra_1990} simplifies a lot and we give it here 
for completeness. We use the following so-called ``Whole Line Palay-Wiener'' 
Theorem which is one of the most important ingredients of the convolution 
Volterra integral equations theory.
\begin{theorem}[Whole-line Palay-Wiener]
Let $k \in L^1(\mathbb{R}, \mathbb{R})$.
There exists a function $x \in L^1(\mathbb{R}, \mathbb{R})$ satisfying the equation 
\[ \forall t \geq 0,~x(t) = k(t) + \int_{\mathbb{R}}{k(t-u) x(u) du}\]
if and only if
\[ \forall y \in \mathbb{R}, ~ \widehat{k}(i y) := \int_{\mathbb{R}}{e^{- i y t }  k(t) dt } \neq 1. \]
\label{th:whole line palay wiener theorem}
\end{theorem}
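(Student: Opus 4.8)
The plan is to pass to the Fourier side, where the line-convolution $(k * x)(t) := \int_{\mathbb{R}}k(t-u)x(u)\,du$ becomes a pointwise product. For $g \in L^1(\mathbb{R})$ I write $\widehat{g}(iy) = \int_{\mathbb{R}}e^{-iyt}g(t)\,dt$ and use throughout that $g \mapsto \widehat{g}(i\,\cdot)$ is linear, injective on $L^1(\mathbb{R})$, carries $*$ to multiplication, and produces continuous functions vanishing at infinity (Riemann--Lebesgue). Set $\kappa(y) := \widehat{k}(iy)$. The necessity direction is then immediate: if $x \in L^1(\mathbb{R})$ solves $x = k + k * x$, transforming gives $\widehat{x}(iy) = \kappa(y) + \kappa(y)\widehat{x}(iy)$, i.e. $(1 - \kappa(y))\widehat{x}(iy) = \kappa(y)$ for all $y \in \mathbb{R}$; if $\kappa(y_0) = 1$ for some $y_0$, the left side vanishes at $y_0$ while the right side equals $1$, a contradiction. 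Hence $\kappa(y) \ne 1$ for all $y$.

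For the converse I would assume $\kappa(y) \ne 1$ for all $y \in \mathbb{R}$. By continuity and Riemann--Lebesgue the set $\kappa(\mathbb{R}) \cup \{0\}$ is a compact subset of $\mathbb{C} \setminus \{1\}$, so $\inf_{y}|1 - \kappa(y)| > 0$. The key step is to invert $1 - \kappa$ in the Wiener algebra $W := \{c + \widehat{g}(i\,\cdot) : c \in \mathbb{C},\ g \in L^1(\mathbb{R})\}$, the commutative unital Banach algebra (pointwise operations) whose maximal ideal space is the one-point compactification $\mathbb{R} \cup \{\infty\}$, with Gelfand transform equal to evaluation --- this is Wiener's lemma on the line. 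The element $1 - \kappa \in W$ vanishes neither on $\mathbb{R}$ (by hypothesis) nor at $\infty$ (where it equals $1$), hence is invertible in $W$, so there is $s \in L^1(\mathbb{R})$ with $(1 - \kappa)^{-1} = 1 + \widehat{s}(i\,\cdot)$. Put $r := k + k * s \in L^1(\mathbb{R})$, so that $\widehat{r}(i\,\cdot) = \kappa\,(1 + \widehat{s}(i\,\cdot)) = \kappa\,(1 - \kappa)^{-1}$, whence $(1 - \kappa)\widehat{r}(i\,\cdot) = \kappa$, i.e. $\widehat{r}(i\,\cdot) = \kappa + \kappa\,\widehat{r}(i\,\cdot) = \widehat{k + k * r}(i\,\cdot)$; injectivity of the transform on $L^1(\mathbb{R})$ gives $r = k + k * r$, the required solution.

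The main obstacle is precisely that invertibility step --- that $1 - \kappa$, nowhere zero on $\mathbb{R} \cup \{\infty\}$, has reciprocal again of the form $1 + \widehat{s}(i\,\cdot)$ with $s \in L^1(\mathbb{R})$. I would deduce it from Gelfand theory for commutative Banach algebras, once the maximal ideal space of $W$ is identified with $\mathbb{R} \cup \{\infty\}$ and the Gelfand transform with evaluation; alternatively one localises --- cover the compact set $\kappa(\mathbb{R}) \cup \{0\}$ by finitely many discs on each of which $1 - \kappa$ is trivially invertible, then patch with a partition of unity whose pieces belong to $W$ --- so as to avoid the full Gelfand machinery. All remaining ingredients (Riemann--Lebesgue, injectivity of the Fourier transform, the convolution algebra of $L^1(\mathbb{R})$) are classical, and for the purposes of this paper one may simply invoke \cite{gripenberg_volterra_1990}.
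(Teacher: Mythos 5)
Your proof is correct, but it follows a different path from the paper. In fact the paper does not prove Theorem~\ref{th:whole line palay wiener theorem} at all: it cites \cite[Th.~4.3, Chap.~2]{gripenberg_volterra_1990} and instead proves a uniform extension of it, Proposition~\ref{prop:whole line palay wiener theorem extension}, by a direct Fourier-localization argument (Fejer kernel $\zeta_p$ to handle $|y|\geq p$ where the transform is small, a modified kernel $\beta_\delta$ to handle a neighborhood of each $y_0$, finitely many local inversions patched together with the functions $\psi_j$). Your argument is the soft Gelfand-theoretic route: working in the Wiener algebra $W=\mathbb{C}\oplus\widehat{L^1(\mathbb{R})}$, identifying its maximal ideal space with $\mathbb{R}\cup\{\infty\}$, noting $1-\widehat{k}$ vanishes nowhere on it (Riemann--Lebesgue handling $\infty$), and invoking invertibility in $W$ to produce $s\in L^1$ with $(1-\widehat{k})^{-1}=1+\widehat{s}$, then setting $r:=k+k*s$ and checking transforms. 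Both the necessity direction and the construction $r$ are carried out correctly, and the injectivity of the $L^1$ Fourier transform is the right tool to close. What your approach buys is brevity and conceptual clarity at the cost of importing Wiener's lemma (or Gelfand theory), whereas the paper's direct construction, though longer, produces the explicit quantitative control $\sup_a\|x_a\|_{L^1}<\infty$ which is exactly what the later Proposition needs and which the abstract inversion does not immediately yield. As a proof of the stated theorem alone, yours is complete and standard; as a step toward the paper's uniform extension it would require re-examination of how all the constants in Wiener's lemma depend on the family $\{k_a\}$.
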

Note that here $\widehat{k}(i y)$ is actually the Fourier transform of $k$ evaluated at $y \in \mathbb{R}$.
\begin{proof}
See \cite[Th. 4.3, Chap. 2]{gripenberg_volterra_1990}. We prove later, in details, an extension of this theorem (see Proposition~\ref{prop:whole line palay wiener theorem extension}).
\end{proof}

\begin{proof}[Proof of Proposition~\ref{prop: comportement en temps long equation de volterra cas convolutionnel}]
Let $\sigma_-$ and $\sigma_+$ be any real numbers such that:
\[ -\lambda^*_a < \sigma_- < 0 < \sigma_+ < \infty. \] 
We first extend $\r[][a]$, $\K[][a]$ and $\H[][a]$  to the whole line by defining: $\forall t \in \mathbb{R},~\r[][a](t) := \r[][a](t) \indic{t \geq 0},~\K[][a](t) :=\K[][a](t) \indic{t \geq 0}$ and $\H[][a](t) := \H[][a](t) \indic{t \geq 0}$. We have from \eqref{volterra_convo}
\begin{equation}
 \forall t \in \mathbb{R},~ \r[][a](t) = \K[][a](t)  + \int_\mathbb{R}{\K[][a](t-u) \r[][a](u) du}. 
 \label{eq:r_a sigma + equation sur R}
 \end{equation}
For any $ \vartriangle \in \mathbb{R}$, we also define $r_{a,\vartriangle}(t):= e^{- \vartriangle t} \r[][a](t),~~K_{a,\vartriangle}(t) := e^{- \vartriangle t} \K[][a](t)$.
Note that $K_{a,\sigma_-} \in L^1(\mathbb{R})$ and that $\forall y \in \mathbb{R},~\widehat{K}_{a,\sigma_-}(i y) = \widehat{K}_a(\sigma_- + i y ) \neq 1$ (by definition of $\lambda^*_a$).  We can apply Theorem~\ref{th:whole line palay wiener theorem}: there exists $\xi_{a, \sigma_-} \in L^1(\mathbb{R})$ such that
\begin{equation}  \forall t  \in \mathbb{R},~ \xi_{a,\sigma_-}(t) = K_{a,\sigma_-}(t) + \int_{\mathbb{R}}{K_{a,\sigma_-}(t-u) \xi_{a,\sigma_-}(u) du }. 
\label{eq:xi a sigma - equation sur R}
\end{equation}
We define $\xi_a(t) := e^{\sigma_- t} \xi_{a,\sigma_-}(t)$. We have $\int_{\mathbb{R}}{|\xi_a(u)| e^{-\sigma_- u } du } < \infty$ and \eqref{eq:xi a sigma - equation sur R} reads
\[ \forall t \in \mathbb{R},~ \xi_a(t)  = \K[][a](t)  + \int_{\mathbb{R}} {\K[][a](t-u) \xi_a(u) du}. \]
\begin{remark}The function $\xi_a$ is not null on $\mathbb{R}_-$ (see formula \eqref{eq:complete link between ra and xia on R} just below).
\end{remark}
We have, using equalities 
\eqref{eq:r_a sigma + equation sur R} 
and \eqref{eq:xi a sigma - equation sur R} 
\begin{align*}
\xi_{a, \sigma_-} \in L^1(\mathbb{R}),~~\widehat{\xi_{a, \sigma_-}}(iy) = \left[ \frac{\widehat{K_a}}{1-\widehat{K_a}} \right](iy + \sigma_-), \\
r_{a, \sigma_+} \in L^1(\mathbb{R}),~~\widehat{r_{a, \sigma_+}}(iy) =  \left[ \frac{\widehat{K_a}}{1-\widehat{K_a}} \right](iy + \sigma_+). 
\end{align*}
We can now use the Fourier inverse formula for $L^1(\mathbb{R})$ functions to get
\[
 \xi_{a, \sigma_-}(t) = \frac{1}{2 \pi} \int_{\mathbb{R}} { e^{i y t }  \left[ \frac{\widehat{K_a}}{1-\widehat{K_a}} \right] (iy + \sigma_-) dy } \quad \text{ and } \quad  r_{a, \sigma_+}(t) = \frac{1}{2 \pi} \int_{\mathbb{R}} { e^{i y t }   \left[  \frac{\widehat{K_a}}{1-\widehat{K_a}} \right](iy + \sigma_+) dy },
\]
or after the changes of variable $z = iy + \sigma_-$ and $z = iy + \sigma_+$:
\[  \xi_a(t) = \lim_{T \rightarrow \infty}{\frac{1}{2 \pi i} \int_{\sigma_- - i T}^{\sigma_- + i T } { e^{z t} \frac{\widehat{K}_a(z)}{ 1 -  \widehat{K}_a(z)}  dz} } \quad \text{ and } \quad r_a(t) = \lim_{T \rightarrow \infty}{\frac{1}{2 \pi i} \int_{\sigma_+ - i T}^{\sigma_+  + i T } { e^{z t} \frac{\widehat{K}_a(z)}{ 1- \widehat{K}_a(z)}  dz} }. \]
Let $\Gamma_T$ be the closed curve in the complex plane composed of four straight lines that join the points $\sigma_- - i T$, $\sigma_- + iT$, $\sigma_+ + iT$, and $\sigma_+ - iT$ in the anti-clockwise direction. It follows from the residue theorem that
\begin{equation}
\int_{\Gamma_T}{ e^{z t}  \frac{\widehat{K}_a(z)}{ 1 - \widehat{K}_a(z)}  dz } = \int_{\Gamma_T}{ e^{z t}  \frac{\widehat{K}_a(z)}{ z \widehat{H}_a(z)}  dz } = 2 \pi i \frac{\widehat{K}_a(0)} { \widehat{H}_a(0) } = 2 \pi  i \gamma(a).
\label{eq:residu theorem sur le chemin GammaT}
\end{equation}
The last equality follows from
\[ \widehat{H}_a(0) = \int_0^\infty{ \H[][a](t) dt} = \int_0^\infty{ \EXP{-\int_0^t{f(\Flow[a][u]) du}} dt } \stackrel{(\ref{eq:equation gamma a avec le flot})}{=}   \frac{1}{\gamma(a)}.\]
By Lemma~\ref{lem:les zeros sont dans un cone}, for all $z$ in the strip $\RePart(z) \in [\sigma_-, \sigma_+]$, $z \neq 0$, we have
\[ |\widehat{K}_a(z)| \leq \frac{\phi_a (\sigma_-) }{ |\ImPart(z)| }. \]
We deduce that
\[ \lim_{T \rightarrow \pm \infty} \int_{\sigma_- + i T}^{\sigma_+ + i T } { e^{z t} \frac{\widehat{K}_a(z)}{ 1 -  \widehat{K}_a(z)}  dz}  = 0.\]
Therefore we can take the limit $T \rightarrow \infty$ in \eqref{eq:residu theorem sur le chemin GammaT} and obtain
\begin{equation} \forall t \in \mathbb{R},~ \r[][a](t) = \gamma(a) + \xi_a(t). 
\label{eq:complete link between ra and xia on R}
\end{equation}
The proposition is proven by choosing $\sigma_- = -\lambda$.
\end{proof} 

\begin{remark}
The speed of convergence obtained in this result is optimal if $\lambda^*_a < f(\sigma_a)$ (i.e. $\widehat{H}_a$ has at least one complex zero with $\RePart(z) > -f(\sigma_a)$) in the sense that
 \[ \forall \lambda > \lambda^*_a,~  r_a - \gamma(a) \notin L_\lambda. \]
To see this, assume that $\lambda^*_a  < f(\sigma_a)$ and choose $\sigma_-$ such that $-f(\sigma_a) < \sigma_- < -\lambda^*_a$. The previous proof can be mimicked except that the  residues of equation \eqref{eq:residu theorem sur le chemin GammaT}  now involves terms of the order $e^{-\lambda^*_a t}$ -  corresponding to the roots of $\widehat{H}_a$ with real part equal to $-\lambda^*_a$. 
\label{rq:vitesse de convergence optimale}
\end{remark}
\subsection{Long time behavior starting from initial condition $\nu$: proof of Proposition~\ref{prop: convergence du taux de saut dans le cas J=0}}

We now come back to the general case where the initial condition can be any probability measure satisfying Assumption~\ref{assumptions:nu0}, and we give the proof of Proposition~\ref{prop: convergence du taux de saut dans le cas J=0}. 
\begin{proof}[Proof of Proposition~\ref{prop: convergence du taux de saut dans le cas J=0}]
	Note  that, we only consider here the convolutions on \([0,t]\) denoted by \(*\) (and no more the convolution on \(\mathbb{R}\)).
Let $\r[\nu][a](t) = \E f(\Y[][\nu][a][t])$ with $\mathcal{L}(Y_0) = \nu$. The function $\r[\nu][a]$ is the unique solution of the Volterra equation
\[ \r[\nu][a] =\K[\nu][a] + \K[][a] * \r[\nu][a]. \]
If we choose $\nu$ to be the invariant probability measure $\nu^\infty_a$, we get $\gamma(a) = \K[\nu^\infty_a][a] + \K[][a] * \gamma(a)$ and
\[\r[\nu][a] - \gamma(a) = \K[\nu][a] - \K[\nu^\infty_a][a] + \K[][a] * (\r[\nu][a] - \gamma(a) ). \]
We can solve this equation in terms of $\r[][a]$, the ``resolvent'' of $\K[][a]$ (using 
Lemma~\ref{lem:solution a laide de la resolvante.})
 and obtain
\begin{align*}
\r[\nu][a] - \gamma(a) & = \K[\nu][a] - \K[\nu^\infty_a][a] + \r[][a] * (\K[\nu][a] - \K[\nu^\infty_a][a]) \\
&= \K[\nu][a] - \K[\nu^\infty_a][a] + \xi_a *  (\K[\nu][a] - \K[\nu^\infty_a][a]) + \gamma(a) * (\K[\nu][a] - \K[\nu^\infty_a][a]),
\end{align*}
where $\r[][a] = \xi_a  + \gamma(a)$, see \eqref{eq:complete link between ra and xia on R}, is the solution of the Volterra equation $\r[][a] = \K[][a] + \K[][a] * \r[][a]$. Using \eqref{relation entre K et H}, we have $\gamma(a)* \K[\nu][a]  = \gamma(a)(1- \H[\nu][a])$ and thus
\[ 
\r[\nu][a] - \gamma(a) = \K[\nu][a] - \K[\nu^\infty_a][a] + \gamma(a)(\H[\nu^\infty_a][a] - \H[\nu][a]) +   \xi_a *  (\K[\nu][a] - \K[\nu^\infty_a][a]).
\]
We now write $\Theta$ any constant only depending on $\lambda, f, b$ and $a$ and which may change from line to line.
It is clear that for any $0 < \lambda < f(\sigma_a)$
\[  |\H[\nu^\infty_a][a] - \H[\nu][a]|(t) \leq \int_0^\infty{\H[x][a](t) |\nu - \nu^\infty_a|(dx) } \leq \int_0^\infty{ \H[][a](t) |\nu - \nu^\infty_a|(dx) } \leq \Theta e^{-\lambda t} \int_0^\infty{ |\nu - \nu^\infty_a|(dx) }.  \]
Similarly, for any $0 < \lambda < f(\sigma_a)$,
\begin{align*}
 |\K[\nu][a] - \K[\nu^\infty_a][a]|(t) & \leq \int_0^\infty{ f(\Flow[a][t](x))\H[x][a](t) |\nu - \nu^\infty_a|(dx) } \leq  \int_0^\infty{ f(x + C^a_b t) \H[][a](t) |\nu - \nu^\infty_a|(dx)} \\
 & \leq C_f \int_0^\infty{ [1 + f(x) + f(C^a_b t)] \H[][a](t) |\nu - \nu^\infty_a|(dx)} \leq \Theta e^{-\lambda t} \int_0^\infty{(1+f(x))|\nu - \nu^\infty_a|(dx)}.
\end{align*} 
We used here Assumption~\ref{assumptions:f0:f(x+y)}.
Let now $0 < \lambda < \lambda^*_a$. Using \(\xi_a\in L_\lambda\), it holds that
\[ |\xi_a * (\K[\nu][a] - \K[\nu^\infty_a][a])|(t) \leq \int_0^t{ |\xi_a(t-u)| |\K[\nu][a] - \K[\nu^\infty_a][a]|(u) du } \leq \Theta e^{-\lambda t}\int_0^\infty{(1+f(x))|\nu - \nu^\infty_a|(dx)}. \]
Combining the three estimates, one deduces that
\[ |\r[\nu][a](t) - \gamma(a)| \leq \Theta e^{-\lambda t} \int_0^\infty{(1+f(x))|\nu - \nu^\infty_a|(dx)}. \]
It remains to prove that \(\lim_{t\rightarrow\infty} \mathcal{L}(\Y[][\nu][a][t]) =\nu^\infty_{a}\). 
The process $(\Y[][\nu][a][t])_{t \geq 0}$ is the solution of \eqref{NL-equation0} with $\tilde{b}(x) 
= b(x) + a$ and \(J=0\). Obviously, \(0\) solves \eqref{eq:scalar equation invariant measures}.
Applying Proposition~\ref{prop:convergence en loi vers la mesure invariante}
ends the proof.
\end{proof}
\section{Long time behavior with a general drift}
\label{sec: long time behavior for a general drift}
\label{section:methode de perturbation general drift}
In this section, we generalize the results obtained in Section~\ref{sec:comportement en temps long avec drift constant} to non constant currents. We consider the process \eqref{non-homogeneous-SDE} driven by a current $(a_t)$ assuming to converge exponentially fast to $a$. We seek to prove that the jump rate of this process is converging to $\gamma(a)$ and estimate the speed of convergence. 
This ``perturbation'' analysis will be useful to study the long time behavior of the solution of the non-linear McKean-Vlasov equation \eqref{NL-equation0} with small interactions. We consider a non-negative continuous function $(a_t)_{t \geq 0}$ such that
\begin{assumption}
\label{assumption:condition sur a_t}
\begin{enumerate}
\item $\sup_{t \geq 0} a_t \leq \bar{a}$ for some constant $\bar{a} > 0$.
\item There exist \(a \geq 0\), \(C\geq 0\) and 
\(\lambda \in(0, \min(\lambda^*_a, f(\sigma_0)))\), where \(\sigma_0\) and 
\(\lambda^*_a\) are defined by \eqref{eq:definition de sigma_a} and 
\eqref{eq:optimal rate of convergence lambda^*},
such that
	\begin{equation}
\forall t \geq 0,\quad |a_t - a| \leq C e^{-\lambda t}.
	\label{speed_of_convergence_of_a_t}
	\end{equation}
\end{enumerate}
\end{assumption}	
	Note that the values of $C$ and $\lambda$ are important in this analysis. 
	Any mention of $C$ and $\lambda$ in this section  refer to these two 
	constants.

Let $\r[\nu][(a.)](t, s) = \E f(\Y[s][\nu][(a.)][t])$, where $\Y[s][\nu][(a.)][t]$ is the solution of \eqref{non-homogeneous-SDE} driven by the current $(a_t)$ and starting at time $s$ with law $\nu$.
The goal of this section is to prove that if $C$ is small enough, then there exists an explicit constant $D$ such that 
     \[  
     \forall t \geq s \geq 0,~ |\r[\nu][(a.)](t, s) - \gamma(a)| \leq D e^{-\lambda (t-s)} ~,
     \]
     where $\gamma(a)$ is given by \eqref{eq:gamma(a) la constante de renormalisation}.
    Note that the exponential decay rate $\lambda$ is preserved.
    We make efforts to keep track of the constant $D$ and to relate it to $C$.
	As in Section~\ref{sec:comportement en temps long avec drift constant} it is useful to split the study in two parts: the case where the initial condition is a Dirac mass at $0$ and the general case.
	We thus consider the unique solution $\r[][(a.)]$ of the following Volterra equation:
	\begin{equation}
	\r[][(a.)] = \K[][(a.)] + \K[][(a.)]*\r[][(a.)].
	\label{eq:volterra equation, resolvent}
	\end{equation}
	It is also useful to introduce a Banach space adapted to this non-homogeneous setting.
\subsection{An adapted Banach algebra}

\begin{definition}
A function $K:~(\mathbb{R}_+)^2 \rightarrow \mathbb{R}$ is said to be a Volterra Kernel with weight $\lambda \in \mathbb{R}$ if: $K$ is Borel measurable, $\forall s > t: K(t, s) = 0 $ a.e. and $||K||_{\lambda,1} < \infty$ with
\[ ||K||_{\lambda,1} := \esssup_{t \geq 0}{\int_{\mathbb{R}_+}{|K(t, s)| e^{\lambda (t-s)} ds} }.  \]
We define $\mathcal{V}_{\lambda}$ the set of Volterra kernels with weight $\lambda$.
We also define for $K \in \mathcal{V}_\lambda$:
\[ ||K||_{\lambda, \infty} = \esssup_{t, s \geq 0}{|K(t,s)e^{\lambda (t-s)}|} \in \mathbb{R}_+ \cup \{+\infty \}. \]
\end{definition}
\begin{proposition}\label{lem:20190418}
	The space $(\mathcal{V}_{\lambda}, ||\cdot||_{\lambda,1})$ is a Banach algebra. Furthermore, 
for all $a,b \in \mathcal{V}_{\lambda}$, $||a*b||_{\lambda,1} \leq ||a||_{\lambda,1}||b||_{\lambda,1}$. 
\end{proposition}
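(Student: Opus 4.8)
The plan is to check successively that $||\cdot||_{\lambda,1}$ is a norm on $\mathcal{V}_\lambda$, that $(\mathcal{V}_\lambda,||\cdot||_{\lambda,1})$ is complete, and that the convolution $*$ of \eqref{eq:defconvolution} makes it a (non-unital) Banach algebra, the submultiplicativity estimate being the central computation. The norm axioms are routine: positive homogeneity is immediate, the triangle inequality follows by applying Minkowski's inequality to the inner integral $\int_{\mathbb{R}_+}|K(t,s)|e^{\lambda(t-s)}\,ds$ at each fixed $t$ and then taking the essential supremum in $t$, and if $||K||_{\lambda,1}=0$ then $\int_{\mathbb{R}_+}|K(t,s)|e^{\lambda(t-s)}\,ds=0$ for a.e.\ $t$, hence by Tonelli $K(t,s)=0$ for a.e.\ $(t,s)$, i.e.\ $K$ is the zero element of $\mathcal{V}_\lambda$.

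For the Banach-algebra structure, note first that if $a,b\in\mathcal{V}_\lambda$ then $(a*b)(t,s)=0$ whenever $s>t$ and $a*b$ is Borel measurable by Fubini. For $0\le s\le t$, using $a(t,u)=0$ for $u>t$, $b(u,s)=0$ for $u<s$, and the identity $e^{\lambda(t-s)}=e^{\lambda(t-u)}e^{\lambda(u-s)}$,
\[
|(a*b)(t,s)|\,e^{\lambda(t-s)}\le\int_s^t |a(t,u)|e^{\lambda(t-u)}\,|b(u,s)|e^{\lambda(u-s)}\,du .
\]
Integrating in $s$ over $\mathbb{R}_+$ and exchanging the order of integration by Tonelli (the integrand is non-negative, and for a.e.\ fixed $u$ the variable $s$ runs over $[0,u]$),
\[
\begin{aligned}
\int_{\mathbb{R}_+}|(a*b)(t,s)|e^{\lambda(t-s)}\,ds
&\le\int_0^t |a(t,u)|e^{\lambda(t-u)}\Bigl(\int_{\mathbb{R}_+}|b(u,s)|e^{\lambda(u-s)}\,ds\Bigr)du\\
&\le ||b||_{\lambda,1}\int_{\mathbb{R}_+}|a(t,u)|e^{\lambda(t-u)}\,du .
\end{aligned}
\]
Taking the essential supremum over $t\ge 0$ gives $||a*b||_{\lambda,1}\le ||a||_{\lambda,1}\,||b||_{\lambda,1}$; in particular $a*b\in\mathcal{V}_\lambda$. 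Bilinearity of $*$ is clear, and associativity follows from one more application of Fubini's theorem over the simplex $\{0\le s\le u\le v\le t\}$, the required absolute integrability being guaranteed by the bound just proved applied to $|a|,|b|,|c|$.

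It remains to prove completeness, which is the only genuinely delicate point. The cleanest route is to identify $\mathcal{V}_\lambda$ isometrically, via $K\mapsto\bigl(t\mapsto[\,s\mapsto e^{\lambda(t-s)}K(t,s)\,]\bigr)$, with the closed subspace of $L^\infty\bigl(\mathbb{R}_+;L^1(\mathbb{R}_+)\bigr)$ formed by those $F$ for which $F(t)$ is supported in $[0,t]$ for a.e.\ $t$; since $L^1(\mathbb{R}_+)$ is Banach, $L^\infty\bigl(\mathbb{R}_+;L^1(\mathbb{R}_+)\bigr)$ is Banach, and a closed subspace of a Banach space is Banach, whence so is $\mathcal{V}_\lambda$. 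Equivalently, one argues directly: a Cauchy sequence $(K_n)$ in $\mathcal{V}_\lambda$ admits a subsequence $(K_{n_k})$ with $||K_{n_{k+1}}-K_{n_k}||_{\lambda,1}\le 2^{-k}$; intersecting the corresponding full-measure sets in $t$, one gets for a.e.\ $t$ that $\sum_k\int_{\mathbb{R}_+}|K_{n_{k+1}}(t,s)-K_{n_k}(t,s)|e^{\lambda(t-s)}\,ds<\infty$, so $(e^{\lambda(t-\cdot)}K_{n_k}(t,\cdot))_k$ converges in $L^1(\mathbb{R}_+)$ for a.e.\ $t$; one then sets $K(t,s)$ equal to $e^{-\lambda(t-s)}$ times this $L^1$-limit (and $0$ elsewhere), checks joint measurability (the limit is strongly measurable as an $L^1$-valued function of $t$, hence is represented by a jointly measurable function of $(t,s)$), verifies $K\in\mathcal{V}_\lambda$, and deduces $K_n\to K$ in $||\cdot||_{\lambda,1}$. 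The main obstacle, then, is exactly this completeness step — reconciling the essential supremum in $t$ with $L^1$-convergence in $s$ and establishing joint measurability of the limiting kernel; the norm axioms and the submultiplicativity inequality are straightforward applications of Minkowski's and Tonelli's theorems together with the bookkeeping of the weight $e^{\lambda(t-s)}=e^{\lambda(t-u)}e^{\lambda(u-s)}$.
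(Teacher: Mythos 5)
Your proof is correct. The paper itself does not give an argument for this proposition: it simply cites Gripenberg--Londen--Staffans, \emph{Volterra integral and functional equations}, Theorem 2.4 and Proposition 2.7(i) of Chapter 9. What you have written is a self-contained proof that follows exactly the standard route used there: the submultiplicativity estimate via the splitting $e^{\lambda(t-s)}=e^{\lambda(t-u)}e^{\lambda(u-s)}$ plus Tonelli (applying the essential-supremum bound on $\int|b(u,s)|e^{\lambda(u-s)}\,ds$ for a.e.\ $u$, which is exactly what is needed under the outer integral in $u$), and completeness by identifying $\mathcal{V}_\lambda$ isometrically with the closed subspace of $L^\infty(\mathbb{R}_+;L^1(\mathbb{R}_+))$ of functions supported in the simplex. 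The only point you slide over is the passage from a strongly measurable $L^1$-valued function of $t$ to a jointly measurable representative $K(t,s)$; this is a genuine (if standard) fact and it is fine to invoke it, but in a fully spelled-out version it deserves a citation or a line of justification, since it is precisely where the Bochner-space viewpoint earns its keep.
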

Proposition~\ref{lem:20190418} 
is proved in \cite{gripenberg_volterra_1990}, Theorem 2.4 and Proposition 2.7 
(i) of Chapter 9.

\begin{lemma}[Connection with the time homogeneous setting]
Let $g \in L_{\lambda}$. We define
\[ 
\forall t,s \in \mathbb{R}_+, \quad \tilde{g}(t, s) := g(t-s)\indica{t \geq s}.
\]
Then $\tilde{g} \in \mathcal{V}_{\lambda}$ and $||g||_{\lambda,1} = ||\tilde{g}||_{\lambda,1}.$
\end{lemma}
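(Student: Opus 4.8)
The plan is to verify the two defining requirements of $\tilde g\in\mathcal{V}_\lambda$ directly from the definitions; the only genuine content is a one-line change of variables, so this is a short argument.

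First I would dispatch measurability and the support condition. The map $(t,s)\mapsto t-s$ is continuous, hence Borel, so $(t,s)\mapsto g(t-s)$ is Borel as a composition of a Borel function with a Borel (indeed continuous) map; multiplying by the Borel indicator $\indica{t\geq s}$ preserves measurability. By construction $\tilde g(t,s)=g(t-s)\indica{t\geq s}=0$ for every $(t,s)$ with $s>t$, in particular a.e., so $\tilde g$ is a Volterra kernel in the sense required.

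Next I would compute the weighted norm. For a fixed $t\geq 0$, the substitution $u=t-s$ gives
\[
\int_{\mathbb{R}_+}|\tilde g(t,s)|\,e^{\lambda (t-s)}\,ds
=\int_0^t |g(t-s)|\,e^{\lambda(t-s)}\,ds
=\int_0^t |g(u)|\,e^{\lambda u}\,du .
\]
The function $t\mapsto\int_0^t|g(u)|e^{\lambda u}\,du$ is non-decreasing (and continuous), so its essential supremum over $t\geq 0$ equals its supremum, namely $\int_0^\infty|g(u)|e^{\lambda u}\,du=\|g\|_{\lambda,1}$. Therefore $\|\tilde g\|_{\lambda,1}=\|g\|_{\lambda,1}$, which is finite because $g\in L_\lambda$, and hence $\tilde g\in\mathcal{V}_\lambda$.

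There is no real obstacle: the only point deserving a word of care is that $\esssup$ may be replaced by $\sup$ here because the inner integral is monotone in $t$; with that observation the identity of the two norms is immediate. (This lemma is exactly what lets one transfer the convolution estimates of Section~\ref{sec:comportement en temps long avec drift constant}, phrased on $L_\lambda$, into the non-homogeneous Banach algebra $\mathcal{V}_\lambda$, since $\widetilde{f*g}=\tilde f*\tilde g$ as well.)
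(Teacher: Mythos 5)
Your proof is correct. The paper states this lemma without proof (it is treated as an immediate observation), so there is nothing to compare against; your direct verification — measurability of $(t,s)\mapsto g(t-s)\indica{t\geq s}$, the support condition, the change of variables $u=t-s$, and the observation that the monotonicity of $t\mapsto\int_0^t|g(u)|e^{\lambda u}\,du$ lets you replace $\esssup$ by $\sup$ — is exactly the intended argument, and the parenthetical identity $\widetilde{f*g}=\tilde f*\tilde g$ is also correct and worth noting since it is what makes the embedding an algebra homomorphism.
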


This result allows us to consider elements of  $L_{\lambda}$ as elements of 
$\mathcal{V}_{\lambda}$. 
\textbf{Note that the algebra $L_{\lambda}$ is commutative 
	whereas $\mathcal{V}_{\lambda}$ is not.}

\subsection{The perturbation method}

Define $\BK[][(a.)] := \K[][(a.)] - \K[][a]$ and 
$\BH[][(a.)] := \H[][(a.)] - \H[][a]$.
\begin{lemma}
Grant Assumptions~\ref{assumptions:b0}, \ref{assumptions:f0}, \ref{assumptions:nu0} and \ref{assumption:condition sur a_t}. Then, there exists a continuous non-negative and integrable function $\eta$
 such that for all $t \geq s \geq 0$, one has
\begin{align*}
|\BK[][(a.)] (t, s) | \leq C  e^{-\lambda t} \eta(t-s), \\
|\BH[][(a.)](t, s) | \leq C  e^{-\lambda t} \eta(t-s).
\end{align*}
The function $\eta$ only depends on $b, \bar{a}, f$ and  $\lambda$ (in particular it does not depend on $C$). Furthermore, we can choose $\eta$ such that $||\eta||_1$ is a non-decreasing function of $\bar{a}$.
\label{lem: la fonction eta qui controle K bar(a) et H bar(a)}
\end{lemma}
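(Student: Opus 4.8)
The plan is to exploit that, since these kernels are built from the Dirac mass $\delta_0$, they are completely explicit: for $t\ge s$,
\[ \K[][(a.)](t,s)=f(\Flow[(a.)][t,s](0))\,\EXP{-\int_s^t f(\Flow[(a.)][u,s](0))\,du},\qquad \H[][(a.)](t,s)=\EXP{-\int_s^t f(\Flow[(a.)][u,s](0))\,du}, \]
while, using that the constant–current flow started at time $s$ is just $w\mapsto\Flow[a][w](0)$ shifted by $s$, $\K[][a](t,s)=f(\Flow[a][t-s](0))\,\EXP{-\int_0^{t-s} f(\Flow[a][w](0))\,dw}$ and $\H[][a](t,s)=\EXP{-\int_0^{t-s} f(\Flow[a][w](0))\,dw}$. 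Hence everything is governed by the flow discrepancy $\delta_{u,s}:=|\Flow[(a.)][u,s](0)-\Flow[a][u-s](0)|$, and I would first record three elementary inputs. (i) By Assumption~\ref{assumptions:b0:Cvarphi}, applied with the current $(a_u)$ and the constant current $\equiv a$, together with \eqref{speed_of_convergence_of_a_t}, $\delta_{u,s}\le C_\varphi\int_s^u|a_\theta-a|\,d\theta\le (C_\varphi C/\lambda)\,e^{-\lambda s}$. (ii) By the comparison principle (Lemma~\ref{lemma:b0}, point~\ref{lemma:b0:comparison principle}), both flows stay between $\Flow[0][u-s](0)$ and $\Flow[\bar{a}][u-s](0)$, so $f(\Flow[(a.)][u,s](0))\ge f(\Flow[0][u-s](0))$, whence $\EXP{-\int_s^t f(\Flow[(a.)][u,s](0))\,du}\le\H[][0](t-s)$ by Remark~\ref{remark:inequality Ha}, and likewise for the constant current; moreover $\Flow[a][u-s](0)\le C^{\bar{a}}_b(u-s)$ by Lemma~\ref{lemma:b0}, point~\ref{lemma:b0:drift bounded from above}. (iii) Since $f(\Flow[0][v](0))\to f(\sigma_0)>\lambda$ (Remark~\ref{remark:b0f}, point~\ref{remark:b0f:lim f flow a}), for every $\lambda''\in(\lambda,f(\sigma_0))$ there is $\Theta$ with $\H[][0](v)\le\Theta e^{-\lambda'' v}$.

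Next, exactly as in the proof of Proposition~\ref{prop:convergence en loi vers la mesure invariante}, I would split
\[ \BK[][(a.)](t,s)=\big[f(\Flow[(a.)][t,s](0))-f(\Flow[a][t-s](0))\big]\EXP{-\!\int_s^t\! f(\Flow[(a.)][u,s](0))du}+f(\Flow[a][t-s](0))\big[\EXP{-\!\int_s^t\! f(\Flow[(a.)][u,s](0))du}-\EXP{-\!\int_0^{t-s}\! f(\Flow[a][w](0))dw}\big]. \]
For the first bracket, convexity and monotonicity of $f$ give $|f(\Flow[(a.)][t,s](0))-f(\Flow[a][t-s](0))|\le f'(C^{\bar{a}}_b(t-s))\,\delta_{t,s}$, and the exponential is $\le\H[][0](t-s)$. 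For the second, $|e^{-A}-e^{-B}|\le e^{-\min(A,B)}|A-B|$ with $e^{-\min(A,B)}\le\H[][0](t-s)$, $f(\Flow[a][t-s](0))\le f(C^{\bar{a}}_b(t-s))$, and $|A-B|\le\int_s^t f'(C^{\bar{a}}_b(u-s))\,\delta_{u,s}\,du\le (C_\varphi C/\lambda)(t-s)\,f'(C^{\bar{a}}_b(t-s))\,e^{-\lambda s}$. Writing $e^{-\lambda s}=e^{-\lambda t}e^{\lambda(t-s)}$, this produces $|\BK[][(a.)](t,s)|\le C\,e^{-\lambda t}\eta(t-s)$ with, for instance,
\[ \eta(v):=\frac{C_\varphi}{\lambda}\,(1+v)\,e^{\lambda v}\,\H[][0](v)\,f'(C^{\bar{a}}_b v)\,\big(1+f(C^{\bar{a}}_b v)\big), \]
and the identical computation with the leading factor $f(\Flow[a][t-s](0))$ deleted yields $|\BH[][(a.)](t,s)|\le C\,e^{-\lambda t}\eta(t-s)$ with the same $\eta$ (it suffices to enlarge $\eta$ once so that it dominates both envelopes). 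Note $\eta$ does not involve $C$, $\nu$ or $s$, only $b,\bar{a},f,\lambda$.

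It then remains to check the properties of $\eta$. Continuity and non‑negativity are immediate from the formula ($f\in\mathcal C^1$, the flow and $\H[][0]$ are continuous). Integrability follows from input (iii) above combined with the sub‑exponential growth of $f$ and $f'$ (Remark~\ref{remark:b0f}, point~\ref{remark:b0f:f does not grow too much}, using $f'(x)\le C+f(x)$ from Assumption~\ref{assumptions:f0:psi}): fixing $\lambda''\in(\lambda,f(\sigma_0))$ and $\epsilon>0$ with $\lambda+\epsilon<\lambda''$, one gets $\eta(v)\le\Theta\,(1+v)^3\,e^{(\lambda+\epsilon-\lambda'')v}\in L^1(\mathbb{R}_+)$. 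Finally, $v\mapsto C^{\bar{a}}_b v=(C_b+\bar{a})v$ is non‑decreasing in $\bar{a}$ and $f,f'$ are non‑decreasing, so the displayed $\eta$ is pointwise non‑decreasing in $\bar{a}$; hence $\|\eta\|_1$ is a non‑decreasing function of $\bar{a}$, as claimed.

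The one genuinely delicate point is obtaining the decay in $e^{-\lambda t}$ rather than merely in $e^{-\lambda s}$: the perturbation estimate on the flow only produces the factor $e^{-\lambda s}$, and turning it into $e^{-\lambda t}$ costs a factor $e^{\lambda(t-s)}$. This is affordable precisely because the surviving exponential factor $\H[][0](t-s)$ decays at rate $f(\sigma_0)>\lambda$, with enough slack to also swallow the sub‑exponential terms $f,f'$ evaluated along the at most linearly growing flow; this is exactly where the hypothesis $\lambda<f(\sigma_0)$ of Assumption~\ref{assumption:condition sur a_t} enters.
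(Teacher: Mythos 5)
Your proof is correct and follows essentially the same route as the paper: the same split of $\BK[][(a.)]$ into the $f$-difference term and the exponential-difference term, the same flow estimates from Assumption~\ref{assumptions:b0:Cvarphi} and the comparison principle, the same trade-off $e^{-\lambda s}=e^{-\lambda t}e^{\lambda(t-s)}$ absorbed by the decay $\H[][0](v)\lesssim e^{-\lambda'' v}$ for some $\lambda''\in(\lambda,f(\sigma_0))$, and the same derivation for $\BH[][(a.)]$ by dropping the leading $f$ factor. The only cosmetic difference is that you package the two envelopes into a single $\eta$ where the paper writes $\eta=\alpha+\beta$ with two explicit formulas; the substance is identical.
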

\begin{proof}
Here, to simplify the notation, we write $\Flow[(a.)][t, s]$ for $\Flow[(a.)][t, s](0)$. We have
\begin{align*}
\BK[][(a.)] (t, s) & =  f(\Flow[(a.)][t, s])\EXP{-\int_s^t{f(\Flow[(a.)][u, s]) du}} - f(\Flow[a][t, s])\EXP{-\int_s^t{f(\Flow[a][u, s]) du}}\\
|\BK[][(a.)] (t, s)| & \leq  |f(\Flow[(a.)][t, s]) - f(\Flow[a][t,s])| \EXP{-\int_s^t{ f(\Flow[(a.)][u, s]) du}} \\
& \quad + f(\Flow[a][t, s]) \left|\EXP{-\int_s^{t}{f(\Flow[(a.)][u, s]) du}} - \EXP{-\int_s^t{f(\Flow[a][u, s]) du}}\right| \\
                &  =:   M_1 + M_2.
\end{align*}
Assumptions~\ref{assumptions:b0}, \ref{assumptions:f0:f(x+y)}
and \eqref{speed_of_convergence_of_a_t}
give
\begin{align*}
 |f(\Flow[(a.)][t, s]) - f(\Flow[a][t,s])|  & \leq  f'(\Flow[\bar{a}][t, s]) | \Flow[(a.)][t, s] - \Flow[a][t, s]| 
  \leq f'(C^{\bar{a}}_{b} (t-s)) C_\varphi  \int_s^t{|a_u - a| du} \\
  & \leq f'(C^{\bar{a}}_{b} (t-s)) C_\varphi C   \int_s^t{e^{-\lambda u} du} 
   \leq C e^{-\lambda t}  f'(C^{\bar{a}}_{b} (t-s)) C_\varphi   \frac{e^{\lambda(t-s)}}{\lambda}.
\end{align*}
Moreover choosing $\lambda'  \in (\lambda , f(\sigma_0))$ and using the fact that $f(\Flow[0][u]) \rightarrow f(\sigma_0)$ as $u \rightarrow \infty$, one obtains
\begin{align*}
 \EXP{-\int_s^t{ f(\Flow[(a.)][u, s]) du}} &\leq \EXP{-\int_s^t{ f(\Flow[0][u, s]) du}} =  \EXP{-\int_0^{t-s}{ f(\Flow[0][u]) du}} \\
 & \leq D(b, f, \lambda') e^{-\lambda' (t-s)},
\end{align*}
for some finite constant $D(b, f, \lambda')$. Let $\alpha(u) :=\frac{D(b, f, \lambda')}{\lambda} e^{-(\lambda' - \lambda) u} f'(C^{\bar{a}}_{b} u) C_\varphi$, we have
\[ 
M_1 \leq C e^{-\lambda t}  \alpha(t-s), 
\]
and $\alpha \in L^1(\mathbb{R}_+)$.
Moreover, for \(A, B \geq 0\), we have $|e^{-A} - e^{-B}| \leq e^{-\min(A, B)}|A-B|$. So,
\begin{align*}
M_2 &\leq f(\Flow[\bar{a}][t, s]) 
\EXP{-\int_0^{t-s}{f(\Flow[0][u])du}} 
\left|\int_s^t{ f(\Flow[(a.)][u,s]) - f(\Flow[a][u, s]) du}\right| \\
& \leq f(C^{\bar{a}}_b (t-s)) D(b, f, \lambda') e^{-\lambda' (t-s)}   f'( C^{\bar{a}}_b (t-s))   \int_s^t{ |\Flow[(a.)][u,s] - \Flow[a][u, s] |du}. 
\end{align*} One has
\[
\int_s^t{ |\Flow[(a.)][u,s] - \Flow[a][u, s] |du} \leq C_\varphi \int_s^t{\int_s^u{|a_\theta - a| d\theta} du} \leq C C_\varphi \int_s^t{\int_s^u{e^{-\lambda \theta} d\theta} du} 
\leq C e^{-\lambda t} \cdot \frac{C_\varphi}{\lambda}(t-s)e^{\lambda (t-s)}. 
\]
Consequently $M_2 \leq C e^{-\lambda t}  \beta(t-s)$ with
\[ 
\beta(u) :=D(b, f, \lambda') e^{-(\lambda' - \lambda)u} f(C^{\bar{a}}_b u)  f'( C^{\bar{a}}_b u)  \frac{C_\varphi}{\lambda}u e^{\lambda u} .  
\]
It holds that $\beta \in L^1(\mathbb{R}_+)$ and setting $\eta := \alpha + \beta$ completes the proof for $\BK[][(a.)]$. The same computations give a similar result for $\BH[][(a.)] $.
\end{proof}
These estimates are sharp enough to give the following result:
\begin{lemma} 
Grant Assumptions~\ref{assumptions:b0}, \ref{assumptions:f0}, \ref{assumptions:nu0} and \ref{assumption:condition sur a_t}. Let $\eta$ be the function given by Lemma~\ref{lem: la fonction eta qui controle K bar(a) et H bar(a)}.
Denote by $1$ the kernel $\indica{t \geq s}$. Then
\begin{enumerate}
\item $\BK[][(a.)] \in \mathcal{V}_\lambda$ and $||\BK[][(a.)]||_{\lambda,1} \leq C ||\eta||_1$.
\item $\BK[][(a.)] * 1 \in \mathcal{V}_\lambda$ and $||\BK[][(a.)] * 1||_{\lambda,1} \leq C ||\eta||_1$.
\end{enumerate}
The exact same estimates holds for $\BH[][(a.)]$ and $\BH[][(a.)] * 1$.
\label{lem:K_bar * 1 et H_bar * 1}
\end{lemma}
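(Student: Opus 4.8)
The plan is to deduce everything from Lemma~\ref{lem: la fonction eta qui controle K bar(a) et H bar(a)} by integrating the pointwise bounds there against the weight $e^{\lambda(t-s)}$. First I would treat item~1: for $\BK[][(a.)]$ one has $|\BK[][(a.)](t,s)| \leq C e^{-\lambda t}\eta(t-s)$, so
\[
\int_0^t{|\BK[][(a.)](t,s)| e^{\lambda(t-s)} ds} \leq C e^{-\lambda t}\int_0^t{\eta(t-s) e^{\lambda(t-s)} ds}.
\]
The factor $e^{-\lambda t}$ kills the exponential growth of $e^{\lambda(t-s)}$ on $[0,t]$: indeed $e^{-\lambda t} e^{\lambda(t-s)} = e^{-\lambda s} \leq 1$ for $s \geq 0$, hence after the change of variable $v = t-s$ the integral is bounded by $C\int_0^t{\eta(v) e^{-\lambda(t-v)}\cdot e^{\lambda(t-v)}\,}$... more directly, $\int_0^t{|\BK[][(a.)](t,s)| e^{\lambda(t-s)} ds} \leq C\int_0^t{\eta(t-s) e^{-\lambda s} ds} \leq C\int_0^\infty{\eta(v) dv} = C\|\eta\|_1$, uniformly in $t$. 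Taking the essential supremum over $t \geq 0$ gives $\|\BK[][(a.)]\|_{\lambda,1} \leq C\|\eta\|_1 < \infty$, so in particular $\BK[][(a.)] \in \mathcal{V}_\lambda$. The same computation with $\BH[][(a.)]$ in place of $\BK[][(a.)]$ gives the corresponding bound.

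For item~2, I would write out $(\BK[][(a.)] * 1)(t,s) = \int_s^t{\BK[][(a.)](t,u)\,du}$ and estimate, using Lemma~\ref{lem: la fonction eta qui controle K bar(a) et H bar(a)} again,
\[
|(\BK[][(a.)] * 1)(t,s)| \leq \int_s^t{|\BK[][(a.)](t,u)|\,du} \leq C e^{-\lambda t}\int_s^t{\eta(t-u)\,du} = C e^{-\lambda t}\int_0^{t-s}{\eta(v)\,dv}.
\]
Then
\[
\int_0^t{|(\BK[][(a.)] * 1)(t,s)| e^{\lambda(t-s)} ds} \leq C e^{-\lambda t}\int_0^t{\Big(\int_0^{t-s}{\eta(v)\,dv}\Big) e^{\lambda(t-s)} ds} \leq C\|\eta\|_1 \int_0^t{e^{-\lambda s}\,ds} \leq \frac{C\|\eta\|_1}{\lambda},
\]
where I bounded $\int_0^{t-s}{\eta(v)\,dv} \leq \|\eta\|_1$ and used $e^{-\lambda t} e^{\lambda(t-s)} = e^{-\lambda s}$. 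Actually, to get the cleaner constant $C\|\eta\|_1$ claimed in the statement, I would instead bound more carefully: observe $\BK[][(a.)] * 1 = \BK[][(a.)] * \mathbbm{1}$ and use the Banach algebra structure of $\mathcal{V}_\lambda$ (Proposition~\ref{lem:20190418}), noting that $\|\mathbbm{1}\|_{\lambda,1} = \esssup_t \int_0^t{e^{\lambda(t-s)}ds} \leq 1/\lambda$ — but that gives $\|\BK[][(a.)]*1\|_{\lambda,1} \leq \|\BK[][(a.)]\|_{\lambda,1}/\lambda$, not $\leq C\|\eta\|_1$. The sharper route is the direct Fubini estimate: swap the order of integration in $\int_0^t\int_s^t |\BK[][(a.)](t,u)| e^{\lambda(t-s)}\,du\,ds = \int_0^t |\BK[][(a.)](t,u)| \big(\int_0^u e^{\lambda(t-s)}ds\big) du$, and then bound $\int_0^u e^{\lambda(t-s)}ds = e^{\lambda t}(1-e^{-\lambda u})/\lambda \cdot$... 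I would simply present whichever of these bookkeeping computations yields the stated constant (the pointwise estimate above already gives a bound of the form $C\cdot\mathrm{const}(\eta,\lambda)$, which is all that is used downstream). Finally, the identical argument applied to $\BH[][(a.)]$, whose pointwise bound $|\BH[][(a.)](t,s)| \leq C e^{-\lambda t}\eta(t-s)$ is the second half of Lemma~\ref{lem: la fonction eta qui controle K bar(a) et H bar(a)}, gives $\BH[][(a.)] \in \mathcal{V}_\lambda$ and $\BH[][(a.)]*1 \in \mathcal{V}_\lambda$ with the same bounds.

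I do not expect any real obstacle here: the content is entirely contained in Lemma~\ref{lem: la fonction eta qui controle K bar(a) et H bar(a)}, and the only subtlety is the elementary observation that the prefactor $e^{-\lambda t}$ exactly compensates the weight $e^{\lambda(t-s)}$ over $s \in [0,t]$, turning a potentially growing integral into one dominated by $\|\eta\|_1$ uniformly in $t$. The one point requiring a little care is matching the precise constant $C\|\eta\|_1$ in item~2 rather than a slightly worse constant; if matching it exactly is awkward I would state the bound with an innocuous $\lambda$-dependent constant, since only finiteness and the linear dependence on $C$ matter for the perturbation argument that follows.
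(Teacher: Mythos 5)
Your item~1 computation is identical to the paper's and is correct. For item~2 you arrive at $C\|\eta\|_1/\lambda$, and you correctly flag that you cannot reproduce the stated constant $C\|\eta\|_1$. You are right to be suspicious: the paper's own proof of item~2 contains a small slip. From the pointwise bound $|(\BK[][(a.)] * 1)(t,s)| \leq C e^{-\lambda t}\|\eta\|_1$, the paper bounds
$\|\BK[][(a.)] * 1\|_{\lambda,1} \leq \sup_{t\ge 0}\int_0^t C e^{-\lambda t}\|\eta\|_1 e^{\lambda(t-s)}\,ds$
and asserts this equals $C\|\eta\|_1$; but $\int_0^t e^{-\lambda t} e^{\lambda(t-s)}\,ds = \int_0^t e^{-\lambda s}\,ds = (1-e^{-\lambda t})/\lambda$, so the supremum is $C\|\eta\|_1/\lambda$, exactly what you obtained. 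Your instinct to not chase the exact constant is the right one: in the perturbation argument that follows, only finiteness and linearity in $C$ matter, and the $1/\lambda$ factor can be absorbed harmlessly into the smallness requirement \eqref{eq:contrainte satisfaite par C pour que la methode de perturbation s'applique} (or equivalently into the definition of $\alpha$). One incidental point worth correcting in your write-up: your parenthetical claim that $\|\mathbbm{1}\|_{\lambda,1}\le 1/\lambda$ is false. With the weight $e^{\lambda(t-s)}$ and $\lambda>0$, one has $\int_0^t e^{\lambda(t-s)}\,ds = (e^{\lambda t}-1)/\lambda$, which is unbounded in $t$, so $\mathbbm{1}\notin\mathcal{V}_\lambda$; the Banach-algebra route you sketched (and discarded) does not apply. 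Your direct estimate, which exploits the crucial $e^{-\lambda t}$ prefactor from Lemma~\ref{lem: la fonction eta qui controle K bar(a) et H bar(a)}, is the correct argument and is essentially the paper's.
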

\begin{proof}
Using Lemma~\ref{lem: la fonction eta qui controle K bar(a) et H bar(a)}, we have
\[ 
||\BK[][(a.)]||_{\lambda,1} := \sup_{t \geq 0}{\int_0^t{|\BK[][(a.)]|(t, s) e^{\lambda (t-s) }ds}}  \leq \sup_{t \geq 0} {\int_0^t{Ce^{-\lambda s} \eta(t-s) ds}} \leq C ||\eta||_1, 
\]
proving point 1. For point 2, we have $\forall t \geq s \geq 0$, $(\BK[][(a.)] * 1)(t, s) := \int_s^t{\BK[][(a.)] (t, u) du}$. 
And Lemma~\ref{lem: la fonction eta qui controle K bar(a) et H bar(a)} gives
\[ 
||\BK[][(a.)] * 1||_{\lambda,1} = \sup_{t \geq 0}{\int_0^t{|\BK[][(a.)] * 1|(t, s) e^{\lambda (t-s) }ds}} \leq \sup_{t \geq 0}{\int_0^t{C e^{-\lambda t} ||\eta||_1   e^{\lambda (t-s) } ds}} = C ||\eta||_1. 
\]
\end{proof}
\begin{proposition}
Grant Assumptions~\ref{assumptions:b0}, \ref{assumptions:f0}, \ref{assumptions:nu0}. Assume $(a_t)_{t \geq 0}$ satisfies Assumption~\ref{assumption:condition sur a_t} and that the constant $C$ is small enough:
\begin{equation}   
	\alpha := C ||\eta||_1 (1 + ||\xi_a||_{\lambda,1} + \gamma(a) ) <  1.
 \label{eq:contrainte satisfaite par C pour que la methode de perturbation s'applique}
\end{equation}
Define $\Delta_K := \BK[][(a.)] + \xi_a * \BK[][(a.)] - \gamma(a) \BH[][(a.)]$ and let $\Delta_r$ be the solution of the Volterra equation
\begin{equation} 
\label{eq:resolvante de l'equation de perturbation}
\Delta_r = \Delta_K + \Delta_K * \Delta_r. 
\end{equation}
Then
\begin{enumerate}
\item $\Delta_K \in \mathcal{V}_\lambda$ with $||\Delta_K||_{\lambda,1} \leq \alpha$ and $\Delta_K * 1 \in \mathcal{V}_\lambda$ with $||\Delta_K * 1||_{\lambda,1} \leq \alpha$.
\item $\Delta_r \in \mathcal{V}_\lambda$ with $||\Delta_r||_{\lambda,1} \leq \frac{\alpha}{1-\alpha}$ and $\Delta_r * 1 \in \mathcal{V}_\lambda$ with $||\Delta_r * 1||_{\lambda,1} \leq \frac{\alpha}{1-\alpha}$.
\item Consider $\r[][(a.)](t, s)$  the jump rate associated to the current $(a_t)_{t \geq 0}$. Then
 \begin{equation} \r[][(a.)] = \r[][a] + \Delta_r + \Delta_r * \r[][a]. 
\label{eq:solution of the perturbed resolvent equation using tilde(r)}
\end{equation}
\end{enumerate}
Consequently, we have $\r[][(a.)] = \gamma(a) + \xi_{(a.)}$ with
\[ \xi_{(a.)} = \xi_a + \Delta_r + \Delta_r * \xi_a + \gamma(a) (\Delta_r * 1) \in \mathcal{V}_\lambda. \]
Furthermore, 
\[ ||\xi_{(a.)} ||_{\lambda,1} \leq ||\xi_a||_{\lambda,1} + \frac{\alpha}{1-\alpha} [1 + ||\xi_a||_{\lambda,1} + \gamma(a)]. \]
\label{prop:long time behavior of the perturbed resolvant equation}
\end{proposition}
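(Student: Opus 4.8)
The plan is to reduce everything to the Banach algebra $\mathcal{V}_\lambda$ and its known properties (Proposition~\ref{lem:20190418}), exploiting the estimates already obtained in Lemma~\ref{lem: la fonction eta qui controle K bar(a) et H bar(a)} and Lemma~\ref{lem:K_bar * 1 et H_bar * 1}. First I would treat point~1: $\Delta_K := \BK[][(a.)] + \xi_a * \BK[][(a.)] - \gamma(a) \BH[][(a.)]$ is a combination of $\BK[][(a.)]$, $\BH[][(a.)]$ and their convolution with $\xi_a \in L_\lambda \subset \mathcal{V}_\lambda$. By Lemma~\ref{lem:K_bar * 1 et H_bar * 1} we have $||\BK[][(a.)]||_{\lambda,1} \leq C||\eta||_1$ and $||\BH[][(a.)]||_{\lambda,1} \leq C||\eta||_1$, and the sub-multiplicativity of $||\cdot||_{\lambda,1}$ gives $||\xi_a * \BK[][(a.)]||_{\lambda,1} \leq ||\xi_a||_{\lambda,1} \, C||\eta||_1$. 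Adding these with the triangle inequality yields $||\Delta_K||_{\lambda,1} \leq C||\eta||_1(1 + ||\xi_a||_{\lambda,1} + \gamma(a)) = \alpha$. For $\Delta_K * 1$, one writes $\Delta_K * 1 = \BK[][(a.)]*1 + \xi_a*(\BK[][(a.)]*1) - \gamma(a)(\BH[][(a.)]*1)$ using associativity of $*$, and applies the $*1$-estimates of Lemma~\ref{lem:K_bar * 1 et H_bar * 1} in exactly the same way to get $||\Delta_K * 1||_{\lambda,1} \leq \alpha$.

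Next I would prove point~2. Since $\alpha < 1$ by hypothesis~\eqref{eq:contrainte satisfaite par C pour que la methode de perturbation s'applique}, the element $\Delta_K$ has $||\Delta_K||_{\lambda,1} < 1$, so in the Banach algebra $\mathcal{V}_\lambda$ the Neumann series $\sum_{n \geq 1} \Delta_K^{*(n)}$ converges absolutely; call its sum $\Delta_r$. It satisfies $\Delta_r = \Delta_K + \Delta_K * \Delta_r$ (the resolvent equation~\eqref{eq:resolvante de l'equation de perturbation}), and $||\Delta_r||_{\lambda,1} \leq \sum_{n\geq1}\alpha^n = \frac{\alpha}{1-\alpha}$; uniqueness of the continuous solution follows as in Lemma~\ref{lem:existence et unicté de l'équation de Volterra}. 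For $\Delta_r * 1$, convolve the resolvent identity on the right by $1$: $\Delta_r * 1 = \Delta_K * 1 + \Delta_K * (\Delta_r * 1)$, so $\Delta_r * 1$ solves the same Volterra equation with forcing $\Delta_K * 1$; its unique solution is $\Delta_K*1 + \Delta_r * (\Delta_K * 1)$ by Lemma~\ref{lem:solution a laide de la resolvante.}, whence $||\Delta_r * 1||_{\lambda,1} \leq ||\Delta_K*1||_{\lambda,1}(1 + ||\Delta_r||_{\lambda,1}) \leq \alpha(1 + \frac{\alpha}{1-\alpha}) = \frac{\alpha}{1-\alpha}$.

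For point~3 I would start from the two Volterra equations $\r[][(a.)] = \K[][(a.)] + \K[][(a.)]*\r[][(a.)]$ and $\r[][a] = \K[][a] + \K[][a]*\r[][a]$, subtract, and rewrite $\K[][(a.)] = \K[][a] + \BK[][(a.)]$, $\H[][(a.)] = \H[][a] + \BH[][(a.)]$. Using $\r[][a] = \gamma(a) + \xi_a$ from~\eqref{eq:complete link between ra and xia on R}, the relation $1*\K[\cdot][\cdot] = 1 - \H[\cdot][\cdot]$ from~\eqref{relation entre K et H}, and that $\xi_a = \K[][a] + \K[][a]*\xi_a$ solves the resolvent equation for $\K[][a]$, one manipulates the difference $\r[][(a.)] - \r[][a]$ into the form (forcing term) $+\, \K[][a]*(\r[][(a.)]-\r[][a])$ where the forcing term is exactly $\Delta_K$; applying Lemma~\ref{lem:solution a laide de la resolvante.} (with $\K[][a]$ and its resolvent $\r[][a]$, equivalently passing through $\xi_a$) gives $\r[][(a.)] - \r[][a] = \Delta_r + \Delta_r * \r[][a]$, which is~\eqref{eq:solution of the perturbed resolvent equation using tilde(r)}. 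Finally, substituting $\r[][a] = \gamma(a) + \xi_a$ into $\xi_{(a.)} := \r[][(a.)] - \gamma(a) = (\r[][(a.)] - \r[][a]) + \xi_a = \xi_a + \Delta_r + \Delta_r*\xi_a + \gamma(a)(\Delta_r*1)$ gives the stated decomposition, and the triangle inequality together with points~1--2 and sub-multiplicativity yields $||\xi_{(a.)}||_{\lambda,1} \leq ||\xi_a||_{\lambda,1} + \frac{\alpha}{1-\alpha}(1 + ||\xi_a||_{\lambda,1} + \gamma(a))$.

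I expect the main obstacle to be the algebraic bookkeeping in point~3: one must carefully juggle the non-commutative convolution $*$, the two different resolvent identities, and the substitution $\r[][a] = \gamma(a) + \xi_a$ so that the forcing term collapses exactly to $\Delta_K$ and no spurious terms survive. In particular the term $-\gamma(a)\BH[][(a.)]$ arises only after using $1*\BK[][(a.)] = -\BH[][(a.)]$ (a consequence of~\eqref{relation entre K et H} applied to both kernels), and one must be attentive to whether $\gamma(a)$ is treated as the constant function (so that $\gamma(a)*\BK[][(a.)] = \gamma(a)(1*\BK[][(a.)]) = -\gamma(a)\BH[][(a.)]$). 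The $\mathcal{V}_\lambda$-norm estimates themselves, once points~1--2 are set up, are routine applications of sub-multiplicativity and the triangle inequality.
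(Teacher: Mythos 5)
Your treatment of points 1 and 2 is correct and matches the paper: you use the same inputs (Lemma~\ref{lem:K_bar * 1 et H_bar * 1}, sub-multiplicativity of $||\cdot||_{\lambda,1}$ in the Banach algebra $\mathcal{V}_\lambda$, and the Neumann series / contraction argument), and your route for $\Delta_r*1$ via $\Delta_r * 1 = \Delta_K*1 + \Delta_r * (\Delta_K*1)$ is an equivalent rearrangement of the paper's step. These parts are fine.

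Point~3 is where your proposal breaks down, and the gap is a substantive one, not just bookkeeping. Subtracting the two Volterra equations and splitting $\K[][(a.)]*\r[][(a.)] - \K[][a]*\r[][a]$ so as to leave a $\K[][a]*(\r[][(a.)]-\r[][a])$ term produces the identity
\[
\r[][(a.)] - \r[][a] \;=\; \bigl(\BK[][(a.)] + \BK[][(a.)]*\r[][(a.)]\bigr) + \K[][a]*(\r[][(a.)]-\r[][a]).
\]
The "forcing term" here is $\BK[][(a.)] + \BK[][(a.)]*\r[][(a.)]$, which is \emph{not} $\Delta_K = \BK[][(a.)] + \r[][a]*\BK[][(a.)]$: it still contains the unknown $\r[][(a.)]$, and even if you replaced $\r[][(a.)]$ by $\r[][a]$, the convolution is on the wrong side ($\BK[][(a.)]*\r[][a]$ vs.\ $\r[][a]*\BK[][(a.)]$) and the algebra $\mathcal{V}_\lambda$ is not commutative. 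Moreover, even if the forcing term \emph{were} exactly $\Delta_K$, one application of Lemma~\ref{lem:solution a laide de la resolvante.} with kernel $\K[][a]$ and resolvent $\r[][a]$ would give $\Delta_K + \r[][a]*\Delta_K$, not the claimed $\Delta_r + \Delta_r*\r[][a]$; so the conclusion you state does not follow from the step you describe. A related small slip: $\xi_a$ does not solve $\xi_a = \K[][a] + \K[][a]*\xi_a$; it is $\r[][a]=\gamma(a)+\xi_a$ that is the resolvent of $\K[][a]$, and $\xi_a$ satisfies $\xi_a = \K[][a] - \gamma(a)\H[][a] + \K[][a]*\xi_a$.

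The paper avoids this trap by \emph{verifying} rather than deriving: it sets $r := \r[][a] + \Delta_r + \Delta_r*\r[][a]$, shows directly that $r = \r[][a] + \Delta_K + \Delta_K*r$, then left-multiplies by $\K[][a]$ and uses the identities $\K[][a]*\r[][a]=\r[][a]*\K[][a]=\r[][a]-\K[][a]$, $\Delta_K = \BK[][(a.)] + \r[][a]*\BK[][(a.)]$ and $\BK[][(a.)]=\K[][(a.)]-\K[][a]$ to collapse everything to $r = \K[][(a.)] + \K[][(a.)]*r$; uniqueness (Lemma~\ref{lem:existence et unicté de l'équation de Volterra}) then identifies $r=\r[][(a.)]$. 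A derivation in the spirit you sketch can be salvaged, but it needs two resolvent inversions: after one application of the $\K[][a]$-resolvent the forcing becomes $\Delta_K + \Delta_K*\r[][(a.)]$, and writing $\r[][(a.)] = \r[][a] + \delta$ turns this into a \emph{new} Volterra equation $\delta = (\Delta_K + \Delta_K*\r[][a]) + \Delta_K*\delta$ with kernel $\Delta_K$; a second resolvent inversion, now using $\Delta_r$ and the identity $\Delta_K + \Delta_r*\Delta_K = \Delta_r$, finally yields $\delta = \Delta_r + \Delta_r*\r[][a]$. Either way, your one-step claim does not hold.
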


\begin{proof}
By Lemma~\ref{lem:K_bar * 1 et H_bar * 1}, we have $||\Delta_K||_{\lambda,1} \leq \alpha < 1$. Consequently equation \eqref{eq:resolvante de l'equation de perturbation} admits a unique solution $\Delta_r\in \mathcal{V}_\lambda$ satisfying
\(||\Delta_r||_{\lambda,1} \leq \frac{\alpha}{1-\alpha}. 
\)
The kernel $\Delta_r * 1$ satisfies the following Volterra equation
\begin{equation} \Delta_r * 1 = (\Delta_K * 1) + \Delta_K * (\Delta_r * 1)
\label{eq:volterra equation tilde(r) * 1 }
\end{equation}
with $ \Delta_K * 1 = (\BK[][(a.)] * 1) + \xi_a * (\BK[][(a.)] * 1) + \gamma(a) (\BH[][(a.)] * 1)$. It follows from Lemma~\ref{lem:K_bar * 1 et H_bar * 1} that $\Delta_K * 1 \in \mathcal{V}_\lambda$ and $||\Delta_K * 1||_{\lambda,1} \leq \alpha$.
From $||\Delta_K||_{\lambda,1} < 1$, one gets that equation \eqref{eq:volterra equation tilde(r) * 1 } has its solution in $\mathcal{V}_\lambda$ and
\[ \Delta_r* 1 \in \mathcal{V}_\lambda,~~ ||\Delta_r * 1||_{\lambda,1} \leq \frac{\alpha}{1-\alpha}. \]
It remains to check that 
\(\r[][(a.)]\)
given by \eqref{eq:solution of the perturbed resolvent equation using tilde(r)} 
is indeed the solution of \eqref{eq:volterra equation, resolvent}. Let $r := \r[][a] + \Delta_r + \Delta_r * \r[][a]$. 
One has
\begin{align*}
\Delta_K * r & = \Delta_K * \r[][a]  + (\Delta_r - \Delta_K) +  (\Delta_r - \Delta_K)  * \r[][a] \\
  & = \Delta_r * \r[][a]  + \Delta_r - \Delta_K \\
  &= r - \r[][a] - \Delta_K,
\end{align*}
i.e. $r$ satisfies 
\begin{equation}
\label{eq:perturbation_equation}
 r = \r[][a] + \Delta_K + \Delta_K * r.
\end{equation}
Using Proposition~\ref{prop: comportement en temps long equation de volterra cas convolutionnel} and \eqref{relation entre K et H}, we have
$\Delta_K = \BK[][(a.)] + \r[][a] * \BK[][(a.)]$. Equation \eqref{eq:perturbation_equation} gives
\[ r - (\BK[][(a.)] + \r[][a] * \BK[][(a.)]) * r = r_a  + \BK[][(a.)] + \r[][a] * \BK[][(a.)].  \]
We multiply this equation by $\K[][a]$ on the left and obtain, using that $\K[][a] * \r[][a] =  \r[][a] * \K[][a] = \r[][a] - \K[][a]$:
\[ \K[][a] * r - \r[][a] * \BK[][(a.)] * r = \r[][a] - \K[][a] + \r[][a] * \BK[][(a.)]. \]
The relation $\BK[][(a.)] = \K[][(a.)] - \K[][a]$ yields 
\[ \K[][a] * r - \r[][a] * \BK[][(a.)] * r = \r[][a] * \K[][(a.)], \]
or equivalently
\[ \Delta_K * r = \K[][(a.)] * r - \r[][a] * \K[][(a.)].  \]
We substitute this equality in \eqref{eq:perturbation_equation} and finally obtain
\[ \r = \K[][(a.)] + \K[][(a.)] * r. \]
By uniqueness (Lemma~\ref{lem:existence et unicté de l'équation de Volterra} with $\nu = \delta_0$), it follows that $\r = \r[][(a.)]$. The end of the proof follows easily.
\end{proof}
\begin{remark}
Let us explain how the formula \eqref{eq:solution of the perturbed resolvent equation using tilde(r)} 
was derived. The algebra $\mathcal{V}_\lambda$ does not have any neutral element (in fact the 
neutral element would be a Dirac distribution) but assume for the sake of this heuristic that $I$ is a 
neutral element of the algebra (i.e. $k * I = I * k = k ~\forall k \in \mathcal{V}_\lambda$). Equation  
\eqref{eq:volterra equation, resolvent} can be rewritten as
\begin{equation}
(I - \K[][(a.)]) * (I + \r[][(a.)]) = I.
\label{eq_r_matricielle}
\end{equation}
In particular (taking $(a_t) \equiv a$), we have $(I - \K[][a]) * (I + \r[][a]) = (I + \r[][a]) *  (I - \K[][a]) = I$. Furthermore,
\[ I - \K[][(a.)] = (I - \K[][a]) * (I - (I  + \r[][a]) *  \BK[][(a.)]) ,\]
with $\BK[][(a.)] =  \K[][(a.)] - \K[][a] \in \mathcal{V}_\lambda$. Equation \eqref{eq_r_matricielle} becomes $ (I - \K[][a]) * (I - (I  + \r[][a]) *\BK[][(a.)]) * (I + \r[][(a.)]) = I$. We multiply by $I + \r[][a]$ on the left of each side, and we get $ (I - (I  + \r[][a]) * \BK[][(a.)]) * (I + \r[][(a.)]) = I + \r[][a]$.

We now expand this equation - the neutral element $I$ disappears and obtain:
\[ \r[][(a.)] - (\BK[][(a.)] + \r[][a] * \BK[][(a.)]) * \r[][(a.)] = \r[][a] + \BK[][(a.)] + \r[][a] * \BK[][(a.)]. \]
Using the definition of $\Delta_K$ we obtain  $ \r[][(a.)] = \r[][a]  + \Delta_K *  \r[][(a.)] 
+\Delta_K.$ Solving this equation in terms of $\Delta_r$ the resolvent of $\Delta_K$ we have 
$\r[][(a.)] = \r[][a] + \Delta_K + \Delta_r* (\r[][a] + \Delta_K)$.  It gives the desired formula.
\end{remark}
We now come back to an arbitrary initial condition $\nu$ and prove the main result of this section.

\begin{proposition}
Grant Assumptions~\ref{assumptions:b0}, \ref{assumptions:f0} and 
\ref{assumptions:nu0}. Let $(\Y[s][\nu][(a.)][t])_{t \geq s}$ be the solution 
to the non-homogeneous equation \eqref{non-homogeneous-SDE} driven by current 
$(a_t)_{t \geq 0}$ and with distribution $\nu$ at time $s$.  Let 
$\r[\nu][(a.)](t, s) =\E f(\Y[s][\nu][(a.)][t])$.
Assume $(a_t)$ satisfies Assumption~\ref{assumption:condition sur a_t} and that the constant $C$  satisfies the inequality \eqref{eq:contrainte satisfaite par C pour que la methode de perturbation s'applique} for some $\alpha \in (0,1)$. Then it holds that
\[ \forall t \geq s \geq 0,~ |\r[\nu][(a.)](t, s) - \gamma(a)| \leq D e^{-\lambda (t-s)}, \]
with
\[  D :=  \frac{1+\alpha\gamma(a)+ ||\xi_a||_{\lambda,1}}{1-\alpha} ||\K[\nu][(a.)]||_{\lambda,\infty} + \gamma(a) ||\H[\nu][(a.)]||_{\lambda,\infty}. \]
\label{prop: theoreme general de perturbation avec les constantes explicites}
\end{proposition}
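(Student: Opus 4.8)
The plan is to reduce the general initial-condition case to the already-treated Dirac-at-$0$ case (Proposition~\ref{prop:long time behavior of the perturbed resolvant equation}) via the representation formula of Proposition~\ref{prop:formule donnant la loi de X_t}. Recall that $\r[\nu][(a.)]$ solves the Volterra equation $\r[\nu][(a.)] = \K[\nu][(a.)] + \K[][(a.)] * \r[\nu][(a.)]$, and that by Lemma~\ref{lem:solution a laide de la resolvante.} this is solved explicitly by $\r[\nu][(a.)] = \K[\nu][(a.)] + \r[][(a.)] * \K[\nu][(a.)]$, where $\r[][(a.)]$ is the resolvent of $\K[][(a.)]$. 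From Proposition~\ref{prop:long time behavior of the perturbed resolvant equation} we know $\r[][(a.)] = \gamma(a) + \xi_{(a.)}$ with $\xi_{(a.)} \in \mathcal{V}_\lambda$ and the explicit bound on $\|\xi_{(a.)}\|_{\lambda,1}$. Substituting, $\r[\nu][(a.)] = \K[\nu][(a.)] + \gamma(a)\,(1 * \K[\nu][(a.)]) + \xi_{(a.)} * \K[\nu][(a.)]$, and using the identity \eqref{relation entre K et H}, namely $1 * \K[\nu][(a.)] = 1 - \H[\nu][(a.)]$, this becomes
\[
\r[\nu][(a.)] = \gamma(a) + \K[\nu][(a.)] - \gamma(a)\,\H[\nu][(a.)] + \xi_{(a.)} * \K[\nu][(a.)].
\]
So $|\r[\nu][(a.)](t,s) - \gamma(a)| \leq |\K[\nu][(a.)](t,s)| + \gamma(a)|\H[\nu][(a.)](t,s)| + |(\xi_{(a.)} * \K[\nu][(a.)])(t,s)|$, and everything reduces to estimating these three terms with the weight $e^{\lambda(t-s)}$.

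\textbf{Key steps.} First I would bound the first two terms trivially: by definition of $\|\cdot\|_{\lambda,\infty}$ we have $|\K[\nu][(a.)](t,s)| \leq \|\K[\nu][(a.)]\|_{\lambda,\infty}\,e^{-\lambda(t-s)}$ and $|\H[\nu][(a.)](t,s)| \leq \|\H[\nu][(a.)]\|_{\lambda,\infty}\,e^{-\lambda(t-s)}$, which accounts for the terms $\|\K[\nu][(a.)]\|_{\lambda,\infty}$ and $\gamma(a)\|\H[\nu][(a.)]\|_{\lambda,\infty}$ in $D$. Second, for the convolution term I would use the submultiplicativity in the Banach algebra $\mathcal{V}_\lambda$ (Proposition~\ref{lem:20190418}): regarding $\K[\nu][(a.)]$ as a kernel, $|(\xi_{(a.)} * \K[\nu][(a.)])(t,s)| \leq e^{-\lambda(t-s)} \|\xi_{(a.)}\|_{\lambda,1}\,\|\K[\nu][(a.)]\|_{\lambda,\infty}$ — more precisely, pull out the weight: $e^{\lambda(t-s)}|(\xi_{(a.)}*\K[\nu][(a.)])(t,s)| \leq \int_s^t |\xi_{(a.)}(t,u)|e^{\lambda(t-u)}\,|\K[\nu][(a.)](u,s)|e^{\lambda(u-s)}\,du \leq \|\xi_{(a.)}\|_{\lambda,1}\|\K[\nu][(a.)]\|_{\lambda,\infty}$. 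Third, I would plug in the bound $\|\xi_{(a.)}\|_{\lambda,1} \leq \|\xi_a\|_{\lambda,1} + \frac{\alpha}{1-\alpha}(1 + \|\xi_a\|_{\lambda,1} + \gamma(a))$ from Proposition~\ref{prop:long time behavior of the perturbed resolvant equation} and simplify: the coefficient of $\|\K[\nu][(a.)]\|_{\lambda,\infty}$ becomes $1 + \|\xi_a\|_{\lambda,1} + \frac{\alpha}{1-\alpha}(1+\|\xi_a\|_{\lambda,1}+\gamma(a)) = \frac{(1+\|\xi_a\|_{\lambda,1})(1-\alpha) + \alpha(1+\|\xi_a\|_{\lambda,1}+\gamma(a))}{1-\alpha} = \frac{1 + \|\xi_a\|_{\lambda,1} + \alpha\gamma(a)}{1-\alpha}$, which matches the stated $D$. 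Collecting terms gives exactly the claimed $D = \frac{1+\alpha\gamma(a)+\|\xi_a\|_{\lambda,1}}{1-\alpha}\|\K[\nu][(a.)]\|_{\lambda,\infty} + \gamma(a)\|\H[\nu][(a.)]\|_{\lambda,\infty}$.

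\textbf{Main obstacle.} The computation itself is routine algebra in the Banach algebra $\mathcal{V}_\lambda$; the only genuine subtlety is making sure that all the quantities involved are finite and that $\K[\nu][(a.)]$, $\H[\nu][(a.)]$ genuinely belong to $\mathcal{V}_\lambda$ (or at least have finite $\|\cdot\|_{\lambda,\infty}$ norm) so that the algebra manipulations are legitimate — this uses Assumption~\ref{assumptions:nu0} ($\nu(f^2) < \infty$, hence $\nu(f) < \infty$), Assumption~\ref{assumptions:f0:f(x+y)}, Remark~\ref{remark:inequality Ha}, and the fact that $\lambda < f(\sigma_0)$ to control the exponential decay of $\H[][0]$. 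One must also verify the convolution identity $\r[\nu][(a.)] = \K[\nu][(a.)] + \r[][(a.)]*\K[\nu][(a.)]$ holds in the non-commutative setting with the correct ordering (the resolvent $\r[][(a.)]$ acts on the left of the forcing term $\K[\nu][(a.)]$), which is precisely what Lemma~\ref{lem:solution a laide de la resolvante.} provides. No serious difficulty is expected beyond careful bookkeeping of the norms and the order of convolution factors.
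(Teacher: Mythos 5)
Your proof follows exactly the same route as the paper: solve the Volterra equation for $\r[\nu][(a.)]$ via the resolvent formula of Lemma~\ref{lem:solution a laide de la resolvante.}, substitute $\r[][(a.)] = \gamma(a) + \xi_{(a.)}$ from Proposition~\ref{prop:long time behavior of the perturbed resolvant equation}, use \eqref{relation entre K et H} to turn $\gamma(a)*\K[\nu][(a.)]$ into $\gamma(a)(1-\H[\nu][(a.)])$, then bound the three resulting terms by their $\|\cdot\|_{\lambda,\infty}$ and $\|\cdot\|_{\lambda,1}$ norms, and finally plug in the estimate on $\|\xi_{(a.)}\|_{\lambda,1}$. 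The algebraic simplification $1 + \|\xi_a\|_{\lambda,1} + \tfrac{\alpha}{1-\alpha}(1+\|\xi_a\|_{\lambda,1}+\gamma(a)) = \tfrac{1+\alpha\gamma(a)+\|\xi_a\|_{\lambda,1}}{1-\alpha}$ is correct, and this is what the paper does, just less explicitly; no gaps or deviations.
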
 
\begin{proof}
The kernel $\r[\nu][(a.)]$ solves the Volterra equation $\r[\nu][(a.)] = \K[\nu][(a.)] + \K[][(a.)] * \r[\nu][(a.)]$. By Lemma~\ref{lem:solution a laide de la resolvante.}, its solution is 
\[ 
\r[\nu][(a.)] = \K[\nu][(a.)] + \r[][(a.)] * \K[\nu][(a.)]. 
\]
Using Proposition~\ref{prop:long time behavior of the perturbed resolvant equation}, we know that $\r[][(a.)] = \gamma(a) + \xi_{(a.)}$, with $ \xi_{(a.)} \in \mathcal{V}_\lambda$.
Furthermore using that $\gamma(a) * \K[\nu][(a.)] = \gamma(a) [1 - \H[\nu][(a.)]]$, we deduce that: 
\[\r[\nu][(a.)] = \gamma(a) + \K[\nu][(a.)] + \xi_{(a.)} * \K[\nu][(a.)]  - \gamma(a) \H[\nu][(a.)]. \]
Using that $\lambda < f(\sigma_0)$ (Assumption~\ref{assumption:condition sur a_t}) we find
\[ ||\H[\nu][(a.)]||_{\lambda,\infty} = \sup_{t,s} \H[\nu][(a.)](t, s) e^{\lambda (t-s)} < \infty,~ ||\K[\nu][(a.)]||_{\lambda,\infty} = \sup_{t,s} \K[\nu][(a.)](t, s) e^{\lambda (t-s)} < \infty. \]
We obtain
\begin{align*}
\forall t \geq s, ~|\r[\nu][(a.)](t, s) - \gamma(a)|e^{\lambda(t-s)} &\leq ||\K[\nu][(a.)]||_{\lambda,\infty} + \gamma(a) ||\H[\nu][(a.)]||_{\lambda,\infty} + e^{\lambda(t-s)} \int_s^t{ |\xi_{(a.)} | (t, u) \K[\nu][(a.)](u, s) du} \\
 &\leq ||\K[\nu][(a.)]||_{\lambda,\infty} + \gamma(a) ||\H[\nu][(a.)]||_{\lambda,\infty} + ||\K[\nu][(a.)]||_{\lambda,\infty}  \int_s^t{ |\xi_{(a.)} | (t, u)  e^{\lambda(t-u)} du} \\
 & \leq ||\K[\nu][(a.)]||_{\lambda,\infty}+ \gamma(a) ||\H[\nu][(a.)]||_{\lambda,\infty} + ||\K[\nu][(a.)]||_{\lambda,\infty}  ||\xi_{(a.)}||_{\lambda,1}. 
\end{align*}
Using the estimate of $||\xi_{(a.)}||_{\lambda,1}$ given by Proposition~\ref{prop:long time behavior of the perturbed resolvant equation}, we deduce the result.
\end{proof}

\section{Long time behavior for small interactions: proof of Theorem\protect~\ref{th:Jpetit}}
\label{sec:proof of the main theorem}
\subsection{Some uniform estimates}

We now turn to the proof of Theorem~\ref{th:Jpetit}. It is convenient to first extend the results obtained in Section~\ref{sec:comportement en temps long avec drift constant}: we need uniform estimates in the input current $a$.
In this section, we grant Assumptions~\ref{assumptions:b0}, \ref{assumptions:f0} and \ref{assumptions:nu0}.
\begin{lemma}
Let $\bar{a} > 0$. It holds that
\[ \inf_{a \in [0, \bar{a}]}{\lambda^*_a} > 0.\]
\label{lem: le min des lambda star}
\end{lemma}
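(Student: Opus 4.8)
The plan is to argue by contradiction and compactness. Suppose $\inf_{a \in [0,\bar a]} \lambda^*_a = 0$. Since $\lambda^*_a > 0$ for every $a$ (as established after \eqref{eq:optimal rate of convergence lambda^*}), there is a sequence $(a_n) \subset [0,\bar a]$ with $\lambda^*_{a_n} \to 0$, and by compactness of $[0,\bar a]$ we may assume $a_n \to a_\infty \in [0,\bar a]$. Now $f(\sigma_{a}) \ge f(\sigma_0) > 0$ for every $a$ (Lemma~\ref{lemma:b0}, Remark~\ref{remark:b0f}(\ref{remark:b0f:lim f flow a})), so for $n$ large $\lambda^*_{a_n} < f(\sigma_0) \le f(\sigma_{a_n})$; by the convention in \eqref{eq:optimal rate of convergence lambda^*} this forces the set of zeros of $\widehat{H}_{a_n}$ in $\{\RePart z > -f(\sigma_{a_n})\}$ to be non-empty, and by definition of $\lambda^*_{a_n}$ one can pick a zero $z_n = x_n + i y_n$ with $-f(\sigma_{a_n}) < x_n$ and $x_n \ge -\lambda^*_{a_n} - 1/n$. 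By Lemma~\ref{lem: pas de zeros pour Re(z) >= 0} necessarily $x_n < 0$ (in particular $z_n \neq 0$), hence $x_n \to 0^-$.

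Next I would derive a uniform bound on $y_n$. Fix $0 < \delta < \lambda' < f(\sigma_0)$; for $n$ large, $x_n \in (-\delta, 0)$. Applying Lemma~\ref{lem:les zeros sont dans un cone} to the nonzero root $z_n$ gives $|y_n| \le \phi_{a_n}(x_n) = ||K'_{a_n,x_n}||_1$ with $K_{a,x}(t) = e^{-xt}\K[][a](t)$. Writing $\K[][a](t) = f(\Flow[a][t](0))\,\H[][a](t)$ and using the flow ODE \eqref{def:definition of the flow}, one finds
\[ |K'_{a,x}(t)| \le |x|\, f(\Flow[a][t](0))\,\H[][a](t) + \Big( f'(\Flow[a][t](0))\,|b(\Flow[a][t](0)) + a| + f^2(\Flow[a][t](0)) \Big)\,\H[][a](t). \]
Now $\H[][a](t) \le \H[][0](t) \le D_{\lambda'} e^{-\lambda' t}$ (Remark~\ref{remark:inequality Ha} and Remark~\ref{remark:b0f}(\ref{remark:b0f:lim f flow a})), $\Flow[a][t](0) \le (C_b+\bar a)t$ (Lemma~\ref{lemma:b0}(\ref{lemma:b0:drift bounded from above})), $|b| \le C_{b,f}(1+f)$ (Assumption~\ref{assumptions:f0:b}), and both $f$ and $f'$ are dominated by $C_\epsilon e^{\epsilon\,\cdot}$ (Remark~\ref{remark:b0f}(\ref{remark:b0f:f does not grow too much}) and $f' \le C+f$). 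Choosing $\epsilon$ so small that $3\epsilon(C_b+\bar a) < \lambda' - \delta$, one gets $|K'_{a,x}(t)| \le \Theta\, e^{-(\lambda'-\delta-3\epsilon(C_b+\bar a))t}$ with $\Theta$ independent of $a \in [0,\bar a]$ and $x \in [-\delta,0]$. Hence $\sup_n |y_n| \le \sup_{a\in[0,\bar a],\,x\in[-\delta,0]}\phi_a(x) < \infty$, so $(z_n)$ is bounded, and along a subsequence $z_n \to z_\infty$ with $\RePart z_\infty = 0$.

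Finally I would pass to the limit in $\widehat{H}_{a_n}(z_n) = 0$. Assumption~\ref{assumptions:b0:Cvarphi}, applied to the constant currents $a_n$ and $a_\infty$, gives $|\Flow[a_n][u](0) - \Flow[a_\infty][u](0)| \le C_\varphi |a_n - a_\infty|\,u \to 0$ for each $u$, whence $f(\Flow[a_n][u](0)) \to f(\Flow[a_\infty][u](0))$ and, by dominated convergence on $[0,t]$, $\H[][a_n](t) \to \H[][a_\infty](t)$ for each $t$. Combined with $z_n \to z_\infty$, this yields $e^{-z_n t}\H[][a_n](t) \to e^{-z_\infty t}\H[][a_\infty](t)$ pointwise in $t$, dominated (for $n$ large, $x_n > -\delta$) by $D_{\lambda'} e^{-(\lambda'-\delta)t} \in L^1(\mathbb{R}_+)$; hence $\widehat{H}_{a_n}(z_n) \to \widehat{H}_{a_\infty}(z_\infty)$. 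Therefore $\widehat{H}_{a_\infty}(z_\infty) = 0$ with $\RePart z_\infty = 0$, contradicting Lemma~\ref{lem: pas de zeros pour Re(z) >= 0}. This proves $\inf_{a\in[0,\bar a]}\lambda^*_a > 0$. The main obstacle is the uniform cone estimate of the second step, namely boundedness of $\phi_a(x)$ as $a$ ranges over $[0,\bar a]$ and $x$ stays close to $0$; this relies on the uniform exponential decay $\H[][a] \le \H[][0]$ and on the sub-exponential growth of $f$, $f'$ and $b$.
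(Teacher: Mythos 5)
Your proposal is correct and takes essentially the same route as the paper: the paper proves lower semi-continuity of the first-zero abscissa $a\mapsto g(a)$ on $[0,\bar a]$ using the same two ingredients you use, namely Lemma~\ref{lem:les zeros sont dans un cone} to confine zeros to a compact rectangle and Lemma~\ref{lem: pas de zeros pour Re(z) >= 0} plus joint continuity of $(a,z)\mapsto\widehat{H}_a(z)$ to rule out a limiting zero on the imaginary axis, while you phrase the same argument as a sequential compactness contradiction and spell out in more detail the uniform bound on $\phi_a(x)$ that the paper simply asserts. (Minor: your displayed bound for $|K'_{a,x}(t)|$ omits the factor $e^{-xt}\le e^{\delta t}$, though your subsequent choice $3\epsilon(C_b+\bar a)<\lambda'-\delta$ shows you accounted for it.)
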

\begin{proof}
We define the function $g$ related to the first zero of \(\widehat{H}_a\) 
by
\[
\forall a\in [0,\bar{a}],\quad g(a) := - \sup\{ \RePart(z)|~ \widehat{H}_a(z) =  0,~\RePart(z) > -f(\sigma_0) \}.
\]
By convention, $g(a) = f(\sigma_0)$ if $\widehat{H}_a$ is not null on $\Re(z) > -f(\sigma_0)$.
By definition of $\lambda^*_a$ and by the results of Section~\ref{sec:comportement en temps long avec drift constant} 
we know that
\(g(a)\in(0,\lambda^*_a]\).	
So, to prove the lemma, it suffices to show the following result\\
\textbf{Claim} $g$ is lower semi-continuous, that is
\[ 
\forall a_0 \in [0, \bar{a}], ~\liminf_{a \rightarrow a_0}{g(a)} \geq g(a_0).
\]

\noindent \textit{Proof of the claim.} Choose $a_0 \in [0, \bar{a}]$. 
We have $g(a_0) > 0$. Fix $\lambda \in (0 , g(a_0))$.
Thanks to Lemma~\ref{lem:les zeros sont dans un cone}, one can find \(R > 0\),
such that for all \(a\in[0,\bar{a}]\), for all \(z\) with \(\RePart(z)\in[-\lambda,0]\) and \(\ImPart(z)\notin [-R,R]\),
we have \(\widehat{H}_a(z)\neq 0\).
Denote \(U=\{z\in\mathbb{C}, \RePart(z)\in[-\lambda,0], |\ImPart(z)|\leq R \}\).
By definition of $g(a_0)$, we have $\widehat{H}_{a_0} \neq 0$ on $U$ and the continuity of $z \mapsto \widehat{H}_{a_0}(z)$ yields
\(\inf_{z \in U} { |\widehat{H}_{a_0}(z)|} > 0\).
Moreover, \((a,z) \mapsto \hat{H}_a(z)\) is
continuous on \([0,\bar{a}]\times U\), so one can find \(\delta > 0\) such that for all \(| a - a_0| \leq \delta\), \(z\in U\), we have \(|\widehat{H}_{a}(z)|\neq 0\).
and so $g(a) \geq \lambda$.
We have proved that \(\forall \lambda\in (0,g(a_0))\), \(\liminf_{a \rightarrow a_0} g(a) \geq \lambda\).
It ends the proof.
\end{proof}

\begin{proposition}[Whole-line Palay-Wiener, an extension]
Let $\bar{a} > 0$ and for all $a \in [0, \bar{a}]$, let $k_a \in L^1(\mathbb{R}, \mathbb{R})$. Assume that
\begin{enumerate}
\item $\exists \eta \in L^1(\mathbb{R},\mathbb{R}_+)$ \textit{s.t.}  $\forall a \in [0, \bar{a}]$, 
\(\forall 0<\epsilon< 1, \forall t \in \mathbb{R},  |k_a(t) - k_a(t-\epsilon)| \leq \epsilon \eta(t). \)
\item $\exists \theta \in L^1(\mathbb{R}, \mathbb{R}_+)$ \textit{s.t.} $\forall a \in [0, \bar{a}]$,
\(\forall t \in \mathbb{R}: |k_a(t)| \leq \theta(t)\).
\item $\forall a \in [0, \bar{a}]$, $\forall y \in \mathbb{R}$ let $\widehat{k}_a(i y) = \int_{\mathbb{R}}{e^{- i y t } k_a(t) dt}$. We assume that
\[ \inf_{a \in [0, \bar{a}], y \in \mathbb{R}}{|1-\widehat{k}_a(iy)|} > 0. \]
\end{enumerate}
Then for all $a \in [0, \bar{a}]$, there exists a function $x_a \in L^1(\mathbb{R}, \mathbb{R})$ satisfying the equation $x_a = k_a + k_a * x_a$ and
 \[ \sup_{a \in [0, \bar{a}]}{ ||x_a||_{L^1}} < \infty. \]
\label{prop:whole line palay wiener theorem extension}
\end{proposition}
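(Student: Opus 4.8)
The plan is to derive the result from the non-uniform statement (Theorem~\ref{th:whole line palay wiener theorem}) by a compactness argument in $L^1(\mathbb{R})$. First I would check that hypotheses~1 and~2 are precisely what is needed to apply the Fr\'echet--Kolmogorov (Kolmogorov--Riesz) compactness criterion to the family $\mathcal{F}:=\{k_a:a\in[0,\bar a]\}$. Hypothesis~2 gives the uniform bound $\|k_a\|_{L^1}\le\|\theta\|_{L^1}$ and the equi-tightness $\int_{|t|>N}|k_a(t)|\,dt\le\int_{|t|>N}\theta(t)\,dt\to 0$ as $N\to\infty$, uniformly in $a$. Hypothesis~1 gives, for $0<h<1$, $\|k_a(\cdot-h)-k_a\|_{L^1}\le h\,\|\eta\|_{L^1}$ directly, and for $-1<h<0$ the same bound after the change of variable $t\mapsto t-h$ in the integral; hence $\mathcal{F}$ is equicontinuous under translation. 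Therefore the $L^1$-closure $\mathcal{K}$ of $\mathcal{F}$ is compact in $L^1(\mathbb{R})$.

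Next I would propagate the non-vanishing condition to all of $\mathcal{K}$. Since $\|\widehat{f}\|_{L^\infty(\mathbb{R})}\le\|f\|_{L^1}$, the map $f\mapsto\widehat{f}$ is $1$-Lipschitz from $L^1(\mathbb{R})$ into $C_0(\mathbb{R})$. Writing $m:=\inf_{a,y}|1-\widehat{k}_a(iy)|>0$ from hypothesis~3 and passing to the limit along a sequence $k_{a_n}\to k$ in $L^1$, one obtains $|1-\widehat{k}(iy)|\ge m$ for every $y\in\mathbb{R}$ and every $k\in\mathcal{K}$. In particular every $k\in\mathcal{K}$ satisfies the hypothesis of Theorem~\ref{th:whole line palay wiener theorem}, so it admits a resolvent $x_k\in L^1(\mathbb{R})$ solving $x_k=k+k*x_k$ (convolution over $\mathbb{R}$); this $x_k$ is unique, since if $z\in L^1$ solves $z=k*z$ then $\widehat{z}(iy)(1-\widehat{k}(iy))=0$, hence $\widehat{z}\equiv0$ and $z=0$. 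For $a\in[0,\bar a]$ one sets $x_a:=x_{k_a}$.

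It then remains to bound $\sup_{k\in\mathcal{K}}\|x_k\|_{L^1}$, and here I would use the Wiener algebra. Let $W:=\mathbb{C}\,\mathbbm{1}\oplus\{\widehat{f}:f\in L^1(\mathbb{R})\}$, normed by $\|\alpha\,\mathbbm{1}+\widehat{f}\|_W:=|\alpha|+\|f\|_{L^1}$; this is a commutative unital Banach algebra, and $k\mapsto\mathbbm{1}-\widehat{k}$ is an affine isometry of $L^1(\mathbb{R})$ into $W$. Taking Fourier transforms in $x_k=k+k*x_k$ gives $(\mathbbm{1}+\widehat{x_k})(\mathbbm{1}-\widehat{k})=\mathbbm{1}$ in $W$, so $\mathbbm{1}-\widehat{k}$ is invertible in $W$ with inverse $\mathbbm{1}+\widehat{x_k}$, for every $k\in\mathcal{K}$. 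Thus $\{\mathbbm{1}-\widehat{k}:k\in\mathcal{K}\}$ is a compact subset of the open group of units of $W$; since inversion is norm-continuous on the units of any Banach algebra, $\sup_{k\in\mathcal{K}}\|(\mathbbm{1}-\widehat{k})^{-1}\|_W<\infty$, whence $\sup_{k\in\mathcal{K}}\|x_k\|_{L^1}=\sup_{k\in\mathcal{K}}\|(\mathbbm{1}-\widehat{k})^{-1}-\mathbbm{1}\|_W<\infty$. In particular $\sup_{a\in[0,\bar a]}\|x_a\|_{L^1}<\infty$, which is the claim.

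The step I expect to be the main obstacle — or at least the one needing care — is the second one: upgrading hypothesis~3 to a statement valid on the compact closure $\mathcal{K}$. This is exactly where the \emph{uniform} positivity $m>0$ in hypothesis~3 is used and cannot be weakened: a limit point $k\in\mathcal{K}$ could otherwise have $\widehat{k}(iy_0)=1$, it would then fail to admit an $L^1$-resolvent, and $\|x_{a_n}\|_{L^1}$ could diverge along the corresponding sequence. Everything else is routine: the Fr\'echet--Kolmogorov criterion, and the standard continuity of inversion on the units of a Banach algebra — which, if one prefers to avoid the Wiener-algebra formulation, can be replaced by a direct Neumann-series/perturbation estimate around each point of $\mathcal{K}$, extracting a finite subcover of the compact set.
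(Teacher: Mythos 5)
Your argument is correct but follows a genuinely different route from the paper's. The paper re-runs the Gripenberg--Londen--Staffans proof of the whole-line Paley--Wiener theorem while tracking every constant's dependence on $a$: Fej\'er kernels $\zeta_p$ split $k_a$ into a high-frequency part $k^\infty_a$ with $\|k^\infty_a\|_{L^1}\le 1/2$ (inverted by a Neumann series) and finitely many frequency-localized pieces $k^{y_0,\delta}_a$ (each inverted by a perturbation argument around the value $\widehat{k}_a(iy_0)$), hypotheses~1--2 being used precisely to make the Fej\'er estimates hold uniformly in $a$, and the finitely many local resolvents are then patched together with a partition of unity in frequency. You spend those same two hypotheses on the Fr\'echet--Kolmogorov criterion instead, obtaining relative compactness of $\{k_a\}_{a\in[0,\bar a]}$ in $L^1(\mathbb{R})$; you then push the uniform lower bound of hypothesis~3 to the $L^1$-closure $\mathcal{K}$ via the norm-one map $f\mapsto\widehat f$, apply Theorem~\ref{th:whole line palay wiener theorem} once per $k\in\mathcal{K}$ to get the resolvent $x_k$, and read off $\sup_{k\in\mathcal{K}}\|x_k\|_{L^1}<\infty$ from the continuity of inversion on the group of units of the unitization of the convolution algebra $L^1(\mathbb{R})$. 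Both proofs are sound. Yours is more abstract and shorter once the one-kernel theorem is granted, and it makes transparent which hypotheses play which role (1--2 give compactness, 3 gives uniform invertibility). The paper's is more self-contained and quantitative: unrolled, it is essentially the ``Neumann series plus finite cover'' alternative you mention, carried out so that the constant can in principle be tracked rather than merely asserted finite. The caveat you flag is indeed the crux of your version: it is compactness of $\mathcal{K}$, not just sup-norm boundedness of $\{\widehat k_a\}$, that bounds the set of inverses, and the uniform $m>0$ of hypothesis~3 is exactly what ensures no $L^1$-limit point falls outside the group of units.
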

\begin{proof}
We follow the proof of Theorem 4.3 in \cite[Chap. 2]{gripenberg_volterra_1990} and emphasis on the differences.
Let $\zeta(t) := \frac{1}{\pi t^2}(1 - \cos(t))$ be the Fejer kernel;
its Fourier transform is $\widehat{\zeta}(iy) = (1 - |y|) \indic{|y| \leq 1}$.
For any $p \geq 1$, set $\zeta_p(t) := p \zeta (p t)$ and 
\(\forall a \in [0, \bar{a}]\),
\[
k^\infty_a(t) := k_a - \zeta_p * k_a.
\]
\textbf{Claim 1} There is an integer $p > 0$ such that $\forall a \in [0, \bar{a}]$, \(\forall |y| \geq p\), we have
\[  
||k^\infty_a||_{L^1} \leq 1/2 \quad\text{ and } \quad \widehat{k}^\infty_a(iy) =\widehat{k}_a(iy).   
\]

\noindent \textit{Proof of the claim.} It is clear that with this choice of $\zeta$, $\forall |y| \geq p$, $\widehat{k}^\infty_a(iy) = \widehat{k}_a(iy)$.
Moreover, using $\int_{\mathbb{R}}{\zeta_p(s) ds } = 1$, we have
\begin{align*}
||k^\infty_a||_{L^1} &= \int_{\mathbb{R}}{ \left|\int_{\mathbb{R}} {k_a(t) \zeta_p(s) - k_a(t-s) \zeta_p(s) ds} \right| dt} \\
& \leq \int_{\mathbb{R}}{ \zeta(u) \int_{\mathbb{R}}{ |k_a(t) - k_a(t - \frac{u}{p})|}  dt du}.
\end{align*}
We used the Tonelli-Fubini Theorem (everything is non-negative).
Let $R> 0$ such that $\int_{\mathbb{R} \setminus [-R, R]}{ \zeta(u) du } \leq \frac{1}{8 ||\theta||_{L^1}}$.
It follows that
\begin{align*}
||k^\infty_a||_{L^1}  & \leq  1/4 + \int_{-R}^R{ \zeta(u) \int_\mathbb{R}{|k_a(t) - k_a(t - \frac{u}{p})| dt} du} \\
& \leq 1/4 + \int_{-R}^R{ \left( \int_\mathbb{R}{|\frac{u}{p}| \eta(t) dt}  \right) du} \\
& \leq 1/4  + \frac{R^2}{p} ||\eta||_{L^1}.
\end{align*}
The claim is proved by choosing an integer $p \geq 4 R^2 ||\eta||_{L^1}$.

Along the same idea, we define $\beta(t) := 4 \zeta(2t) - \zeta(t) = \frac{1}{\pi t^2} (\cos{t} - \cos{2t})$. Note that \(\forall |y|\leq 1\),
we have \(\widehat{\beta}(iy) = 1\). Then for all $\delta > 0$, we set $\beta_{\delta}(t) = \delta \beta(\delta t)$ and 
\[ 
\forall y_0 \in \mathbb{R},\forall t \geq 0,\quad k^{y_0, \delta}_a(t) = \int_{\mathbb{R}}{ (\beta_\delta(t-s)  - \beta_\delta(t)) e^{i y_0 (t-s)} k_a(s)  ds} .
\]
\textbf{Claim 2} Given $\epsilon > 0$, one can find a constant $\delta > 0$ such that: $\forall y_0 \in \mathbb{R}, \forall a \in [0, \bar{a}]$, 
\[ 
\forall |y-y_0| \leq \delta,~ \widehat{k}_a(i y) = \widehat{k}_a(i y_0) + \widehat{k^{y_0, \delta}_a}(i y)
 \quad  \text{ and } \quad 
 ||k^{y_0, \delta}_a||_{L^1} \leq \frac{\epsilon}{2}.
\] 
\textit{Proof of the claim.} By definition of $k^{y_0, \delta}_a$ it holds that
\[ \forall y \in \mathbb{R},  \widehat{k^{y_0, \delta}_a}(i y)  = \widehat{\beta}_\delta(i (y - y_0))(\widehat{k}_a(i y) - \widehat{k}_a(i y_0)). \]
Moreover, $\widehat{\beta}_\delta(i y) = 1$ if $|y| \leq \delta$ and consequently the first point of the claim is satisfied.
Furthermore, 
\begin{align*}
 \int_\mathbb{R} { |k^{y_0, \delta}_a(t)| dt } & \leq \int_{\mathbb{R}}{ |k_a(s)| \int_{\mathbb{R}}{ |\beta(t - \delta s) - \beta(t)| dt } ds } \\
 & \leq \int_{\mathbb{R}}{ \theta(s) \int_{\mathbb{R}}{ |\beta(t - \delta s) - \beta(t)| dt } ds }. 
\end{align*} 
The right hand side does not  depend on $y_0$ nor $a$ and goes to zero as $\delta$ goes to zero. 
This proves the second point of the claim.

It follows from Claim~1 that $\forall a \in [0, \bar{a}]$, the equation $x^\infty_a = k_a + k^\infty_a * x^\infty_a$ has a unique solution $x^\infty_a \in L^1(\mathbb{R})$ with $||x^\infty_a||_{L^1} \leq 2 ||\theta||_{L^1}$.  Moreover, we have
\[ 
\forall a \in [0, \bar{a}], \forall |y| \geq p,\quad \widehat{x}^\infty_a(iy) = \frac{\widehat{k}_a(iy) }{1 - \widehat{k}_a(iy)}. 
\]
Similarly, we define $\epsilon := \inf_{a \in [0, \bar{a}], y \in \mathbb{R}}{|1-\widehat{k}_a(iy)|} > 0$ and apply the second claim.
Given $y_0 \in \mathbb{R}$ and $a \in [0, \bar{a}]$, let $A^{y_0}_a = \frac{1}{1 - \widehat{k}_a(iy_0)}$.
We have  
$1-\widehat{k}_a(iy) = 1 - \widehat{k}_a(iy_0) - \widehat{k^{y_0, \delta}_a}(i y) = \frac{1}{A^{y_0}_a} (1 - A^{y_0}_a \widehat{k^{y_0, \delta}_a}(i y))$.
So,
\[ 
\forall |y-y_0| \leq \delta, \quad  \frac{\widehat{k}_a(iy)}{1-\widehat{k}_a(iy)} = \frac{ A^{y_0}_a \widehat{k}_a(iy)}{1- A^{y_0}_a\widehat{k^{y_0, \delta}_a}(i y) }.  
\]
Using  $||A^{y_0}_a k^{y_0, \delta}_a||_{L^1} \leq 1/2 $, we can
define 
the solution of $x^{y_0}_a = A^{y_0}_a k_a + A^{y_0}_a k^{y_0, \delta}_a * x^{y_0}_a$ and we have
\[ 
||x^{y_0}_a||_{L^1} \leq \frac{2}{\epsilon} ||\theta||_{L^1}.  
\]
Consequently, for all $y$ with $|y - y_0| \leq \delta$ we have
\[ \widehat{x}^{y_0}_a(i y) = \frac{\widehat{k}_a(iy)}{1-\widehat{k}_a(iy)}. \]
Furthermore, still following \cite{gripenberg_volterra_1990}, one can find an integer $m >0$ such that: $ \forall a \in [0, \bar{a}]$, 
$\forall j \in \mathbb{Z}, |j|\leq  m  p$, 
there exists a function $x^{j/m}_a \in L^1(\mathbb{R})$ with  $||x^{j/m}_a||_{L^1} \leq \frac{2}{\epsilon} ||\theta||_{L^1}$ such that
\[ 
\forall |y - j/m| \leq 1/m,\quad \widehat{x^{j/m}_a}(iy) = \frac{\widehat{k}_a(iy)}{1-\widehat{k}_a(iy)}. 
\]
We define $\psi_j(t) = \frac{1}{m} e^{-i j t / m} \zeta(t/m)$. We have
\(||\psi_j||_{L^1} = 1\). Its Fourier transform is given by
\[
\widehat{\psi}_j(iy) =
\left\{
\begin{array}{ll}
0 & \text{ if }  |y-j/m|>1/m\\
1 - m |y-j/m| & \text{ otherwise. }
\end{array}
\right.
\]
We set
\[ 
x_a = \sum_{|j| \leq mp}{\psi_j * (x^{j/m}_a - x^\infty_a)} + x^\infty_a.
\]
It is clear that $x_a \in L^1(\mathbb{R})$ and that
\[ 
\sup_{a \in [0, \bar{a}]}{ ||x_a||_{L^1}}  \leq   m  p \left(\frac{2}{\epsilon} ||\theta||_{L^1}  + 2 ||\theta||_{L^1}\right) + 2 ||\theta||_{L^1}< \infty. \]
With this choice of $\psi_j$, $\forall y \in \mathbb{R}, \widehat{x}_a(iy) = \frac{\widehat{k}_a(iy)}{1-\widehat{k}_a(iy)}$ and by uniqueness of the Fourier transform, we conclude that $x_a$ is the solution of $x_a = k_a + k_a * x_a$. 
\end{proof}
As a consequence of the previous result, we have
\begin{corollary}
Let $\bar{a} > 0$, define $\lambda^* = \inf_{a \in [0, \bar{a}]}{\lambda^*_a}$ ($\lambda^* > 0$ by Lemma~\ref{lem: le min des lambda star}). Let $0 < \lambda < \lambda^*$ and consider $\r[][a]$ the solution of the Volterra equation $\r[][a] = \K[][a] + \K[][a] * \r[][a]$.
By Proposition~\ref{prop: comportement en temps long equation de volterra cas convolutionnel}, it holds that $\r[][a] = \gamma(a) + \xi_a$ for some $\xi_a \in L_\lambda$.
Then we have $\sup_{a \in [0, \bar{a}]}{||\xi_a||_{\lambda,1}} < \infty.$
\label{lem:xi_a est uniformement borne}
\end{corollary}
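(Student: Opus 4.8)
The plan is to obtain the uniform bound on $\|\xi_a\|_{\lambda,1}$ by reproducing the proof of Proposition~\ref{prop: comportement en temps long equation de volterra cas convolutionnel}, but now feeding it the uniform version of the Whole-line Palay-Wiener theorem (Proposition~\ref{prop:whole line palay wiener theorem extension}) instead of the one-parameter Theorem~\ref{th:whole line palay wiener theorem}. Recall from that earlier proof that, after extending everything to the whole line and shifting by $\sigma_- = -\lambda$, one sets $K_{a,-\lambda}(t) := e^{\lambda t}\K[][a](t)\indic{t\geq 0}$ and obtains $\xi_a(t) = e^{-\lambda t}\xi_{a,-\lambda}(t)$, where $\xi_{a,-\lambda}\in L^1(\mathbb{R})$ solves $\xi_{a,-\lambda} = K_{a,-\lambda} + K_{a,-\lambda}*\xi_{a,-\lambda}$. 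Since $\|\xi_a\|_{\lambda,1} = \int_{\mathbb{R}_+}|\xi_a(t)|e^{\lambda t}\,dt = \|\xi_{a,-\lambda}\|_{L^1(\mathbb{R})}$, it suffices to prove $\sup_{a\in[0,\bar a]}\|\xi_{a,-\lambda}\|_{L^1(\mathbb{R})} < \infty$, and this is exactly what Proposition~\ref{prop:whole line palay wiener theorem extension} delivers, provided its three hypotheses hold uniformly for the family $k_a := K_{a,-\lambda}$.

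So the core of the argument is checking the three hypotheses. For hypothesis~(3), the uniform spectral gap: by definition of $\lambda^* = \inf_{a\in[0,\bar a]}\lambda^*_a$ and the fact $\lambda < \lambda^*$, we have $\widehat{K}_a(z)\neq 1$ for all $z$ with $\RePart(z)\geq -\lambda$ and all $a\in[0,\bar a]$; one then upgrades this to a uniform lower bound $\inf_{a,y}|1-\widehat{k}_a(iy)| > 0$ using the cone estimate of Lemma~\ref{lem:les zeros sont dans un cone} (which bounds $|\widehat K_a(z)|\to 0$ as $|\ImPart z|\to\infty$, uniformly in $a\in[0,\bar a]$ since $\phi_a$ is controlled uniformly there) together with the continuity of $(a,z)\mapsto \widehat{H}_a(z)$ on the compact set $[0,\bar a]\times\{\RePart z = -\lambda,\ |\ImPart z|\leq R\}$ — this is the same compactness argument already used in the proof of Lemma~\ref{lem: le min des lambda star}. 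For hypotheses~(1) and~(2), one needs a uniform integrable dominating function $\theta$ for $k_a$ and a uniform integrable modulus-of-continuity function $\eta$. Here we use the explicit form $\K[][a](t) = f(\Flow[a][t](0))\EXP{-\int_0^t f(\Flow[a][u](0))du}$: since $a\leq\bar a$, the flow is dominated by $\Flow[\bar a][t](0)\leq C_b^{\bar a}t$ (Lemma~\ref{lemma:b0:drift bounded from above}), so $f(\Flow[a][t](0))\leq f(C_b^{\bar a}t)$; and since $a\geq 0 \geq \dots$ gives $\Flow[a][u](0)\geq\Flow[0][u](0)\to\sigma_0$ (comparison principle, Lemma~\ref{lemma:b0:comparison principle}), one gets $\EXP{-\int_0^t f(\Flow[a][u](0))du}\leq D(b,f,\lambda')e^{-\lambda'(t-s)}$ for any $\lambda\in(\lambda,f(\sigma_0))$, exactly as in the proof of Lemma~\ref{lem: la fonction eta qui controle K bar(a) et H bar(a)}. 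Multiplying by $e^{\lambda t}$, the product is bounded by $\theta(t) := f(C_b^{\bar a}t)D(b,f,\lambda')e^{-(\lambda'-\lambda)t}\indic{t\geq 0}$, which lies in $L^1(\mathbb{R})$ by Remark~\ref{remark:b0f:f does not grow too much}; the modulus-of-continuity bound follows similarly by differentiating in $t$ (the derivative of $\K[][a]$ involves $f'$ and $f^2$, each again dominated uniformly in $a\in[0,\bar a]$ by the same flow-comparison arguments plus Assumption~\ref{assumptions:f0}), giving an integrable $\eta$ independent of $a$.

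With the three hypotheses verified, Proposition~\ref{prop:whole line palay wiener theorem extension} yields $\sup_{a\in[0,\bar a]}\|x_a\|_{L^1(\mathbb{R})}<\infty$ where $x_a$ solves $x_a = k_a + k_a*x_a$; by uniqueness of the $L^1$ solution (the contraction in the proof of Lemma~\ref{lem:existence et unicté de l'équation de Volterra}, or directly the uniqueness embedded in the Palay-Wiener statement), $x_a = \xi_{a,-\lambda}$, and hence $\sup_{a\in[0,\bar a]}\|\xi_a\|_{\lambda,1} = \sup_{a\in[0,\bar a]}\|x_a\|_{L^1(\mathbb{R})} < \infty$, which is the claim. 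I expect the main obstacle to be the verification of the uniform lower bound in hypothesis~(3), namely $\inf_{a\in[0,\bar a],y\in\mathbb{R}}|1-\widehat k_a(iy)| > 0$: one must combine the uniform cone estimate to truncate $y$ to a compact interval with a genuine compactness argument in $(a,y)$ — the non-vanishing of $\widehat H_a(-\lambda+iy)$ is known pointwise, but ruling out that $|1-\widehat k_a|$ approaches $0$ along some sequence $(a_n,y_n)$ requires the continuity of $\widehat H$ jointly in both variables, exactly the subtle point handled in Lemma~\ref{lem: le min des lambda star}. The remaining estimates (hypotheses~(1)–(2)) are routine adaptations of Lemma~\ref{lem: la fonction eta qui controle K bar(a) et H bar(a)} with $\bar a$ playing the role of the supremum of the current.
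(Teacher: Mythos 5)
Your proposal follows essentially the same route as the paper's proof: reduce $\|\xi_a\|_{\lambda,1}$ to $\|\xi_{a,-\lambda}\|_{L^1(\mathbb{R})}$, feed $k_a = K_{a,-\lambda}$ into Proposition~\ref{prop:whole line palay wiener theorem extension}, and verify its three hypotheses using the flow comparison $\Flow[a][t](0)\leq\Flow[\bar a][t](0)$ (for the uniform dominating function and modulus of continuity) and Lemmas~\ref{lem: le min des lambda star} and~\ref{lem:les zeros sont dans un cone} (for the uniform spectral gap). The paper's dominating function is phrased as $\theta(t) = e^{\lambda t}f(C^{\bar a}_t)\H[][0](t)\indic{t\geq 0}$ rather than your $f(C_b^{\bar a}t)D(b,f,\lambda')e^{-(\lambda'-\lambda)t}$, but these are equivalent bounds.
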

\begin{proof}
	Recall (see proof of Proposition~\ref{prop: comportement en temps long equation de volterra cas convolutionnel}) that $\xi_a(t) = e^{-\lambda t} \xi_{a,-\lambda}(t)$ and so \(||\xi_a||_{\lambda,1} = ||\xi_{a,-\lambda}||_{L^1}\). We now prove that Proposition~\ref{prop:whole line palay wiener theorem extension} applies to \(\xi_{a,-\lambda}\).
	Indeed, it solves
\[ 
\xi_{a,-\lambda} = \K[][a,-\lambda]  + \K[][a,-\lambda] * \xi_{a,-\lambda}, 
\]
with $\K[][a,-\lambda](t) := e^{\lambda t} \K[][a](t) \indic{t \geq 0}$. 
It remains to show that \(\K[][a,-\lambda]\) fulfills the assumptions of Proposition~\ref{prop:whole line palay wiener theorem extension}.

\noindent 1. We use \(\sup_{a \in [0, \bar{a}]}K_a(t) \leq f(\varphi_t^{\bar{a}}(0))H_0(t)\) and \(\sup_{a \in [0, \bar{a}]}
|\varphi_t^{a}(0) - \varphi_{t-\epsilon}^{a}(0)|\leq \epsilon C_b^{\bar{a}}.
\)

\noindent 2. For all \(t\geq 0\) and \(a\in[0,\bar{a}]\), we have
\[\K[][a,-\lambda](t) \leq \theta(t) := e^{\lambda t} f(C^{\bar{a}}_t) \H[][0](t) \indica{\mathbb{R}_+}(t) \in L^1(\mathbb{R}). \]

\noindent 3. We have \(\widehat{\K[][a,-\lambda]}(iy) = \widehat{K}_a(-\lambda + iy)\). We conclude by Lemmas~\ref{lem: le min des lambda star} and \ref{lem:les zeros sont dans un cone}.
\end{proof}
\subsection{Proof of Theorem\protect~\ref{th:Jpetit}}

We are now ready to give the proof of the main theorem.
\begin{itemize}
\item \textbf{Step 1}  Recall that equation \eqref{eq:20190424} gives 
\[ 
\frac{d}{dt} \E f(X_t) \leq \frac{1}{2}[\bar{r}(J) ^2 - \E^2 f(X_t) ], 
\]
where $(X_t)_{t \geq 0}$ is the solution of the non-linear equation \eqref{NL-equation0} and the function \(J \mapsto \bar{r}(J)\) is non-decreasing. 
Using Proposition~\ref{prop: la constant bar(a)} with $\kappa := J \bar{r}(J) + 1$, there is a non-decreasing function $J \mapsto \bar{a}(J)$ such that: 
\[  \forall J,s \geq 0,~ \forall (a_t)_{t \geq s} \in \mathcal{C}([s, \infty), \mathbb{R}_+),~ [\sup_{t \geq s}{a_t }\leq \bar{a}(J) \text{ and } J \nu(f) \leq \bar{a}(J)] \implies \sup_{t \geq s}{J \r[\nu][(a.)](t, s)} \leq \bar{a}(J) . \]
Moreover, it holds that $\forall J \geq 0,~ J \bar{r} (J) < \bar{a}(J)$.
\item \textbf{Step 2} We define
\[ 
\lambda^* := \inf_{a \in [0, \bar{a}(J_m)]}{\lambda^*_a}, 
\]
where $J_m > 0$ is defined in Proposition~\ref{prop:mesures invariantes}.
Lemma~\ref{lem: le min des lambda star} gives $\lambda^* > 0$. We now fix $\lambda$ such that $0 < \lambda < \lambda^*$. 
\item \textbf{Step 3}
\begin{itemize}
\item Using Corollary~\ref{lem:xi_a est uniformement borne}, we know that the solution of the Volterra equation $\r[][a] = \K[][a] + \K[][a] * \r[][a]$ is $\r[][a] = \gamma(a) + \xi_a$ with $\xi_a \in L_\lambda$ and that:
\[ \xi^\infty(J)  := \sup_{a \in [0, \bar{a}(J)]} || \xi_a||_{\lambda,1}< \infty.\]
It is clear that $J \mapsto \xi^\infty(J)$ is non-decreasing (as $J \mapsto \bar{a}(J)$ is).
\item One can find a function $k^\infty: \mathbb{R}_+ \times \mathbb{R}_+ \rightarrow \mathbb{R}_+$, non-decreasing with respect to its two parameters, such that for all $(a_t) \in \mathcal{C}(\mathbb{R}_+, \mathbb{R}_+)$ we have:
\[ \sup_{t \geq 0}{a_t} \leq \bar{a} \implies ||\K[\nu][(a.)]||_{\lambda,\infty} \leq k^\infty(\nu(f), \bar{a}) < \infty. \]
Moreover, one can find a constant $h^\infty$ (only depending on $\lambda$, $b$ and $f$) such that for all $(a_t) \in \mathcal{C}(\mathbb{R}_+, \mathbb{R}_+)$, we have
\[ 
||\H[\nu][(a.)]||_{\lambda,\infty} \leq h^\infty. 
\]
These two points follow from $\lambda < f(\sigma_0)$,
Assumption~\ref{assumptions:b0}, Remarks~\ref{remark:b0f}(\ref{remark:b0f:lim f flow a}) and \ref{remark:inequality Ha}.
\item The function $\eta_{\bar{a}}$ of Lemma~\ref{lem: la fonction eta qui controle K bar(a) et H bar(a)} satisfies
\[ 
||\eta_{\bar{a}}||_1 < \infty, ~\bar{a} \mapsto ||\eta_{\bar{a}}||_1 \text{ is non-decreasing, }
\]
and consequently the function $J \mapsto  ||\eta_{\bar{a}(J)}||_1$ is non-decreasing.
\item Finally the normalization $\gamma$ is a non-decreasing function of \(a\) (see \eqref{eq:equation gamma a avec le flot}) and it follows that 
\[ 
\forall a \in [0, \bar{a}(J)], \gamma(a) \leq \gamma(\bar{a}(J)). 
\]
\end{itemize}
\item \textbf{Step 4} Let $\nu$ be a probability measure such that $\nu(f) \leq \bar{r}(J_m) +1$. 
Remind that  for all $J \in (0,J_m)$ the equation $a \gamma^{-1}(a) = J$ has a unique solution $a^*(J) \in [0, \bar{a}(J_m)]$. We now apply 
Proposition~\ref{prop: theoreme general de perturbation avec les constantes explicites} with $\alpha = 1/2$.
Define:
\begin{align*}  C(J) & := \frac{1}{2 ||\eta_{\bar{a}(J)}||_1 (1 + \xi^\infty(J) + \gamma(\bar{a}(J)))}\\
 D(J) & := 2(1 + \gamma(\bar{a}(J)) + \xi^\infty(J)) k^\infty(\bar{r}(J_m) +1, \bar{a}(J)) + \gamma(\bar{a}(J)) h^\infty. 
 \end{align*}
From Step~3, it is clear that the functions $J \mapsto \tfrac{1}{C(J)}$ and $J \mapsto D(J)$ are non-decreasing. Consequently, we can find a constant $J^* \in (0, J_m)$ such that
\[ 
\forall J \in [0, J^*],\quad \frac{J D(J)}{C(J)} \leq 1.
\]
Proposition~\ref{prop: theoreme general de perturbation avec les constantes explicites} tells us that for every $0 \leq J \leq J^*$, given any $(a_t)_{t\geq 0} \in \mathcal{C}(\mathbb{R}_+, \mathbb{R}_+)$ with $\sup_{t \geq 0}{a_t} \leq \bar{a}(J)$ and such that
\[ 
\forall t \geq 0,~|a_t - a^*(J)| \leq C(J) e^{-\lambda t}, 
\]
it holds
\[ 
\forall t \geq 0,~ |J \r[\nu][(a.)](t,0) - a^*(J) | \leq C(J) e^{- \lambda t}. 
\]
\item \textbf{Step 5} Let now $J \in (0, J^*]$ be fixed (the case $J = 0$ is already treated by Proposition~\ref{prop: convergence du taux de saut dans le cas J=0}). 
We assume the initial condition $\nu$ of \eqref{NL-equation0} satisfies $J \nu(f) \leq \bar{a}(J)$ and that $\nu(f) \leq \bar{r}(J_m) + 1$ (we shall come back to the general case in Step 6).
We define recursively $a^n \in \mathcal{C}(\mathbb{R}_+, \mathbb{R}_+)$ by
\[ 
\forall t \geq 0,~a^0(t) := a^*(J)\quad \mbox{ and } \quad\forall n\geq 0, \quad ~a^{n+1}(t) := J \r[\nu][(a^n.)](t, 0). \]
By Step~4 and by induction, it holds that:
\[ \forall n \geq 0,~\forall t \geq 0,~|a^n (t) - a^*(J) | \leq C(J) e^{- \lambda t}. \]
We deduce that:
\begin{align*} 
\forall t \geq 0,~|\E f(X_t) - \gamma(a^*(J))| & \leq |\E f(X_t) - \r[\nu][(a^{n}.)](t, 0)| + \frac{1}{J}|a^{n+1}(t) - a^*(J)| \\
& \leq   \frac{1}{J}|J \E f(X_t) - a^{n+1}(t)| + \frac{C(J)}{J}e^{-\lambda t}. 
\end{align*}
The Picard iteration studied in Part~\ref{section: existence de l'equation non lineaire} shows that
\[ 
\forall t \geq 0,~ \lim_{n \rightarrow \infty}{ |J \E f(X_t) - a^n(t)| = 0}.
\]
We have proved that
\[ 
\forall t \geq 0,~  |\E f(X_t) - \gamma(a^*(J))| \leq \frac{ C(J)}{J} e^{-\lambda t}.  
\]
\item \textbf{Step 6} We now prove that there exists \(s\geq 0\) such that $\E f(X_s) \leq \min(\frac{\bar{a}(J)}{J}, \bar{r}(J_m) + 1)$.
By Step~1, we have $\limsup \E f (X_t) \leq \overline{r}(J)$.
Since $\overline{r}(J)<\overline{a}(J) / J$ and since $\overline{r}(J) \leq \overline{r}\left(J_{m}\right)$, the conclusion follows. Consequently, Step 5 can be applied to the process $(X_t)_{t \geq s}$ starting with $\nu  = \mathcal{L}(X_s).$ This proves the convergence of the jump rate. 
\end{itemize}
The convergence of the law of $X_t$ to the invariant measure then follows from Proposition~\ref{prop:convergence en loi vers la mesure invariante}. 
This ends the proof of Theorem~\ref{th:Jpetit}.
\begin{remark}
There is some freedom in the above construction of the constants $\lambda$ and $J^*$. We can 
choose any $\lambda$ in $[0, \lambda^*)$ and the value of $J^*$ depends both on $\lambda$ and 
on a parameter $\alpha \in (0,1)$, here chosen to be equal to $1/2$ (see Step~4). We may 
optimize this construction to get either $J^*$ or $\lambda$ as large as possible.
\end{remark}

\section*{Acknowledgements}
The authors warmly thank one anonymous referee for his/her deep reading of a previous version of the paper. His/Her comments have strongly helped us to improve  the presentation.

\noindent This project/research has received funding from the European Union’s Horizon 2020 Framework Programme for Research and Innovation under the Specific Grant Agreement No. 785907 (Human Brain Project SGA2).

\bibliographystyle{abbrv}

\end{document}